\setlist{itemsep=1mm}                                         
\newtheorem{theorem}{Theorem}[section]
\newtheorem{proposition}[theorem]{Proposition}
\newtheorem{lemma}[theorem]{Lemma}
\newtheorem{corollary}[theorem]{Corollary}
\newtheorem{introtheorem}{Theorem}
\theoremstyle{definition}
\newtheorem{definition}[theorem]{Definition}
\newtheorem{proposition-definition}[theorem]{Proposition-Definition}
\newtheorem{remark}[theorem]{Remark}
\newtheorem{example}[theorem]{Example}
\newtheorem{question}[theorem]{Question}
\numberwithin{equation}{section}
\DeclareMathOperator{\Spec}{\mathrm{Spec}}
\DeclareMathOperator{\Div}{\mathrm{Div}}
\DeclareMathOperator{\Rat}{\mathrm{Rat}}
\DeclareMathOperator{\DSP}{\mathrm{DSP}}
\DeclareMathOperator{\Gal}{\mathrm{Gal}}
\DeclareMathOperator{\vol}{\mathrm{vol}}
\DeclareMathOperator{\can}{can}
\DeclareMathOperator{\Id}{Id}
\DeclareMathOperator{\Hom}{\mathrm{Hom}}
\DeclareMathOperator{\val}{\mathrm{val}}
\DeclareMathOperator{\supp}{\mathrm{supp}}
\DeclareMathOperator{\MI}{\mathrm{MI}}
\DeclareMathOperator{\MV}{\mathrm{MV}}
\DeclareMathOperator{\integ}{\mathrm{int}}
\DeclareMathOperator{\model}{\mathrm{mod}}
\DeclareMathOperator{\adel}{\mathrm{adel}}
\newcommand{\longhookrightarrow}{\lhook\joinrel\longrightarrow}
\renewcommand{\and}{\quad \text{and} \quad}
\newcommand{\abs}{\mathrm{abs}}
\newcommand{\ess}{\mathrm{ess}}
\newcommand{\Gm}{\mathbb{G}_{\mathrm{m}}}
\newcommand{\bR}{\mathbb{R}}
\newcommand{\bC}{\mathbb{C}}
\newcommand{\bQ}{\mathbb{Q}}
\newcommand{\bN}{\mathbb{N}}
\newcommand{\bZ}{\mathbb{Z}}
\newcommand{\cO}{\mathcal{O}}
\newcommand{\an}{\mathrm{an}}
\newcommand{\rmr}{r}
\newcommand{\rmw}{w}
\renewcommand\div{\operatorname{div}}
\newcommand{\Dcan}{ {\overline{D}} \vphantom{L}^{\can}}
\newcommand{\Mcan}{ {\overline{M}} \vphantom{L}^{\can}}
\newcommand{\Ncan}{ {\overline{N}} \vphantom{L}^{\can}}
\newcommand{\Etor}{{\overline{E}} \vphantom{L}^{\mathrm{tor}}}
\begin{document}

\title[Approximation of adelic divisors and
equidistribution]{Approximation of adelic divisors and
  equidistribution of small points}

\author[Balla\"y]{Fran\c{c}ois Balla\"y}
\address{\hspace*{-6.3mm} Laboratoire de Math\'ematiques Nicolas Oresme,
Universit\'e Caen Normandie, 14032 Caen, France \vspace*{-2.8mm}}
\address{\hspace*{-6.3mm} {\it Email address:} {\tt francois.ballay@unicaen.fr}}

\author[Sombra]{Mart{\'\i}n~Sombra}
\address{\hspace*{-6.3mm} ICREA, 
  08010 Barcelona, Spain \vspace*{-2.8mm}}
\address{ 
\hspace*{-6.3mm}  Departament de Matem\`atiques i
  Inform\`atica, Universitat de Barcelona,  08007
  Bar\-ce\-lo\-na, Spain \vspace*{-2.8mm}}
\address{\hspace*{-6.3mm} Centre de Recerca Matem\`atica, 08193 Bellaterra, Spain \vspace*{-2.8mm}}
\address{\hspace*{-6.3mm} {\it Email address:} {\tt martin.sombra@icrea.cat}}

\date{\today} \subjclass[2020]{Primary 14G40; Secondary 11G50, 14M25,
  37P30} \keywords{Adelic divisor, height of points, essential
  minimum, equidistribution, toric variety, dynamical system,
  semiabelian variety}

\begin{abstract}
  We study the asymptotic distribution of the Galois orbits of generic
  sequences of algebraic points of small height in a projective
  variety over a number field. Our main result is a generalization of
  Yuan's equidistribution theorem that applies to heights for which
  Zhang's lower bound for the essential minimum is not necessarily an
  equality. It extends to all projective varieties a theorem of Burgos
  Gil, Philippon, Rivera-Letelier and the second author for toric
  varieties.  It also applies to sums of canonical heights for an
  algebraic dynamical system, and in particular it recovers Kühne's
  semiabelian equidistribution theorem.  We also generalize previous
  work of Chambert-Loir and Thuillier to obtain new logarithmic
  equidistribution results. Finally we extend our main result to the
  quasi-projective setting recently introduced by Yuan and Zhang.
\end{abstract}

\maketitle

\thispagestyle{empty}

\vspace{-6mm}

\setcounter{tocdepth}{1}
\tableofcontents

\vspace{-6mm}

\counterwithout{equation}{section}

\section*{Introduction}

In their seminal work \cite{SUZ}, Szpiro, Ullmo and Zhang
used Arakelov theory to prove an equidistribution theorem for the
Galois orbits of algebraic points of a projective variety over a
number field. Their result applies to generic sequences of points in an
abelian variety with Néron-Tate heights converging to zero, and is at
the heart of the proof of the Bogomolov conjecture for abelian
varieties \cite{Ullmo, Zhang:equi}. It has been developed in several
directions  by many authors, culminating with the celebrated
equidistribution theorem of Yuan \cite{Yuan:big}.

In this paper we generalize Yuan's theorem to allow more flexibility
in the choice of the height function. In the same spirit we generalize
the logarithmic equidistribution theorem of Chambert-Loir and
Thuillier \cite{CLT}.

Our results extend to all projective varieties the toric
equidistribution theorem of Burgos Gil, Philippon, Rivera-Letelier and
the second author \cite{BPRS:dgopshtv}. Moreover they strengthen it,
showing that this result holds without any semipositivity assumption
and with respect to functions admitting logarithmic singularities
along some specific effective divisors.  They also apply in the
dynamical setting to sums of canonical height functions, giving an
equidistribution theorem allowing test functions with logarithmic
singularities along preperiodic hypersurfaces. In particular, this
recovers the semiabelian equidistribution theorem of Kühne
\cite{Kuhne:pshsv} and strengthens its statement to include
convergence with respect to functions with logarithmic singularities
along torsion and boundary hypersurfaces.

We also give a partial converse to our main result, showing that in
some situations the imposed condition on the height function is
necessary for the the equidistribution to occur. This includes the
semipositive toric case, thus recovering the reciprocal of the toric
equidistribution theorem from \cite{BPRS:dgopshtv}.  Finally we extend
our main result to the setting of adelic line bundles on
quasi-projective varieties introduced by Yuan and Zhang
\cite{YuanZhang:quasiproj}.

\subsection*{Background}

Arakelov geometry provides a very general and powerful framework to
define and study heights of algebraic points. Classical heights in
Diophantine geometry such as Néron-Tate heights on abelian varieties
are special cases of height functions associated to metrized line
bundles in the sense of Zhang \cite{Zhang:spam}.

Let $K$ be a number field with a fixed algebraic closure
$ \overline{K}$. Let $X$ be a projective variety of dimension
$d \ge 1$ over $K$ and $\overline{D}$ an adelic divisor on $X$. The
latter consists of a Cartier divisor $D$ on $X$ with an adelic family
of Green functions, and is a datum essentially equivalent to that of a
metrized line bundle on $X$. Let
$h_{\overline{D}} \colon X(\overline{K}) \rightarrow \bR$ be the
associated height function, and denote by $\mu^{\abs}(\overline{D})$
and $\mu^{\ess}(\overline{D})$ its absolute and essential minima.

A fundamental inequality of Zhang \cite{Zhang:positive} asserts that if
$D$ is ample and $\overline{D}$ is semipositive then
\begin{equation}\label{eq:Zhangineq}
\mu^{\ess}(\overline{D}) \ge \frac{(\overline{D}^{d+1})}{(d+1)(D^d)},
\end{equation} 
where $(D^{d})$ and  $(\overline{D}^{d+1})$ denote the top intersection numbers of $D$ and 
 $\overline{D}$, respectively.

For every generic sequence $(x_{\ell})_{\ell}$ in $X(\overline{K})$ we
have
$\liminf_{\ell \to \infty} h_{\overline{D}}(x_{\ell}) \ge
\mu^{\ess}(\overline{D})$, and there exist generic sequences for which
the equality holds.  Following \cite{BPRS:dgopshtv}, we say that
$(x_{\ell})_{\ell}$ is
\emph{$\overline{D}$-small} if the heights of these points converge to
the smallest possible value, that is
\begin{displaymath}
  \lim_{\ell \to \infty} h_{\overline{D}}(x_{\ell}) =
\mu^{\ess}(\overline{D}).
\end{displaymath}
For each place $v \in \mathfrak{M}_K$ we denote by $X_v^{\an}$ the
$v$-adic analytification of $X$. It is a Berkovich space over $\bC_v$,
the completion of an algebraic closure of the local field $K_v$.  For
an algebraic point $x \in X(\overline{K})$ we denote by
$\delta_{O(x)_v}$ the uniform probability measure on
$O(x)_v \subset X_v^{\an}$, the image in $X_{v}^{\an}$ of the Galois
orbit of $x$.

With this notation, Yuan's theorem \cite{Yuan:big} can be stated as
follows.

\begin{introtheorem}[Yuan]\label{thm:YuanIntro}
  Let $\overline{D} \in \widehat{\Div}(X)$ be a semipositive adelic
  divisor on $X$ with ample geometric divisor $D$, and assume that
\begin{equation}\label{eq:conditionYuan}
\mu^{\ess}(\overline{D}) = \frac{(\overline{D}^{d+1})}{(d+1)(D^d)}.
\end{equation}
Then for every $v \in \mathfrak{M}_K$ and every $\overline{D}$-small
generic sequence $(x_{\ell})_{\ell}$ in $X(\overline{K})$ the
sequence of probability measures $(\delta_{O(x_{\ell})_{v}})_{\ell}$
on $X_v^{\an}$ converges weakly to
$c_1(\overline{D}_v)^{\wedge d}/(D^d)$.
\end{introtheorem}

In other words, this result asserts that if Zhang's lower bound
\eqref{eq:Zhangineq} is an equality then $\overline{D}$ has the
equidistribution property at every place $v\in \mathfrak{M}_{K}$, in
the sense that the Galois orbits of points in $\overline{D}$-small
generic sequences equidistribute in $X_v^{\an}$ for every~$v$, and
that moreover the $v$-adic equidistribution measure is the normalized
$v$-adic Monge-Ampère measure of $\overline{D}$. When $X$ is a curve,
this theorem is due to Autissier \cite{Autissier} and Chambert-Loir
\cite{Chambert-Loir:Mesuresetequi}.

Yuan's theorem encompasses in a unified way the (Archimedean) theorems
of Szpiro, Ullmo and Zhang for abelian varieties~\cite{SUZ}, of Bilu
for canonical heights on toric varieties \cite{Bilu} and of
Chambert-Loir for canonical heights on isotrivial semiabelian
varieties~\cite{Chambert-Loir:pphvs}, as well as their non-Archimedean
analogues by Chambert-Loir \cite{Chambert-Loir:Mesuresetequi}. The
positivity assumptions in Theorem \ref{thm:YuanIntro} can be weakened,
and in fact Yuan's proof remains valid when $D$ is big but not
necessarily ample. Moreover, Berman and Boucksom
\cite{BermanBoucksom:2010} and later Chen \cite{Chen:diffvol}
generalized this theorem for the Archimedean places to the
non-semipositive case.

By a result of the first author, the fact that Zhang's lower
bound is an equality is equivalent to the equality between the
essential and the absolute minima \cite[Theorem~6.6]{Ba:Okounkov}.
This is a very restrictive
condition 
that is nevertheless satisfied in the important case of canonical
heights on polarized dynamical systems \cite{Zhang:spam}, which
includes the canonical heights on toric varieties and the Néron-Tate
heights on abelian varieties.

To our knowledge, there is no general result ensuring the
equidistribution property for an adelic divisor on a projective
variety over a number field when Zhang's inequality is strict.
However, there are two remarkable situations where results in this
direction are known.
 
In \cite{BPRS:dgopshtv}, Burgos Gil, Philippon, Rivera-Letelier and
the second author achieved a systematic description of this property
in the toric setting. Their result gives a criterion in terms of
convex analysis, and shows that there are plenty of toric adelic
divisors for which Zhang's inequality is strict but that nevertheless
satisfy the property. This provides a wealth of new equidistribution
phenomena previously out of reach, as well as situations where it does
not occur.

In \cite{Kuhne:pshsv}, Kühne proved the long standing semiabelian
equidistribution conjecture, showing that this
property holds for canonical heights on semiabelian varieties.  As
shown by Chambert-Loir \cite{Chambert-Loir:pphvs}, Theorem
\ref{thm:YuanIntro} does not apply in this case as the
condition \eqref{eq:conditionYuan} can fail when the semiabelian
variety is not isotrivial. Kühne's theorem allowed him to give a
purely Arakelov-geometric proof of the semiabelian Bogomolov
conjecture, previously established by David and Philippon with other
methods~\cite{DP:stvs}.
   
\subsection*{Main theorem}
The results of \cite{BPRS:dgopshtv} and \cite{Kuhne:pshsv} raise the
following question: on an arbitrary projective variety, what can be
said regarding the equidistribution property for an adelic divisor
when Zhang's lower bound is strict? More precisely, can we identify a
condition weaker than \eqref{eq:conditionYuan} that guarantees this
property for a given adelic divisor?  Our main contribution is a
generalization of Yuan's theorem that answers affirmatively this
question.

As in \cite{SUZ, Chambert-Loir:Mesuresetequi, Yuan:big, Chen:diffvol},
the positivity properties of adelic divisors play a central role in
our approach.  We work with the more general notion of adelic
$\bR$-divisors developed by Moriwaki, as it provides a
particularly efficient framework to study positivity
\cite{BMPS:positivity,Moriwaki:MAMS}. Adelic $\bR$-divisors are better
behaved on normal varieties, and so we assume that $X$ is normal from
now on.

Let $\overline{D}$ be an adelic $\bR$-divisor on $X$. A
\emph{semipositive approximation} of $\overline{D}$ is a pair
$(\phi, \overline{Q})$ consisting of a normal modification
$\phi \colon X' \rightarrow X$ and a semipositive adelic $\bR$-divisor
$\overline{Q}$ on $X'$ such that the $\bR$-divisor $Q$ is big and
$\phi^*\overline{D} - \overline{Q}$ is pseudo-effective.  It is a
variant of the notion of admissible decomposition introduced by  Chen
\cite{Chen:diffvol}.

Given two big $\bR$-divisors $P,A$ on $X$, the \emph{inradius} of $P$
with respect to $A$ is the positive real number defined as 
\begin{displaymath}
r(P;A) = \sup\{ \lambda \in \bR \ | \, P- \lambda A \text{ is big} \}.
\end{displaymath}
This geometric invariant was introduced by Teissier \cite{Teissier}
and measures the bigness of $P$, see 
Section \ref{subsection:inradius} for more details. Our main result is the following.

\begin{introtheorem}\label{thm:MainIntro}
  Let $\overline{D}$ be an adelic $\mathbb{R}$-divisor on $X$ with $D$
  big. Assume that there exists a sequence
  $(\phi_n \colon X_n \to X,\overline{Q}_n)_n$ of semipositive
  approximations of $\overline{D}$ such that
\begin{equation}\label{eq:condMainIntro}
\lim_{n\to\infty} \frac{\mu^{\ess}(\overline{D}) - \mu^{\abs}(\overline{Q}_n)}{r(Q_n;\phi_n^*D)} = 0.
\end{equation}
Let $v \in\mathfrak{M}_K$, and for each $n \ge 1$ let $\nu_{n,v}$ be
the pushforward to $X_v^{\an}$ of the normalized $v$-adic Monge-Ampère
measure $c_1(\overline{Q}_{n,v})^{\wedge d}/(Q_n^d)$ on
$X_{n,v}^{\an}$. Then
\begin{enumerate}[label=(\roman*), leftmargin=*, widest=ii]
\item the sequence $(\nu_{n,v})_n$ converges weakly to a probability
  measure $\nu_{v}$ on $X_v^{\an}$,
\item for every $\overline{D}$-small generic sequence
  $(x_{\ell})_{\ell}$ in $X(\overline{K})$, the sequence of probability measures
  $(\delta_{O(x_{\ell})_{v}})_{\ell}$ on $X_v^{\an}$ converges weakly
  to~$\nu_{v}$. 
\end{enumerate}
\end{introtheorem}

Theorem \ref{thm:YuanIntro} follows
immediately from this result applied with the constant sequence
$(\phi_n,\overline{Q}_n) = (\Id_X,\overline{D})$, $n\in\bN$. Theorem
\ref{thm:MainIntro} also implies Chen's equidistribution theorem
(Corollary \ref{coro:Chenequi}).

We actually  prove a stronger result (Theorem \ref{thm:MainSequences})
showing that under the condition~\eqref{eq:condMainIntro} for every
$\overline{D}$-small generic sequence $(x_{\ell})_{\ell}$ in
$X(\overline{K})$ and
$\overline{E} \in \widehat{\Div}(X)_{\bR}$~we~have
\begin{equation}\label{eq:HCPintro}
  \lim_{\ell \to \infty} h_{\overline{E}}(x_{\ell}) = \lim_{n\to \infty} \, \frac{(\overline{Q}_n^d \cdot \phi_n^* \overline{E}) - d\,  \mu^{\ess}(\overline{D})
    \, (Q_n^{d-1} \cdot \phi_n^* E)}{(Q_n^d)}.
\end{equation}
In particular, both limits exist in $\bR$ and the second does not
depend on the choice of the sequence
$(\phi_n,\overline{Q}_n)_n$. Theorem \ref{thm:MainIntro} follows by
specializing \eqref{eq:HCPintro} to the adelic divisors $\overline{E}$
over the zero divisor of $X$ associated to continuous real-valued
functions on $X_v^{\an}$.

We also obtain a generalization of Chambert-Loir and Thuillier's
logarithmic equi\-distribution theorem \cite{CLT}, showing that in the
situation of Theorem~\ref{thm:MainIntro} the equidistribution property
extends to test functions with logarithmic singularities along
effective divisors satisfying a numerical condition (Theorem
\ref{thm:logequi} and Corollary~\ref{cor:logequi}).

Note that it is always possible to construct a sequence
$(\phi_n,\overline{Q}_n)_n$ of semipositive approximations of
$\overline{D}$ with $\mu^{\abs}(\overline{Q}_n)$ converging to
$\mu^{\ess}(\overline{D})$, see for instance
Lemma~\ref{lem:goodapprox}.  However, for such sequences the
condition~\eqref{eq:condMainIntro} is not necessarily satisfied since
as already explained, there are situations where the equidistribution
property fails.

\subsection*{Toric varieties}\label{subsec:toricintro}
We first apply  our results in the toric setting. To this end, let
$X$ be a projective toric variety over $K$ with torus
$\mathbb{T} \simeq \Gm^{d}$ and $\overline{D}$ a toric adelic
$\mathbb{R}$-divisor on~$X$ with big geometric $\bR$-divisor $D$.  Let
$\Delta_{D}$ be the $d$-dimensional polytope associated to this
$\mathbb{R}$-divisor. Following \cite{BPS:asterisque}, the family of Green functions of
$\overline{D}$ induces a family of concave functions
$\vartheta_{\overline{D},v}\colon \Delta_{D}\to \mathbb{R}$,
$v\in \mathfrak{M}_{K}$, called \textit{local roof functions}, whose
weighted sum gives the \textit{global roof function}
\begin{displaymath}
\vartheta_{\overline{D}} \colon \Delta_{D}\to \mathbb{R}.
\end{displaymath}
These concave functions convey a lot of information about the height
function of~$\overline{D}$. For instance, its essential minimum
coincides with the maximum of $\vartheta_{\overline{D}}$, and if
$\overline{D}$ is semipositive then its absolute minimum coincides
with the minimum of this concave function. This readily implies that
the only toric adelic $\mathbb{R}$-divisors to which Yuan's theorem
applies are those whose associated global roof function is constant.

The global roof function $\vartheta_{\overline{D}}$ is said to be
\emph{wide} if the width of its sup-level sets remains relatively
large as the level approaches its maximum, see Appendix
\ref{sec:prel-conv-analys} for details. When this is the case, there
is a unique balanced family of vectors $u_{v}$,
$v\in \mathfrak{M}_{K}$, such that each $u_{v}$ is a sup-gradient of
the $v$-adic roof function $\vartheta_{\overline{D},v}$. Then for each
$v$ we can associate to $u_{v}$ a probability measure on $X_{v}^{\an}$
that we denote by $\eta_{\overline{D},v}$. When $v$ is Archimedean,
it is the Haar probability measure on a translate of the compact torus
$\mathbb{S}_{v}\simeq (S^{1})^{d} $ of $\mathbb{T}_{v}^{\an}$, whereas
if $v$ is non-Archimedean it is the Dirac measure at a translate of
the Gauss point of this $v$-adic analytic torus, see
Section~\ref{sec:inrad-posit-arithm} for precisions.

The following is our main result in this setting.

\begin{introtheorem}
  \label{thm:toricIntro}
  Let $\overline{D}$ be a toric adelic $\mathbb{R}$-divisor on $X$
  with $D$ big, and assume that $\vartheta_{\overline{D}}$ is
  wide. Then for every $v\in \mathfrak{M}_{K}$ and
every  $\overline{D}$-small generic sequence $(x_{\ell})_{\ell}$ in
  $X(\overline{K})$ the sequence of probability measures
  $(\delta_{O(x_{\ell})_{v}})_{\ell}$ on $X_v^{\an}$ converges weakly
  to $ \eta_{\overline{D},v}$.
\end{introtheorem}

We actually show a stronger result (Theorem \ref{thm:1}): under the
assumptions of Theorem \ref{thm:toricIntro}, for every
$\overline{D}$-small generic sequence $(x_{\ell})_{\ell}$ in
$X(\overline{K})$ and every
$\overline{E} \in \widehat{\Div}(X)_{\mathbb{R}}$ with $E$ toric we
have
  \begin{displaymath} 
\lim_{\ell\to \infty} h_{\overline{E}}(x_{\ell}) = \sum_{v\in \mathfrak{M}_{K}}n_{v}\int_{X_{v}^{\an}}g_{\overline{E},v} \, d \eta_{\overline{D},v},
\end{displaymath}
where $g_{\overline{E},v}$ denotes the $v$-adic Green function of
$\overline{E}$.  The case when $E$ is arbitrary can be reduced to the
previous one by linear equivalence (Corollary \ref{cor:5}).

  % The proof of Theorem \ref{thm:toricIntro} is an application of
  % Theorem \ref{thm:MainProduct}, which as explained is a
  % reinterpretation of Theorem \ref{thm:MainIntro} in terms of
  % arithmetic positive intersection numbers. In addition, it uses the
  % computation of the arithmetic positive intersection numbers of toric
  % adelic divisors in terms of the mixed integrals of their local roof
  % functions.

As an application of our logarithmic equidistribution result we
strengthen the previous to allow test functions with logarithmic
singularities along the boundary and some specific translates of
subtori (Theorem \ref{thm:2}).  In the semipositive case we can
combine it with the characterization of the Bogomolov property
in~\cite[Section 5]{BPRS:dgopshtv} to obtain the following consequence
(Corollary \ref{cor:3}).

\begin{introtheorem}
  \label{thm:toriclogequiIntro} 
  Let $\overline{D}$ be a semipositive toric adelic
  $\mathbb{R}$-divisor on $X$ with $D$ big and such that
  $\vartheta_{\overline{D}}$ is wide.  Let $E$ be an effective divisor
  on $X$ such that each irreducible component $V$ of $E$ that is not
  contained in $X\setminus \mathbb{T}$ satisfies
  $\mu^{\ess}(\overline{D}|_{V})=\mu^{\ess}(\overline{D})$. Then for
  every $v\in \mathfrak{M}_{K}$ and every $\overline{D}$-small generic
  sequence $(x_{\ell})_{\ell}$ in $X(\overline{K})$ we have
 \begin{displaymath}
   \lim_{\ell \to \infty}\int_{X_v^{\an}} \varphi \, d\delta_{O(x_\ell)_v} =
\int_{X_{v}^{\an}}\varphi \, d \eta_{\overline{D},v}
 \end{displaymath}
 for any function
 $\varphi \colon X_v^{\an} \rightarrow \bR \cup \{\pm \infty \}$ with
 at most logarithmic singularities along $E$.
\end{introtheorem}

We also show that when $\overline{D}$ is semipositive, the converse of
Theorem \ref{thm:toricIntro} holds: in this situation, the condition
that $\vartheta_{\overline{D}}$ is wide is necessary for the
equidistribution of the Galois orbits of points in
$\overline{D}$-small generic sequences (Theorem \ref{thm:5}).
Together with Theorem \ref{thm:toricIntro}, this fully recover the
main theorem of \cite{BPRS:dgopshtv}.  We refer to Remark~\ref{rem:9}
for a more detailed comparison between our results and those in
\emph{loc. cit.}.

\subsection*{Dynamical systems and semiabelian
  varieties}

Yuan's theorem gives the equidistribution property for the canonical
metrized line bundles associated to polarized dynamical systems, a
result with vast consequences in arithmetic dynamics.  In a similar
vein, our result implies this property for adelic
$\mathbb{R}$-divisors that are sums of several canonical adelic
$\mathbb{R}$-divisors with different regimes with respect to a
dynamical system that is not necessarily polarized.

Let $\phi \colon X\to X$ be a surjective endomorphism of a normal
projective variety over~$K$ of dimension $d\ge 1$.  Then $\phi$ is
finite and we denote by $\deg(\phi)$ its degree. For $i=1,\dots, s$
let $D_{i} \in \Div(X)_{\mathbb{R}}$ with $\phi^*D_i \equiv q_iD_i$
for a real number $q_{i}>1$ and set
\begin{displaymath}
\overline{D} = \sum_{i = 1}^s \overline{D}_i^{\can},
\end{displaymath}
where $\overline{D}_i^{\can}\in \widehat{\Div}(X)_{\mathbb{R}}$
denotes the canonical adelic $\mathbb{R}$-divisor over $D_{i}$. For
simplicity here we assume that $D_{i}$ is effective for every $i$ and
that $D$ is ample, although our results are valid under slightly
weaker positivity assumptions.

\begin{introtheorem}
  \label{introthm:equidyn}
  Let $v\in \mathfrak{M}_{K}$ and denote by $\mu_{v}$ the normalized
  Monge-Ampère measure of any semipositive adelic $\mathbb{R}$-divisor
  over $D$.
Then
\begin{enumerate}[label=(\roman*), leftmargin=*, widest=ii]
\item \label{item:1} the sequence
  $\displaystyle{\Big(\frac{\phi_{v}^{\circ n,\an, *} \mu_{v}
    }{\deg(\phi)^n}\Big)_{n}}$ converges weakly to a probability
  measure $\nu_{v}$ on $X_v^{\an}$,
\item \label{item:8} for every $\overline{D}$-small generic sequence
  $(x_{\ell})_{\ell}$ in $X(\overline{K})$ the sequence of
  probability measures $(\delta_{O(x_{\ell})_{v}})_{\ell}$ on
  $X_v^{\an}$ converges weakly to~$\nu_{v}$.
\end{enumerate}
\end{introtheorem}

In this statement we denote by $\phi_{v}^{\circ n,\an, *} \mu_{v}$ the
pullback of the probability measure~$\mu_{v}$ on $X_{v}^{\an}$ with
respect to the $n$-th iterate of the $v$-adic analytification of
$\phi$, which is well-defined because $\phi$ is finite.  Note that a
semipositive adelic $\mathbb{R}$-divisor over $D$ always exists by
ampleness.  

The probability measure $\nu_v$ in Theorem \ref{introthm:equidyn} is called the $v$-adic
\emph{equilibrium measure} of $\phi$ with respect to $D$, and this result shows that it is well-defined in this context.
 If the $v$-adic Green function of $\Dcan_i$ is semipositive
for every $i$ then this measure coincides with the normalized $v$-adic
Monge-Ampère measure of~$\overline{D}$,~that~is
\begin{displaymath}
\nu_{v} = \frac{c_1(\overline{D}_v)^{\wedge d}}{(D^d)}.  
\end{displaymath}
These results are contained in Theorem \ref{thm:equidyn}, which also
shows the existence and gives an explicit expression for the limit
height $ \lim_{\ell \to \infty} h_{\overline{E}}(x_{\ell})$ for any
$\overline{E}\in \widehat{\Div}(X)_{\mathbb{R}}$.

Theorem \ref{introthm:equidyn} is a straightforward consequence of
Theorem \ref{thm:MainIntro}. When $\overline{D}$ is semipositive, it
is obtained by considering the sequence of adelic
$\mathbb{R}$-divisors on $X$ defined as
\begin{displaymath}
  \overline{Q}_n = q^{-n}\, \phi^{\circ n, *}\overline{D}, \quad n\in \mathbb{N},
\end{displaymath}
with $q=\max_{j}q_{j}$. From the dynamical properties of
$\overline{D}_{i}^{\can}$, $i=1,\dots, s$, one can check that
$(\Id_{X},\overline{Q}_{n})$ is a semipositive approximation of
$\overline{D}$ whose absolute minimum decreases much faster than its
inradius as $n\to \infty$, and conclude then with
Theorem~\ref{thm:MainIntro}.

As another application of the logarithmic equidistribution theorem, we
strengthen the previous result in the semipositive case to allow test
functions with logarithmic singularities along preperiodic
hypersurfaces.

\begin{introtheorem}
  \label{introthm:dynlogEP} 
  Assume that $\Dcan_{i}$ is semipositive for every $i$.  Let
  $(x_{\ell})_{\ell}$ be a $\overline{D}$-small generic sequence in
  $X(\overline{K})$ and $E$ an effective divisor on $X$ such that each
  of its irreducible components is preperiodic. Then for every
  $v\in \mathfrak{M}_{K}$ we have
   \begin{displaymath}
     \lim_{\ell \to \infty}\int_{X_v^{\an}} \varphi \, d\delta_{O(x_\ell)_v} = \int_{X_v^{\an}} \varphi \, \frac{c_1(\overline{D}_v)^{\wedge d}}{(D^{d})}
   \end{displaymath}
   for any function
   ${\varphi \colon X_v^{\an} \rightarrow \bR \cup \{\pm \infty \}}$
   with at most logarithmic singularities along $E$.
 \end{introtheorem}

 As a particular case of these results we recover K\"uhne's semiabelian
 equidistribution theorem \cite{Kuhne:pshsv} and extend it to include
 the computation of the corresponding limit heights (Theorem
 \ref{thm:semiab}).  Furthermore, we can also strengthen it to show
 the convergence of the Galois orbits of points in generic small
 sequences with respect to test functions with logarithmic
 singularities along torsion hypersurfaces (Theorem
 \ref{thm:logequisemiab}).

\subsection*{Further results and questions}

In Theorem \ref{thm:MainProduct} we give an alternative formulation of
Theorem \ref{thm:MainIntro} in terms of the arithmetic positive
intersection numbers introduced by Chen \cite{Chen:diffvol}. Its
statement is more intrinsic in the sense that it does not rely on the
choice of a specific sequence of semipositive approximations.

% In fact, we show that the existence of a sequence
% $(\phi_n,\overline{Q}_n)_n$ as in Theorem \ref{thm:MainIntro} is
% equivalent to an equality
% \begin{displaymath}
% \lim_{t \to \mu^{\ess}(\overline{D})} \frac{\mu^{\ess}(\overline{D}) - t}{\rho(\overline{D}(t))}=0,
% \end{displaymath} 
% where $\rho(\overline{D}(t))$ is an invariant measuring the inradii of
% the geometric divisors of arithmetic Fujita approximations for an
% adelic $\mathbb{R}$-divisor $\overline{D}(t)$ obtained by twisting the
% Archimedean Green functions of $\overline{D}$. In this case, for each
% $v \in \mathfrak{M}_K$ the equidistribution measure can be computed
% using arithmetic positive intersection numbers introduced by Chen
% \cite{Chen:diffvol}, see \eqref{eq:46}.

In \cite[Theorem 5.4.3]{YuanZhang:quasiproj}, Yuan and Zhang extended
Yuan's theorem to the setting of adelic line bundles on
quasi-projective varieties. In Section \ref{sec:quasiproj} we present
a generalization of Theorem \ref{thm:MainIntro} in this context that
recovers this result.

In the toric setting the condition in Theorem
\ref{thm:MainIntro} translates into a convex analysis statement
involving the global roof function. We have a similar result on an
arbitrary projective variety in terms of the arithmetic Okounkov
bodies introduced and studied by Boucksom and Chen \cite{BC}. Since
this is beyond the scope of this text, it will appear in a subsequent
manuscript.

As already explained, the condition in Theorem \ref{thm:MainIntro} is
optimal in the semipositive toric case: for a semipositive toric
adelic $\bR$-divisor the equidistribution property at every place is
equivalent to the existence of a sequence of semipositive
approximations satisfying the condition \eqref{eq:condMainIntro}. It
is natural to ask whether this remains true in general, that is if
Theorem \ref{thm:MainIntro} actually gives a criterion for the
equidistribution property (Question~\ref{question:converse}). In
Proposition \ref{prop:conversequasican} we give an affirmative answer
under an additional technical condition, which is always satisfied for
semipositive toric adelic
$\bR$-divisors.  

% \begin{remark}[Maybe not needed] The only reason why we choose to
%   work on normal varieties is that it simplifies the definition and
%   behavior of adelic $\bR$-divisors. This assumption is harmless for
%   our purposes, since the statement of Theorem \ref{thm:MainIntro}
%   is invariant under birational modification. In this statement, the
%   reader more familiar with adelic line bundles in the sense of
%   Zhang \cite{Zhang:spam} can replace $\overline{D}$ by an adelic
%   line bundle $\overline{L} \in \widehat{\Pic}(X)$, and consider
%   adelic $\bQ$-line bundles
%   $\overline{Q}_n \in \widehat{\Pic}(X_n) \otimes_\bZ \bQ$,
%   $n \ge 1$ to define semipositive approximations. With this point
%   of view, it is not necessary to assume that the varieties $X$ and
%   $X_n$ are normal. \end{remark}

\subsection*{Comments on the proof}
Our initial  motivation was to generalize the toric equidistribution theorem from
\cite{BPRS:dgopshtv} to all projective varieties. A major
obstacle was that both the statement and the proof of this result rely
heavily on notions and tools that are specific to toric adelic $\bR$-divisors,
with no clear extension outside of the toric setting. A first step was
to produce a non-trivial reformulation of the toric equidistribution
theorem that we could translate in terms of the arithmetic and
geometric properties of the algebra of global sections of the adelic
$\bR$-divisor, leading to the statement of Theorem
\ref{thm:MainIntro}.
 
The proof of Theorem \ref{thm:MainIntro} is based on Szpiro, Ullmo and
Zhang's variational principle and Yuan's arithmetic analogue of Siu's
inequality. Compared with \cite{Yuan:big}, we do not apply the latter
directly to $\overline{D}$ but rather to the sequence of semipositive
approximations satisfying the condition \eqref{eq:condMainIntro}. The
main difficulty is that we need to keep a very precise control of the
error terms that arise in this asymptotic process.  The core of our
proof is a new consequence of the arithmetic Siu's inequality with an
error term involving an inradius (Corollary \ref{coro:ineqSiu}). It is
based on precise estimates for arithmetic intersection numbers based
on the arithmetic Hodge index theorem of Yuan and
Zhang~\cite{YuanZhang:hodge}.  This technical feature is not needed in
the case of curves, for which most of the difficulties disappear.

We next outline the proof in this
situation.  Assume that $X$ is a smooth projective curve over~$K$ with
an adelic $\mathbb{R}$-divisor $\overline{D}$ such that $D$ is big.
Shifting the Green functions of $\overline{D}$ if necessary we assume
$\mu^{\ess}(\overline{D}) > 0$.

The assumption of Theorem~\ref{thm:MainIntro} boils down to
 \begin{equation}
   \label{eq:83}
   \lim_{n\to \infty}\frac{\mu^{\ess}(\overline{D})-\mu^{\abs}(\overline{Q}_{n})}{(Q_{n})}=0
 \end{equation}
 for a sequence $(\overline{Q}_{n})_{n}$ of semipositive adelic
 $\mathbb{R}$-divisors on $X$ with $Q_{n}$ big and
 $\overline{D}-\overline{Q}_{n}$ pseudo-effective for every
 $n$. Indeed, since $d=1$ every modification of $X$ is an isomorphism,
 and moreover $r(Q_{n};D)=(Q_{n})/(D)$ and so the inradius in that
 condition can be replaced by the intersection number~$(Q_{n})$.

 Let $v \in \mathfrak{M}_K$ and $(x_{\ell})_{\ell}$ a
 $\overline{D}$-small generic sequence in $X(\overline{K})$. To prove
 that $\overline{D}$ satisfies the equidistribution property at $v$ we
 need to show that for every continuous function
 $\varphi\colon X_v^{\an} \to \mathbb{R} $ we have
\begin{equation}\label{eq:ideaproofint}
\lim_{\ell \to \infty} \int_{X_v^{\an}} \varphi \, d\delta_{O(x_{\ell})_v} = \lim_{n \to \infty} \int_{X_v^{\an}} \varphi \,  \frac{c_{1}(\overline{Q}_{n,v})^{\wedge d}}{(Q_{n})},
\end{equation}
including the existence of both limits. By a standard density
argument, it suffices to consider the case where $\varphi$ is
$\Gal(\overline{K}_v/K_v)$-invariant. This allows to associate to
$\varphi$ an adelic divisor $\overline{E} = \overline{0}^{\varphi}$
over the zero divisor of $X$ such that
\eqref{eq:ideaproofint} translates into
\begin{equation}\label{eq:ideaproofheight}
\lim_{\ell \to \infty} h_{\overline{E}}(x_{\ell}) = \lim_{n\to \infty} \frac{(\overline{Q}_n \cdot \overline{E})}{(Q_n)}.
\end{equation}
By density, we can furthermore assume that $\overline{E}$ is the
difference of two semipositive adelic divisors on $X$.

Let $ \lambda  \in (0,1)$. Then
\begin{displaymath}
 \mu^{\ess}(\overline{D}) + \lambda\liminf_{\ell \to \infty } h_{\overline{E}}(x_{\ell}) = \liminf_{\ell \to \infty } h_{\overline{D} + \lambda \overline{E}}(x_{\ell}) \ge \mu^{\ess}(\overline{D} + \lambda\overline{E})
\end{displaymath}
by the linearity of heights and the definition of the essential
minimum.  Now the assumption that $\overline{D} - \overline{Q}_n$ is
pseudo-effective together with Zhang's lower bound for the essential
minimum in terms of the $\chi$-volume (Theorem \ref{thm:Zhang}) gives
\begin{displaymath}
  \mu^{\ess}(\overline{D} + \lambda\overline{E}) \ge \mu^{\ess}(\overline{Q}_n + \lambda \overline{E}) \ge \frac{\widehat{\vol}_{\chi}(\overline{Q}_n + \lambda \overline{E})}{2\, (Q_n)}.
\end{displaymath}

The condition \eqref{eq:83} implies $\mu^{\abs}(\overline{Q}_n) > 0$
for $n$ large enough, and therefore $\overline{Q}_n$ is nef.  Then a
classical consequence of Yuan's arithmetic version of Siu's inequality
\cite{Yuan:big} shows that there exists a constant $c \ge 0$ such that
\begin{equation}\label{eq:ideaproofSiu}
  \widehat{\vol}_{\chi}(\overline{Q}_n + \lambda \overline{E}) \ge
  (\overline{Q}_n^2) + 2 \,\lambda \, (\overline{Q}_n\cdot \overline{E}) - c\, \lambda^2
\end{equation}
for every  sufficiently large $n$.  
On the other hand, Zhang's theorem on successive minima gives
$ {(\overline{Q}_n^2)} \ge 2\, (Q_n) \, \mu^{\abs}(\overline{Q}_n)$.
Combining these inequalities gives
\begin{displaymath}
\liminf_{\ell \to \infty } h_{\overline{E}}(x_{\ell}) \ge \frac{\mu^{\abs}(\overline{Q}_n)- \mu^{\ess}(\overline{D})}{\lambda}   + \frac{(\overline{Q}_n \cdot \overline{E})}{(Q_n)} - \frac{c\, \lambda}{2\, (Q_n)}.
\end{displaymath}
Using \eqref{eq:83}, we can apply this to a suitable sequence of real
numbers $(\lambda_{n})_{n}$ to obtain
\begin{displaymath}
  \liminf_{\ell \to \infty } h_{\overline{E}}(x_{\ell}) \ge
\limsup_{n\to \infty} \frac{(\overline{Q}_n \cdot \overline{E})}{(Q_n)},
\end{displaymath}
and then \eqref{eq:ideaproofheight} by applying this inequality to
$-\overline{E}$. This concludes the proof in the one-dimensional case.

In higher dimensions this argument breaks down as a lower bound of the
form~\eqref{eq:ideaproofSiu} is not precise enough to take advantage
of the condition in Theorem~\ref{thm:MainIntro}. This is where we need
the finer estimate from Corollary \ref{coro:ineqSiu}.

When specialized to semiabelian varieties, our proof of Theorem
\ref{thm:MainIntro} is different from Kühne's, although both
ultimately rely on an asymptotic use of the arithmetic Siu's
inequality: indeed, we do not need to modify the semiabelian variety
through a sequence of isogenies as in \cite{Kuhne:pshsv}, but rather
produce a suitable sequence of semipositive approximations sitting on
the given semiabelian variety.

As emphasized in \cite{Chen:diffvol}, the variational principle
reduces the equidistribution property to the differentiability of
invariants associated to adelic $\mathbb{R}$-divisors. For example,
Yuan's theorem is a consequence of the differentiability of the
$\chi$-volume function~\cite{Yuan:big} whereas Chen's equidistribution
theorem follows from the differentiability of the arithmetic volume
function \cite{Chen:diffvol}.  Similarly, our main results concern the
differentiability of the essential minimum function, which is
equivalent to the height convergence property in \eqref{eq:HCPintro}
and thus implies Theorem \ref{thm:MainIntro}.  We refer the reader to
Section~\ref{sec:variational} for a review of the correspondence
between differentiability and equidistribution.

\subsection*{Organization} Sections \ref{sec:Rdiv} and
\ref{section:adelicdiv} contain the material we need about
$\bR$-divisors and adelic $\bR$-divisors. In Section \ref{sec:diffvol}
we study the relationship between Fujita approximations and positive
intersection numbers of adelic
$\mathbb{R}$-divisors. We state our main theorem in
Section \ref{sec:Mainthm} and prove it in Section \ref{sec:Proof}
together with some complements like the partial converse and the
logarithmic equidistribution theorem.  In Sections
\ref{sec:toric-varieties} and~\ref{sec:dynamical-heights} we apply
these results in the settings of toric varieties and of dynamical
systems, including semiabelian varieties. Finally, in Section
\ref{sec:quasiproj} we extend our main theorem to quasi-projective
varieties. Appendix \ref{sec:prel-conv-analys} contains the auxiliary
results in convex analysis that are needed for the application to
toric varieties.

 \subsection*{Acknowledgments}
 
 We thank Jos\'e Ignacio Burgos Gil, Huayi Chen, \'Eric Gaudron,
 Thomas Gauthier, Souvik Goswami, Roberto Gualdi, Walter Gubler, Shu Kawaguchi,
 Robert Wilms, Junyi Xie and De-Qi Zhang for helpful
 discussions. Special thanks go to the participants of the Oberseminar
 on Arakelov theory at the university of Regensburg for their
 attentive reading and their suggestions on a previous version of this
 text.

 Part of this work was done while we met at the universities of
 Barcelona and Caen, and we thank these institutions for their
 hospitality. Fran\c{c}ois Balla\"{y} was partially supported by the ANR project AdAnAr (Projet-ANR-24-CE40-6184).
  Mart\'{\i}n Sombra was partially supported by the
 MICINN research project PID2019-104047GB-I00, the AGAUR research
 project 2021-SGR-01468 and the AEI project CEX2020-001084-M of the
 Mar\'{\i}a de Maeztu program for centers and units of excellence in
 R\&D.

 \addtocontents{toc}{\protect\setcounter{tocdepth}{0}}
\section*{Notation and conventions} 
\addtocontents{toc}{\protect\setcounter{tocdepth}{1}}

We let $K$ be a number field and $\overline{K}$ a fixed algebraic
closure of $K$. We also let $\mathcal{O}_{K}$ be the ring of integers
of $K$.  We denote by $\mathfrak{M}_K$ the set of places of $K$ and by
$\mathfrak{M}_{K}^{\infty}\subset \mathfrak{M}_{K}$ the subset of
Archimedean places. For each $v \in \mathfrak{M}_ K$ we denote by
$K_v$ the completion of $K$ with respect to $v$, and by
$\mathbb{Q}_{v}$ the closure of $\mathbb{Q}$ in $K_{v}$. We endow
$K_{v}$ with the unique absolute value $|\cdot|_{v}$ extending the
usual absolute value on $\mathbb{Q}_{v}$ and  set
\begin{displaymath}
n_v = \frac{[K_v:\bQ_v]}{[K:\bQ]}.
\end{displaymath}
These absolute values and weights
verify the \emph{product formula}:
\begin{displaymath}
  \sum_{v \in \mathfrak{M}_K} n_v \log|\alpha|_v = 0 \quad \text{ for } \alpha \in K^{\times}.
\end{displaymath} 
Furthermore, for any place $v_0 \in \mathfrak{M}_{\bQ}$ we have
$\sum_{v \mid v_0} n_v = 1$, the sum being over the places
$v \in \mathfrak{M}_K$ above $v_0$.

For each $v\in \mathfrak{M}_{K}$ we fix an algebraic closure
$\overline{K}_v$ of $K_{v}$, which admits a unique absolute value
extending $|\cdot|_{v}$. We  denote by $\mathbb{C}_{v}$ the
completion of $\overline{K}_{v}$, and we still denote by
$|\cdot|_{v}$ the induced absolute value. We also fix an embedding
\begin{math}
\overline{K} \hookrightarrow \overline{K}_v \subset \mathbb{C}_{v}.  
\end{math}

A \emph{variety} is a separated and integral scheme of finite type
over a field. We let $X$ be a normal projective variety over $K$ of
dimension $d \ge 1$. A \emph{modification} of $X$ is a birational
projective morphism $\phi \colon X'\rightarrow X$, that is said
\emph{normal} if so is $X'$.  We write
$X_{K'} = X \times_K \Spec (K')$ for any field extension
$K \subset K'$.  The elements of $X(\overline{K})$ are called the
\emph{algebraic points} of $X$.

A \emph{measure} is a Radon measure on a locally compact Hausdorff
space. A \emph{signed measure} is a difference of two measures.

\numberwithin{equation}{section}

\section{The inradius of an \texorpdfstring{$\bR$}{R}-divisor}
\label{sec:Rdiv}

Throughout the text we assume some working knowledge of
$\mathbb{R}$-divisors and their positivity properties as presented in
\cite[Chapters 1 and 2]{LazI}. Nevertheless, in this section we recall
the basic objects and notations. We pay special attention to the
inradius of an $\mathbb{R}$-divisor, explaining its properties and
relation to other invariants.

\subsection{Intersection numbers and global sections of
  \texorpdfstring{$\bR$}{R}-divisors}
\label{sec:inters-numb-glob}

We denote by $\Div(X)$ the Abelian group of Cartier divisors on $X$
and by $\Div(X)_\bR = \Div(X)\otimes_{\bZ} \bR$ the real vector space
of $\bR$-Cartier divisors. Since $X$ is normal, there is an injective
morphism from $\Div(X)$ to the free Abelian group of Weil divisors of
$X$. Therefore $\Div(X)$ has no torsion and there is an inclusion
\begin{math}
\Div(X) \subset \Div(X)_{\bR}.
\end{math}
Since we will be mainly concerned with Cartier divisors and
$\mathbb{R}$-Cartier divisors, we just call them divisors and
$\mathbb{R}$-divisors for short.

We denote by $\Rat(X)$ the field of rational functions of $X$ and we
set $\Rat(X)^{\times}_{\bR} = \Rat(X)^{\times}\otimes_{\bZ}\bR$.  For
a nonzero rational function $f \in \Rat(X)^{\times}$ we denote by
$\div(f) \in \Div(X)$ its associated principal divisor. This
assignment extends by linearity to a map
\begin{displaymath}
\div\colon \Rat(X)^{\times}_\bR \longrightarrow \Div(X)_{\bR}.  
\end{displaymath}
Given two $\mathbb{R}$-divisors $D,D'$ on $X$ we write $D \equiv D'$
when they are linearly equivalent, that is when $D' = D + \div(f)$ for
some $f \in \Rat(X)^{\times}_\bR$.

For $D \in \Div(X)_{\mathbb{R}}$ we denote by $[D]$ its associated
$\mathbb{R}$-Weil divisor. The \emph{support} of~$D$, denoted by
$\mathrm{supp}(D)$, is defined as the support of this $\bR$-Weil
divisor. It is a closed subset of $X$.

For an integer $0\le r \le d$, an $r$-dimensional cycle $Z$ of $X$ and
a family of $\mathbb{R}$-divisors $D_{i}\in \Div(X)_\bR$,
$i=1,\dots r$, we denote their intersection number by
\begin{displaymath}
  (D_1 \cdots D_r \cdot Z) \in \mathbb{R}.
\end{displaymath}
When $Z =X$ we simply write this quantity as
\begin{math}
  (D_1 \cdots D_d).
\end{math}
It depends only on the linear equivalence classes of the
$\bR$-divisors and is invariant under normal modifications.  If
$D\in \Div(X)_{\bR}$ is nef then the intersection number~$(D^d)$
coincides with the volume $\vol(D)$ of $D$.

Given $D_{1},\dots, D_{d-1},D_{d},D'_{d} \in \Div(X)_{\mathbb{R}}$
with $D_{i}$ nef for $1 \le i\le d-1$ and $D_{d}-D_{d}'$
pseudo-effective, the corresponding intersection numbers compare as
\begin{equation}
  \label{eq:comparegeometriccap}
  (D_1 \cdots D_{d-1}\cdot D_d')\le (D_1 \cdots  D_{d-1}\cdot D_d).
\end{equation}

 The \emph{space of global sections} of an $\bR$-divisor $D$  is defined as 
\begin{displaymath}
\Gamma(X,D) = \{ (f, D) \ | \ f \in \Rat(X)^{\times}, \ D + \div(f) \ge 0\} \cup \{0\}.
\end{displaymath}
It is a finite dimensional real vector space.  Given
$s = (f, D) \in \Gamma(X,D) \setminus \{0\}$ we set
\begin{displaymath}
 \div(s) = D + \div(f) \in \Div(X)_\bR 
\end{displaymath}
for the corresponding $\mathbb{R}$-divisor in the linear equivalence class
of $D$.

Given global sections $s_1 = (f_1,D_1)$ and $s_2 = (f_2,D_2)$ of
$\bR$-divisors $D_1,D_2 \in \Div(X)_{\bR}$, their product is defined
as
\begin{displaymath}
s_1 \otimes s_2 = (f_1f_2, D_1 + D_2) \in \Gamma(X,D_1 + D_2).
\end{displaymath}
The \emph{algebra of sections} of $D$ is then defined as the direct sum 
\begin{displaymath}
 R(D)= \bigoplus_{m \in \bN} \Gamma(X,mD)
\end{displaymath}
endowed with the structure of graded algebra induced by this product.

% \begin{remark}
% \label{rem:8}
% If $D$ is a divisor then $\Gamma(X,D)$ identifies with the classical
% space of global sections of $\cO_X(D)$, and $R(D)$ with the algebra of
% sections of this line bundle \cite[Definition 2.1.17]{LazI}.  
% \end{remark}

\subsection{Definition and basic properties of the inradius}
\label{subsection:inradius}

The notion of inradius of a line bundle with respect to another one
was introduced by Tessier as a measure of its
bigness~\cite{Teissier}. As explained in \emph{loc. cit.}, the
terminology is inspired by its interpretation in the toric setting,
where it identifies with the usual inradius in the sense of convex
geometry (Proposition~\ref{prop:1}).

\begin{definition}
  \label{def:8}
  Let $P,A$ be big $\mathbb{R}$-divisors on $X$. The \emph{inradius}
  of $P$ with respect to~$A$ is defined as
\begin{displaymath}
  r(P;A)  = \sup\{ \lambda \in \bR \ | \ P - \lambda A \text{ is big} \} 
   =  \sup\{ \lambda \in \bR \ | \ P - \lambda A \text{ is pseudo-effective} \}.
\end{displaymath}
\end{definition}

We have $r(P;A)<\infty$: indeed, choosing any ample divisor $H$ then
for any $\lambda\in \mathbb{R}$ such that $P-\lambda A$ is
pseudo-effective we have
$\lambda\le (H^{d-1}\cdot P)/(H^{d-1}\cdot A)$ by
\eqref{eq:comparegeometriccap}.
On the other hand, since $P$ is big we also have $r(P;A)>0$ by
the continuity of the volume function. Hence $r(P;A)$ is a positive
real number.

Note that $ P-r(P;A) A $ is pseudo-effective.
Moreover
\begin{displaymath}
r(\delta P;A) = \delta \, r(P;A) \and
r(P;\delta A) = \delta^{-1} \, r(P;A) \quad \text{ for } \delta \in \bR_{> 0},   
\end{displaymath}
and $r(\phi^*P;\phi^*A) = r(P;A)$ for any
normal modification $\phi \colon X' \rightarrow X$.  This latter
property follows from the fact that the volume is invariant under
birational morphisms.

Changing the big $\mathbb{R}$-divisor of reference $A$ can only modify
the inradius $r(P;A)$ up to a bounded factor, as we next show.

\begin{lemma}\label{lemma:compareinradii} Let
  $P, A_1, A_2$ be big $\mathbb{R}$-divisors on $X$. Then
\begin{displaymath}
r(A_2;A_1) \, r(P;A_2) \le r(P;A_1) \le \frac{1}{r(A_1;A_2)} \, r(P;A_2).
\end{displaymath}
\end{lemma}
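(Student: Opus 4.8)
The plan is to unwind the definition of the inradius and chain together the pseudo-effectivity conditions, using the scaling property $r(\delta P; A) = \delta\, r(P,A)$ recorded just before the statement. Write $s = r(A_2; A_1)$ and $t = r(P; A_2)$; both are positive reals since $A_1, A_2, P$ are all big. By definition of $t$, the $\bR$-divisor $P - \lambda A_2$ is pseudo-effective for every $\lambda < t$, and by definition of $s$, the $\bR$-divisor $A_2 - \mu A_1$ is pseudo-effective for every $\mu < s$. The key observation is that the sum of a pseudo-effective $\bR$-divisor and a nonnegative multiple of a pseudo-effective $\bR$-divisor is again pseudo-effective. Hence for $\lambda < t$ and $\mu < s$ we get that
\begin{displaymath}
(P - \lambda A_2) + \lambda (A_2 - \mu A_1) = P - \lambda\mu\, A_1
\end{displaymath}
is pseudo-effective, which shows $r(P; A_1) \ge \lambda \mu$. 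Letting $\lambda \to t$ and $\mu \to s$ yields $r(P;A_1) \ge s\,t = r(A_2;A_1)\, r(P;A_2)$, the left-hand inequality.

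For the right-hand inequality, I would apply the left-hand inequality already proven, but with the roles of $A_1$ and $A_2$ interchanged and with $P$ kept fixed: this gives $r(A_1; A_2)\, r(P; A_1) \le r(P; A_2)$. Since $A_1$ is big, $r(A_1; A_2) > 0$, so we may divide through to obtain $r(P; A_1) \le r(P;A_2) / r(A_1; A_2)$, exactly as claimed. Thus the second inequality is a formal consequence of the first.

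I do not anticipate any serious obstacle here; the argument is essentially a manipulation of suprema and of the cone of pseudo-effective $\bR$-divisors. The only point that needs a moment of care is the passage to the limit $\lambda \to t^-$, $\mu \to s^-$: one should note that $\lambda \mu$ can be taken arbitrarily close to $st$ from below, so the supremum defining $r(P;A_1)$ is at least $st$ — this uses only that the set $\{\lambda : P - \lambda A_1 \text{ pseudo-effective}\}$ is an interval unbounded below (a consequence of pseudo-effectivity being preserved under adding effective, or more precisely pseudo-effective, divisors and of $A_1$ being big so that $-\lambda A_1 + (\text{large multiple of }A_1)$ stays in the pseudo-effective cone). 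If one prefers to avoid even this mild limiting argument, one can instead phrase the inradius via the equivalent "big" formulation and use continuity of the volume function, but the pseudo-effective version above is cleaner.
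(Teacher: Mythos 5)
Your argument is correct and is essentially the paper's proof: chain the pseudo-effectivity of $P - \lambda A_2$ and $A_2 - \mu A_1$ to get pseudo-effectivity of $P - \lambda\mu A_1$, then obtain the second inequality by interchanging $A_1$ and $A_2$. The only cosmetic difference is that the paper uses directly that $P - r(P;A_2)A_2$ and $A_2 - r(A_2;A_1)A_1$ are pseudo-effective (the defining supremum is achieved, the condition being closed), whereas you approximate from below and pass to the limit — both are fine.
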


\begin{proof} Since both $P - r(P;A_2)A_2$ and $A_2 - r(A_2;A_1) A_1$
  are pseudo-effective, so is $P- r(P;A_2) \, r(A_2;A_1) \, A_1$.
  Therefore $r(A_2;A_1) \, r(P;A_2) \le r(P;A_1)$, which gives the
  first inequality. The second follows by interchanging the roles of
  $A_1$ and $A_2$.
\end{proof}

The next key lemma shows that the inradius can be controlled up to a
constant factor by a quotient of intersections numbers.

\begin{lemma}\label{lemma:inradiusandcap} Let $P, A$ be big and
  nef $\mathbb{R}$-divisors on $X$. Then
\begin{displaymath}
\frac{(P^d)}{d\, (P^{d-1} \cdot A)} \le r(P;A) \le \frac{(P^d)}{(P^{d-1} \cdot A)}.
\end{displaymath}
\end{lemma}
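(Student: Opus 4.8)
The statement to prove is
$$\frac{(P^d)}{d\,(P^{d-1}\cdot A)}\ \le\ r(P;A)\ \le\ \frac{(P^d)}{(P^{d-1}\cdot A)}$$
for nef and big $\bR$-divisors $P,A$ on $X$.

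Let me think about each inequality separately.

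**Upper bound.** I want $r(P;A)\le (P^d)/(P^{d-1}\cdot A)$. Let $\lambda$ be any real number with $P-\lambda A$ big, hence pseudo-effective. I want to conclude $\lambda\le (P^d)/(P^{d-1}\cdot A)$; taking the sup over such $\lambda$ then gives the bound. Since $P$ is nef (so $P^{d-1}$ gives a nef cycle class, and in particular $P^{d-1}\cdot(\text{pseff})\ge 0$), apply \eqref{eq:comparegeometriccap} with $D_1=\cdots=D_{d-1}=P$, $D_d = P$, $D_d' = \lambda A$... wait, I need $P - \lambda A$ pseudo-effective, i.e. $D_d - D_d'$ pseff with $D_d = P$, $D_d' = \lambda A$? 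No — I need $\lambda A$ on the smaller side. Since $P-\lambda A$ is pseff, we get from \eqref{eq:comparegeometriccap} that $(P^{d-1}\cdot \lambda A)\le (P^{d-1}\cdot P) = (P^d)$, i.e. $\lambda(P^{d-1}\cdot A)\le (P^d)$. Because $A$ is big and $P$ is nef and big, $(P^{d-1}\cdot A)>0$ (this positivity needs a word — it follows since $A$ big implies $A\ge \varepsilon H$ for an ample $H$ modulo pseff, and $(P^{d-1}\cdot H)>0$ as $P$ is big and nef, e.g. by Siu-type or just $(P^{d-1}\cdot H)\ge (\text{positive})$). Hence $\lambda\le (P^d)/(P^{d-1}\cdot A)$, as wanted. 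One should be slightly careful that $\lambda$ could a priori be negative, but then the inequality is trivially true, so no harm.

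**Lower bound.** Here I want to exhibit that $P-\lambda A$ is big for $\lambda = (P^d)/\big(d\,(P^{d-1}\cdot A)\big)$ (or at least for all smaller $\lambda$), which by definition gives $r(P;A)\ge \lambda$. The natural tool is a volume estimate: for nef divisors one has the lower bound $\vol(P-\lambda A)\ge (P-\lambda A)^d$ when $P-\lambda A$ is nef, but that is not available in general. Instead the clean approach is to use the reverse/Siu-type inequality for volumes of differences of nef classes: for $P$ nef and $A$ nef one has
$$\vol(P-\lambda A)\ \ge\ (P^d) - d\lambda\,(P^{d-1}\cdot A)$$
for $\lambda\ge 0$ small — this is the geometric Siu inequality (the statement that $\vol(P - tA)$ has right-derivative at $0$ equal to $-d(P^{d-1}\cdot A)$ when $P$ is nef, combined with concavity of $t\mapsto \vol(P-tA)^{1/d}$, or more elementarily the inequality $(P-\lambda A)^{d}\ge (P^d)-d\lambda(P^{d-1}\cdot A)$ plus the fact that $\vol$ dominates the top self-intersection of a nef class). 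Thus $\vol(P-\lambda A)>0$ — i.e. $P-\lambda A$ is big — for every $\lambda<(P^d)/\big(d\,(P^{d-1}\cdot A)\big)$, which is exactly $r(P;A)\ge (P^d)/\big(d\,(P^{d-1}\cdot A)\big)$ after letting $\lambda$ increase to this value and using the fact that the set of $\lambda$ with $P - \lambda A$ big is open below its sup. I would cite \cite[§1.2]{Moriwaki:MAMS} or \cite{LazI} for the precise form of the volume Siu inequality used.

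**Main obstacle.** The upper bound is essentially formal given \eqref{eq:comparegeometriccap} and the positivity $(P^{d-1}\cdot A)>0$. The real content is the lower bound, and the crux there is having the right ``reverse Siu'' lower bound $\vol(P-\lambda A)\ge (P^d)-d\lambda(P^{d-1}\cdot A)$ for nef $P$ and $A$; everything else is bookkeeping. So I would organize the proof as: (1) record $(P^{d-1}\cdot A)>0$; (2) upper bound via \eqref{eq:comparegeometriccap}; (3) lower bound via the geometric Siu volume inequality, concluding by openness of the bigness locus.
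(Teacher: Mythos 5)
Your proposal is correct and follows essentially the same route as the paper: the lower bound is Siu's inequality for nef classes (the paper cites \cite[Theorem 2.2.15]{LazI} directly, which is exactly your volume-form statement), and the upper bound comes from pairing the pseudo-effective class $P-\lambda A$ against the nef class $P^{d-1}$ via \eqref{eq:comparegeometriccap}. The only cosmetic difference is that the paper intersects with $P-r(P;A)A$ at the supremum rather than taking a sup over $\lambda$, and your remark about ``openness below the sup'' is unnecessary since bigness for all $\lambda<c$ already gives $r(P;A)\ge c$ by definition.
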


\begin{proof} For $\lambda \in \bR$, by Siu's inequality \cite[Theorem
  2.2.15]{LazI} we have that $P-\lambda A$ is big whenever
\begin{displaymath}
(P^{d})> \lambda \, d \, (P^{d-1} \cdot A).
\end{displaymath}
This gives the first inequality.  The second follows from the facts
that $P$ is nef and $P - r(P;A) A$ is pseudo-effective, which imply
\begin{displaymath}
(P^d) - r(P;A) \, (P^{d-1}\cdot A) = (P^{d-1}\cdot (P - r(P;A) A)) \ge 0
\end{displaymath}
by the inequality \eqref{eq:comparegeometriccap}.
\end{proof}

\begin{lemma}\label{lemma:lowerboundinradiusvol}
  Let $P, A$ be big  and nef $\mathbb{R}$-divisors on $X$ with   $A - P$
  pseudo-effective. Then
 \begin{displaymath}
   r(P;A) \ge \frac{(P^d)}{d \, (A^d)}.
 \end{displaymath}
\end{lemma}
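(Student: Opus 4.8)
The plan is to prove Lemma~\ref{lemma:lowerboundinradiusvol} by combining the two previous lemmas. We want a lower bound for $r(P;A)$ when $P,A$ are nef and big and $A-P$ is pseudo-effective. The natural first move is to invoke Lemma~\ref{lemma:inradiusandcap}, which gives $r(P;A)\ge (P^d)/(d\,(P^{d-1}\cdot A))$. So it suffices to bound $(P^{d-1}\cdot A)$ from above by $(A^d)$, since then
\begin{displaymath}
r(P;A)\ge \frac{(P^d)}{d\,(P^{d-1}\cdot A)}\ge \frac{(P^d)}{d\,(A^d)}.
\end{displaymath}

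The inequality $(P^{d-1}\cdot A)\le (A^d)$ should follow from the monotonicity of mixed intersection numbers in \eqref{eq:comparegeometriccap}, applied step by step. Indeed, since $A-P$ is pseudo-effective and $P,A$ are nef, one replaces the factors $P$ by $A$ one at a time: $(P^{d-1}\cdot A)\le (P^{d-2}\cdot A^2)\le \cdots \le (A^d)$, where at the $j$-th step we use \eqref{eq:comparegeometriccap} with the $d-1$ nef divisors being $j$ copies of $A$ and $d-1-j$ copies of $P$, comparing the last factor $P$ with $A$ via pseudo-effectivity of $A-P$. Each such comparison is legitimate because the remaining factors are all nef (being copies of $P$ or $A$).

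The main (and only real) obstacle is bookkeeping: making sure that at each step the hypotheses of \eqref{eq:comparegeometriccap} are met, namely that the $d-1$ divisors held fixed are nef and that the difference of the two divisors being compared in the last slot is pseudo-effective. Both are immediate here since every divisor in sight is one of $P$ or $A$, both nef, and $A-P$ is pseudo-effective by assumption. I would therefore write the proof in two lines: first the telescoping chain $(P^{d-1}\cdot A)\le (A^d)$ obtained by $d-1$ applications of \eqref{eq:comparegeometriccap}, then the substitution into the lower bound from Lemma~\ref{lemma:inradiusandcap}. Alternatively, one could bypass Lemma~\ref{lemma:inradiusandcap} and argue directly via Siu's inequality that $P-\lambda A$ is big for $\lambda\, d\,(P^{d-1}\cdot A)<(P^d)$ and then use $(P^{d-1}\cdot A)\le (A^d)$, but reusing Lemma~\ref{lemma:inradiusandcap} is cleaner.
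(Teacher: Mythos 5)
Your proof is correct and follows essentially the same route as the paper: the paper also deduces $(P^{d-1}\cdot A)\le (A^d)$ from \eqref{eq:comparegeometriccap} (your telescoping chain just spells out the iteration the paper leaves implicit) and then concludes via Lemma~\ref{lemma:inradiusandcap}.
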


\begin{proof}
  Since $A - P$ is pseudo-effective and $A,P$ are nef, we get
  $(P^{d-1} \cdot A) \leq (A^d)$ by iteratively
  applying \eqref{eq:comparegeometriccap}. The result then follows from Lemma
  \ref{lemma:inradiusandcap}.
\end{proof}

The inradius allows to control the behavior of the volume function
with respect to perturbations, under suitable positivity assumptions.

\begin{lemma}\label{lemma:comparevolsinradius} Let
  $P, E,A \in \Div(X)_{\bR}$ with $P,A$ big and $A \pm E$
  pseudo-effective. Then for every $\lambda\in \mathbb{R}$ with
  $0\le \lambda\le r(P;A)$ we have
\begin{displaymath}
\Big( 1 - \frac{\lambda}{r(P;A)} \Big)^d \vol(P) \le \vol(P + \lambda E) \le \Big( 1 + \frac{\lambda}{r(P;A)} \Big)^d \vol(P).
\end{displaymath}
\end{lemma}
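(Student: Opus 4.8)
The plan is to reduce the statement to a pure estimate on volumes of $\mathbb{R}$-divisors, using only the inradius $r(P;A)$ to control how $\vol(P+\lambda E)$ deviates from $\vol(P)$ as $\lambda$ grows. The guiding idea is that the hypothesis $A \pm E$ pseudo-effective lets me sandwich $P + \lambda E$ between two multiples of $P$ modulo pseudo-effectivity, and the inradius is exactly the invariant that measures how large a multiple of $A$ one can subtract from $P$ while staying big. Since $\vol$ is monotone under adding pseudo-effective $\mathbb{R}$-divisors and homogeneous of degree $d$, such a sandwich immediately yields the two bounds.

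First I would fix $\lambda \ge 0$ and set $\rho = r(P;A)$, which is positive since $P$ is big. If $\lambda \ge \rho$ the left-hand inequality is trivial because its left side is $\le 0$ (recall $\vol \ge 0$), so I may assume $0 \le \lambda < \rho$. For the upper bound, I would write
\begin{displaymath}
P + \lambda E = \Big(1 + \tfrac{\lambda}{\rho}\Big) P - \tfrac{\lambda}{\rho}\big( P - \rho\, (E/\lambda)\cdot\lambda/\rho \cdots \big)
\end{displaymath}
— more cleanly: observe that $\big(1+\tfrac{\lambda}{\rho}\big) P - (P+\lambda E) = \tfrac{\lambda}{\rho} P - \lambda E = \tfrac{\lambda}{\rho}\big( P - \rho E\big)$, and $P - \rho E = (P - \rho A) + \rho(A - E)$ is pseudo-effective since $P - \rho A$ is pseudo-effective (by definition of $\rho$, using a limiting argument or the closedness of the pseudo-effective cone) and $A - E$ is pseudo-effective by hypothesis. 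Hence $\big(1+\tfrac{\lambda}{\rho}\big) P - (P + \lambda E)$ is pseudo-effective, so by monotonicity and homogeneity of the volume,
\begin{displaymath}
\vol(P+\lambda E) \le \vol\Big(\big(1+\tfrac{\lambda}{\rho}\big)P\Big) = \big(1+\tfrac{\lambda}{\rho}\big)^d \vol(P).
\end{displaymath}

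For the lower bound I would run the symmetric argument with the sign of $E$ flipped: $(P+\lambda E) - \big(1 - \tfrac{\lambda}{\rho}\big)P = \tfrac{\lambda}{\rho}\big(P + \rho E\big)$, and $P + \rho E = (P - \rho A) + \rho(A + E)$ is pseudo-effective because $A + E$ is pseudo-effective. When $0 \le \lambda < \rho$ the coefficient $1 - \tfrac{\lambda}{\rho}$ is positive, so $\big(1-\tfrac{\lambda}{\rho}\big)P$ is big and $\vol\big((1-\tfrac{\lambda}{\rho})P\big) = \big(1-\tfrac{\lambda}{\rho}\big)^d\vol(P)$; monotonicity of $\vol$ then gives $\vol(P+\lambda E) \ge \big(1-\tfrac{\lambda}{\rho}\big)^d\vol(P)$, completing the proof. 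The only mildly delicate point — the main obstacle, such as it is — is justifying that $P - \rho A$ is pseudo-effective when $\rho = r(P;A)$ is the supremum rather than a value attained; this follows from the closedness of the pseudo-effective cone in $\Div(X)_{\mathbb{R}}/\!\equiv$ (or from $\vol$ being continuous and $\vol(P-\lambda A)>0$ for all $\lambda<\rho$), and I would cite the relevant statement from \cite{LazI} or §\ref{sec:Rdiv}. Everything else is bookkeeping with the monotonicity $\vol(D) \le \vol(D')$ when $D' - D$ is pseudo-effective and $D,D'$ are (say) on the boundary handled by continuity, together with $\vol(\delta D) = \delta^d \vol(D)$.
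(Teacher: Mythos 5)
Your argument is essentially the paper's own proof: you use the same decomposition $P \mp r(P;A)\,E = (P - r(P;A)A) + r(P;A)(A \mp E)$ to see that $(1+\lambda/r(P;A))P - (P+\lambda E)$ and $(P+\lambda E) - (1-\lambda/r(P;A))P$ are pseudo-effective, and then conclude by monotonicity and degree-$d$ homogeneity of the volume, with the pseudo-effectivity of $P - r(P;A)A$ justified by closedness of the pseudo-effective cone exactly as the paper implicitly does. This is correct and matches the paper's route.
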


\begin{proof} Since $A - E$ is pseudo-effective we have that
\begin{displaymath}
\frac{1}{r(P;A)} P - E = \frac{1}{r(P;A)} (P - r(P;A) A) + A - E
\end{displaymath}
is also pseudo-effective.  Therefore 
\begin{displaymath}
\vol( P + \lambda E) \le \vol \Big(P + \frac{\lambda}{r(P;A)}P \Big) = \Big( 1 + \frac{\lambda}{r(P;A)} \Big)^d \vol(P)
\end{displaymath}
because the volume function is positive homogeneous of degree $d$ and
increases along pseudo-effective directions.  Similarly,
\begin{displaymath}
\frac{1}{r(P;A)} P + E = \frac{1}{r(P;A)} (P - r(P;A) A) + A + E
\end{displaymath}
is pseudo-effective and so 
\begin{displaymath}
\Big( 1 - \frac{\lambda}{r(P;A)} \Big)^d \vol(P) = \vol \Big(P - \frac{\lambda}{r(P;A)}P\Big) \le \vol(P + \lambda E)
\end{displaymath}
as stated. 
\end{proof}

\section{Adelic \texorpdfstring{$\bR$}{R}-divisors}\label{section:adelicdiv}

In this section we recall the definition and basic facts concerning
adelic $\bR$-divisors, referring to \cite{BPS:asterisque,
  BMPS:positivity, Moriwaki:MAMS} for the proofs and more details.

 \subsection{First definitions} \label{subsec:adelicdef}

 For $v \in \mathfrak{M}_ K$ we denote by $X_v^{\mathrm{an}}$ the
 analytification of $X_{\mathbb{C}_{v}}$ in the sense of Berkovich,
 see~\cite[Section 1.2]{BPS:asterisque} and \cite[Section
 1.3]{Moriwaki:MAMS} for short introductions sufficient for our
 purposes. There is an injective map
 $X(\bC_v) \hookrightarrow X_v^{\an}$ that induces an inclusion
\begin{displaymath}
\iota_v \colon X(\overline{K}) \longhookrightarrow X_v^{\an}
\end{displaymath}
via the chosen embedding
$\overline{K} \hookrightarrow \overline{K}_v \subset \bC_v$.

For an algebraic point $x \in X(\overline{K})$ we denote by
$O(x) \subset X(\overline{K})$ its orbit under the action of the
absolute Galois group $\Gal(\overline{K}/K)$, and by
\begin{displaymath}
  O(x)_v = \iota_v(O(x)) \subset X_{v}^{\an}
\end{displaymath}
its image under $\iota_{v}$. It does not depend on the choice of the
embedding.

The local Galois group $G_v = \Gal(\overline{K}_v/K_v)$ acts on
$X_v^{\an}$.  We denote by $C(X_v^{\an})$ the space of continuous
real-valued functions on $X_v^{\an}$ and by
$C(X_v^{\an})^{G_v} \subset C(X_v^{\an})$ the subspace of those
functions that are $G_v$-invariant.

% Recall that the underlying set ofq $X_v^\mathrm{an}$ is the set of
% pairs $x = (j_v(x),|.|_x)$, where $j_v(x) \in X$ is a scheme point
% of $X_v$ and $|.|_x$ is an absolute value on the residue field of
% $j_v(x)$ that extends $|.|_v$. The topology on $X_v^{\mathrm{an}}$
% is the coarsest that makes continuous the map
% $j_v \colon X_v^{\mathrm{an}} \rightarrow X_v$ as well as all the
% maps
%\begin{displaymath}
% |f|_v \colon   U^{\mathrm{an}}  \rightarrow \bR_{\ge 0}, \quad x = (j_v(x),|.|_x) \mapsto |f(x)|_v \coloneqq |f(j(x))|_x,
%\end{displaymath}
%where $f \in \cO_{X_v}(U)$ is a regular function on some Zariski-open subset $U \subset X_v$. 

Let $D \in \Div(X)_{\bR}$ and $v \in \mathfrak{M}_ K$. A
\emph{continuous $v$-adic Green function} for $D$ is a $G_v$-invariant
function
\begin{displaymath}
g_v \colon X_v^{\mathrm{an}} \setminus \mathrm{supp}(D)_v^{\mathrm{an}} \rightarrow \bR
\end{displaymath}
with the property that for each open subset $U\subset X$ where $D$ is
defined by an $\mathbb{R}$-rational function $f$ we have that
$g_{v}+ \log |f_{v}^{\an}|_v$ extends to a continuous function
on~$U_{v}^{\an}$, with $f_{v}^{\an}$ the $v$-adic analytification of
$f$.  In this text we only consider continuous $v$-adic Green
functions, and so we call them $v$-adic Green functions for short.

Let $U \subset \Spec(\cO_K)$ be a nonempty open subset. A \emph{model}
of $X$ over $U$ is a normal integral projective scheme
$\mathcal{X} \rightarrow U$ such that
$X = \mathcal{X} \times_U \Spec(K)$. For $D \in \Div(X)_\bR$, a
\emph{model} of $(X,D)$ over $U$ is a pair
$(\mathcal{X}, \mathcal{D})$ where $\mathcal{X}$ is a model of $X$
over $U$ and $\mathcal{D}$ is an $\bR$-divisor on $\mathcal{X}$ whose
restriction to $X$ coincides with $D$. Such a model induces a $v$-adic
Green function for $D$ for each place $v \in U$ that we denote by
$g_{\mathcal{D},v}$, see \cite[Section 2.1]{Moriwaki:MAMS} for
details.

\begin{definition}
  \label{def:adelicdiv}
  An \emph{adelic $\bR$-divisor} on $X$ is a pair
  $\overline{D} = (D,(g_v)_{v \in \mathfrak{M}_ K}) $ with
  $D \in \Div(X)_\bR$ and $g_v$ a $v$-adic Green function for $D$ for
  each $v \in \mathfrak{M}_{K}$, such that there is a model
  $(\mathcal{X},\mathcal{D})$ of $(X,D)$ over a nonempty open subset
  $U \subset \Spec(\cO_K)$ with $g_v = g_{\mathcal{D},v}$ for all
  $v \in U$. When $D$ is a divisor we say that $\overline{D}$ is an
  \emph{adelic divisor}.

  We say that $\overline{D}$ is an adelic
  $\mathbb{R}$-divisor \emph{over} $D$ and conversely, we say that
  $D$ is the \emph{geometric $\mathbb{R}$-divisor} of $\overline{D}$.
  \end{definition}

  Unless otherwise stated, given an adelic
  $\mathbb{R}$-divisor $\overline{D}$ on
  $X$ we use the same letter
  $D$ to denote its geometric
  $\mathbb{R}$-divisor and we denote by $(g_{\overline{D},v})_{v \in
    \mathfrak{M}_K}$ its family of Green functions.
  
  The set of adelic $\bR$-divisors forms a real vector space that we
  denote by $\widehat{\Div}(X)_{\mathbb{R}}$, and the subgroup of
  adelic divisors is denoted by $\widehat{\Div}(X)$.
  
\begin{example}
    \label{ex:divon0}
    Let $(\varphi_v)_{v \in \mathfrak{M}_K}$ with
    $\varphi_v \in C(X_v^{\an})^{G_v}$ for each
    $v \in \mathfrak{M}_{K}$ and $\varphi_v = 0$ for all except
    finitely many $v$. Then
    $(0,(\varphi_v)_{v \in \mathfrak{M}_K})$ is an adelic
    divisor over $0 \in \Div(X)$. Every adelic divisor over
    the zero divisor of $X$ is of this form.
\end{example}

Denote by
$[\infty] = (0, (\varphi_v)_{v\in \mathfrak{M}_K}) \in
\widehat{\Div}(X)$ the adelic divisor over $0\in \Div(X)_{\mathbb{R}}$
given by the constant functions $\varphi_v = 1$ if $v$ is Archimedean,
and $\varphi_v = 0$ if $v$ is non-Archidemedean. Then for
$\overline{D}\in\widehat{\Div}(X)_{\mathbb{R}}$ and $t\in \mathbb{R}$
we set
\begin{equation}
\label{eq:1}
\overline{D}(t)=\overline{D}-t\, [\infty] \in\widehat{\Div}(X)_{\mathbb{R}}.
\end{equation}

Given $f \in \Rat(X)^{\times}$ we set
$ \widehat{\div}(f) = (\div(f), (-\log|f_v^{\an}|_v)_{v\in
  \mathfrak{M}_K})$ for its associated principal divisor.
This assignment extends by linearity to a map
 \begin{displaymath}
\widehat{\div} \colon \Rat(X)^{\times}_{\bR} \longrightarrow  \widehat{\Div}(X)_{\bR}.
 \end{displaymath}
 Two adelic $\bR$-divisors
 $\overline{D}, \overline{D'} \in \widehat{\Div}(X)_{\bR}$ are said 
 \emph{linearly equivalent}, denoted
 $\overline{D} \equiv \overline{D'}$, when there is
 $f \in \Rat(X)^{\times}_\bR$ such that
 $\overline{D'} = \overline{D} + \widehat{\div}(f)$.

For   $\overline{D} \in \widehat{\Div}(X)_{\bR}$ and a
 morphism
$\phi \colon X' \rightarrow X$ from a normal projective variety $X'$
whose image is not contained in $\mathrm{supp}(D)$, the
\emph{pullback} $\phi^*\overline{D} \in \widehat{\Div}(X')_{\bR}$ is
defined as the $\bR$-divisor $\phi^*D$ equipped at each place
$v \in \mathfrak{M}_K$ with the pullback to $(X')^{\an}_v$ of
$g_{\overline{D},v}$ by the $v$-adic analytification of~$\phi$.

For $D\in \Div(X)_{\mathbb{R}}$ and $v\in \mathfrak{M}_{K}$ let
$g_{v}$ be a $v$-adic Green function for $D$. We say that $g_{v}$ is
\emph{semipositive} when it is of ($C^{0} \, \cap$ PSH)-type in the
sense of \cite[Section~1.4 and Definition 2.1.6]{Moriwaki:MAMS}.  In
the Archimedean case this means that $g_{v}$ is plurisubharmonic,
whereas in the non-Archimedean case it means that $g_{v}$ can be
uniformly approached by the $v$-adic Green functions of a sequence of
vertically nef models. On the other hand, we say that $g_{v}$ is
\emph{DSP} (short for difference of semipositive) if there are
semipositive $v$-adic Green functions $g_{1,v}$ and $g_{2,v}$ such
that $g_{v}=g_{1,v}-g_{2,v}$. Note that if $D$ admits a semipositive
$v$-adic Green function then it is nef, and in particular
$\vol(D) = (D^d)$.

 Passing to the global situation, an adelic $\mathbb{R}$-divisor
 $\overline{D}$ is \emph{semipositive} if all its $v$-adic Green
 functions are semipositive, and is \emph{DSP} if there are two
 semipositive adelic $\bR$-divisors $\overline{D}_1$, $\overline{D}_2$
 such that $\overline{D} = \overline{D}_1 - \overline{D}_2$. We denote
 by
  \begin{displaymath}
\widehat{\DSP}(X)_{\bR} \subset \widehat{\Div}(X)_{\bR}
  \end{displaymath}
  the subspace of DSP adelic $\bR$-divisors of $X$. 

\begin{remark}
  \label{rem:4}
  To an adelic divisor
  $\overline{D}=(D,(g_{v})_{v\in \mathfrak{M}_{K}})$ on $X$ one can
  associate a metrized line bundle
  $\overline{L} = (\cO_X(D), (\|.\|_v)_{v \in \mathfrak{M}_K})$ on $X$
  in the sense of Zhang \cite{Zhang:spam}, and every such metrized
  line bundle can be constructed in this way \cite[Proposition
  3.8]{BMPS:positivity}.  The metrized line bundle $\overline{L}$ is
  semipositive in the sense of \cite{Zhang:spam} if and only if
  $\overline{D}$ is semipositive, and it is integrable in the sense of
  \cite{Zhang:spam} if and only if $\overline{D}$ is DSP.
\end{remark}

\subsection{Heights of points and cycles}
\label{subsec:defheight} 

Given $\overline{D} \in \widehat{\Div}(X)_{\mathbb{R}}$
and $x \in X({\overline{K}})$,
the \emph{height} of $x$ with respect to
$\overline{D}$ is defined as 
\begin{displaymath}
 h_{\overline{D}}(x) =\sum_{v \in  \mathfrak{M}_ K} \frac{n_v}{\# O(x)_v} \sum_{y \in O(x)_v}g_{\overline{D'},v}(y) \in \mathbb{R}
\end{displaymath}
for any $\overline{D'}\in \widehat{\Div}(X)_{\mathbb{R}}$ with
$\overline{D'}\equiv \overline{D}$ such that
$x \notin \mathrm{supp}( D')(\overline{K})$.  This quantity does not
depend on the choice of $\overline{D'}$ thanks to
the product formula.

For an integer $0\le r\le d$ let $Z$ be an $r$-dimensional cycle of
$X$ and $\overline{D}_i$, $i=1,\dots, r$, a family of adelic
$\mathbb{R}$-divisors on $X$.  Let $v \in \mathfrak{M}_K$ and assume
that $g_{\overline{D}_{i},v}$ is DSP for all~$i$.  Then
there is a signed measure on $X_v^{\an}$, denoted by
\begin{equation*} 
c_1(\overline{D}_{1,v}) \wedge \cdots \wedge c_1(\overline{D}_{r,v}) \wedge \delta_{Z_v^{\an}},
\end{equation*}
supported on $Z_v^{\an}$ and with total mass
$(D_1 \cdots D_r \cdot Z)$.  When the $D_i$'s are divisors, it is
defined using the complex Monge-Ampère operator in the Archimedean
case, whereas in the non-Archimedean case it is the signed measure
introduced by Chambert-Loir~\cite{Chambert-Loir:Mesuresetequi}. This
construction extends by multilinearity and continuity to the general
case of adelic $\bR$-divisors with DSP $v$-adic Green functions.  When
these $v$-adic Green functions are semipositive, it is actually a
measure.

In the case when $Z = X$ and
$\overline{D}_1 = \cdots = \overline{D}_d=\overline{D}$, this signed
measure is called the \emph{$v$-adic Monge-Ampère measure}
of~$\overline{D}$ and denoted by~$c_1(\overline{D}_{v})^{\wedge d}$.

Assume now that $\overline{D}_1, \ldots, \overline{D}_r$ are DSP, and
let $\overline{D}_{r+1} $ be a further adelic $\mathbb{R}$-divisor on
$X$. The \emph{height}
$h_{\overline{D}_1, \ldots, \overline{D}_{r+1}}(Z)$ of $Z$ with
respect to $\overline{D}_{1},\dots, \overline{D}_{r+1}$ is defined by
induction on the dimension $r$ of the cycle. When $r=0$ it is given by
linearity from the previous definition of height of points, whereas
when $r>0$ it is given by the \emph{arithmetic Bézout formula}:
\begin{multline}\label{eq:Bezout}
h_{\overline{D}_1, \ldots, \overline{D}_{r+1}}(Z) = h_{\overline{D}_1,\ldots,\overline{D}_r}(D' \cdot Z)\\ + \sum_{v \in \mathfrak{M}_K} n_v \int_{X_v^{\an}} g_{\overline{D'},v} \, c_1(\overline{D}_{1,v}) \wedge \cdots \wedge c_1(\overline{D}_{r,v}) \wedge \delta_{Z_v^{\an}}
\end{multline}
for any $\overline{D'}\in \widehat{\Div}(X)_{\mathbb{R}}$ with
$\overline{D'}\equiv \overline{D}_{r+1}$ such that $D'$ intersects $Z$
properly,  where $D' \cdot Z$ is the intersection cycle.
This B\'ezout formula is well-defined because the $v$-adic Green
function $g_{\overline{D'},v}$ is integrable with respect to the
signed measure therein \cite[Theorem 4.1]{CLT}. Furthermore,  it does
not depend on the choice of $\overline{D'}$ and it is multilinear in
$\overline{D}_1, \ldots, \overline{D}_{r+1}$
\cite[Section~1.5]{BPS:asterisque}, \cite[page
225]{BMPS:positivity}. If
$\overline{D}_{r+1} \in \widehat{\mathrm{DSP}}(X)_{\bR}$ this
construction is symmetric in
$\overline{D}_1, \ldots, \overline{D}_{r+1}$.
For $\overline{D}\in \widehat{\DSP}(X)_{\mathbb{R}}$ we write
$h_{\overline{D}}(Z)$ for the height of $Z$ with respect to $r+1$
copies of $\overline{D}$.

For
$\overline{D}_{i}\in \widehat{\DSP}(X)_{\mathbb{R}}$, $i=1,\dots, d$,
and $\overline{D}_{d+1}\in \widehat{\Div}(X)_{\mathbb{R}}$, we define their 
\emph{arithmetic intersection number}  as 
\begin{displaymath}
(\overline{D}_1 \cdots \overline{D}_{d+1}) = h_{\overline{D}_1, \ldots, \overline{D}_{d+1}}(X) \in \mathbb{R}.
\end{displaymath}
This quantity only depends on the linear equivalence classes of these
adelic $\mathbb{R}$-divisors, and for any normal modification
$\phi\colon X' \rightarrow X$ we have
\begin{displaymath}
(\phi^*\overline{D}_1 \cdots \phi^*\overline{D}_{d+1}) = (\overline{D}_1 \cdots \overline{D}_{d+1}).
\end{displaymath}

It follows from these definitions that for any adelic divisor
$\overline{0}=(0,(\varphi_v)_{v \in \mathfrak{M}_K})$ over the zero
divisor of $X$ we have
\begin{equation}
  \label{eq:12}
  (\overline{D}_{1} \cdots \overline{D}_{d}\cdot \overline{0} )
  = \sum_{v \in \mathfrak{M}_{K}} n_v\int_{X_v^{\an}} \varphi_{v} \ c_1(\overline{D}_{1,v}) \wedge \cdots \wedge c_1(\overline{D}_{d,v}).
\end{equation}
In particular, 
\begin{equation}\label{eq:arithmeticvsgeometricintersection}
(\overline{D}_{1} \cdots \overline{D}_{d}\cdot [\infty]) = \sum_{v \in \mathfrak{M}_{K}^{\infty}} n_v\int_{X_v^{\an}} c_1(\overline{D}_{1,v}) \wedge \cdots \wedge c_1(\overline{D}_{d,v}) = (D_1 \cdots D_{d}).
\end{equation}
% When $\overline{D}_{d+1}$ is DSP, our definition of arithmetic
% intersection number coincides with the one in \cite[Section
% 4.5]{Moriwaki:MAMS}.

\subsection{Arithmetic volumes and positivity}
\label{subsec:arithmvol}

Let $\overline{D} \in \widehat{\Div}(X)_{\bR}$.  Given a nonzero
global section $s = (f, D) \in \Gamma(X,D) \setminus \{0\}$, for each
point
$y \in X_v^{\mathrm{an}} \setminus
\mathrm{supp}(\div(s))_v^{\mathrm{an}}$~we~set
\begin{displaymath}
  \|s(y)\|_{\overline{D},v} = \exp(-g_{\overline{D} + \widehat{\div}(f),v}(y)).
\end{displaymath} 
By \cite[Propositions 1.4.2 and 2.1.3]{Moriwaki:MAMS}, this assignment
can be uniquely extended to a continuous real-valued function
$y \in X_v^{\mathrm{an}} \mapsto \|s(y)\|_{\overline{D},v} \in \bR$,
called the \emph{$v$-adic norm} of $s$.  We also set
\begin{equation*} 
\|s\|_{\overline{D}, v,\sup}= \sup_{y \in X_v^{\mathrm{an}}} \|s(y)\|_{\overline{D},v}.
\end{equation*}
The height of a point outside the support of a global section can be
bounded from below in terms of these sup-norms: for  $m \in \bN_{>0}$
and $s \in \Gamma(X,mD) \setminus \{0\}$ we have
 \begin{equation}
    \label{eq:heightlowerbound}
    h_{\overline{D}}(x)\ge -\frac{1}{m} \sum_{v\in \mathfrak{M}_{K}} n_v\log\|s\|_{\overline{D},v,\sup} \quad   \text{ for all }
    x\in X(\overline{K})\setminus
    \mathrm{supp}(\div(s))(\overline{K}).
  \end{equation}

  For $s \in \Gamma(X,D) \setminus \{0\} $ we say that $s$ is
  \emph{small} if $\|s\|_{\overline{D},v,\sup} \le 1$ for all
  $v \in \mathfrak{M}_ K$, and by convention we agree that
  $0 \in \Gamma(X,D)$ is small. We denote by
  \begin{math} 
\widehat{\Gamma}(X,\overline{D})    
  \end{math}
  the set of small global sections of $\overline{D}$.

Let $\mathbb{A}_K$ be the ring of ad\`{e}les of $K$ and
  consider the adelic unit~ball
\begin{displaymath}
  \mathbb{B}_{\overline{D}} = \{ (s_v)_{v\in \mathfrak{M}_K} \in \Gamma(X,D) \otimes_K \mathbb{A}_K \ | \ \|s_v\|_{\overline{D},v,\sup} \leq 1 \ \text{ for all }
  v \in \mathfrak{M}_K\}.
\end{displaymath}
We have that $\Gamma(X,D) \otimes_K \mathbb{A}_K$ is a locally compact
group and $\Gamma(X,D)$ is a lattice within it \emph{via} the diagonal
embedding $K \hookrightarrow \mathbb{A}_K$.  We denote by $\mu$ the
unique Haar measure on $\Gamma(X,D) \otimes_K \mathbb{A}_K$ satisfying
\begin{displaymath}
\mu((\Gamma(X,D) \otimes_K \mathbb{A}_K)/\Gamma(X,D))=1
\end{displaymath}
and we set
$ \widehat{\chi}(X,\overline{D}) = \log (\mu(\mathbb{B}_{\overline{D}}))$.

\begin{definition}
  The \emph{arithmetic volume} and the \emph{$\chi$-volume} of
  $\overline{D}$ are respectively defined as
  \begin{align*}
    \widehat{\vol}(\overline{D}) &= \frac{1}{[K:\bQ]}\limsup_{m \rightarrow \infty} \frac{ \log(\# \widehat{\Gamma}(X,m\overline{D}))}{m^{d+1}/(d+1)!},\\
  \widehat{\vol}_{\chi}(\overline{D}) & = \frac{1}{[K:\bQ]}\limsup_{m \rightarrow \infty} \frac{\widehat{\chi}(X,m\overline{D})}{m^{d+1}/(d+1)!}.    
  \end{align*}
\end{definition}

% When $D$ is big, the above $\limsup$ are actually limits in $\bR$ by
% \cite[Theorems 2.8 and 3.1]{BC} (note that the conditions on the
% filtration in \cite[Theorem 3.1]{BC} are satisfied thanks to
% \cite[Theorem D]{Ikoma:Boundedness}). \todo{Not so clear for
% $\bR$-divisors}

 We now recall classical positivity notions for adelic $\bR$-divisors. 
 
 \begin{definition}\label{def:positivity} We say that $\overline{D}$
   is
 \begin{enumerate}[leftmargin=*]
 \item \emph{effective} if $D$ is effective and 
    $g_{\overline{D},v} \ge 0$ on
   $X_v^{\an} \setminus \mathrm{supp}(D)_v^{\an}$ for all $v \in \mathfrak{M}_K$,
 \item \emph{big} if $\widehat{\vol}(\overline{D}) > 0$,
 \item \emph{pseudo-effective} if $\overline{D} + \overline{B}$ is big
   for every big $\overline{B} \in \widehat{\Div}(X)_{\bR}$,
 \item \emph{nef} if $\overline{D}$ is semipositive and $h_{\overline{D}}(x) \ge 0$ for all $x \in X(\overline{K})$.
 \end{enumerate}
 \end{definition}

 In the sequel we recall the basic properties of the arithmetic volume
 and the $\chi$-volume, referring to~\cite{Moriwaki:MAMS} for the
 details and proofs.

\begin{remark}
  \label{rem:7}
Unlike \cite{Moriwaki:MAMS}, here we do not assume that $X$ is
  geometrically irreducible. In this more general situation
  the Stein factorization shows that there is a finite extension
  $K'/K$ with the property that the structural morphism
  $X \rightarrow \Spec(K)$ factors through a morphism
  $X \rightarrow \Spec(K')$ and that $X$ is geometrically connected
  over $K'$. Since $X$ is normal, it is geometrically irreducible as a
  variety over $K'$ and all the cited results from \emph{loc. cit.}
  extend directly to our setting.
\end{remark}

 For any  normal modification
$\phi\colon X' \to X$ we have
\begin{displaymath}
\widehat{\vol}(\phi^*\overline{D}) = \widehat{\vol}(\overline{D})
\and
\widehat{\vol}_{\chi}(\phi^*\overline{D}) =
\widehat{\vol}_{\chi}(\overline{D}),   
\end{displaymath}
and moreover
$\widehat{\vol}(\overline{D}) \ge \widehat{\vol}_{\chi}(\overline{D})$
\cite[Section 4.3]{Moriwaki:MAMS}.
By \cite[Theorem
5.2.1]{Moriwaki:MAMS} we also have the following continuity property:
for every $\overline{E} \in \widehat{\Div}(X)_{\bR}$,
 \begin{displaymath}
 \widehat{\vol}(\overline{D}) = \lim_{\lambda \to 0} \widehat{\vol}(\overline{D} + \lambda \overline{E}) \and  \widehat{\vol}_{\chi}(\overline{D}) = \lim_{\lambda \to 0} \widehat{\vol}_{\chi}(\overline{D} + \lambda \overline{E}).
 \end{displaymath}

 When the geometric $\mathbb{R}$-divisor $D$ is big, a
 sufficiently large shift of~$\overline{D}$~is~big.

\begin{lemma} \label{lem:6} If $D$ is big then  $\overline{D}+ c\,[\infty]$ is big for any sufficiently large $c > 0$.
\end{lemma}

\begin{proof}
  For $c\in \mathbb{R}$ we have
  $\widehat{\chi}(X,\overline{D} + c \,[\infty]) =
  \widehat{\chi}(X,\overline{D})+c \, [K: \mathbb{Q}] \dim_{K}(\Gamma(X,D))$ by the
  definition of these
  quantities. %, see also \cite[Propostion 2.6(3)]{Yuan:big}.
  Hence
  \begin{displaymath}
   \widehat{\vol}(\overline{D} + c \, [\infty]) \ge  \widehat{\vol}_{\chi}(\overline{D}+ c \, [\infty])=\widehat{\vol}_{\chi}(\overline{D})+
 c \, (d+1)  \vol(D) ,
  \end{displaymath}
which readily implies the statement because $\vol(D)>0$. 
\end{proof}

% Let $\overline{D} \in \widehat{\Div}(X)_{\bR}$. We associate to  $D$  the graded algebra\todo{En 1.1}
% \begin{displaymath}
%  R(D)= \bigoplus_{m \in \bN} \Gamma(X,mD).
% \end{displaymath}
% When $D$ is a divisor, this is the classical algebra of sections of
% the line bundle $\cO_X(D)$ \cite[Definition 2.1.17]{LazI}.

Now let $t\in \mathbb{R}$, and for each $m \in \bN$   denote by
 $R_m^t(\overline{D})$ the $K$-linear subspace of
 $\Gamma(X,mD)$ generated by $\widehat{\Gamma}(X,m \overline{D}(t))$
 with $\overline{D}(t)=\overline{D} - t[\infty]$ as in \eqref{eq:1}.
Note
that $\widehat{\Gamma}(X,m \overline{D}(t))$ is
the set of global sections $s \in \Gamma(X,mD)$ such that
\begin{displaymath}
  \log\|s\|_{\overline{D},v,\sup} \le
  \begin{cases}
 -mt & \text{ if } v \text{ is Archimedean},\\
 0 & \text{ otherwise.} 
  \end{cases}
\end{displaymath}
Then we set
\begin{displaymath}
R^t(\overline{D}) = \bigoplus_{m \in \bN}
R_m^t(\overline{D}), 
\end{displaymath}
that is a graded subalgebra of the algebra of sections $R(D)$. Its
volume is the quantity
\begin{displaymath}
\vol(R^t(\overline{D})) = \limsup_{m\rightarrow \infty} \frac{\dim_K R_m^t(\overline{D})}{m^d/d!}.
\end{displaymath}

The next theorem is due to Chen, and allows to express the arithmetic
volume of a big adelic $\mathbb{R}$-divisor in terms of the volumes of
these graded subalgebras.

\begin{theorem}
  \label{thm:ChenIntegral}
  If $\overline{D}$ is big then 
\begin{math}
\displaystyle{\widehat{\vol}(\overline{D}) = (d+1) \int_0^{\infty} \vol(R^t(\overline{D})) \ dt.}
\end{math}
\end{theorem}

\begin{proof}
  When $\overline{D}$ is an adelic divisor, this formula is given by
  \cite[Theorem 3.8]{Chen:Fujita} (see also \cite[Formula (5.2)]{Chen:diffvol}). It is also a consequence of the
  results of Boucksom and Chen on arithmetic Okounkov bodies
  \cite[Theorems 1.11 and 2.8]{BC}, whose proof can be carried out for
  adelic $\bR$-divisors using the extension of this theory in
  \cite[Section 7.3]{Moriwaki:MAMS}.
\end{proof}

Arithmetic $(\chi$-)volumes coincide with arithmetic intersection
numbers under suitable positivity conditions.

\begin{theorem}[{\cite[Theorem
    5.3.2]{Moriwaki:MAMS}}]\label{thm:arithmHodgeMoriwaki} If
  $\overline{D}$ is
  semipositive then
  \begin{displaymath}
   \widehat{\vol}_{\chi}(\overline{D}) = (\overline{D}^{d+1}). 
  \end{displaymath}
  If moreover $\overline{D}$ is nef, then
  $\widehat{\vol}(\overline{D})=\widehat{\vol}_{\chi}(\overline{D}) =
  (\overline{D}^{d+1})$.
\end{theorem}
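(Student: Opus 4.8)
The plan is to reduce the statement to the arithmetic Hilbert--Samuel theorem, using the continuity properties recalled above together with a sequence of positivity-improving perturbations. Observe first that all three quantities $\widehat{\vol}(\overline{D})$, $\widehat{\vol}_{\chi}(\overline{D})$ and $(\overline{D}^{d+1})$ are continuous along segments $\overline{D}+\lambda\overline{E}$: the first two by the continuity property of $\widehat{\vol}$ and $\widehat{\vol}_{\chi}$ recalled above, the last by multilinearity of the arithmetic intersection number. Fix an arithmetically ample adelic $\bQ$-divisor $\overline{A}$ on $X$.

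I would carry out the reduction in three moves. First, for every $\varepsilon>0$ the perturbation $\overline{D}+\varepsilon\overline{A}$ is again semipositive (resp.\ nef) and has ample geometric divisor, so by continuity as $\varepsilon\to 0$ we may assume that $D$ is ample; in particular, if $\overline{D}$ is semipositive then $\mu^{\abs}(\overline{D})>-\infty$, since a finite globally generating set of sections of some $mD$ gives, via \eqref{eq:heightlowerbound}, a uniform lower bound for $h_{\overline{D}}$. Second, with $D$ ample and $\overline{D}$ semipositive, for any $c>-\mu^{\abs}(\overline{D})$ the adelic $\bR$-divisor $\overline{D}+c[\infty]$ is nef, its height function being $h_{\overline{D}}+c>0$. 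Directly from the definition of $\widehat{\chi}$ and the asymptotics $h^0(X,mD)=(D^d)m^d/d!+o(m^d)$ one gets
\[
\widehat{\vol}_{\chi}(\overline{D}+c[\infty])=\widehat{\vol}_{\chi}(\overline{D})+(d+1)c\,(D^d),
\]
which, by \eqref{eq:arithmeticvsgeometricintersection} and the vanishing of all terms involving two or more copies of $[\infty]$, matches $((\overline{D}+c[\infty])^{d+1})=(\overline{D}^{d+1})+(d+1)c\,(D^d)$; hence the first assertion for $\overline{D}$ is equivalent to it for the nef divisor $\overline{D}+c[\infty]$. Third, when $\overline{D}$ is nef with $D$ ample, $\overline{D}+\varepsilon\overline{A}$ is arithmetically ample (being the sum of a nef and an ample adelic divisor), so continuity reduces us to $\overline{D}$ arithmetically ample, and then --- approximating by arithmetically ample adelic $\bQ$-divisors --- to the case where $\overline{D}$ is an arithmetically ample adelic $\bQ$-divisor.

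In this last case the arithmetic Hilbert--Samuel theorem --- due to Gillet--Soul\'e and Abbes--Bouche for arithmetically ample Hermitian line bundles, and extended to the adelic setting by Zhang \cite{Zhang:spam} and Moriwaki \cite{Moriwaki:MAMS} --- gives $\widehat{\chi}(X,m\overline{D})=(\overline{D}^{d+1})\,m^{d+1}/(d+1)!+o(m^{d+1})$, hence $\widehat{\vol}_{\chi}(\overline{D})=(\overline{D}^{d+1})$. Moreover, arithmetic ampleness ensures that for $m\gg 0$ the small sections of $m\overline{D}$ span $H^0(X,mD)$ and that the successive minima of $\widehat{\Gamma}(X,m\overline{D})$ inside $\Gamma(X,mD)\otimes_K\mathbb{A}_K$ grow at most polynomially in $m$; Minkowski's theorems then give $\log\#\widehat{\Gamma}(X,m\overline{D})=\widehat{\chi}(X,m\overline{D})+o(m^{d+1})$, so that $\widehat{\vol}(\overline{D})=(\overline{D}^{d+1})$ as well. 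Unwinding the three moves proves the theorem.

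The genuine difficulty lies in the arithmetic Hilbert--Samuel theorem itself, which rests on analytic input (Bergman-kernel asymptotics or $L^2$-estimates, together with Gromov's bound for the sup-norm of sections) and which I would simply invoke. Among the formal steps, the ones that need real care --- and which I would defer to \cite{Moriwaki:MAMS} --- are the comparison between $\log\#\widehat{\Gamma}$ and $\widehat{\chi}$ in the arithmetically ample case, i.e.\ the control of sup-norms ensuring that the Minkowski estimates have error $o(m^{d+1})$, and the $\bR$-to-$\bQ$ approximation of arithmetically ample adelic divisors, where arithmetic ampleness must be preserved along the approximating sequence.
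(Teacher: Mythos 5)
The paper does not actually prove this statement: Theorem \ref{thm:arithmHodgeMoriwaki} is imported wholesale from Moriwaki's memoir \cite{Moriwaki:MAMS}, so there is no internal argument to measure yours against. Judged on its own terms, your reduction chain is sound and is broadly the route the literature takes: continuity of $\widehat{\vol}$, $\widehat{\vol}_{\chi}$ and of arithmetic intersection numbers along segments justifies perturbing to $D$ ample; the lower bound \eqref{eq:heightlowerbound} does give $\mu^{\abs}(\overline{D})>-\infty$ once $D$ is ample; and the twist computation $\widehat{\vol}_{\chi}(\overline{D}+c[\infty])=\widehat{\vol}_{\chi}(\overline{D})+(d+1)c\,(D^d)$, matched against $((\overline{D}+c[\infty])^{d+1})$ via \eqref{eq:arithmeticvsgeometricintersection} and the vanishing of terms with two copies of $[\infty]$, correctly reduces the semipositive assertion to the nef one. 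After these moves everything rests on the arithmetic Hilbert--Samuel theorem, which you invoke — exactly the content the paper itself outsources by citing \cite[Theorem 5.3.2]{Moriwaki:MAMS}, so in substance your proposal and the paper rely on the same external input.

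Two caveats on the steps you treat as formal. First, ``arithmetically ample'' adelic $\bR$-divisors are not defined in this paper, and the facts you use about them — stability under adding a nef adelic divisor, approximation by arithmetically ample adelic $\bQ$-divisors, and the validity of Hilbert--Samuel in the adelic (not merely model) setting — are precisely the technical substance of Moriwaki's memoir; deferring them is legitimate, but then your argument is a repackaging of the same citation rather than an independent proof. Second, the equality $\widehat{\vol}=\widehat{\vol}_{\chi}$ in the nef case is not a soft Minkowski statement: the inequality recalled in §\ref{subsec:arithmvol} only gives $\widehat{\vol}\ge\widehat{\vol}_{\chi}$, and the reverse direction needs genuine control of the successive minima of $\widehat{\Gamma}(X,m\overline{D})$ coming from positivity (this is one of the main points of the cited theorem, going back to Gillet--Soul\'e, Abbes--Bouche and Zhang). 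Your one-line justification there is the thinnest step; you do flag it as deferred, but it should not be presented as an automatic consequence of ampleness.
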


The existence of arithmetic Fujita approximations for adelic divisors
was established independently by Yuan \cite{Yuan:volumes} and  Chen
\cite{Chen:Fujita}. We will use the following extension to adelic
$\mathbb{R}$-divisors from \cite[Theorem 5.1.6]{Moriwaki:MAMS}.

\begin{theorem}\label{thm:arithmeticFujita} Assume that
  $\overline{D}$ is big. For each $\varepsilon > 0$ there exists a normal
  modification $\phi\colon X'\rightarrow X$ and a nef adelic
    $\mathbb{R}$-divisor $\overline{P}$ on $X'$ such that
  $\phi^*\overline{D} - \overline{P}$ is pseudo-effective and
\begin{displaymath}
(\overline{P}^{d+1}) = \widehat{\vol}(\overline{P}) \ge \widehat{\vol}(\overline{D}) - \varepsilon.
\end{displaymath}
\end{theorem}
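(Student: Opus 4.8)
The plan is to deduce this $\bR$-coefficient statement from the arithmetic Fujita approximation for big adelic $\bQ$-divisors — equivalently, big adelic line bundles — due to Yuan \cite{Yuan:volumes} and Chen \cite{Chen:Fujita}, combined with the continuity of the arithmetic volume recalled above (\cite[Theorem 5.2.1]{Moriwaki:MAMS}) and with Theorem~\ref{thm:arithmHodgeMoriwaki}. The rational case provides, for a big $\overline{D}_0 \in \widehat{\Div}(X)_{\bQ}$ and any $\varepsilon' > 0$, a normal modification $\phi \colon X' \to X$ and a nef $\overline{P} \in \widehat{\Div}(X')_{\bQ}$ with $\phi^*\overline{D}_0 - \overline{P}$ pseudo-effective and $\widehat{\vol}(\overline{P}) \ge \widehat{\vol}(\overline{D}_0) - \varepsilon'$; since $\overline{P}$ is nef, $(\overline{P}^{d+1}) = \widehat{\vol}(\overline{P})$ by Theorem~\ref{thm:arithmHodgeMoriwaki}, so $(\overline{P}^{d+1}) = \widehat{\vol}(\overline{P}) \ge \widehat{\vol}(\overline{D}_0) - \varepsilon'$, which is the asserted conclusion for $\overline{D}_0$.

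To handle an arbitrary big $\overline{D} \in \widehat{\Div}(X)_{\bR}$ and a given $\varepsilon > 0$, the first and essential step is to produce a big adelic $\bQ$-divisor $\overline{D}_0$ with $\overline{D} - \overline{D}_0$ pseudo-effective and $\widehat{\vol}(\overline{D}_0) \ge \widehat{\vol}(\overline{D}) - \varepsilon/2$. I would build $\overline{D}_0$ in two moves. First, using an arithmetic Kodaira lemma ($\widehat{\vol}(\overline{D})>0$ forces $\overline{D}$ to be linearly equivalent to $\overline{A} + \overline{B}$ with $\overline{A}$ a big adelic $\bQ$-divisor and $\overline{B}$ effective), I interpolate: $\overline{D}_\lambda := \overline{A} + (1-\lambda)\overline{B}$ satisfies $\overline{D} - \overline{D}_\lambda \equiv \lambda\overline{B}$, hence is pseudo-effective, while $\widehat{\vol}(\overline{D}_\lambda) \to \widehat{\vol}(\overline{D})$ as $\lambda \to 0^+$ by continuity of $\widehat{\vol}$; I fix $\lambda_0 \in (0,1)$ with $\widehat{\vol}(\overline{D}_{\lambda_0}) \ge \widehat{\vol}(\overline{D}) - \varepsilon/4$. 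Second, since the geometric divisor of $\overline{D}_{\lambda_0}$ is big I would rationalize it: after a harmless modification making the prime divisors in play $\bQ$-Cartier, I lower the real coefficients slightly to nearby rationals and equip the resulting $\bQ$-divisor with Green functions just below those of $\overline{D}_{\lambda_0}$, obtaining a big $\overline{D}_0 \in \widehat{\Div}(X)_{\bQ}$ with $\overline{D}_{\lambda_0} - \overline{D}_0$ pseudo-effective and, by continuity of $\widehat{\vol}$ in the finitely many directions involved, $\widehat{\vol}(\overline{D}_0) \ge \widehat{\vol}(\overline{D}_{\lambda_0}) - \varepsilon/4$. Adding the two, $\overline{D} - \overline{D}_0$ is pseudo-effective and $\widehat{\vol}(\overline{D}_0) \ge \widehat{\vol}(\overline{D}) - \varepsilon/2$.

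I would then apply the rational case to $\overline{D}_0$ with $\varepsilon' = \varepsilon/2$, obtaining $\phi \colon X' \to X$ and a nef $\overline{P}$ on $X'$ with $\phi^*\overline{D}_0 - \overline{P}$ pseudo-effective and $\widehat{\vol}(\overline{P}) \ge \widehat{\vol}(\overline{D}_0) - \varepsilon/2 \ge \widehat{\vol}(\overline{D}) - \varepsilon$. Since the pullback by a modification of a pseudo-effective adelic $\bR$-divisor is pseudo-effective and a sum of pseudo-effective adelic $\bR$-divisors is pseudo-effective, $\phi^*\overline{D} - \overline{P} = \phi^*(\overline{D} - \overline{D}_0) + (\phi^*\overline{D}_0 - \overline{P})$ is pseudo-effective; together with $(\overline{P}^{d+1}) = \widehat{\vol}(\overline{P})$ (Theorem~\ref{thm:arithmHodgeMoriwaki}, $\overline{P}$ nef) this is precisely the statement. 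The hard part here is the rational case: the existence of arithmetic Fujita approximations of \cite{Yuan:volumes, Chen:Fujita} is a genuinely deep theorem. Everything else is bookkeeping, the only mildly delicate points being the arithmetic Kodaira decomposition and the verification that the geometric $\bR$-divisor can be rationalized downward while keeping the relevant differences pseudo-effective — both of which are standard.
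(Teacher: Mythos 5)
There is no internal proof to compare against here: the paper imports Theorem \ref{thm:arithmeticFujita} verbatim from Moriwaki's memoir \cite[Theorem 5.1.6]{Moriwaki:MAMS}, which is precisely the extension of the Yuan--Chen arithmetic Fujita approximation to adelic $\bR$-divisors. So what you have written is a proposed proof of the cited result, and its skeleton --- arithmetic Kodaira decomposition $\overline{D}\equiv\overline{A}+\overline{B}$ with $\overline{A}$ a big adelic $\bQ$-divisor and $\overline{B}$ effective, interpolation $\overline{D}_\lambda$, continuity of $\widehat{\vol}$, downward rationalization, then the rational case --- is the standard reduction and is in the same spirit as what Moriwaki's memoir carries out with full care.

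Two steps are looser than ``bookkeeping'', though both are repairable. First, ``equip the resulting $\bQ$-divisor with Green functions just below those of $\overline{D}_{\lambda_0}$'' does not typecheck: a Green function for $D_{\lambda_0}$ is not a Green function for the divisor with lowered coefficients, since the two differ by $\sum_i(c_i-c_i')\Gamma_i\neq 0$. The correct move is to pass to a smooth (hence normal, as we are in characteristic zero) modification $\pi\colon X''\to X$ on which the components $\Gamma_i$ of $\pi^*B$ are Cartier, fix adelic structures $\overline{\Gamma}_i$ and constants $C_i\ge 0$ with $\overline{\Gamma}_i+C_i[\infty]$ effective, and set $\overline{D}_0=\pi^*\overline{D}_{\lambda_0}-\sum_i(c_i-c_i')\bigl(\overline{\Gamma}_i+C_i[\infty]\bigr)$ with $c_i'\le c_i$ rational; then $\pi^*\overline{D}_{\lambda_0}-\overline{D}_0$ is effective, the geometric divisor of $\overline{D}_0$ is a $\bQ$-divisor, and continuity of $\widehat{\vol}$ in these finitely many directions gives the volume estimate. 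Second, the ``rational case'' you invoke is not literally what \cite{Yuan:volumes} and \cite{Chen:Fujita} prove: their Fujita approximations are established for hermitian line bundles induced by models over $\Spec\cO_K$, not for arbitrary adelic $\bQ$-divisors, so before applying them you must also replace the finitely many non-model Green functions by model ones lying just below (in the spirit of Lemma \ref{lemma:approachDSP}), again invoking continuity of $\widehat{\vol}$. Both repairs are standard, but they are exactly the layer of care that Moriwaki's Theorem 5.1.6 packages, which is why the paper cites it rather than reproving it.
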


Yuan's arithmetic analogue of Siu's inequality is a key ingredient for
equidistribution results in Arakelov geometry \cite[Theorem
2.2]{Yuan:big}.  We will use the next extension to adelic
$\mathbb{R}$-divisors, which follows from the original one by the
continuity of the $\chi$-volume function as in \cite[Proof of Theorem
7.5]{ChenMoriwaki:DysDirichlet}.
  
\begin{theorem}
  \label{thm:Yuan}
  Let $\overline{P}_1, \overline{P}_2$ be nef adelic $\bR$-divisors on
  $X$. Then
\begin{displaymath}
\widehat{\vol}_{\chi}(\overline{P}_1 - \overline{P}_2) \ge (\overline{P}_1^{d+1}) - (d+1)\, (\overline{P}_1^{d} \cdot \overline{P}_2).
\end{displaymath}
\end{theorem}

We now turn to other positivity aspects of adelic
$\mathbb{R}$-divisors.  First note that if there is
$s \in \widehat{\Gamma}(X,\overline{D}) \setminus \{0\}$ then for
every big $\overline{B} \in \widehat{\Div}(X)_{\bR}$ and
$m \in \bN_{>0}$ we have an inclusion
 \begin{displaymath}
  \widehat{\Gamma}(X,m\overline{B}) \longhookrightarrow
 \widehat{\Gamma}(X,m(\overline{D} +\overline{B})) 
\end{displaymath}
given by the multiplication by $s^{\otimes m}$. Hence
$\widehat{\vol}(\overline{D} + \overline{B}) \ge
\widehat{\vol}(\overline{B}) > 0$ and $\overline{D}$ is
pseudo-effective. In particular, an effective adelic $\bR$-divisor is
pseudo-effective. Moreover, a nef adelic $\bR$-divisor is also
pseudo-effective \cite[Proposition 4.4.2(2)]{Moriwaki:MAMS}.

We need the following version of the well-known fact that adelic
$\bR$-divisors can be approximated by DSP adelic $\bR$-divisors.

\begin{lemma}
  \label{lemma:approachDSP}
  Let $\overline{E} \in \widehat{\Div}(X)_{\bR}$. Then for each
  $\varepsilon > 0$ there exists
  $\overline{E}' \in \widehat{\mathrm{DSP}}(X)_{\bR}$ over $E$ such
  that $\overline{E} - \overline{E}'$ is effective and
  $\overline{E}' + \varepsilon[\infty] - \overline{E}$ is
  pseudo-effective.
\end{lemma}

\begin{proof} By \cite[Theorem 4.1.3]{Moriwaki:MAMS} there is a finite
  set $S \subset \mathfrak{M}_K$ containing
  $\mathfrak{M}_{K}^{\infty}$ such that for each $\varepsilon' >0$
  there is a model $(\mathcal{X},\mathcal{E})$ of $(X,E)$ over
  $\Spec(\cO_K)$ such that setting
  $g_{\overline{E}',v} = g_{\mathcal{E},v}$ and
  $\varphi_v = g_{\overline{E},v} - g_{\overline{E}',v}$ for every
  non-Archimedean place $v $ we have $\varphi_v = 0$ for $v \notin S$
  and $0 \le \varphi_v \le \varepsilon'$ for
  $v \in S \setminus \mathfrak{M}_{K}^{\infty}$.  Applying the
  Stone-Weierstrass theorem one can also show that for every
  Archimedean place~$v$ there is a smooth $v$-adic Green function
  $g_{\overline{E}',v}$ on $E$ such that
  $\varphi_v = g_{\overline{E},v} - g_{\overline{E}',v}$ also
  satisfies $0 \le \varphi_v \le \varepsilon'$.

  By construction,
  $\overline{E}' = (E,(g_{\overline{E}',v})_{v\in \mathfrak{M}_K})$ is
  a DSP adelic $\mathbb{R}$-divisor over~$E$ and
  $\overline{E} - \overline{E}' = (0, (\varphi_v)_{v \in
    \mathfrak{M}_K})$ is effective.  Moreover, taking $\varepsilon'$
  sufficiently small with respect to $\varepsilon$ we get from
  \cite[Lemma 1.11]{BMPS:positivity} that a sufficiently high multiple
  of
\begin{displaymath}
\overline{E}' + \varepsilon[\infty] - \overline{E} =  \varepsilon[\infty] - (0,(\varphi_v)_v)
\end{displaymath}
has a nonzero small global section. Thus
$\overline{E}' + \varepsilon[\infty] - \overline{E}$ is
pseudo-effective.
\end{proof}

\begin{lemma}
  \label{lemma:intersectionpseff}
  Let $\overline{D}_1, \dots, \overline{D}_{d}$ be nef adelic
  $\bR$-divisors on $X$. For every
  $\overline{D}_{d+1},\overline{D'}_{\hspace{-0.2em}d+1} \in
  \widehat{\Div}(X)_{\bR}$ with
  $\overline{D}_{d+1}-\overline{D'}_{\hspace{-0.2em}d+1} $
  pseudo-effective we have
\begin{displaymath}
  (\overline{D}_1 \cdots \overline{D}_{d} \cdot \overline{D'}_{\hspace{-0.2em}d+1} ) \le
  (\overline{D}_1 \cdots \overline{D}_{d} \cdot \overline{D}_{d+1} ).
\end{displaymath}
\end{lemma}

\begin{proof}
  Setting
  $\overline{E}=\overline{D}_{d+1}-\overline{D'}_{\hspace{-0.2em}d+1}$,
  the inequality is equivalent to the fact that
  \begin{displaymath}
   (\overline{D}_1 \cdots \overline{D}_{d} \cdot \overline{E} )\ge 0.  
  \end{displaymath}
  The case when $\overline{E}$ is DSP is given by \cite[Proposition
  4.5.4(3)]{Moriwaki:MAMS}. For the general case, let $\varepsilon >0$
  and take $\overline{E}'$ as in Lemma \ref{lemma:approachDSP}.  Then
  $\overline{E}-\overline{E}'$ is effective and
  $\overline{E}'+\varepsilon[\infty] $ is both DSP and
  pseudo-effective, and so from the
  formulae~\eqref{eq:12}~and~\eqref{eq:arithmeticvsgeometricintersection}
  we get
  \begin{displaymath}
    (\overline{D}_1 \cdots \overline{D}_{d} \cdot \overline{E} )\ge  
    (\overline{D}_1 \cdots \overline{D}_{d} \cdot \overline{E}' )\ge
    (\overline{D}_1 \cdots \overline{D}_{d} \cdot (-\varepsilon[\infty])) =-\varepsilon \, (D_{1}\cdots D_{d}).
      \end{displaymath}
      The result follows by letting $\varepsilon\to 0$.
\end{proof}
 
We will also need the following auxiliary result.

\begin{lemma}
  \label{lem:1}
  Let $\overline{E} \in \widehat{\mathrm{DSP}}(X)_{\bR}$. Then there
  exist big and nef
  $\overline{N}_1,\overline{N}_2 \in \widehat{\Div}(X)_{\bR}$ such
  that $\overline{E} = \overline{N}_1 - \overline{N}_2$.
\end{lemma}

\begin{proof}
  By definition, there are semipositive
  $\overline{D}_1,\overline{D}_2 \in \widehat{\Div}(X)_{\bR}$ such
  that $\overline{E} = \overline{D}_1 - \overline{D}_2$. Take any
  semipositive $\overline{A} \in \Div(X)_{\bR}$ with $A$ ample.  Then
  both $D_{1}$ and $D_{2}$ are nef, and so both $A + D_1$ and
  $A+D_{2}$ are ample. It follows from the inequality
  \eqref{eq:heightlowerbound} that the height functions
  \begin{displaymath}
h_{\overline{D}_1 + \overline{A}},h_{\overline{D}_2 + \overline{A}} \colon X(\overline{K})
  \longrightarrow \bR    
  \end{displaymath}
  are bounded from below by a real number. Taking a sufficiently large
  $t \in \bR$ and letting
  $\overline{N}_i = \overline{D}_i + \overline{A} + t\, [\infty]$,
  $i =1,2$, we have that $\overline{N}_i$ is nef, and it is big by
  Lemma \ref{lem:6}.  Since
  $\overline{E} = \overline{N}_1 - \overline{N}_2$, the lemma is
  proven.
\end{proof}

\subsection{Absolute and essential minima}
\label{subsec:minima}
Let $\overline{D} \in \widehat{\Div}(X)_{\bR}$. Its  \emph{absolute minimum} is
\begin{displaymath}
\mu^{\abs}(\overline{D}) = \inf_{x \in X(\overline{K})} h_{\overline{D}}(x) .
\end{displaymath}
Clearly $\mu^{\abs}(\phi^*\overline{D}) = \mu^{\abs}(\overline{D})$
for any surjective morphism $\phi\colon X'\rightarrow X$ with $X'$
projective and normal. We also have 
$\mu^{\abs}(\overline{D}) >-\infty$ whenever $D$ is semiample.

By definition, $\overline{D}$ is nef if and only if it is semipositive
and $\mu^{\abs}(\overline{D}) \ge 0$.  Note that for $t\in \mathbb{R}$
we have
\begin{displaymath}
 h_{\overline{D}(t)}(x) =
 h_{\overline{D}}(x) -t \quad \text{ for } x\in X(\overline{K}),
\end{displaymath}
and so when $\overline{D}$ is semipositive we have
\begin{equation}
  \label{eq:28}
\mu^{\mathrm{abs}}(\overline{D})  = \sup \{ t \in \bR \ | \ \overline{D}(t) \text{ is nef}\, \}.
\end{equation}

% We say that $D \in \Div(X)_{\bR}$ is semiample if it can be written as linear combination of semiample divisors with positive real coefficients.
%  \begin{lemma}\label{lem:2} 
%    Assume that either  $D \in \Div(X)_{\bR}$ is semiample, or that $D \in \Div(X)$ is a nef and big divisor. Then
%    $\mu^{\abs}(\overline{D})>-\infty$.   \todo{Just put semiample. For Theorem \ref{thm:sumofcanonicallinebundles}, it would be
%    nice to replace the semiample hypothesis by $D$ big and
%    $\overline{D}$ semipositive. This is true with the even weaker hypothesis $D$ big and nef by results of Ikoma and Ballay, but the path is too involved}
%  \end{lemma}

% \begin{proof}
%    When $D \in \Div(X)_{\bR}$  is semiample, the statement follows directly from
%    \eqref{eq:heightlowerbound}. If $D$ is nef and big, then 
%    \begin{displaymath}
%    \mu^{\abs}(\overline{D}) \ge \sup\{t \in \bR \ | \, \vol(R^t(\overline{D})) = \vol(D)\}
%    \end{displaymath}
%    by \cite[Theorem 1.1 and Lemma 4.6]{Ba:Okounkov}. If moreover $D$ is a divisor, then the supremum on the right-hand side is finite since there exists $t\in \bR$ such that $R^t(\overline{D}) = R(D)$ by \cite[Theorem D]{Ikoma:Boundedness} (see also \cite[Theorem 3.1]{Luo:sbet}).  
%  \end{proof}

The following lower bound for the height of effective cycles is a
consequence of Zhang's theorem on minima \cite[Theorem
5.2]{Zhang:positive}.
  
\begin{lemma}\label{lemma:ineqheightnef}
 Let $Z$ be an effective cycle of $X$ of dimension $r$. If $\overline{D}$ is semipositive then
  \begin{displaymath}
   h_{\overline{D}}(Z) \ge (r+1)\, \mu^{\abs}(\overline{D}) \, (D^{r}\cdot Z). 
  \end{displaymath}
 In particular, if $\overline{D}$ is nef then $h_{\overline{D}}(Z) \ge 0$.
\end{lemma}  
  
\begin{proof} By linearity we may assume that $Z$ is a
  subvariety. When $D$ is ample, the result follows then from
  \cite[Corollary 2.9]{Ba:Okounkov}.  For the general case, choose
  $\overline{A} \in \widehat{\Div}(X)_{\bR}$ semipositive with $A$
  ample. Then for any $\varepsilon > 0$ we have that $D + \varepsilon A$ is ample
  and so
\begin{align*} 
h_{\overline{D} + \varepsilon \overline{A}}(Z)& \ge (r+1)\, \mu^{\abs}(\overline{D} + \varepsilon \overline{A}) \, ((D+ \varepsilon A)^{r}\cdot Z)\\
&  \ge (r+1)\, (\mu^{\abs}(\overline{D}) + \varepsilon \mu^{\abs}( \overline{A})) \, ((D+ \varepsilon A)^{r}\cdot Z).
\end{align*}
We conclude by letting $\varepsilon \to 0$ and using multilinearity.
\end{proof}  
  
The \emph{essential minimum} of $\overline{D}$ is the quantity
\begin{displaymath}
\mu^{\mathrm{ess}}(\overline{D}) = \sup_{Y \varsubsetneq X} \inf_{x \in (X \setminus Y)(\overline{K})} h_{\overline{D}}(x),
\end{displaymath}
where the supremum is over the closed subsets $Y \varsubsetneq X$. We
have $\mu^{\ess}(\phi^*\overline{D})=\mu^{\ess}(\overline{D})$ for any dominant and generically finite morphism $\phi \colon X' \rightarrow X$ with $X'$ projective and normal \cite[Proposition 3.4]{BPS:smthf}. We also have
$\mu^{\mathrm{ess}}(\overline{D}) < \infty$, and
$\mu^{\ess}(\overline{D}) \in \bR$ whenever $R(D) \ne \{0\}$
\cite[Proposition 2.6]{BC}.

\begin{lemma}
  \label{lemma:propertiesessmin}
  Let
  $\overline{D}_1, \overline{D}_2 \in \widehat{\Div}(X)_{\mathbb{R}}$.
  Then
\begin{enumerate}[leftmargin=*]
\item\label{item:essminsuperadd} 
  $\mu^{\mathrm{ess}}(\overline{D}_1 + \overline{D}_2) \ge
  \mu^{\mathrm{ess}}(\overline{D}_1) +
  \mu^{\mathrm{ess}}(\overline{D}_2)$,
\item\label{item:essmincont} if $D_1$ is big then
  $\lim_{\lambda \to 0} \mu^{\ess}(\overline{D}_1 + \lambda
  \overline{D}_2) = \mu^{\ess}(\overline{D}_1)$,
\item\label{item:essminlowerbound} for every $t \in \bR$ such that
  $R^{t}(\overline{D}_1)\ne \{0\}$ we have
  $\mu^{\ess}(\overline{D}_1) \ge t$,
  \item\label{item:essminincreases} if $D_1$ is big and
    $\overline{D}_1 - \overline{D}_2$ is pseudo-effective then
    $\mu^{\mathrm{ess}}(\overline{D}_1) \ge
    \mu^{\mathrm{ess}}(\overline{D}_2)$.
\end{enumerate}
\end{lemma}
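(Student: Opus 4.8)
The four statements will be proved in turn, with (4) resting on (1)--(3) and (2) on (1). Two elementary facts are used throughout: the height is additive in the adelic $\bR$-divisor, and $h_{[\infty]}\equiv 1$ (indeed $h_{[\infty]}(x)=\sum_{v\in\mathfrak{M}_{K}^{\infty}}n_v=1$); consequently $\mu^{\ess}$ is positively homogeneous and $\mu^{\ess}(\overline{D}+s[\infty])=\mu^{\ess}(\overline{D})+s$ for $s\in\bR$. The cases where one of the essential minima equals $-\infty$ make the relevant inequality trivial, so one may assume all quantities lie in $\bR$. For (1), given proper closed subsets $Y_1,Y_2\subsetneq X$, put $Y=Y_1\cup Y_2\subsetneq X$; since $X\setminus Y\subseteq X\setminus Y_i$ and $h_{\overline{D}_1+\overline{D}_2}=h_{\overline{D}_1}+h_{\overline{D}_2}$ pointwise, we get $\inf_{x\notin Y}h_{\overline{D}_1+\overline{D}_2}(x)\ge\inf_{x\notin Y_1}h_{\overline{D}_1}(x)+\inf_{x\notin Y_2}h_{\overline{D}_2}(x)$, and taking the supremum over $Y_1$ and $Y_2$ yields the superadditivity.

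For (3), the hypothesis $R^{t}(\overline{D}_1)\ne\{0\}$ provides $m\ge 1$ and a nonzero $s\in\widehat{\Gamma}(X,m(\overline{D}_1-t[\infty]))$. Then $Y:=\supp(\div(s))$ is the support of an effective $\bR$-divisor, hence a proper closed subset, and applying \eqref{eq:heightlowerbound} to $\overline{D}_1-t[\infty]$ gives, for every $x\in X(\overline{K})\setminus Y$, the bound $h_{\overline{D}_1}(x)-t=h_{\overline{D}_1-t[\infty]}(x)\ge-\tfrac{1}{m}\sum_{v}n_v\log\|s\|_{v,\sup}\ge 0$, the last inequality because $s$ is small. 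Thus $\mu^{\ess}(\overline{D}_1)\ge\inf_{x\notin Y}h_{\overline{D}_1}(x)\ge t$. I single out the case $t=0$: if $\overline{D}$ is big then $\widehat{\vol}(\overline{D})>0$ forces $\widehat{\Gamma}(X,m\overline{D})\ne\{0\}$ for large $m$, so $R^{0}(\overline{D})\ne\{0\}$ and therefore $\mu^{\ess}(\overline{D})\ge 0$.

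For (2), superadditivity (1) and positive homogeneity imply that $f(\lambda):=\mu^{\ess}(\overline{D}_1+\lambda\overline{D}_2)$ is concave on $\bR$: writing $\lambda=s\lambda_1+(1-s)\lambda_2$ with $s\in[0,1]$ and applying (1) to the adelic $\bR$-divisors $s(\overline{D}_1+\lambda_1\overline{D}_2)$ and $(1-s)(\overline{D}_1+\lambda_2\overline{D}_2)$ gives $f(\lambda)\ge sf(\lambda_1)+(1-s)f(\lambda_2)$. Since $D_1$ is big and bigness is an open condition on $\bR$-divisors, $D_1+\lambda D_2$ is big, hence $R(D_1+\lambda D_2)\ne\{0\}$ and $f(\lambda)\in\bR$, for all $\lambda$ in a neighbourhood of $0$. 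A real-valued concave function that is finite on an open interval is continuous there, so $f$ is continuous at $0$, which is the assertion.

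For (4), write $\overline{E}=\overline{D}_1-\overline{D}_2$, which is pseudo-effective, and fix a big adelic $\bR$-divisor $\overline{B}$ on $X$ (such exist, e.g. as in Remark \ref{rema:DSP}). For each $\varepsilon>0$ the adelic $\bR$-divisor $\varepsilon\overline{B}$ is big, since $\widehat{\vol}(\varepsilon\overline{B})=\varepsilon^{d+1}\widehat{\vol}(\overline{B})>0$; hence $\overline{E}+\varepsilon\overline{B}$ is big, and the case $t=0$ of (3) gives $\mu^{\ess}(\overline{E}+\varepsilon\overline{B})\ge 0$. By (1), $\mu^{\ess}(\overline{D}_1+\varepsilon\overline{B})=\mu^{\ess}\big(\overline{D}_2+(\overline{E}+\varepsilon\overline{B})\big)\ge\mu^{\ess}(\overline{D}_2)+\mu^{\ess}(\overline{E}+\varepsilon\overline{B})\ge\mu^{\ess}(\overline{D}_2)$, and letting $\varepsilon\to 0$ while using the continuity in (2) (valid because $D_1$ is big) gives $\mu^{\ess}(\overline{D}_1)\ge\mu^{\ess}(\overline{D}_2)$. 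The only step needing care is (2): the role of the hypothesis ``$D_1$ big'', here and in (4), is precisely to keep $\mu^{\ess}$ finite --- and thus the concave function $f$ continuous --- throughout a neighbourhood of $\overline{D}_1$; without it, $f$ could be finite at $0$ yet discontinuous there. I do not foresee any genuine obstacle beyond bookkeeping with the $\pm\infty$ conventions.
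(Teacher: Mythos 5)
Your proof is correct, and for the two parts the paper actually argues --- item \eqref{item:essminlowerbound} via \eqref{eq:heightlowerbound} and item \eqref{item:essminincreases} via a big auxiliary $\overline{B}$, the superadditivity applied to $\overline{D}_2$ and $\overline{D}_1+\varepsilon\overline{B}-\overline{D}_2$, and then $\varepsilon\to 0$ using \eqref{item:essmincont} --- it is exactly the paper's argument. For items \eqref{item:essminsuperadd} and \eqref{item:essmincont} the paper simply cites \cite[Lemma 3.15]{Ba:Essmin}, so there you go beyond it by supplying self-contained proofs: superadditivity by taking the union of the two exceptional closed subsets, and continuity by observing that $\lambda\mapsto\mu^{\ess}(\overline{D}_1+\lambda\overline{D}_2)$ is concave (superadditivity plus homogeneity), never $+\infty$, and finite near $0$ because bigness of $D_1$ is an open condition and $R(D_1+\lambda D_2)\ne\{0\}$ forces finiteness; both arguments are sound and standard. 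The only point worth flagging is cosmetic: in \eqref{item:essminlowerbound} you correctly read the hypothesis $R^{t}(\overline{D}_1)\ne\{0\}$ as producing a nonzero small section in some degree $m\ge 1$, which is the intended meaning (degree $0$ is vacuous there), and your verification that $h_{\overline{D}_1}\ge t$ off $\supp(\div(s))$ matches the paper's one-line justification.
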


\begin{proof}
  The first two points can be found in \cite[Lemma 3.15]{Ba:Essmin},
  whereas the third is a direct consequence of the inequality
  \eqref{eq:heightlowerbound}.

  For \eqref{item:essminincreases} choose
  $\overline{B} \in \widehat{\Div}(X)_{\bR}$ big and
  $\varepsilon > 0$. Then
  $\overline{D}_1 + \varepsilon \overline{B} - \overline{D}_2 $ is big
  and therefore
  $R^0(\overline{D}_1 + \varepsilon \overline{B} - \overline{D}_2) \ne
  0$. We  deduce from  \eqref{item:essminsuperadd} and
  \eqref{item:essminlowerbound}
\begin{displaymath}
\mu^{\ess}(\overline{D}_1 + \varepsilon \overline{B}) \ge \mu^{\ess}(\overline{D}_1 + \varepsilon \overline{B} - \overline{D}_2) +   \mu^{\ess}(\overline{D}_2) \ge \mu^{\ess}(\overline{D}_2)
\end{displaymath}
and  conclude by letting $\varepsilon\to 0$ and using
\eqref{item:essmincont}.
\end{proof}

The next result characterizes the essential mimimum of an adelic
$\mathbb{R}$-divisor with big geometric $\mathbb{R}$-divisor. It was
obtained by the first author \cite[Theorem~1.1]{Ba:Essmin} as a
consequence of a theorem of Ikoma \cite{Ikoma:IMRN}, assuming that
$\overline{D}$ is semipositive. This condition was later removed
thanks to the independent works of Qu and Yin \cite{QuYin} and
Szachniewicz \cite{Szachniewicz}. The next version follows by
combining \cite[Lemma 3.3.5 and Theorem~3.3.7]{Szachniewicz}.

\begin{theorem}
  \label{thm:Essmin}
We have
\begin{displaymath}
\mu^{\ess}(\overline{D}) \le \sup\{ t \in \bR \, | \ \overline{D}(t) \text{ is pseudo-effective}\},
\end{displaymath}  
with equality if $D$ is big. In that case we also have
\begin{displaymath}
\mu^{\mathrm{ess}}(\overline{D})  = \sup \{ t \in \bR \, | \ \overline{D}(t) \text{ is big}\}
 = \sup \{ t \in \bR \, | \ R^t(\overline{D}) \ne 0\}.
\end{displaymath}
\end{theorem}

We  will also use  the next variants of Zhang's lower bound for the
essential minimum \cite[Theorem~5.2]{Zhang:positive}.

\begin{theorem}
  \label{thm:Zhang}
  If $D$ is big then
\begin{displaymath}
 \mu^{\mathrm{ess}}(\overline{D}) \ge \frac{\widehat{\vol}_{\chi}(\overline{D})}{(d+1) \vol(D)}.
\end{displaymath}
Moreover, if $\overline{D}$ is big then 
\begin{displaymath}
\mu^{\ess}(\overline{D}) \ge \frac{\widehat{\vol}(\overline{D})}{(d+1) \vol(R^0(\overline{D}))} \ge \frac{\widehat{\vol}(\overline{D})}{(d+1) \vol(D)}.
\end{displaymath}
\end{theorem}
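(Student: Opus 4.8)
The plan is to derive both statements from Chen's integral formula (Theorem~\ref{thm:ChenIntegral}) and the description of $\mu^{\ess}$ as a supremum of bigness thresholds (Theorem~\ref{thm:Essmin}); with these deep inputs available the rest is essentially bookkeeping. I would start with the second chain of inequalities, assuming $\overline{D}$ big (so that $D$ is big, $\mu^{\ess}(\overline{D})\in\bR$, and Theorem~\ref{thm:ChenIntegral} applies). Put $\mu=\mu^{\ess}(\overline{D})$. Bigness of $\overline{D}$ gives $R^{0}(\overline{D})\neq 0$, hence $\mu\ge 0$ by Lemma~\ref{lemma:propertiesessmin}\eqref{item:essminlowerbound}, while Theorem~\ref{thm:Essmin} yields $R^{t}(\overline{D})=0$ for every $t>\mu$. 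Moreover, for $0\le t$ the subspaces $R^{t}_{m}(\overline{D})\subseteq\Gamma(X,mD)$ decrease with $t$, so $t\mapsto\vol(R^{t}(\overline{D}))$ is non-increasing on $[0,\infty)$; in particular $\vol(R^{0}(\overline{D}))>0$ and $\vol(R^{t}(\overline{D}))\le\vol(R^{0}(\overline{D}))$ for all $t\ge 0$. Substituting into Theorem~\ref{thm:ChenIntegral},
\begin{displaymath}
\widehat{\vol}(\overline{D})=(d+1)\int_{0}^{\mu}\vol(R^{t}(\overline{D}))\,dt\le(d+1)\,\mu\,\vol(R^{0}(\overline{D})),
\end{displaymath}
whence $\mu^{\ess}(\overline{D})\ge\widehat{\vol}(\overline{D})/\bigl((d+1)\vol(R^{0}(\overline{D}))\bigr)$; and $R^{0}_{m}(\overline{D})\subseteq\Gamma(X,mD)$ forces $\vol(R^{0}(\overline{D}))\le\vol(D)$, which together with $\widehat{\vol}(\overline{D})>0$ gives the last inequality of the chain. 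Combining with the Minkowski-type bound $\widehat{\vol}(\overline{D})\ge\widehat{\vol}_{\chi}(\overline{D})$ recalled in §\ref{subsec:arithmvol} then yields the $\widehat{\vol}_{\chi}$-inequality when $\overline{D}$ is big.

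It remains to prove the $\widehat{\vol}_{\chi}$-inequality assuming only $D$ big. Here I would twist by $[\infty]$: since $D$ is big, $\mu^{\ess}(\overline{D})$ is a real number, and by Theorem~\ref{thm:Essmin} the adelic $\bR$-divisor $\overline{D}_{s}:=\overline{D}+s[\infty]$ is big for every real $s>-\mu^{\ess}(\overline{D})$. The geometric divisor of $\overline{D}_{s}$ is still $D$, one has $\mu^{\ess}(\overline{D}_{s})=\mu^{\ess}(\overline{D})+s$, and $\widehat{\vol}_{\chi}(\overline{D}_{s})=\widehat{\vol}_{\chi}(\overline{D})+(d+1)s\,\vol(D)$. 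Applying the already established big case to $\overline{D}_{s}$ gives $\mu^{\ess}(\overline{D})+s\ge\widehat{\vol}_{\chi}(\overline{D})/\bigl((d+1)\vol(D)\bigr)+s$, and cancelling $s$ proves the inequality in general.

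The one computation that needs a little care is the identity $\widehat{\vol}_{\chi}(\overline{D}+s[\infty])=\widehat{\vol}_{\chi}(\overline{D})+(d+1)s\,\vol(D)$. At finite level there is the exact relation $\widehat{\chi}(X,m(\overline{D}+s[\infty]))=\widehat{\chi}(X,m\overline{D})+ms[K:\bQ]\dim_{K}\Gamma(X,mD)$, since replacing $\overline{D}$ by $\overline{D}+s[\infty]$ rescales the adelic unit ball by the factor $e^{ms}$ in each Archimedean coordinate; the $\limsup$ defining $\widehat{\vol}_{\chi}$ then splits off the additional term because $d!\,m^{-d}\dim_{K}\Gamma(X,mD)$ converges to $\vol(D)$ — and this convergence, rather than a mere $\limsup$, is exactly where the bigness of $D$ enters. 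Beyond this I do not foresee a genuine obstacle: Theorems~\ref{thm:ChenIntegral} and~\ref{thm:Essmin} carry the weight, and everything else is formal manipulation of the properties of $\widehat{\vol}$, $\widehat{\vol}_{\chi}$ and $\mu^{\ess}$ recalled above.
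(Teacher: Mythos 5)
Your proof is correct, and for the second chain of inequalities it is essentially the paper's own argument: the authors likewise combine Chen's formula (Theorem \ref{thm:ChenIntegral}) with Lemma \ref{lemma:propertiesessmin}\eqref{item:essminlowerbound} to get $(d+1)\vol(R^0(\overline{D}))\,\mu^{\ess}(\overline{D}) \ge (d+1)\int_0^{\mu^{\ess}(\overline{D})}\vol(R^t(\overline{D}))\,dt = \widehat{\vol}(\overline{D})$, and then use $\vol(R^0(\overline{D}))\le\vol(D)$. Where you genuinely diverge is the first inequality: the paper does not prove it at all but cites Zhang's theorem for the ample semipositive case and its generalization in \cite[Theorem 7.2]{Ba:Essmin}, whereas you deduce it from the big case by twisting with $s[\infty]$ for $s>-\mu^{\ess}(\overline{D})$ (bigness of $\overline{D}+s[\infty]$ coming from Theorem \ref{thm:Essmin}), together with Minkowski's inequality $\widehat{\vol}\ge\widehat{\vol}_{\chi}$ and the additivity $\widehat{\vol}_{\chi}(\overline{D}+s[\infty])=\widehat{\vol}_{\chi}(\overline{D})+(d+1)s\vol(D)$, which you correctly reduce to the finite-level identity for $\widehat{\chi}$ and the fact that $\dim_K\Gamma(X,mD)\,d!/m^d\to\vol(D)$ since $D$ is big. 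This buys self-containedness: modulo the already quoted inputs (Chen's formula, Theorem \ref{thm:Essmin}, Minkowski), you avoid importing the generalized Zhang inequality of \cite{Ba:Essmin}, at the cost of a short extra computation; the paper's route is shorter but rests on an external black box. Two cosmetic remarks: the positivity $\vol(R^0(\overline{D}))>0$ is not a consequence of monotonicity alone, but follows immediately from $\widehat{\vol}(\overline{D})>0$ via Chen's formula (or simply from the fact that the product $(d+1)\mu^{\ess}(\overline{D})\vol(R^0(\overline{D}))$ dominates a positive number), and in the degenerate case $\widehat{\vol}_{\chi}(\overline{D})=-\infty$ the first inequality is trivial, so your additivity claim is only needed when the $\chi$-volume is finite.
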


\begin{proof}
  % In the case where $D$ is ample and $\overline{D}$ is
  % semipositive, the first statement is given by Zhang's inequality
  % \cite[Theorem 5.2]{Zhang:positive}. 
The first inequality is  \cite[Theorem 7.2(1)]{Ba:Essmin}. The second follows
  from Theorem \ref{thm:ChenIntegral} and Lemma
  \ref{lemma:propertiesessmin}\eqref{item:essminlowerbound}, which
  imply
  \begin{displaymath}
    \widehat{\vol}(\overline{D})= (d+1) \int_{0}^{\mu^{\ess}(\overline{D})} \vol(R^t(\overline{D})) dt  \le  
    (d+1)\vol(R^0(\overline{D})) \, \mu^{\mathrm{ess}}(\overline{D}).
\end{displaymath}
\end{proof}

Under mild positivity assumptions, the condition that Zhang's
lower bound is an equality is equivalent to the fact that the essential
 and the absolute minima coincide.

\begin{theorem}[{\cite[Theorem 6.6]{Ba:Okounkov}}]
  \label{thm:criterionextremal}
  Assume that $\overline{D}$ is semipositive and that $D$ is
  big. Then
\begin{displaymath}
 \mu^{\mathrm{ess}}(\overline{D}) = \frac{(\overline{D}^{d+1})}{(d+1) \, (D^d)} \quad \text{ if and only if } \quad \mu^{\ess}(\overline{D}) = \mu^{\abs}(\overline{D}).
\end{displaymath}
\end{theorem}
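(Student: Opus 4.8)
\emph{Reduction.} First I would reduce to the case where $\overline{D}$ is nef. Since $h_{[\infty]}\equiv 1$ on $X(\overline{K})$ and, by \eqref{eq:arithmeticvsgeometricintersection}, $([\infty]^{2}\cdot\overline{E}_{3}\cdots\overline{E}_{d+1})=0$ for all $\overline{E}_{i}\in\widehat{\Div}(X)_{\bR}$, replacing $\overline{D}$ by $\overline{D}-\mu^{\abs}(\overline{D})[\infty]$ subtracts $\mu^{\abs}(\overline{D})$ from each of $\mu^{\ess}(\overline{D})$, $\mu^{\abs}(\overline{D})$ and $\tfrac{(\overline{D}^{d+1})}{(d+1)(D^{d})}$, so both conditions in the statement are preserved. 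Thus we may assume $\mu^{\abs}(\overline{D})=0$, i.e. $\overline{D}$ is nef (recall $\overline{D}$ semipositive and $D$ big force $D$ nef and $(D^{d})=\vol(D)>0$). With this, the implication ``$\mu^{\ess}(\overline{D})=\mu^{\abs}(\overline{D})\Rightarrow$ equality'' is a sandwich: Theorem~\ref{thm:Zhang} together with $\widehat{\vol}_{\chi}(\overline{D})=(\overline{D}^{d+1})$ (Theorem~\ref{thm:arithmHodgeMoriwaki}) gives $\mu^{\ess}(\overline{D})\ge\tfrac{(\overline{D}^{d+1})}{(d+1)(D^{d})}$, while Lemma~\ref{lemma:ineqheightnef} applied to the nef divisor $\overline{D}-\mu^{\abs}(\overline{D})[\infty]$ with $Z=X$ gives $(\overline{D}^{d+1})-(d+1)\mu^{\abs}(\overline{D})(D^{d})=h_{\overline{D}-\mu^{\abs}(\overline{D})[\infty]}(X)\ge 0$, i.e. $\tfrac{(\overline{D}^{d+1})}{(d+1)(D^{d})}\ge\mu^{\abs}(\overline{D})$. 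So $\mu^{\abs}(\overline{D})\le\tfrac{(\overline{D}^{d+1})}{(d+1)(D^{d})}\le\mu^{\ess}(\overline{D})$, and equality of the two extremes forces equality in Zhang's bound.

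\emph{Converse, first steps.} Now assume $\overline{D}$ is nef and $\mu^{\ess}(\overline{D})=\tfrac{(\overline{D}^{d+1})}{(d+1)(D^{d})}=:c$; we must show $\mu^{\abs}(\overline{D})=c$. If $c=0$ then $(\overline{D}^{d+1})=0$, and since $\mu^{\abs}(\overline{D})\le\mu^{\ess}(\overline{D})=0$ while $\overline{D}$ nef forces $\mu^{\abs}(\overline{D})\ge 0$, we are done. So assume $c>0$; then $\widehat{\vol}(\overline{D})=(\overline{D}^{d+1})=(d+1)c(D^{d})>0$ (Theorem~\ref{thm:arithmHodgeMoriwaki}), so $\overline{D}$ is big. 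Combining Theorem~\ref{thm:ChenIntegral}, the vanishing $\vol(R^{t}(\overline{D}))=0$ for $t>c$ (Theorem~\ref{thm:Essmin}), and the monotone bound $\vol(R^{t}(\overline{D}))\le\vol(R^{0}(\overline{D}))\le\vol(D)=(D^{d})$ valid for $t\ge 0$, we obtain
\[
(d+1)c(D^{d})=\widehat{\vol}(\overline{D})=(d+1)\int_{0}^{c}\vol(R^{t}(\overline{D}))\,dt\le (d+1)c(D^{d}).
\]
Hence equality holds throughout, forcing $\vol(R^{t}(\overline{D}))=\vol(D)$ for every $t\in[0,c)$. In the language of the arithmetic Okounkov bodies of Boucksom--Chen \cite{BC}, this means the concave transform $G_{\overline{D}}$ on $\Delta(D)$ has full-measure super-level sets $\{G_{\overline{D}}\ge t\}$ for all $t<c$, while $\sup G_{\overline{D}}=\mu^{\ess}(\overline{D})=c$; by concavity $G_{\overline{D}}$ is the constant function $c$.

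\emph{Conclusion and main obstacle.} It remains to deduce $\mu^{\abs}(\overline{D})=c$, and I expect this to be the crux: the absolute minimum is an infimum of heights over \emph{all} algebraic points and is not detected by the asymptotic section data $\vol(R^{t}(\overline{D}))$ at a single level $t$. The plan is to localise. For each $t<c$ and each subvariety $Z\subsetneq X$, the equality $\vol(R^{t}(\overline{D}))=\vol(D)$ should propagate to an equality of \emph{restricted} volumes along $Z$, so that $\overline{D}|_{Z}-t[\infty]$ is big on $Z$ and $\mu^{\ess}(\overline{D}|_{Z})\ge t$ (Theorem~\ref{thm:Essmin}); letting $t\to c$ gives $\mu^{\ess}(\overline{D}|_{Z})\ge c$ for every subvariety $Z$, including $0$-dimensional ones. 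Then, choosing a sequence $(x_{\ell})_{\ell}$ in $X(\overline{K})$ with $h_{\overline{D}}(x_{\ell})\to\mu^{\abs}(\overline{D})$ and passing to a subsequence, either it is generic, in which case $\mu^{\abs}(\overline{D})=\lim_{\ell}h_{\overline{D}}(x_{\ell})\ge\mu^{\ess}(\overline{D})=c$, or it is generic in some subvariety $Z$, in which case $\mu^{\abs}(\overline{D})=\lim_{\ell}h_{\overline{D}|_{Z}}(x_{\ell})\ge\mu^{\ess}(\overline{D}|_{Z})\ge c$; since $\mu^{\abs}(\overline{D})\le\mu^{\ess}(\overline{D})=c$ always, we conclude $\mu^{\abs}(\overline{D})=c=\mu^{\ess}(\overline{D})$. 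The delicate ingredient is precisely the passage from full volume of $R^{t}(\overline{D})$ on $X$ to full restricted volume along $Z$ — the arithmetic analogue of the Ein--Lazarsfeld--Mustata--Nakamaye--Popa restricted-volume formalism; alternatively, the whole conclusion can be packaged as the identity $\mu^{\abs}(\overline{D})=\inf_{\Delta(D)}G_{\overline{D}}$, which together with $G_{\overline{D}}\equiv c$ gives $\mu^{\abs}(\overline{D})=c$ at once.
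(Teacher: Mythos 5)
The paper does not actually prove this statement: it is quoted from \cite[Theorem 6.6]{Ba:Okounkov}, so your proposal has to be judged on its own. Your forward direction is correct and complete: the sandwich $\mu^{\abs}(\overline{D}) \le (\overline{D}^{d+1})/((d+1)(D^d)) \le \mu^{\ess}(\overline{D})$ follows from Lemma \ref{lemma:ineqheightnef} with $Z=X$ and from Theorem \ref{thm:Zhang} combined with Theorem \ref{thm:arithmHodgeMoriwaki}, valid for any semipositive $\overline{D}$ with $D$ big, so equality of the two minima forces Zhang equality (you do not even need the nef reduction here; note also that replacing $\overline{D}$ by $\overline{D}-\mu^{\abs}(\overline{D})[\infty]$ tacitly assumes $\mu^{\abs}(\overline{D})>-\infty$, which is not automatic for a merely semipositive $\overline{D}$ with $D$ big and would itself need an argument in the converse direction). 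The first steps of your converse are also fine: Zhang equality together with Theorems \ref{thm:arithmHodgeMoriwaki}, \ref{thm:ChenIntegral} and \ref{thm:Essmin} does force $\vol(R^t(\overline{D}))=\vol(D)$ for all $0\le t<c$, equivalently the Boucksom--Chen concave transform $G_{\overline{D}}$ is constant equal to $c$ on $\Delta(D)$.

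The genuine gap is the step you yourself flag as the crux: passing from this to $\mu^{\abs}(\overline{D})\ge c$. Neither of your proposed routes is available. The ``propagation to restricted volumes'' cannot work as stated: $\vol(R^t(\overline{D}))$ is insensitive to the behaviour of small sections along a fixed proper subvariety $Z$ --- all sections of $R^t_m(\overline{D})$ could vanish along $Z$ (for instance if $Z$ lies in the augmented base locus of $D$) without lowering the volume --- so full volume on $X$ does not yield bigness, or even nontriviality, of the restricted data on $Z$, and no arithmetic analogue of the Ein--Lazarsfeld--Musta\c{t}\u{a}--Nakamaye--Popa formalism giving such a transfer is proved in the paper or invoked correctly here. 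The alternative packaging $\mu^{\abs}(\overline{D})=\inf_{\Delta(D)}G_{\overline{D}}$ is not an identity you may quote either: for semipositive adelic $\bR$-divisors this lower bound on the absolute minimum (equivalently, the statement that $G_{\overline{D}}\ge 0$ on $\Delta(D)$ forces $h_{\overline{D}}(x)\ge 0$ at \emph{every} point, including points invisible to the section data) is precisely the nontrivial content of \cite{Ba:Okounkov} from which Theorem \ref{thm:criterionextremal} is taken, and it is exactly where semipositivity must be used in an essential way; it is true in the toric case via roof functions, but your sketch does not supply any argument in general. So the converse direction is incomplete: everything up to $G_{\overline{D}}\equiv c$ is correct, but the decisive step is missing.
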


\subsection{Positive linear functionals on ${G_v}$-invariant
  functions}\label{subsec:positivelinearfunctionals}

Let $v \in \mathfrak{M}_K$ and recall that $ C(X_v^{\an})^{G_v}$
denotes the space of $G_{v}$-invariant continuous real-valued
functions on $X_{v}^{\an}$.  By the Riesz representation theorem, any
positive linear functional on $C(X_v^{\an})$ corresponds to a measure
on $X_{v}^{\an}$. We need the following variant of this result for
positive linear functionals on $C(X_v^{\an})^{G_v}$.

\begin{lemma}
  \label{lem:extendlinearfunctionals}
  Let $\Lambda \colon C(X_v^{\an})^{G_v} \rightarrow \bR$ be a
  positive linear functional. 
\begin{enumerate}[leftmargin=*]
\item\label{item:extendlinearfunctional} There is a unique
  $G_{v}$-invariant measure $\nu$ on $X_{v}^{\an}$ such that
  \begin{equation}
    \label{eq:11}
   \Lambda(\varphi)=  \int_{X_v^{\an}} \varphi \, d\nu \quad \text{ for all } \varphi \in  C(X_v^{\an})^{G_v}.
  \end{equation} 
\item\label{item:weakconvergenceGalinv} If $(\nu_{n})_{n}$ is a
  sequence of $G_v$-invariant measures on $X_v^{\an}$ such that
\begin{displaymath}
\lim_{n \to \infty} \int_{X_v^{\an}} \varphi \, d\nu_{n} = \Lambda(\varphi) \quad \text{ for all } \varphi \in  C(X_v^{\an})^{G_v}, 
\end{displaymath}
then $\lim_{n\to\infty}\nu_{n}=\nu$. 
\end{enumerate}
\end{lemma}

\begin{proof} For \eqref{item:extendlinearfunctional}, let
  $V \subset C(X_v^{\an})$ be the subspace of continuous real-valued
  functions on $X_v^{\an}$ that are
  $\Gal(\overline{K}_v/K'_v)$-invariant for some finite extension
  $K'/K$. By \cite[``Equivalence'' at page 638]{Yuan:big} (see also
  \cite[Proposition 2.11 and Theorem 2.13]{GualdiMartinez}) this
  subspace is dense in $C(X_v^{\an})$ with respect to the supremum
  norm.

  For each $\varphi \in V$ let $K'/K$ be a finite extension such that
  $\varphi$ is $\Gal(\overline{K}_v/K'_v)$-invariant and set
\begin{displaymath}
  \widetilde{\varphi} = \frac{1}{\# \Gal(K'_v/K_v)} \sum_{\sigma \in \Gal(K'_v/K_v)} \sigma^*\varphi .
\end{displaymath}
This is an element of $C(X_{v}^{\an})^{G_{v}}$ that does not depend on
the choice of this finite extension.  We extend $\Lambda$ to a
$G_v$-invariant positive linear functional
$\widetilde{\Lambda}\colon V \to \mathbb{R}$ by setting
$\widetilde{\Lambda}(\varphi) = \Lambda(\widetilde{\varphi})$, and
then by continuity to a $G_v$-invariant positive linear functional on
$C(X_v^{\an})$. By the Riesz representation theorem there is a
$G_{v}$-invariant measure~$\nu$ on $X_{v}^{\an}$ that satisfies
\eqref{eq:11}.

If $\nu'$ is another $G_{v}$-invariant measure on $X_{v}^{\an}$
satisfying this equality on $C(X_{v}^{\an})^{G_{v}}$ then it also does
on $V$. Therefore $\nu'=\nu$ by the density of this subspace.

To prove  \eqref{item:weakconvergenceGalinv}, for each $\varphi \in V$ we have 
\begin{displaymath}
  \int_{X_v^{\an}} \varphi \, d\nu_{n} =  \int_{X_v^{\an}} \widetilde{\varphi} \, d\nu_{n} \  \xlongrightarrow[n \to \infty]{} \
\int_{X_v^{\an}} \widetilde{\varphi} \, d\nu =  \int_{X_v^{\an}} \varphi \, d\nu
\end{displaymath}
by the $G_v$-invariance of the measures $\nu_{n}$,
$n\in \mathbb{N}$, and $\nu$. The statement then follows from the
density of $V$.
\end{proof} 
 
Finally we recall a basic instance of Lemma
\ref{lem:extendlinearfunctionals}\eqref{item:extendlinearfunctional}. For
$\varphi \in C(X_v^{\an})^{G_v}$ we have that $\varphi$ is a $v$-adic
Green function over the zero divisor of $X$, and we define the adelic
divisor $\overline{0}^{\varphi}$ by equipping this divisor with $\varphi$
at $v$ and the zero function at every other place. The assignment
\begin{equation}
  \label{eq:27}
\varphi \longmapsto \overline{0}^\varphi 
\end{equation}
gives an inclusion
$C(X_v^{\an})^{G_v} \hookrightarrow \widehat{\Div}(X)_{\bR}$.  Now if
$\overline{D}$ is a semipositive adelic $\mathbb{R}$-divisor on $X$,
for each $\varphi\in C(X_{v}^{\an})^{G_{v}}$ we have
\begin{equation}\label{eq:intersectionvarphi}
(\overline{D}^d \cdot \overline{0}^{\varphi}) = n_v \int_{X_v^{\an}} \varphi \, c_1(\overline{D}_v)^{\wedge d}
\end{equation} 
by the formula in \eqref{eq:12}. Hence the positive linear functional
$C(X_v^{\an})^{G_v} \to \mathbb{R}$ defined by
$\varphi\mapsto (\overline{D}^d \cdot \overline{0}^{\varphi}) $ is
represented by the $G_{v}$-invariant measure
$ n_v \, c_1(\overline{D}_v)^{\wedge d}$. 

\section{Fujita approximations and positive intersection
  numbers}\label{sec:diffvol}

In this section we revisit Chen's theorem on the differentiability of
the arithmetic volume \cite{Chen:diffvol} to express the derivative of
this function at a big adelic $\mathbb{R}$-divisor in terms of
arithmetic Fujita approximations. This will allow us to characterize
in terms of such approximations the arithmetic positive intersection
numbers that are relevant to our results.
 
\subsection{Differentiability of concave functions}\label{sec:diff-conc-funct}

We first recall the notion of differentiability of functions on real
vector spaces and its relation to concavity.  Our ambient will be a
real vector space $V$ endowed with the topology defined by declaring
that a subset $U\subset V$ is open whenever its restriction to any
finite-dimensional affine subspace $H\subset V$ is open with respect
to the Euclidean topology of $H$.

\begin{definition}
  \label{def:diff}  
  Let $\Phi\colon U \to \mathbb{R}$ be a function on an open subset
  $U\subset V$. For a point $x\in U$ and a linear subspace
  $W\subset V$ we say that $\Phi$ is \emph{differentiable at $x$ along
    $W$} if for all $z\in W$ the one-sided derivative
  \begin{displaymath}
    \partial_{z} \Phi(x)=\lim_{\lambda \to 0^{+}} \frac{\Phi(x+\lambda\,z)-\Phi(x)}{\lambda}
  \end{displaymath}
  exists in $\bR$ and the map
  $ z\in W\to \partial_{z} \Phi(x) \in \bR$ is linear. When $W=V$, we
  simply say that $\Phi$ is differentiable at $x$.
\end{definition}

A real-valued function $\Phi$ on a subset $ U\subset V$ is
\emph{concave} when $U$ is convex and
\begin{displaymath}
  \Phi(\lambda x+(1-\lambda)y) \ge \lambda\Phi(x)+(1-\lambda)\Phi(y) \quad \text{ for all }  x,y\in U \text{ and }  0\le \lambda\le 1.
\end{displaymath}

\begin{lemma}\label{lemma:diffconcave} 
  Let $\Phi \colon U\to \mathbb{R}$ be a concave function on an open
  convex subset of $ V$ and let $x\in U$. Then
  \begin{enumerate}[leftmargin=*]
  \item\label{item:diffconcaveineq} for all $z\in V$ the one-sided
    derivative $\partial_{z} \Phi(x)$ exists in $\bR$ and 
     $\partial_{z} \Phi(x) \le -\partial_{-z} \Phi(x)$,
  \item\label{item:diffconcavealldirections} the function $\Phi$ is
    differentiable at $x$ along a linear subspace $W\subset V$ if and only if
    $\partial_{z} \Phi(x) = -\partial_{-z} \Phi(x)$ for all $z\in W$.
\end{enumerate}
\end{lemma}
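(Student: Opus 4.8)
The plan is to reduce both assertions to elementary one-variable convex analysis by restricting $\Phi$ to the affine lines through $x$. Fix $z\in V$ and set $\psi(\lambda)=\Phi(x+\lambda z)$; since $U$ is open and convex, $\psi$ is a concave function on an open interval $I\ni 0$, and in particular $\psi$ is defined for small $\lambda$ of \emph{both} signs, which is what makes $\partial_{-z}\Phi(x)$ meaningful. The only input I need is the three-chord inequality for one-variable concave functions: for $a<b<c$ in $I$,
\[
\frac{\psi(b)-\psi(a)}{b-a}\ \ge\ \frac{\psi(c)-\psi(a)}{c-a}\ \ge\ \frac{\psi(c)-\psi(b)}{c-b}.
\]

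For part \eqref{item:diffconcaveineq}: applying the inequality to $0<\lambda_1<\lambda_2$ shows that $\lambda\mapsto(\psi(\lambda)-\psi(0))/\lambda$ is nondecreasing as $\lambda\downarrow 0$, while applying it to $-\varepsilon<0<\lambda$ shows it is bounded above by $(\psi(0)-\psi(-\varepsilon))/\varepsilon$; hence $\partial_{z}\Phi(x)=\lim_{\lambda\to 0^{+}}(\psi(\lambda)-\psi(0))/\lambda$ exists in $\bR$. Applying the inequality instead to the triple $-\lambda<0<\mu$ gives $\tfrac{\psi(0)-\psi(-\lambda)}{\lambda}\ge\tfrac{\psi(\mu)-\psi(0)}{\mu}$ for all $\lambda,\mu>0$; letting $\lambda\downarrow 0$ and then $\mu\downarrow 0$ yields $-\partial_{-z}\Phi(x)\ge\partial_{z}\Phi(x)$.

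For part \eqref{item:diffconcavealldirections}: the forward implication is immediate, since linearity of $z\mapsto\partial_{z}\Phi(x)$ on $W$ forces $\partial_{-z}\Phi(x)=-\partial_{z}\Phi(x)$. For the converse, assume $\partial_{z}\Phi(x)=-\partial_{-z}\Phi(x)$ for all $z\in W$. Positive homogeneity $\partial_{tz}\Phi(x)=t\,\partial_{z}\Phi(x)$ for $t\ge 0$ follows from the definition by rescaling $\lambda$, and combined with the hypothesis it extends to all $t\in\bR$. Superadditivity $\partial_{z+w}\Phi(x)\ge\partial_{z}\Phi(x)+\partial_{w}\Phi(x)$ follows from concavity applied to $x+\lambda(z+w)=\tfrac12(x+2\lambda z)+\tfrac12(x+2\lambda w)$, dividing by $\lambda$ and letting $\lambda\downarrow 0$. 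Applying superadditivity to $-z$ and $-w$ and invoking the hypothesis turns it into the reverse inequality, so $\partial_{\cdot}\Phi(x)$ is additive, hence linear, on $W$.

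No step here is a real obstacle; the proof is entirely one-dimensional convex analysis pulled back along affine lines. The only points requiring care are checking that the affine line $\{x+\lambda z\}$ meets $U$ in an open interval about $0$ (this is precisely where openness of $U$ enters) and tracking the directions of the inequalities in the three-chord lemma and in the limit passages.
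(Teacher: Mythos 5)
Your proof is correct and follows essentially the same route as the paper: part (1) via monotonicity and comparison of one-sided difference quotients of the restriction of $\Phi$ to the line $\lambda\mapsto x+\lambda z$, and part (2) via superadditivity of $z\mapsto\partial_z\Phi(x)$ from concavity, upgraded to additivity by the hypothesis and combined with positive homogeneity. No gaps; your explicit remark that positive homogeneity extends to all real scalars using the hypothesis is a point the paper leaves implicit.
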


\begin{proof}
  For \eqref{item:diffconcaveineq} let $a > 0$ be a real number
  sufficiently small so that $x+\lambda z \in U$ for all
  $-a< \lambda < a$. Let $\iota\colon (-a,a)\to V$ be the inclusion
  map defined as $\iota(\lambda)=x+\lambda z$, so that the function
  $\iota^{*}\Phi \colon (-a,a)\to \mathbb{R}$ is concave. Then the
  functions~$r_{-},r_{+}\colon (0,a)\to \mathbb{R}$ respectively
  defined as
  \begin{displaymath}
    r_{-}(\lambda)= \frac{\iota^{*}\Phi(-\lambda)-\iota^{*}\Phi(0)}{-\lambda} \and
      r_{+}(\lambda)= \frac{\iota^{*}\Phi(\lambda)-\iota^{*}\Phi(0)}{\lambda} 
  \end{displaymath}
  verify that $r_{-}$ is non-decreasing, $r_{+}$ is non-increasing,
  and $r_{-}(\lambda)\ge r_{+}(\lambda)$ for every $\lambda$.  Hence
  both converge when $\lambda\to 0^{+}$ and 
  \begin{displaymath}
-       \partial_{-z} \Phi(x)=\lim_{\lambda \to 0^{+}} r_{-}(\lambda)\ge \lim_{\lambda \to 0^{+}} r_{+}(\lambda) 
       =\partial_{z} \Phi(x).
  \end{displaymath}

  For \eqref{item:diffconcavealldirections} it is clear that the
  differentiability of $\Phi$ at $x$ along $W$ implies that
  $\partial_{z} \Phi(x) = -\partial_{-z} \Phi(x)$ for all $z\in
  W$. Conversely, let $z_{1},z_{2}\in W$ and suppose that this
  condition holds for these two vectors. For $\lambda>0$ small we
  deduce from the concavity of $\Phi$ that
  \begin{displaymath}
    \frac{\Phi(x+\lambda(z_{1}+z_{2}))-\Phi(x)}{\lambda} \ge 
    \frac{\Phi(x+2\lambda z_{1})-\Phi(x)}{2\lambda} +\frac{\Phi(x+2\lambda z_{2})-\Phi(x)}{2\lambda}.
  \end{displaymath}
  Letting $\lambda\to 0^{+}$ we get
  \begin{math}
   \partial_{z_{1}+z_{2}}\Phi(x)\ge
  \partial_{z_{1}}\Phi(x)+\partial_{z_{2}}\Phi(x). 
  \end{math}
  Applying this inequality to $-z_{1},-z_{2}$ together with
  \eqref{item:diffconcaveineq} we obtain
  \begin{displaymath}
    -\partial_{-z_{1}}\Phi(x)-\partial_{-z_{2}}\Phi(x)\ge -\partial_{-z_{1}-z_{2}}\Phi(x)\ge    \partial_{z_{1}+z_{2}}\Phi(x)\ge
  \partial_{z_{1}}\Phi(x)+\partial_{z_{2}}\Phi(x).   
  \end{displaymath}
  By assumption the extremes in these inequalities coincide, and so
  $ \partial_{z_{1}+z_{2}}\Phi(x)=
  \partial_{z_{1}}\Phi(x)+\partial_{z_{2}}\Phi(x)$. In addition, the
  one-sided derivative is positive homogeneous of degree $1$ and so
  linear, as stated.
\end{proof}

\subsection{The differentiability of the arithmetic
  volume}\label{subsec:diffarithmvol}

Let $\overline{D} $ be a big adelic $\bR$-divisor on $X$.

\begin{definition}
  \label{def:9}
A \emph{Fujita approximation sequence} of $\overline{D}$ is a sequence 
$(\phi_n, \overline{P}_n)_{n}$ satisfying the following conditions:
\begin{enumerate}[leftmargin=*]
\item \label{item:7} $\phi_{n} \colon X_{n} \rightarrow X$ is a normal modification,
\item \label{item:11} $\overline{P}_{n}$ is a nef adelic $\bR$-divisor on $X_n$ with
  $\phi^*\overline{D} - \overline{P}_{n}$ pseudo-effective,
\item \label{item:12} $\displaystyle{\lim_{n \to \infty} (\overline{P}_n^{d+1}) =
  \widehat{\vol}(\overline{D})}$.
\end{enumerate}
\end{definition} 

The existence of Fujita approximation sequences is warranted by
Theorem \ref{thm:arithmeticFujita}. The next result is a variant of
Chen's differentiability theorem~\cite{Chen:diffvol} and shows that
the derivative of the arithmetic volume function can be realized in
terms of any such sequence. Its proof follows closely that of Chen.

\begin{theorem}
   \label{thm:Chendiff}
   The arithmetic volume function is differentiable at $\overline{D}$,
   and for any Fujita approximation sequence
   $(\phi_n, \overline{P}_n)_{n}$ of $\overline{D}$ we have
\begin{displaymath}
\partial_{\overline{E}}\, \widehat{\vol}(\overline{D}) = (d+1) \lim_{n\to \infty} (\overline{P}_n^d \cdot \phi_n^*\overline{E})
\quad  \text{ for all } \overline{E} \in \widehat{\Div}(X)_{\bR}.
\end{displaymath}
% In particular, the limit on the right-hand side exists in $\bR$ and
% does not depend on the choice of the sequence.
\end{theorem}

We need the following consequence of the arithmetic Siu's
inequality (Theorem~\ref{thm:Yuan}).

\begin{lemma}\label{lemma:ineqYuanv0} Let $\overline{P}$ and
  $\overline{E}$ be two adelic $\bR$-divisors on $X$ with
  $\overline{P}$ nef, and $\overline{A}$ a big and nef adelic
  $\bR$-divisor on $X$ such that $\overline{A} - \overline{P}$ is
  pseudo-effective and $\overline{A} \pm \overline{E}$ are nef. There
  exists a constant $c_{d}$ depending only on $d$ such that
\begin{displaymath}
  \widehat{\vol}(\overline{P} + \lambda \overline{E}) \ge (\overline{P}^{d+1})
  + (d+1) \, (\overline{P}^d \cdot \overline{E}) \, \lambda - c_d\, \widehat{\vol}(\overline{A})  \, \lambda^2 
  \quad \text{ for all } \lambda\in [0,1].
\end{displaymath}
\end{lemma}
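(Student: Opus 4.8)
The plan is to reduce the $\bR$-divisor case to an estimate that can be derived directly from Yuan's arithmetic Siu inequality (Theorem \ref{thm:Yuan}), by writing everything in terms of two nef adelic $\bR$-divisors whose difference is $\overline{P}+\lambda\overline{E}$. First I would dispose of the boundary case $\lambda=0$, where the claimed inequality reads $\widehat{\vol}(\overline{P})\ge(\overline{P}^{d+1})$; since $\overline{P}$ is nef this is an equality by Theorem \ref{thm:arithmHodgeMoriwaki}. So fix $\lambda\in(0,1]$. The key algebraic observation is that
\begin{displaymath}
\overline{P}+\lambda\overline{E} = \bigl(\overline{P}+\lambda\overline{A}+\lambda\overline{E}\bigr) - \lambda\bigl(\overline{A}\bigr),
\end{displaymath}
and that both $\overline{P}+\lambda\overline{A}+\lambda\overline{E}=\overline{P}+\lambda(\overline{A}+\overline{E})$ and $\lambda\overline{A}$ are nef: indeed $\overline{A}+\overline{E}$ is nef by hypothesis, $\overline{P}$ is nef, and a nonnegative combination of nef adelic $\bR$-divisors is nef, while $\lambda\overline{A}$ is nef because $\overline{A}$ is. Applying Theorem \ref{thm:Yuan} with $\overline{P}_1=\overline{P}+\lambda(\overline{A}+\overline{E})$ and $\overline{P}_2=\lambda\overline{A}$, together with $\widehat{\vol}(\overline{P}+\lambda\overline{E})\ge\widehat{\vol}_\chi(\overline{P}+\lambda\overline{E})$ (Minkowski), gives
\begin{displaymath}
\widehat{\vol}(\overline{P}+\lambda\overline{E}) \ge \bigl(\overline{P}_1^{d+1}\bigr) - (d+1)\lambda\bigl(\overline{P}_1^{d}\cdot\overline{A}\bigr).
\end{displaymath}

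Next I would expand both arithmetic intersection numbers on the right by multilinearity, writing $\overline{P}_1=\overline{P}+\lambda\overline{A}+\lambda\overline{E}$ and collecting powers of $\lambda$. The terms of order $\lambda^0$ and $\lambda^1$ should reproduce exactly $(\overline{P}^{d+1})+(d+1)\lambda(\overline{P}^d\cdot\overline{E})$, after the cancellation of the contributions involving $\overline{A}$ linearly: the $\lambda^1$-coefficient coming from $(\overline{P}_1^{d+1})$ is $(d+1)\bigl((\overline{P}^d\cdot\overline{A})+(\overline{P}^d\cdot\overline{E})\bigr)$, and the $\lambda^1$-coefficient from $-(d+1)\lambda(\overline{P}_1^d\cdot\overline{A})$ is $-(d+1)(\overline{P}^d\cdot\overline{A})$, so the $\overline{A}$-terms cancel and exactly $(d+1)(\overline{P}^d\cdot\overline{E})$ survives, as desired. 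It then remains to bound the collected terms of order $\lambda^2,\dots,\lambda^{d+1}$ from below by $-C_d\lambda^2\widehat{\vol}(\overline{A})$ for a constant $C_d$ depending only on $d$.

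For that final bound, the point is that since $\lambda\le 1$ we have $\lambda^k\le\lambda^2$ for all $k\ge 2$, so it suffices to bound the absolute value of each coefficient by a constant multiple of $\widehat{\vol}(\overline{A})=(\overline{A}^{d+1})$. Each such coefficient is, up to a binomial constant bounded in terms of $d$, an arithmetic intersection number of the shape $(\overline{P}^a\cdot\overline{A}^b\cdot\overline{E}^c)$ with $a+b+c=d+1$ and $b+c\ge 2$. Using that $\overline{A}\pm\overline{E}$ are nef one controls each $\overline{E}$-slot: replacing $\overline{E}$ by $\overline{A}$ or $-\overline{A}$ in turn, and using that $\overline{A}-\overline{P}$ is pseudo-effective together with Lemma \ref{lemma:intersectionpseff} (applied to the nef adelic $\bR$-divisors in the remaining slots and the pseudo-effective difference), one bounds $\bigl|(\overline{P}^a\cdot\overline{A}^b\cdot\overline{E}^c)\bigr|\le 2^{c}(\overline{A}^{d+1})$ — here each swap of an $\overline{E}$ for $\pm\overline{A}$ costs a factor $2$, and each subsequent replacement of a $\overline{P}$ by $\overline{A}$ only increases the (nonnegative) intersection number by \eqref{eq:comparegeometriccap}'s arithmetic analogue. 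Summing over the finitely many such terms produces the constant $C_d$. The main obstacle is precisely this last step: one must be careful that the adelic divisors appearing in all intermediate slots are genuinely nef (so that Lemma \ref{lemma:intersectionpseff} applies), and that the combinatorial bookkeeping of signs and binomial coefficients is organized so the constant depends on $d$ only; this is routine but requires attention, and is essentially the computation already carried out in \cite{Chen:diffvol} and \cite{Ikoma:IMRN}.
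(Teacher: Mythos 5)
Your argument is correct, but it is not the route the paper takes for this lemma: the paper simply cites Ikoma's Proposition 5.1 (which covers adelic divisors whose non-Archimedean Green functions come from a model) and then extends to general adelic $\bR$-divisors by continuity, whereas you give a direct, self-contained derivation from Theorem \ref{thm:Yuan}. Your decomposition $\overline{P}+\lambda\overline{E}=(\overline{P}+\lambda(\overline{A}+\overline{E}))-\lambda\overline{A}$ is the mirror image of the one the paper itself uses later for the refined estimate of Proposition \ref{prop:ineqSiu} (there one writes $\overline{P}+\lambda\overline{E}=(\overline{P}+\lambda\overline{A})-\lambda(\overline{A}-\overline{E})$), and the subsequent bookkeeping is the same in spirit: the $\lambda^{0}$ and $\lambda^{1}$ terms match after the cancellation you describe, the remaining positive terms from $(\overline{P}_1^{d+1})$ can be discarded by Lemma \ref{lemma:intersectionpseff}, and the negative terms are controlled by $(\overline{A}^{d+1})=\widehat{\vol}(\overline{A})$ using that $\overline{A}\pm\overline{E}$ are nef and $\overline{A}-\overline{P}$ is pseudo-effective, with $\lambda^{k+1}\le\lambda^{2}$ for $\lambda\le 1$. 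What your route buys is that it avoids the model-case-plus-continuity reduction entirely, since Theorem \ref{thm:Yuan}, Theorem \ref{thm:arithmHodgeMoriwaki} and Lemma \ref{lemma:intersectionpseff} are already available for general nef adelic $\bR$-divisors; what the citation buys is brevity.

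One small point of care in your last step: when a coefficient contains several $\overline{E}$-slots you cannot literally swap them for $\pm\overline{A}$ ``in turn'', because after the first swap the remaining slots are no longer all nef and Lemma \ref{lemma:intersectionpseff} does not apply directly. The clean fix is either to keep $\overline{A}+\overline{E}$ unexpanded in all terms of order $\lambda^{2}$ and higher (then every slot is nef and one only replaces $\overline{A}+\overline{E}$ by $2\overline{A}$ and $\overline{P}$ by $\overline{A}$), or to substitute $\overline{E}=\tfrac12\bigl((\overline{A}+\overline{E})-(\overline{A}-\overline{E})\bigr)$ and expand multilinearly into intersections of nef divisors, each bounded by $2^{c}(\overline{A}^{d+1})$. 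Either variant yields a constant depending only on $d$, e.g. $C_d=(d+1)(3^{d}-1)$ with the first organization, so the gap is only one of phrasing, not of substance.
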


\begin{proof}
  This is given by \cite[Proposition 5.1]{Ikoma:IMRN} in the case when
  all non-Archimedean Green functions are induced by an integral
  model. The general case follows from this by continuity.
\end{proof}

\begin{proof}[Proof of Theorem \ref{thm:Chendiff}] Consider the
  function $\Phi = \widehat{\vol}^{\frac{1}{d+1}} $ on the big cone of
  $ \widehat{\Div}(X)_{\mathbb{R}}$. It is positive homogeneous of
  degree $1$ and super-additive by the Brunn-Minkowski inequality, and
  its domain is an open cone \cite[Theorems 5.2.1 and
  5.3.1]{Moriwaki:MAMS}. Therefore it is a concave function on a
  convex open subset of a real vector space.

  Let $\overline{E} \in \widehat{\Div}(X)_{\bR}$.  Applying Lemma
  \ref{lemma:diffconcave} to this concave function and noting that
  $\widehat{\vol}=\Phi^{d+1}$ we deduce that the one-sided derivative
  $\partial_{\overline{E}}\, \widehat{\vol}(\overline{D})$ exists in
  $\mathbb{R}$ and moreover
  $-\partial_{-\overline{E}}\, \widehat{\vol}(\overline{D}) \ge
  \partial_{\overline{E}}\, \widehat{\vol}(\overline{D})$. We claim
  that
\begin{equation}\label{eq:lowerpartialvol}
\partial_{\overline{E}}\, \widehat{\vol}(\overline{D}) \ge (d+1) \, \limsup_{n\to \infty} \, (\overline{P}_n^d \cdot \phi_n^*\overline{E}).
\end{equation}
To prove this, we first assume that $\overline{E}$ is DSP.  By Lemma
\ref{lem:1} there is a big and nef adelic $\mathbb{R}$-divisor
$\overline{A}$ on $X$ such that $\overline{A} \pm \overline{E}$ are
nef.  Replacing $\overline{A}$ by a sufficiently large multiple we
can also assume that $\overline{A}-\overline{D}$ is pseudo-effective,
which implies that $\phi_n^*\overline{A} - \overline{P}_n$ is
pseudo-effective for all $n$.  By Lemma \ref{lemma:ineqYuanv0}
there exists $c >0$ such that
\begin{displaymath}
\widehat{\vol}(\overline{D}+ \lambda\overline{E}) \ge \widehat{\vol}(\overline{P}_n + \lambda \, \phi_n^*\overline{E}) \ge (\overline{P}_n^{d+1}) + (d+1) \lambda \, (\overline{P}_n^{d} \cdot \phi_n^*\overline{E}) - c\,  \lambda^2
\end{displaymath}
for all $0<\lambda\le 1$ and $n \in \bN$.  Taking the $\limsup$ with respect to $n$
we deduce
\begin{displaymath}
  \frac{\widehat{\vol}(\overline{D} + \lambda \overline{E}) - \widehat{\vol}(\overline{D})}{\lambda} \ge  (d+1) \, 
  \limsup_{n\to \infty} \, (\overline{P}_n^d \cdot \phi_n^*\overline{E}) - c\,  \lambda,
\end{displaymath}
and we obtain \eqref{eq:lowerpartialvol} in this case by letting
$\lambda\to 0$.

For the general case, let $\varepsilon > 0$. By Lemma
\ref{lemma:approachDSP} there is
$\overline{E}' \in \widehat{\DSP}(X)_{\bR}$ such that
$\overline{E} - \overline{E}'$ and
$\overline{E}' + \varepsilon[\infty] - \overline{E}$ are
pseudo-effective. Then
$\partial_{\overline{E}}\, \widehat{\vol}(\overline{D}) \ge
\partial_{\overline{E}'}\, \widehat{\vol}(\overline{D})$, and using  Lemma
\ref{lemma:intersectionpseff}, the formula in
\eqref{eq:arithmeticvsgeometricintersection} and the fact that
$(P_n^d) = \vol(P_n)$ we obtain
 \begin{displaymath}
 (\overline{P}_n^d \cdot \phi_n^*\overline{E}') \ge (\overline{P}_n^d \cdot \phi_n^*(\overline{E} -\varepsilon [\infty])) = (\overline{P}_n^d \cdot \phi_n^*\overline{E}) - \varepsilon\vol(P_n) \ge (\overline{P}_n^d \cdot \phi_n^*\overline{E}) - \varepsilon\vol(D) 
 \end{displaymath}
 for all $n$. From the DSP case we deduce
\begin{displaymath}
  \partial_{\overline{E}}\, \widehat{\vol}(\overline{D}) \ge (d+1) \, \Big(\limsup_{n\to \infty} \, (\overline{P}_n^d \cdot \phi_n^*\overline{E}) -\varepsilon \vol(D)\Big),
\end{displaymath}
and so \eqref{eq:lowerpartialvol} follows by letting
$\varepsilon \to 0$.  Applying this inequality to $-\overline{E}$ we
obtain
\begin{displaymath}
(d+1) \,\liminf_{n\to \infty}  \, (\overline{P}_n^d \cdot \phi_n^*\overline{E}) \ge -\partial_{-\overline{E}}\, \widehat{\vol}(\overline{D}) \ge \partial_{\overline{E}}\, \widehat{\vol}(\overline{D}) \ge (d+1) \,  \limsup_{n\to \infty} \, (\overline{P}_n^d \cdot \phi_n^*\overline{E}).
\end{displaymath}
This implies that
$ \partial_{\overline{E}}\, \widehat{\vol}(\overline{D})=(d+1) \,
\lim_{n\to \infty} \, (\overline{P}_n^d \cdot \phi_n^*\overline{E})$,
including the fact that this limit exists in $\mathbb{R}$ and that
$\widehat{\vol}$ is differentiable at $\overline{D}$.
\end{proof}

Combining this with Chen's formula for the arithmetic volume
(Theorem~\ref{thm:ChenIntegral}) we obtain useful information about
the asymptotics of Fujita approximation sequences.

\begin{proposition}
  \label{prop:orthogonal}
  Let $(\phi_n, \overline{P}_n)_{n}$ be a Fujita approximation
  sequence of $\overline{D}$. Then
\begin{displaymath}
  \lim_{n\to\infty} (\overline{P}_n^d \cdot \overline{D}) = \widehat{\vol}(\overline{D}), \quad \lim_{n\to\infty} (P_n^d) = \vol(R^0(\overline{D})), \quad
   \lim_{n\to\infty} \mu^{\ess}(\overline{P}_n) = \mu^{\ess}(\overline{D}).
\end{displaymath}
\end{proposition}

\begin{proof}
  The first formula follows from Theorem \ref{thm:Chendiff} after
  noting that
  $\partial_{\overline{D}}\, \widehat{\vol}(\overline{D}) =
  (d+1)\widehat{\vol}(\overline{D})$ by the positive homogeneity of
  the arithmetic volume.

  For the
  second, for all $\lambda>-\mu^{\ess}(\overline{D})$ we
  have that $\overline{D}+\lambda[\infty]$ is big and
  \begin{displaymath}
    \widehat{\vol}(\overline{D}+\lambda[\infty]) 
    = (d+1) \int_{0}^{\infty} \vol(R^{t}(\overline{D}+\lambda[\infty])) \, dt
  = (d+1) \int_{-\lambda}^{\infty} \vol(R^{t}(\overline{D})) \, dt
\end{displaymath}
by Theorems \ref{thm:Essmin} and \ref{thm:ChenIntegral}. Since the
function $t\mapsto \vol(R^{t}(\overline{D}))$ is non-increasing, this
integral formula implies
  \begin{displaymath}
    \partial_{[\infty]}\, \widehat{\vol}(\overline{D}) = (d+1) \lim_{\lambda\to 0^{-}}  \vol(R^{\lambda}(\overline{D})), \quad 
-  \partial_{-[\infty]}\, \widehat{\vol}(\overline{D}) = (d+1) \lim_{\lambda\to 0^{+}}  \vol(R^{\lambda}(\overline{D})).
  \end{displaymath}
By Theorem \ref{thm:Chendiff} these two quantities coincide and so  
  $ \partial_{[\infty]}\, \widehat{\vol}(\overline{D})= (d+1)
  \vol(R^{0}(\overline{D})) $, and moreover   this derivative equals
  $(d+1) \lim_{n\to \infty} (\overline{P}_n^d \cdot [\infty]) = (d+1)
  \lim_{n \to\infty} (P_n^d)$.
  
  For the third, assume by contradiction that there exists
  $ \gamma< \mu^{\ess}(\overline{D}) $ such that
  $\mu^{\ess}(\overline{P}_n) \le \gamma$ for an arbitrarily large
  $n$ and set
\begin{displaymath}
c = (d+1) \int_{\gamma}^{\mu^{\ess}(\overline{D})} \vol(R^t(\overline{D})) \, dt. 
\end{displaymath}
We have $c=\widehat{\vol}(\overline{D}(\gamma))>0$ by Theorem
\ref{thm:Essmin}.  
Since $\overline{D}-\phi_{n}^{*} \overline{P}_{n}$ is
pseudo-effective, by Theorem \ref{thm:ChenIntegral} and
Lemma~\ref{lemma:propertiesessmin}\eqref{item:essminlowerbound} we have 
\begin{displaymath}
\widehat{\vol}(\overline{P}_n) = (d+1) \int_0^{\mu^{\ess}(\overline{P}_n)} \hspace{-3mm}\vol(R^t(\overline{P}_n)) \, dt \le  (d+1) \int_0^{\gamma} \vol(R^t(\overline{D})) \, dt
 \le  \widehat{\vol}(\overline{D}) - c,
\end{displaymath}
which contradicts the last condition in Definition \ref{def:9}.
\end{proof}

\begin{definition}
  \label{def:13}
  For each $\overline{E} \in \widehat{\Div}(X)_{\bR}$, the \emph{arithmetic
    positive intersection number}
  $(\langle \overline{D}^d \rangle \cdot \overline{E})$ is defined~as
\begin{displaymath}
(\langle \overline{D}^d \rangle \cdot \overline{E}) = \lim_{n\to \infty}  (\overline{P}_n^d \cdot \phi_n^*\overline{E})
\end{displaymath}
for any Fujita approximation sequence $(\phi_n, \overline{P}_n)_{n}$
of $\overline{D}$.  By Theorem \ref{thm:Chendiff} this limit exists and
does not depend on the choice of the sequence.  
\end{definition}

\begin{remark}\label{rem:13}
  Our definition of the quantity
  $(\langle \overline{D}^d \rangle \cdot \overline{E}) $ agrees with
  that of Chen~\cite{Chen:diffvol} since both coincide with
  $(d+1)^{-1} \partial_{\overline{E}}\, \widehat{\vol}(\overline{D})$.
  The alternative approach from Definition~\ref{def:13} is better
  suited for our purposes.
\end{remark}

\begin{remark}
\label{rem:geometricpositiveintersection} 
One can similarly adapt the proof of Boucksom, Favre and Jonsson's
differentiability theorem \cite{BFJ:diffvol} to show that for
$D, E \in \Div(X)_{\bR}$ with $D$ big, the geometric positive
intersection number $(\langle D^{d-1} \rangle \cdot E)$ introduced in
\emph{loc. cit.}  can be expressed as
\begin{displaymath}
(\langle D^{d-1} \rangle \cdot E) = \lim_{n \rightarrow \infty} (P_n^{d-1}\cdot \phi_n^*E)
\end{displaymath}
for any sequence $(\phi_n, P_n)_n$ such that $P_n$ is a nef
$\bR$-divisor on a normal modification
$\phi_n \colon X_n \rightarrow X$ with $\phi_n^*D - P_n$
pseudo-effective and $\lim_{n \to\infty} (P_n^d) = \vol(D)$.
\end{remark}

With this definition, the first and second formulae in Proposition
\ref{prop:orthogonal} become
\begin{equation}
  \label{eq:40}
  (\langle \overline{D}^d \rangle \cdot \overline{D}) =\widehat{\vol}(\overline{D}) \and
  (\langle \overline{D}^d \rangle \cdot [\infty]) = \vol(R^{0}(\overline{D})).
\end{equation}
The first of them is \cite[Corollary 4.4]{Chen:diffvol} and is usually
interpreted as an asymptotic orthogonality for the Fujita
approximations of $\overline{D}$.

These arithmetic positive 
intersections numbers allow to define the linear
functional
\begin{equation}
  \label{eq:41}
\Omega_{\overline{D}} \colon \widehat{\Div}(X)_{\bR} \longrightarrow \bR, \quad \overline{E} \longmapsto \frac{(\langle \overline{D}^d \rangle \cdot \overline{E})}{\vol(R^0(\overline{D}))}.
\end{equation}
By Proposition \ref{prop:orthogonal}, for any Fujita approximation
sequence $(\phi_n, \overline{P}_n)_{n}$ of $\overline{D}$ we have
\begin{equation}\label{eq:Omegalimit}
\Omega_{\overline{D}}(\overline{E}) = \lim_{n \to \infty} \frac{(\overline{P}_n^d \cdot \phi_n^*\overline{E})}{(P_n^d)} \quad \text{ for all } \overline{E} \in \widehat{\Div}(X)_{\bR}.
\end{equation}

\begin{remark}
  \label{rem:14}
  This linear functional verifies that
  $\Omega_{\overline{D}}(\overline{E}) \ge 0$ for every
  $\overline{E}\in \widehat{\Div}(X)_{\mathbb{R}}$ pseudo-effective,
  $\Omega_{\overline{D}} (\widehat{\div}(f)) = 0$ for every
  $f \in \Rat(X)_{\bR}^{\times}$ and
  $\Omega_{\overline{D}}([\infty]) = 1$, as it follows respectively
  from Lemma \ref{lemma:intersectionpseff}, the invariance of
  arithmetic intersection numbers with respect to linear equivalence,
  and the second formula in \eqref{eq:40}. In particular,
  $\Omega_{\overline{D}}$ is a normalized GVF functional in the sense
  of \cite{Szachniewicz}.
\end{remark}

For each $v\in \mathfrak{M}_K$ we have that $\Omega_{\overline{D}}$
induces a positive linear functional on $C(X_{v}^{\an})^{G_{v}}$
through the inclusion in \eqref{eq:27}. Hence by
Lemma~\ref{lem:extendlinearfunctionals}\eqref{item:extendlinearfunctional}
there is a unique $G_{v}$-invariant measure $\omega_{\overline{D},v}$
on $X_{v}^{\an}$ such that
\begin{equation}
  \label{eq:36}
\Omega_{\overline{D}}(\overline{0}^{\varphi})=    \frac{(\langle \overline{D}^d \rangle \cdot \overline{0}^\varphi)}{\vol(R^0(\overline{D}))} = n_v \int_{X_v^{\an}} \varphi \, d\omega_{\overline{D},v} \quad \text{ for all } \varphi \in C(X_v^{\an})^{G_v}.
\end{equation}
It follows from the limit formula in \eqref{eq:Omegalimit} together
with \eqref{eq:intersectionvarphi} that
for any Fujita approximation sequence $(\phi_n,\overline{P}_{n})_{n}$
of $\overline{D}$ we have
\begin{displaymath}
\int_{X_{v}^{\an}} \varphi \, d \omega_{\overline{D},v}
  % =\lim_{n\to \infty} \int_{X_{n,v}^{\an}} \phi_n^*\varphi \,  \frac{c_{1}(\overline{P}_{n,v})^{\wedge d}}{(P_{n}^{d})}
  =\lim_{n\to \infty} \int_{X_{v}^{\an}} \varphi \, d \omega_{n,v} \quad \text { for all } \varphi \in C(X_v^{\an})^{G_v}
\end{displaymath}
with $\omega_{n,v}$ the pushforward to $X_{v}^{\an}$ of
$c_{1}(\overline{P}_{n,v})^{\wedge d}/ (P_{n}^{d})$. Hence by Lemma
\ref{lem:extendlinearfunctionals}\eqref{item:weakconvergenceGalinv} we
have
\begin{math}
  \omega_{\overline{D},v}=\lim_{n\to \infty} \omega_{n,v}. 
\end{math}
In particular, $\omega_{\overline{D},v}$ is a probability measure.

\section{The differentiability of the essential minimum}\label{sec:Mainthm}

The differentiability of the essential minimum at an adelic
$\mathbb{R}$-divisor $\overline{D}$ is closely related to the
asymptotic behavior of the $\overline{D}$-small generic sequences of
algebraic points.  After explaining this relation we state our central
result, giving the differentiability of this function assuming the
existence of suitable semipositive approximations
of~$\overline{D}$. We then explain how it implies both Yuan's and
Chen's equidistribution theorems~\cite{Yuan:big,
  Chen:diffvol}. Finally we give a reformulation of this result in
terms of arithmetic positive intersection numbers.

Throughout this section we let  $\overline{D}$ be an adelic
$\bR$-divisor on $X$ over a big $\bR$-divisor~$D$.

\subsection{The variational approach to limit heights and
  equidistribution}
\label{sec:variational}

As noted  in Section \ref{subsec:minima}, the essential minimum function 
\begin{displaymath}
  \mu^{\ess}\colon \widehat{\Div}(X)_{\bR}\longrightarrow \mathbb{R}\cup\{-\infty\}
\end{displaymath}
takes finite values on the open cone
$C \subset \widehat{\Div}(X)_{\bR}$ of adelic $\bR$-divisors with big
geometric $\bR$-divisor. Moreover, it is positive homogeneous of
degree $1$ and super-additive by Lemma
\ref{lemma:propertiesessmin}. Therefore it is a concave function on
$C$.  By Lemma \ref{lemma:diffconcave}, for every
$\overline{E}\in \widehat{\Div}(X)_{\bR}$ the one-sided derivative
$\partial_{\overline{E}}\, \mu^{\ess}(\overline{D})$ exists in $\bR$
and we have
  \begin{displaymath}
 -\partial_{-\overline{E}}\, \mu^{\ess}(\overline{D}) \ge
\partial_{\overline{E}}\, \mu^{\ess}(\overline{D}). 
\end{displaymath}

\begin{definition}
  \label{def:10}
  A sequence $(x_{\ell})_{\ell}$ in $X(\overline{K})$ is
  \emph{generic} if for every closed subset $Y \varsubsetneq X$ there
  is $\ell_0 \in \bN$ such that $x_{\ell} \notin Y(\overline{K})$ for
  all $\ell \ge \ell_0$.  When this is the case, we say that
  $(x_{\ell})_{\ell}$ is $\overline{D}$-\emph{small} if
\begin{displaymath}
\lim_{\ell \to \infty} h_{\overline{D}}(x_{\ell}) = \mu^{\ess}(\overline{D}).
\end{displaymath}
\end{definition} 

In the sequel, every considered generic sequence of algebraic points
lies in $X(\overline{K})$ unless otherwise stated.

By \cite[Proposition 3.2]{BPRS:dgopshtv} for every generic sequence
$(x_{\ell})_{\ell}$ we have
\begin{displaymath}
 \liminf_{\ell \to \infty} h_{\overline{D}}(x_{\ell}) \ge
\mu^{\ess}(\overline{D}), 
\end{displaymath}
and moreover there are generic sequences that are
$\overline{D}$-small. A similar conclusion holds for the heights of
$\overline{D}$-small generic sequences with respect to another adelic
$\bR$-divisor~$\overline{E}$, replacing the quantity
$\mu^{\ess}(\overline{D})$ by the one-sided derivative
$\partial_{\overline{E}}\, \mu^{\ess}(\overline{D})$.

\begin{lemma}
\label{lem:sequencesandderivative} 
Let
  $\overline{E} \in \widehat{\Div}(X)_{\bR}$. For every
  $\overline{D}$-small generic sequence $(x_{\ell})_{\ell}$~we~have
\begin{displaymath}
-\partial_{-\overline{E}}\, \mu^{\ess}(\overline{D}) \ge \limsup_{\ell \to \infty} h_{\overline{E}}(x_{\ell}) \ge \liminf_{\ell \to \infty} h_{\overline{E}}(x_{\ell}) \ge \partial_{\overline{E}}\, \mu^{\ess}(\overline{D}).
\end{displaymath}
Moreover, there exists a $\overline{D}$-small generic sequence $(x_{\ell})_{\ell}$
such that
\begin{displaymath}
\lim_{\ell \to \infty} h_{\overline{E}}(x_{\ell}) = \partial_{\overline{E}}\, \mu^{\ess}(\overline{D}).
\end{displaymath}
\end{lemma}

\begin{proof}
For  a $\overline{D}$-small generic
  sequence   $(x_{\ell})_{\ell}$ and  $\lambda > 0$ we have 
  $\liminf_{\ell \to \infty} h_{\overline{D} + \lambda
    \overline{E}}(x_{\ell}) \ge \mu^{\ess}(\overline{D} + \lambda
  \overline{E})$ and so
 \begin{displaymath}
 \liminf_{\ell \to \infty} h_{\overline{E}}(x_{\ell})  = \liminf_{\ell \to \infty} \frac{ h_{\overline{D} + \lambda \overline{E}}(x_{\ell}) -  h_{\overline{D}}(x_{\ell})}{\lambda} \ge \frac{\mu^{\ess}(\overline{D} + \lambda \overline{E})  - \mu^{\ess}(\overline{D})}{\lambda}.
 \end{displaymath}
 Therefore
 $ \liminf_{\ell \to \infty} h_{\overline{E}}(x_{\ell}) \ge
 \partial_{\overline{E}}\, \mu^{\ess}(\overline{D})$, and we complete
 the proof of the first statement by applying this to $-\overline{E}$.

 We now pass to the second. By homogeneity, after multiplying
 $\overline{E}$ by a sufficiently small positive constant we assume
 without loss of generality that $D + E$ is big. For each
 $n \in \bN_{> 0}$ we choose a generic sequence $(x_{n,\ell})_{\ell}$
 such that
 \begin{equation}
   \label{eq:limvar}
\lim_{\ell \to \infty} h_{\overline{D} + \frac{1}{n} \overline{E}}(x_{n,\ell}) = \mu^{\ess}\Big(\overline{D} + \frac{1}{n}\overline{E}\Big).
\end{equation}
Let $H_i$, $i \in \bN$, be the collection of all the hypersurfaces of
$X$ in an arbitrary order. Then for each $n$ we use \eqref{eq:limvar}
and the genericity of the sequence $(x_{n,\ell})_{\ell}$ to choose
$\ell(n) \in \bN$ such that the algebraic point
$y_n = x_{n,\ell(n)} \in X(\overline{K})$ satisfies the conditions:
\begin{enumerate}[leftmargin=*]
\item \label{item:23} $y_n  \notin \bigcup_{i = 0}^n H_i$,
\item \label{item:27} $h_{\overline{D} + \frac{1}{n} \overline{E}}(y_n) \le
  \mu^{\ess}(\overline{D} + \frac{1}{n}\overline{E}) + 1/n^{2}$, 
\item \label{item:28} $h_{\overline{D}}(y_n) \ge \mu^{\ess}(\overline{D}) -1/n^{2}$.
\end{enumerate}

By \eqref{item:23} the sequence $(y_n)_{n}$ is generic. Since
$D + E$ is big, this implies that there is $c \in \bR$ such that
$h_{\overline{D} + \overline{E}}(y_n) \ge c$ for all $n$, thanks to
the lower bound~\eqref{eq:heightlowerbound}.  Then it follows from
the condition \eqref{item:27} that
\begin{displaymath}
h_{\overline{D}}(y_n)  =  \frac{n}{n-1}  \Big( h_{\overline{D} + \frac{1}{n} \overline{E}}(y_n) - \frac{1}{n}h_{\overline{D} + \overline{E}}(y_n) \Big)
 \le \frac{n}{n-1} \mu^{\ess} \Big(\overline{D} +  \frac{1}{n}\overline{E}\Big) +  \frac{1}{n(n-1)} - \frac{c}{n-1}.
\end{displaymath}
Therefore the sequence $(y_n)_{n}$ is also $\overline{D}$-small by
the continuity of the essential minimum (Lemma
\ref{lemma:propertiesessmin}\eqref{item:essmincont}).  On the other
hand, the conditions \eqref{item:27} and \eqref{item:28} give
\begin{multline*}
  - \frac{1}{n^2} + \frac{1}{n} h_{\overline{E}}(y_n) \le
  h_{\overline{D}}(y_n)+ \frac{1}{n} h_{\overline{E}}(y_n) -
  \mu^{\ess}(\overline{D}) \\ = h_{\overline{D} + \frac{1}{n}
    \overline{E}}(y_n) - \mu^{\ess}(\overline{D}) \le
  \mu^{\ess}\Big(\overline{D} + \frac{1}{n}\overline{E}\Big) -
  \mu^{\ess}(\overline{D}) + \frac{1}{n^2},
\end{multline*}
which implies
\begin{displaymath}
  \limsup_{n \to \infty} h_{\overline{E}}(y_n) \le \lim_{n\to \infty} n\, \Big( \mu^{\ess}(\overline{D} + \frac{1}{n}\overline{E})
  - \mu^{\ess}(\overline{D})\Big) = \partial_{\overline{E}}\, \mu^{\ess}(\overline{D}).
\end{displaymath}
We conclude that
$ \lim_{n \to \infty} h_{\overline{E}}(y_n) =
\partial_{\overline{E}}\, \mu^{\ess}(\overline{D})$, as stated.
\end{proof}

\begin{proposition}
  \label{prop:diffvsHCP}
  Let $\overline{E} \in \widehat{\Div}(X)_{\bR}$. The following
  conditions are equivalent:
\begin{enumerate}[leftmargin=*]
\item\label{item:variationalHCP} for every $\overline{D}$-small
  generic sequence $(x_{\ell})_{\ell}$ the limit
  $\lim_{\ell \to \infty} h_{\overline{E}}(x_{\ell})$ exists,
\item\label{item:variationaldiff}
  $\partial_{-\overline{E}}\, \mu^{\ess}(\overline{D}) =
  -\partial_{\overline{E}}\, \mu^{\ess}(\overline{D})$.
\end{enumerate}
If they are satisfied, then
$\lim_{\ell \to \infty} h_{\overline{E}}(x_{\ell})=
\partial_{\overline{E}}\, \mu^{\ess}(\overline{D})$ for every
$\overline{D}$-small generic sequence $(x_{\ell})_{\ell}$.
\end{proposition}

\begin{proof} The implication
  $\eqref{item:variationaldiff} \Rightarrow
  \eqref{item:variationalHCP}$ is given by the first part of Lemma
  \ref{lem:sequencesandderivative}.  Conversely, suppose that
  $\eqref{item:variationalHCP}$ holds. Then
  $\lim_{\ell \to \infty} h_{\overline{E}}(x_{\ell}) =
  \partial_{\overline{E}}\, \mu^{\ess}(\overline{D})$ for every
  $\overline{D}$-small generic sequence $(x_{\ell})_{\ell}$. Indeed,
  if the limit were different then using the second part of
  Lemma~\ref{lem:sequencesandderivative} we could construct a
  $\overline{D}$-small generic sequence $(x'_{\ell})_{\ell}$ such that
  $(h_{\overline{E}}(x'_{\ell}))_{\ell}$ does not converge. Moreover,
  applying the latter to $-\overline{E}$ gives that every
  $\overline{D}$-small generic sequence $(x_{\ell})_{\ell}$ verifies
\begin{displaymath}
 \partial_{-\overline{E}}\, \mu^{\ess}(\overline{D}) = \lim_{\ell \to \infty} h_{-\overline{E}}(x_{\ell}) = -\lim_{\ell \to \infty} h_{\overline{E}}(x_{\ell}) = - \partial_{\overline{E}}\, \mu^{\ess}(\overline{D}),
\end{displaymath}
which gives \eqref{item:variationaldiff}.
\end{proof}

The next result summarizes the relation between limit heights for
$\overline{D}$-small sequences of algebraic points and the
differentiability of the essential minimum function.  It is a direct
consequence of the previous one together with Lemma
\ref{lemma:diffconcave}\eqref{item:diffconcavealldirections}.

\begin{proposition}
\label{prop:11}
The following conditions are equivalent:
\begin{enumerate}[leftmargin=*]
\item \label{item:21} for every $\overline{D}$-small generic sequence
  $(x_{\ell})_{\ell}$ in $X(\overline{K})$ and 
  $\overline{E} \in \widehat{\Div}(X)_{\bR}$ the limit
  $\lim_{\ell \to \infty} h_{\overline{E}}(x_{\ell})$ exists,
\item \label{item:20}   the essential minimum function is differentiable at $\overline{D}$.
\end{enumerate}
If they are satisfied, then for any $\overline{D}$-small generic sequence
  $(x_{\ell})_{\ell}$ in $X(\overline{K})$ we have 
\begin{displaymath}
  \lim_{\ell \to \infty} h_{\overline{E}}(x_{\ell})=
  \partial_{\overline{E}}\, \mu^{\ess}(\overline{D})  \quad \text{ for all } \overline{E} \in \widehat{\Div}(X)_{\bR}.
\end{displaymath}
\end{proposition}

Let $v \in \mathfrak{M}_K$. For $x \in X(\overline{K})$ we denote by
$\delta_{O(x)_v}$ the uniform probability measure on the $v$-adic
Galois orbit of this point, that is
 \begin{displaymath}
 \delta_{O(x)_v}= \frac{1}{\# O(x)_v} \sum_{y \in O(x)_v} \delta_y.
\end{displaymath}

\begin{definition}
\label{def:11}
We say that $\overline{D}$ satisfies the \emph{equidistribution
  property} at $v$ if there is a probability measure
$\nu_{\overline{D},v}$ on $X_v^{\an}$ such that for every
$\overline{D}$-small generic sequence~$(x_{\ell})_{\ell}$, the
sequence of measures $(\delta_{O(x_{\ell})_v})_{\ell}$ converges to
$\nu_{\overline{D},v}$. When this holds, $\nu_{\overline{D},v}$ is
called the \emph{(v-adic) equidistribution measure} of $\overline{D}$.
\end{definition}

Note that if $\overline{D}$ satisfies the equidistribution property at
a place $v$ then its equidistribution measure $\nu_{\overline{D},v}$
is $G_v$-invariant, being the limit of a sequence of $G_v$-invariant
discrete probability measures.

The next result gives the relation between the
equidistribution property at~$v$ and the differentiability of the
essential minimum function along the subspace of $G_{v}$-invariant
continuous real-valued functions on $X_{v}^{\an}$.

\begin{proposition}
  \label{prop:diffvsEP}  The following
  conditions are equivalent:
\begin{enumerate}[leftmargin=*]
\item $\overline{D}$ satisfies the equidistribution property at $v$,
\item the essential minimum function is differentiable at
  $\overline{D}$ along $C(X_v^{\an})^{G_v}$.
\end{enumerate}
If they are satisfied, then  the equidistribution measure
$\nu_{\overline{D},v}$ is the unique $G_v$-invariant measure on
$X_v^{\an}$ such that
\begin{displaymath}
n_v \int_{X_v^{\an}} \varphi \, d\nu_{\overline{D},v} = \partial_{\overline{0}^{\varphi}}\, \mu^{\ess}(\overline{D})\quad \text{ for all } \varphi \in  C(X_v^{\an})^{G_v}.
\end{displaymath}
\end{proposition}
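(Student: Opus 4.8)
The plan is to set up a dictionary between $G_v$-invariant test functions and adelic $\mathbb{R}$-divisors via the assignment $\varphi \mapsto \overline{0}^{\varphi}$, and then to read off the equivalence from Proposition~\ref{prop:diffvsHCP} and Lemma~\ref{lem:extendlinearfunctionals}. First I would note that since $D$ is big, $\overline{D}$ lies in the open cone on which $\mu^{\ess}$ is concave, so by Lemma~\ref{lemma:diffconcave} the one-sided derivatives $\partial_{\overline{0}^{\varphi}}\,\mu^{\ess}(\overline{D})$ exist for all $\varphi \in C(X_v^{\an})^{G_v}$, and $\mu^{\ess}$ is differentiable at $\overline{D}$ along $C(X_v^{\an})^{G_v}$ if and only if $\partial_{\overline{0}^{\varphi}}\,\mu^{\ess}(\overline{D}) = -\partial_{-\overline{0}^{\varphi}}\,\mu^{\ess}(\overline{D})$ for every such $\varphi$ (Lemma~\ref{lemma:diffconcave}\eqref{item:diffconcavealldirections}). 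Applying Proposition~\ref{prop:diffvsHCP} with $\overline{E} = \overline{0}^{\varphi}$, this identity holds for a fixed $\varphi$ exactly when $\lim_{\ell} h_{\overline{0}^{\varphi}}(x_{\ell})$ exists for every $\overline{D}$-small generic sequence $(x_{\ell})_{\ell}$, and in that case the limit equals $\partial_{\overline{0}^{\varphi}}\,\mu^{\ess}(\overline{D})$, independently of the sequence. Using the identity $h_{\overline{0}^{\varphi}}(x) = n_v \int_{X_v^{\an}} \varphi \, d\delta_{O(x)_v}$ recalled above, this says precisely that condition~(2) of the proposition is equivalent to: for every $\varphi \in C(X_v^{\an})^{G_v}$ and every $\overline{D}$-small generic sequence, $\int_{X_v^{\an}} \varphi \, d\delta_{O(x_\ell)_v}$ converges to $\tfrac{1}{n_v}\,\partial_{\overline{0}^{\varphi}}\,\mu^{\ess}(\overline{D})$, a value not depending on the chosen sequence.

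For the implication $(1) \Rightarrow (2)$, if $\overline{D}$ has the equidistribution property at $v$ then $\int_{X_v^{\an}} \varphi \, d\delta_{O(x_\ell)_v} \to \int_{X_v^{\an}} \varphi \, d\nu_{\overline{D},v}$ for every continuous $\varphi$, in particular for every $\varphi \in C(X_v^{\an})^{G_v}$; hence all the limits $\lim_\ell h_{\overline{0}^{\varphi}}(x_\ell)$ exist along every $\overline{D}$-small generic sequence, and the dictionary above gives differentiability of $\mu^{\ess}$ at $\overline{D}$ along $C(X_v^{\an})^{G_v}$.

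For $(2) \Rightarrow (1)$, I would fix one $\overline{D}$-small generic sequence (such a sequence exists) and define $\Lambda \colon C(X_v^{\an})^{G_v} \to \bR$ by $\Lambda(\varphi) = \tfrac{1}{n_v}\,\partial_{\overline{0}^{\varphi}}\,\mu^{\ess}(\overline{D}) = \lim_\ell \int_{X_v^{\an}} \varphi \, d\delta_{O(x_\ell)_v}$. Differentiability along $C(X_v^{\an})^{G_v}$ makes $\Lambda$ linear, and writing it as a pointwise limit of integrations against the $G_v$-invariant probability measures $\delta_{O(x_\ell)_v}$ shows at once that $\Lambda$ is positive and that $\Lambda(1) = 1$. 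By Lemma~\ref{lem:extendlinearfunctionals}\eqref{item:extendlinearfunctional}, $\Lambda$ extends uniquely to a $G_v$-invariant positive linear functional on $C(X_v^{\an})$, i.e.\ to a $G_v$-invariant measure $\nu_{\overline{D},v}$, which is a probability measure since its total mass is $\Lambda(1) = 1$. Now for an arbitrary $\overline{D}$-small generic sequence the dictionary gives $\int_{X_v^{\an}} \varphi \, d\delta_{O(x_\ell)_v} \to \Lambda(\varphi) = \int_{X_v^{\an}} \varphi \, d\nu_{\overline{D},v}$ for all $\varphi \in C(X_v^{\an})^{G_v}$, and since the $\delta_{O(x_\ell)_v}$ are $G_v$-invariant probability measures, Lemma~\ref{lem:extendlinearfunctionals}\eqref{item:weakconvergenceGalinv} yields weak convergence of $(\delta_{O(x_\ell)_v})_\ell$ to $\nu_{\overline{D},v}$; this establishes the equidistribution property at $v$. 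The displayed formula $n_v \int_{X_v^{\an}} \varphi \, d\nu_{\overline{D},v} = \partial_{\overline{0}^{\varphi}}\,\mu^{\ess}(\overline{D})$ then holds by construction, and uniqueness of a $G_v$-invariant measure satisfying it is precisely the uniqueness clause of Lemma~\ref{lem:extendlinearfunctionals}\eqref{item:extendlinearfunctional}.

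The argument is essentially formal once Proposition~\ref{prop:diffvsHCP} and Lemma~\ref{lem:extendlinearfunctionals} are available, so I do not expect a genuine obstacle here; the only points that need a little care are the bookkeeping that ``differentiability along the subspace $C(X_v^{\an})^{G_v}$'' packages together the one-sided derivative identities for all test functions at once, and the verification that $\Lambda$ is positive and normalized — both of which become transparent once $\Lambda$ is viewed as a limit of integrations against the probability measures $\delta_{O(x_\ell)_v}$.
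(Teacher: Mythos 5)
Your proposal is correct and follows essentially the same route as the paper: the paper obtains Proposition~\ref{prop:diffvsEP} precisely by combining the identity $h_{\overline{0}^{\varphi}}(x) = n_v\int_{X_v^{\an}}\varphi\,d\delta_{O(x)_v}$ with Proposition~\ref{prop:diffvsHCP} and Lemma~\ref{lem:extendlinearfunctionals}, which is exactly your dictionary argument. Your write-up merely makes explicit the bookkeeping (concavity via Lemma~\ref{lemma:diffconcave}, positivity and normalization of $\Lambda$, and the uniqueness clause) that the paper leaves implicit.
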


\begin{proof}
  For every $\varphi \in C(X_v^{\an})^{G_v}$ and
  $x \in X(\overline{K})$ we have
\begin{displaymath}
  h_{\overline{0}^{\varphi}}(x) = \frac{n_v}{\# O(x)_v} \sum_{y \in O(x)_v} \varphi(y) = n_v\int_{X_v^{\an}} \varphi \, d\delta_{O(x)_v}.
\end{displaymath}
The statement  then follows from Proposition \ref{prop:diffvsHCP}
 with Lemmas
\ref{lemma:diffconcave}\eqref{item:diffconcavealldirections}
and~\ref{lem:extendlinearfunctionals}.
\end{proof}

\subsection{Main theorem}\label{subsec:mainthm}

Recall that $\overline{D}$ is an adelic $\mathbb{R}$-divisor on $X$ with $D$ big.

\begin{definition}
  \label{def:12}
  A \emph{semipositive approximation} of
  $\overline{D}$ is a pair $(\phi, \overline{Q})$ where 
\begin{enumerate}[leftmargin=*]
\item $\phi \colon X' \rightarrow X$ is a normal modification,
\item $\overline{Q}$ is a semipositive adelic $\bR$-divisor on $X'$
  with big geometric $\bR$-divisor $Q$,
\item $\phi^*\overline{D} - \overline{Q}$ is pseudo-effective.
\end{enumerate} 
When $\phi$ is the identity on $X$, we simply denote by $\overline{Q}$
the semipositive approximation of $\overline{D}$ corresponding to the
pair $(\Id_{X},\overline{Q})$.
\end{definition} 

  The following is the central result of this text. % strengthens Theorem \ref{thm:MainIntro}.

\begin{theorem} \label{thm:MainSequences}
  Assume that there exists a sequence
  $(\phi_n, \overline{Q}_n)_{n}$ of semipositive approximations
  of $\overline{D}$ such that
\begin{equation}\label{eq:conditioninradiussequences}
 \lim_{n \rightarrow \infty} \frac{\mu^{\ess}(\overline{D}) - \mu^{\abs}(\overline{Q}_n)}{r(Q_n;\phi_n^*D)} = 0.
\end{equation}
Then the essential minimum function is
differentiable at $\overline{D}$ and 
  \begin{displaymath}
    \partial_{\overline{E}}\,  \mu^{\ess}(\overline{D}) 
    = \lim_{n \rightarrow \infty} \, \frac{(\overline{Q}_n^d \cdot \phi_n^* \overline{E}) - d\,  \mu^{\ess}(\overline{D}) \, (Q_n^{d-1} \cdot \phi_n^* E)}{(Q_n^d)} \quad \text{ for all } \overline{E}\in \widehat\Div(X)_{\mathbb{R}}. 
%  & = \lim_{n \rightarrow \infty}\frac{(\overline{Q}_n^d \cdot \phi_n^* \overline{E}) - d\,  \mu^{\abs}(\overline{Q}_n) \times (Q_n^{d-1} \cdot \phi_n^* E)}{(Q_n^d)}.
\end{displaymath}
In particular, the limit on the right-hand side exists in $\bR$ and
does not depend on the choice of the sequence.
\end{theorem}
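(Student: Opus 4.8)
The plan is to run the variational principle and reduce the whole statement to a single lower bound for a one-sided derivative. By Lemma~\ref{lemma:diffconcave}, the essential minimum function is differentiable at $\overline D$ if and only if $\partial_{\overline E}\,\mu^{\ess}(\overline D)=-\partial_{-\overline E}\,\mu^{\ess}(\overline D)$ for every $\overline E\in\widehat\Div(X)_{\bR}$, and by Proposition~\ref{prop:sequencesandderivative} the derivative $\partial_{\overline E}\,\mu^{\ess}(\overline D)$ is always caught between $\liminf_\ell h_{\overline E}(x_\ell)$ and $\limsup_\ell h_{\overline E}(x_\ell)$ along any $\overline D$-small generic sequence, so the displayed formula will also yield the height convergence property~\eqref{eq:HCPintro} and Theorem~\ref{thm:MainIntro}. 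Writing $F_n(\overline E)$ for the quotient on the right-hand side of the displayed formula, a quantity linear in $\overline E$, it will be enough to prove
\[
\partial_{\overline E}\,\mu^{\ess}(\overline D)\ \ge\ \limsup_{n\to\infty}F_n(\overline E)\qquad\text{for all }\overline E\in\widehat\Div(X)_{\bR}:
\]
applying this to $-\overline E$ and invoking the concavity inequality $\partial_{\overline E}\,\mu^{\ess}(\overline D)\le-\partial_{-\overline E}\,\mu^{\ess}(\overline D)$ of Lemma~\ref{lemma:diffconcave} forces $\liminf_n F_n(\overline E)$, $\limsup_n F_n(\overline E)$, $\partial_{\overline E}\,\mu^{\ess}(\overline D)$ and $-\partial_{-\overline E}\,\mu^{\ess}(\overline D)$ all to coincide, which simultaneously gives that $\lim_n F_n(\overline E)$ exists and is sequence-independent, the claimed formula, and differentiability at $\overline D$.

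Before the main estimate I would make several reductions. Replacing $\overline D$ by $\overline D+c[\infty]$ and each $\overline Q_n$ by $\overline Q_n+c[\infty]$ shifts both minima by $c$, leaves $r(Q_n;\phi_n^*D)$ unchanged, and — using \eqref{eq:arithmeticvsgeometricintersection} to compute $((\overline Q_n+c[\infty])^d\cdot\phi_n^*\overline E)=(\overline Q_n^d\cdot\phi_n^*\overline E)+dc\,(Q_n^{d-1}\cdot\phi_n^*E)$ — leaves $F_n(\overline E)$ and the hypothesis~\eqref{eq:conditioninradiussequences} unchanged, so one may assume $\mu^{\ess}(\overline D)>0$. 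Since $\phi_n^*D-Q_n$ is pseudo-effective we have $r(Q_n;\phi_n^*D)\le1$, so \eqref{eq:conditioninradiussequences} forces $\mu^{\abs}(\overline Q_n)\to\mu^{\ess}(\overline D)$; hence $\overline Q_n$ is nef for $n$ large, and Lemma~\ref{lemma:ineqheightnef} together with Theorem~\ref{thm:Zhang} shows that $\mu_n:=(\overline Q_n^{d+1})/\bigl((d+1)(Q_n^d)\bigr)$ lies in $[\mu^{\abs}(\overline Q_n),\mu^{\ess}(\overline D)]$ and so tends to $\mu^{\ess}(\overline D)$. Finally, using Lemma~\ref{lemma:approachDSP} to squeeze $\overline E$ between DSP adelic $\bR$-divisors over the same geometric divisor, together with super-additivity of one-sided derivatives (as in the proof of Lemma~\ref{lemma:diffconcave}) and Lemma~\ref{lemma:intersectionpseff} for the $F_n$, reduces to $\overline E$ DSP; by Remark~\ref{rema:DSP} and linearity of $F_n$ this reduces to $\overline E$ nef, and rescaling one may also assume $D\pm E$ big. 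Fix once and for all a nef big $\overline A$ with $\overline A\pm\overline E$ nef and $\overline A-\overline D$ pseudo-effective.

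The core is a twisting estimate. For small $\lambda>0$, modification invariance of $\mu^{\ess}$ and Lemma~\ref{lemma:propertiesessmin}\eqref{item:essminincreases} (applied to $\phi_n^*(\overline D+\lambda\overline E)-(\overline Q_n+\lambda\phi_n^*\overline E)=\phi_n^*\overline D-\overline Q_n$) give $\mu^{\ess}(\overline D+\lambda\overline E)\ge\mu^{\ess}(\overline Q_n+\lambda\phi_n^*\overline E)$, and Theorem~\ref{thm:Zhang} bounds the latter below by $\widehat\vol_{\chi}(\overline Q_n+\lambda\phi_n^*\overline E)\big/\bigl((d+1)\vol(Q_n+\lambda\phi_n^*E)\bigr)$. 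The crucial input — to be established separately in \S\ref{subsec:consequenceYuan} from precise arithmetic intersection estimates resting on the Yuan--Zhang arithmetic Hodge index theorem \cite{YuanZhang:hodge} — is a refinement of Yuan's arithmetic Siu inequality (Theorem~\ref{thm:Yuan}, Lemma~\ref{lemma:ineqYuanv0}) in which the $O(\lambda^2)$ error carries the inradius in its denominator, schematically
\[
\widehat\vol_{\chi}(\overline Q_n+\lambda\phi_n^*\overline E)\ \ge\ (\overline Q_n^{d+1})+(d+1)\lambda\,(\overline Q_n^{d}\cdot\phi_n^*\overline E)\ -\ C\,\frac{\lambda^2\,(Q_n^{d})}{r(Q_n;\phi_n^*D)}
\]
with $C$ independent of $n$ and $\lambda$. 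As $\overline E$ is nef, $Q_n+\lambda\phi_n^*E$ is nef, so $\vol(Q_n+\lambda\phi_n^*E)=(Q_n^d)+d\lambda\,(Q_n^{d-1}\cdot\phi_n^*E)+O(\lambda^2)$ is a polynomial whose higher coefficients are controlled uniformly in $n$ via \eqref{eq:comparegeometriccap} (comparing the mixed numbers with those of $\overline A$) and Lemma~\ref{lemma:comparevolsinradius}. Dividing, expanding to first order in $\lambda$, replacing the constant term $\mu_n$ by $\mu^{\abs}(\overline Q_n)$ and bounding $\mu_n\le\mu^{\ess}(\overline D)$ in the cross term $-\mu_n\,d\lambda\,(Q_n^{d-1}\cdot\phi_n^*E)/(Q_n^d)$ — which is exactly what produces the correction $-d\,\mu^{\ess}(\overline D)(Q_n^{d-1}\cdot\phi_n^*E)$ inside $F_n(\overline E)$ — one would obtain, for $n$ large and $0\le\lambda\le\lambda_0\,r(Q_n;\phi_n^*D)$,
\[
\mu^{\ess}(\overline D+\lambda\overline E)\ \ge\ \mu^{\ess}(\overline D)-\bigl(\mu^{\ess}(\overline D)-\mu^{\abs}(\overline Q_n)\bigr)+\lambda\,F_n(\overline E)-C'\,\frac{\lambda^2}{r(Q_n;\phi_n^*D)}.
\]

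To finish, divide by $\lambda$ and optimise: the choice $\lambda_n=\bigl(r(Q_n;\phi_n^*D)\,(\mu^{\ess}(\overline D)-\mu^{\abs}(\overline Q_n))/C'\bigr)^{1/2}$ — which tends to $0$ and meets the constraint, by \eqref{eq:conditioninradiussequences} and $r(Q_n;\phi_n^*D)\le1$ — balances the two $\lambda$-dependent errors and yields
\[
\frac{\mu^{\ess}(\overline D+\lambda_n\overline E)-\mu^{\ess}(\overline D)}{\lambda_n}\ \ge\ F_n(\overline E)-2\sqrt{C'}\ \sqrt{\frac{\mu^{\ess}(\overline D)-\mu^{\abs}(\overline Q_n)}{r(Q_n;\phi_n^*D)}}.
\]
Since $\lambda\mapsto\bigl(\mu^{\ess}(\overline D+\lambda\overline E)-\mu^{\ess}(\overline D)\bigr)/\lambda$ is nonincreasing and tends to $\partial_{\overline E}\,\mu^{\ess}(\overline D)$ as $\lambda\to0^+$, the left-hand side is at most $\partial_{\overline E}\,\mu^{\ess}(\overline D)$; hence $\partial_{\overline E}\,\mu^{\ess}(\overline D)\ge F_n(\overline E)-2\sqrt{C'}\sqrt{(\mu^{\ess}(\overline D)-\mu^{\abs}(\overline Q_n))/r(Q_n;\phi_n^*D)}$, and letting $n\to\infty$ and using~\eqref{eq:conditioninradiussequences} to kill the square-root term gives $\partial_{\overline E}\,\mu^{\ess}(\overline D)\ge\limsup_n F_n(\overline E)$, as required. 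Passing back from the nef case to the DSP case by linearity, and then to arbitrary $\overline E\in\widehat\Div(X)_{\bR}$ through the squeeze of Lemma~\ref{lemma:approachDSP}, completes the argument and shows that $\lim_n F_n(\overline E)$ exists and does not depend on the chosen sequence of semipositive approximations. \emph{The main obstacle is the refined Siu inequality above, carrying the factor $1/r(Q_n;\phi_n^*D)$ in the error term}: the bound of Lemma~\ref{lemma:ineqYuanv0}, whose $O(\lambda^2)$ error is independent of $n$, is too lossy to exploit~\eqref{eq:conditioninradiussequences} once $d\ge2$, and the accompanying uniform control — in $n$ — of the relevant arithmetic and geometric intersection numbers in terms of $r(Q_n;\phi_n^*D)$ is the delicate technical point.
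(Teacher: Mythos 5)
Your overall architecture is the paper's: reduce by concavity (Lemma~\ref{lemma:diffconcave}) to the single one-sided bound $\partial_{\overline E}\,\mu^{\ess}(\overline D)\ge\limsup_n F_n(\overline E)$, obtain it by comparing $\mu^{\ess}(\overline D+\lambda\overline E)$ with $\mu^{\ess}(\overline Q_n+\lambda\phi_n^*\overline E)$ via Lemma~\ref{lemma:propertiesessmin}\eqref{item:essminincreases}, Zhang's inequality and a Siu-type lower bound for the $\chi$-volume, and then optimize $\lambda=\lambda_n\asymp\sqrt{\gamma(n)}\,r(Q_n;\phi_n^*A)$; your trade of $\mu^{\abs}(\overline Q_n)$ (or $\mu_n$) against $\mu^{\ess}(\overline D)$ in the correction term is exactly the content of Lemma~\ref{lemma:expressionlimit}. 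The problem is that the proposal leaves unproved precisely the two estimates on which the whole argument rests, and which you yourself flag as the ``main obstacle'': (i) the refined Siu inequality $\widehat\vol_\chi(\overline Q_n+\lambda\phi_n^*\overline E)\ge(\overline Q_n^{d+1})+(d+1)\lambda(\overline Q_n^{d}\cdot\phi_n^*\overline E)-C\,\lambda^2 (Q_n^d)/r(Q_n;\phi_n^*D)$ with $C$ independent of $n$, and (ii) the uniform control in $n$ of the arithmetic intersection numbers entering that error term. In the paper these are Proposition~\ref{prop:ineqSiu}/Corollary~\ref{coro:ineqSiu} and Lemma~\ref{lemma:sup}: the first is proved by writing $\overline E=\overline A-(\overline A-\overline E)$, applying Theorem~\ref{thm:Yuan}, and taming the higher mixed terms through the arithmetic Hodge index theorem, the decisive point being Lemma~\ref{lemma:Hodge}\eqref{item:Hodgeinradius}, namely $(\overline P^{d-i}\cdot\overline A^{i+1})\le(\overline P^{d}\cdot\overline A)\,(2/r(P;A))^{i}$; the second follows from Zhang's inequality applied to $\widetilde Q_n+\lambda\phi_n^*\overline A$ at $\lambda=r(Q_n;\phi_n^*A)$ together with hypothesis~\eqref{eq:conditioninradiussequences}. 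Pointing to the Yuan--Zhang Hodge index theorem as the expected tool is correct, but as written the proposal reduces the theorem to its hardest step rather than proving it.

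Two further points would need repair even granting (i) and (ii). First, you cannot ``reduce to $\overline E$ nef by linearity'': the one-sided derivative $\partial_{\overline E}\,\mu^{\ess}(\overline D)$ is only super-additive in $\overline E$, so the lower bound for nef classes gives no information on $\partial_{-\overline N}\,\mu^{\ess}(\overline D)$ for $\overline N$ nef, which you need when applying the inequality to $-\overline E$. The correct reduction (and the paper's) is to treat DSP $\overline E$ directly, using the auxiliary $\overline A$ with $\overline A\pm\overline E$ nef that you already fix; this symmetric setup is what lets you run the estimate for both $\overline E$ and $-\overline E$. Second, once $E$ is no longer nef, $\vol(Q_n+\lambda\phi_n^*E)$ is not a polynomial in $\lambda$ and the cross term $(Q_n^{d-1}\cdot\phi_n^*E)$ has no fixed sign, so your polynomial expansion and the replacement $\mu_n\le\mu^{\ess}(\overline D)$ must be replaced by the bounds of Lemma~\ref{lemma:comparevolsinradius} (or by dropping the nonnegative term $(\widetilde Q_n^{d+1})$ as in the paper) and by $|(Q_n^{d-1}\cdot\phi_n^*E)|\le(Q_n^{d-1}\cdot\phi_n^*A)$ together with Lemma~\ref{lemma:inradiusandcap}; these are routine fixes, but they are part of why the paper's Corollary~\ref{coro:ineqSiu} is formulated the way it is.
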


This result together with Proposition \ref{prop:11} show that
if $\overline{D}$ satisfies the
condition~\eqref{eq:conditioninradiussequences}, then for any
$\overline{D}$-small generic sequence $(x_{\ell})_{\ell}$ and
$\overline{E} \in \widehat{\Div}(X)_{\bR}$ we have
\begin{equation*} %\label{eq:HCP}
  \lim_{\ell \to \infty} h_{\overline{E}}(x_{\ell}) = \lim_{n\to \infty} \, \frac{(\overline{Q}_n^d \cdot \phi_n^* \overline{E})
    - d\,  \mu^{\ess}(\overline{D}) \, (Q_n^{d-1} \cdot \phi_n^* E)}{(Q_n^d)}.
\end{equation*}
In particular, $\overline{D}$ satisfies the equidistribution property
at each place $v \in \mathfrak{M}_K$ and its  $v$-adic equidistribution
measure is given by
\begin{displaymath}
  \nu_{\overline{D},v} = \lim_{n \to \infty} \nu_{n,v}
\end{displaymath}
where $\nu_{n,v}$ denotes the pushforward to $X_v^{\an}$ of the normalized
Monge-Ampère measure $c_1(\overline{Q}_{n,v})^{\wedge d}/(Q_n^d)$, as
stated in Theorem \ref{thm:MainIntro}.

%\todo{Choose a formulation (differentiability or (HCP) + (EP)}
%\begin{theorem}[Test for alternative formulation] Assume that there exists a sequence
%  $(\phi_n, \overline{Q}_n)_{n \in \bN}$ of semipositive approximations
%  of $\overline{D}$ such that
%\begin{equation}\label{eq:conditioninradiussequences_alternative}
% \lim_{n \rightarrow \infty} \frac{\mu^{\ess}(\overline{D}) - \mu^{\abs}(\overline{Q}_n)}{r(Q_n;\phi_n^*D)} = 0.
%\end{equation}
%Then $\overline{D}$ has the height convergence property: for every $\overline{D}$-small generic sequence $(x_{\ell})_{\ell}$ and for every $\overline{E} \in %\widehat{\Div}(X)_{\bR}$, we have 
%\begin{displaymath}
%\lim_{\ell} h_{\overline{E}}(x_{\ell}) = \lim_{n\to \infty} \, \frac{(\overline{Q}_n^d \cdot \phi_n^* \overline{E}) - d\,  \mu^{\ess}(\overline{D}) \times (Q_n^{d-1} \cdot \phi_n^* E)}{(Q_n^d)}.
%\end{displaymath}
%In particular, $\overline{D}$ has the equidistribution property at every place $v \in \mathfrak{M}_K$ and the $v$-adic equidistribution measure is the weak limit $\nu_{\overline{D},v} = \lim_{n \to \infty} \nu_{n,v}$, where $\nu_{n,v}$ denotes the pushforward to $X_v^{\an}$ of the normalized Monge-Ampère measure
%\begin{displaymath}
%\frac{c_1(\overline{Q}_{n,v})^{\wedge d}}{(Q_n^d)}.
%\end{displaymath}
%\todo{Then discuss that this is equivalent to the differentiability of the essential minimum}
%\end{theorem}

\begin{remark}\label{rem:conditionMainvolume}
  The inradius $r(Q_n; \phi_n^*D)$ in Theorem \ref{thm:MainSequences}
  measures the bigness of the geometric $\bR$-divisor $Q_n$ for each
  $n$.  For our purposes this invariant is finer than the geometric
  volume $\vol(Q_n) = (Q_n^d)$.  Indeed, for any ample
  $A \in \Div(X)_{\bR}$ with $A - D$ pseudo-effective we have that
  $\phi_n^*A - Q_n$ is pseudo-effective for all $n$, and so
  $ r(Q_n;\phi^*_n D) \ge (Q_n^d) / (d \, (A^d))$ by
  Lemma~\ref{lemma:lowerboundinradiusvol}. Hence any sequence of
  semipositive approximations $(\phi_n, \overline{Q}_n)_{n}$ of
  $\overline{D}$ such that
  \begin{displaymath}
\lim_{n \to \infty} \frac{\mu^{\ess}(\overline{D}) - \mu^{\abs}(\overline{Q}_n)}{(Q_n^d)} = 0
\end{displaymath} 
also satisfies \eqref{eq:conditioninradiussequences}.  When $X$ is a
curve this condition is equivalent to
\eqref{eq:conditioninradiussequences} %as in this case the inradius can also be bounded above by the geometric volume using Lemmas~\ref{lemma:compareinradii} and~\ref{lemma:inradiusandcap}.  By contrast,
but is stronger in higher dimensions, as it can be seen for instance
in the semiabelian setting (Remark~\ref{rem:16}).
\end{remark}

\begin{remark}\label{rema:hypothesesthm}
  We assume throughout that $X$ is normal because we do not consider
  adelic $\bR$-divisors on an arbitrary projective variety.
  Nevertheless, Theorem \ref{thm:MainSequences} can be applied to
  study the equidistribution properties of an adelic divisor
  $\overline{D}$ on an arbitrary projective variety $X$ over $K$.
  Indeed, since all the data is invariant under birational
  modifications one can reduce to the normal case by working on the
  normalization.
\end{remark}

\subsection{Application to classical equidistribution
  results}\label{subsec:YuanandChen}

% \todo{This discussion (quasi-canonical metrics, Yuan theorem) might  eventually be moved to the intro}
% Following \cite[Definition~2.7]{BPRS:dgopshtv}, we say
% that $\overline{D}$ is \emph{quasi-canonical} if it is semipositive and
% \begin{equation}
% \label{eq:quasicanonical}
% \mu^{\ess}(\overline{D})=\frac{(\overline{D}^{d+1})}{(d+1) (D^{d})}, 
% \end{equation}
% that is, if Zhang's lower bound for the essential minimum of
% $\overline{D}$ (Theorem~\ref{thm:Zhang}) is an equality in this case.
% By \cite[Theorem 6.6]{Ba:Okounkov}, the condition in \eqref{eq:quasicanonical} is
% equivalent to the fact that the the essential minimum and the absolute
% minimum of $\overline{D}$ coincide. In other terms, 
% $\overline{D}$ is
% {quasi-canonical} if and only if
% \begin{equation}
% \label{eq:muessmuabs}
% \mu^{\ess}(\overline{D})=\mu^{\abs}(\overline{D}).
% \end{equation}

% Yuan's equidistribution theorem \cite[Theorem 3.1]{Yuan:big} shows that every
% quasi-canonical adelic divisor $\overline{D}$ with $D$ ample
% satisfies the equidistribution property for every place $v$, with
% $v$-adic equidistribution measure equal to the normalized $v$-adic
% Monge-Ampère measure. We can recover it as a direct application of our
% results.  As a bonus, we extend it to the case when $D$ is a big
% $\mathbb{R}$-divisor and prove the differentiability of its essential
% minimum function along every direction. 

Yuan's equidistribution theorem (Theorem \ref{thm:YuanIntro}) is a
direct consequence of Theorem \ref{thm:MainSequences}, which moreover
shows that this result is valid for any adelic $\bR$-divisor with big
geometric $\bR$-divisor and gives the differentiability of the
essential minimum function. These slight improvements could already be
obtained by adapting Yuan's proof.

\begin{corollary}
\label{cor:yuantheorem} 
Assume that $\overline{D}$ is semipositive and
that 
\begin{displaymath}
\mu^{\ess}(\overline{D}) = \frac{(\overline{D}^{d+1})}{(d+1)\, (D^d)}.
\end{displaymath}
Then the essential minimum function is differentiable at $\overline{D}$ and
\begin{displaymath}
  \partial_{\overline{E}}\, \mu^{\ess}(\overline{D})= \frac{(\overline{D}^{d}\cdot \overline{E}) - d \, \mu^{\ess}(\overline{D}) \, (D^{d-1}\cdot E)}{(D^{d})} \quad \text{ for  all }  \overline{E}\in \widehat{\Div}(X)_{\bR}.
\end{displaymath}
In particular, $\overline{D}$ satisfies the $v$-adic equidistribution
property at each $v\in \mathfrak{M}_{K}$ with equidistribution measure
$\nu_{\overline{D},v}= c_{1}(\overline{D}_{v})^{\wedge d}/(D^{d})$.
\end{corollary}

\begin{proof}
  Apply Theorem \ref{thm:MainSequences} to the sequence of
  semipositive approximations 
  $\overline{Q}_{n}=\overline{D}$, $n \in \bN$. By Theorem \ref{thm:criterionextremal} we have 
  \begin{math}
\mu^{\abs}(\overline{D}) =\mu^{\ess}(\overline{D}) 
\end{math}
and so~\eqref{eq:conditioninradiussequences} is
verified.
\end{proof}

%Theorem \ref{thm:MainSequences} also implies Chen's equidistribution
%theorem \cite[Corollary~5.5]{Chen:diffvol}.  As shown in
%\emph{loc. cit.}, this result is a direct consequence of the
%differentiability of the arithmetic volume. Its derivation from our
%result is more demanding.

The following corollary recovers Chen's equidistribution theorem
\cite[Corollary~5.5]{Chen:diffvol}. As shown in \emph{loc. cit.}, this
result is a consequence of the differentiability of the arithmetic
volume. We shall explain how to deduce it from Theorem
\ref{thm:MainSequences}.

\begin{corollary}\label{coro:Chenequi}
  Assume that $\overline{D}$ is big and that
\begin{displaymath}
\mu^{\ess}(\overline{D}) = \frac{\widehat{\vol}(\overline{D})}{(d+1) \vol(D)}.
\end{displaymath}
Then the essential minimum function is differentiable at $\overline{D}$ and 
\begin{displaymath}
 \partial_{\overline{E}}\, \mu^{\ess}(\overline{D})= \frac{(\langle \overline{D}^{d} \rangle \cdot \overline{E}) - d \, \mu^{\ess}(\overline{D}) \, ( \langle D^{d-1}\rangle \cdot E)}{\vol(D)} \quad \text{ for all }  \overline{E}\in \widehat{\Div}(X)_{\bR}.
\end{displaymath}
In particular, $\overline{D}$ satisfies the $v$-adic equidistribution
property at each $v\in \mathfrak{M}_{K}$, with equidistribution
measure $\nu_{\overline{D},v}=\omega_{\overline{D},v}$.
\end{corollary}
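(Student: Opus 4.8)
The plan is to apply Theorem~\ref{thm:MainSequences} to a sequence of semipositive approximations of $\overline{D}$ produced by Corollary~\ref{coro:goodapprox}, and then to evaluate the resulting derivative of the essential minimum by comparison with Zhang's inequality. The starting point is that extremality forces the truncated section rings of $\overline{D}$ to have maximal volume throughout the interval $[0,\mu^{\ess}(\overline{D}))$. Indeed, $\overline{D}$ being big gives $\widehat{\vol}(\overline{D})>0$, so in the chain
\begin{displaymath}
\mu^{\ess}(\overline{D})\;\ge\;\frac{\widehat{\vol}(\overline{D})}{(d+1)\vol(R^0(\overline{D}))}\;\ge\;\frac{\widehat{\vol}(\overline{D})}{(d+1)\vol(D)}
\end{displaymath}
of Theorem~\ref{thm:Zhang} the hypothesis forces $\mu^{\ess}(\overline{D})>0$ and $\vol(R^0(\overline{D}))=\vol(D)$. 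Since $t\mapsto\vol(R^t(\overline{D}))$ is non-increasing, bounded above by $\vol(R^0(\overline{D}))=\vol(D)$, and vanishes for $t>\mu^{\ess}(\overline{D})$ by Theorem~\ref{thm:Essmin}, while by Theorem~\ref{thm:ChenIntegral} its integral over $[0,\mu^{\ess}(\overline{D})]$ equals $\widehat{\vol}(\overline{D})/(d+1)=\mu^{\ess}(\overline{D})\vol(D)$, monotonicity then forces $\vol(R^t(\overline{D}))=\vol(D)$ for every $t\in[0,\mu^{\ess}(\overline{D}))$.

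Next I would choose reals $t_n$ with $0\le t_n<\mu^{\ess}(\overline{D})$ and $t_n\to\mu^{\ess}(\overline{D})$, together with $\varepsilon_n>0$ with $\varepsilon_n\to 0$, and for each $n$ let $(\phi_n,\overline{Q}_n)$ be the semipositive approximation of $\overline{D}$ given by Corollary~\ref{coro:goodapprox} for $t=t_n$ and $\varepsilon=\varepsilon_n$; then $\mu^{\abs}(\overline{Q}_n)\ge t_n$ and, by the first paragraph, $(Q_n^d)\ge\vol(R^{t_n}(\overline{D}))-\varepsilon_n=\vol(D)-\varepsilon_n$. Consequently
\begin{displaymath}
\frac{\mu^{\ess}(\overline{D})-\mu^{\abs}(\overline{Q}_n)}{(Q_n^d)}\;\le\;\frac{\mu^{\ess}(\overline{D})-t_n}{\vol(D)-\varepsilon_n},
\end{displaymath}
whose right-hand side tends to $0$ as $n\to\infty$, so by Remark~\ref{rem:conditionMainvolume} the sequence $(\phi_n,\overline{Q}_n)_n$ satisfies condition~\eqref{eq:conditioninradiussequences}. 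Theorem~\ref{thm:MainSequences} then yields the differentiability of the essential minimum function at $\overline{D}$.

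It remains to compute $\partial_{\overline{E}}\,\mu^{\ess}(\overline{D})$ for a given $\overline{E}\in\widehat{\Div}(X)_{\bR}$, which I would do by a tangency argument. For $\lambda$ near $0$, set $f(\lambda)=\mu^{\ess}(\overline{D}+\lambda\overline{E})$ and $g(\lambda)=\widehat{\vol}(\overline{D}+\lambda\overline{E})/\bigl((d+1)\vol(D+\lambda E)\bigr)$, which are well defined since $\overline{D}$ and $D$ are big. Theorem~\ref{thm:Zhang} gives $f\ge g$ near $0$, the hypothesis gives $f(0)=g(0)$, $f$ is differentiable at $0$ by the previous paragraph, and $g$ is differentiable at $0$ because $\widehat{\vol}$ is differentiable at $\overline{D}$ (Theorem~\ref{thm:Chendiff}) and $\vol$ is differentiable at $D$ (Remark~\ref{rem:geometricpositiveintersection}), with $\vol(D)\ne 0$. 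Hence $f-g$ attains an interior minimum at $0$, so $(f-g)'(0)=0$, i.e. $f'(0)=g'(0)$; evaluating $g'(0)$ using $\partial_{\overline{E}}\widehat{\vol}(\overline{D})=(d+1)(\langle\overline{D}^d\rangle\cdot\overline{E})$, $\partial_E\vol(D)=d(\langle D^{d-1}\rangle\cdot E)$ and $\widehat{\vol}(\overline{D})=(d+1)\mu^{\ess}(\overline{D})\vol(D)$ yields
\begin{displaymath}
\partial_{\overline{E}}\,\mu^{\ess}(\overline{D})=\frac{(\langle\overline{D}^d\rangle\cdot\overline{E})-d\,\mu^{\ess}(\overline{D})\times(\langle D^{d-1}\rangle\cdot E)}{\vol(D)}.
\end{displaymath}
Specialising to $\overline{E}=\overline{0}^{\varphi}$ with $\varphi\in C(X_v^{\an})^{G_v}$, where $E=0$, and using $\vol(R^0(\overline{D}))=\vol(D)$, this becomes $\partial_{\overline{0}^{\varphi}}\mu^{\ess}(\overline{D})=n_v\int_{X_v^{\an}}\varphi\,d\omega_{\overline{D},v}$, and Proposition~\ref{prop:diffvsEP} identifies the $v$-adic equidistribution measure with $\omega_{\overline{D},v}$.

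I expect the main obstacle to be the verification of the inradius condition in the second paragraph: one needs semipositive approximations whose absolute minimum tends to $\mu^{\ess}(\overline{D})$ while the volumes $(Q_n^d)$ remain bounded away from $0$, and the latter is precisely what can fail for a general big adelic $\bR$-divisor, whose truncated volumes $\vol(R^t(\overline{D}))$ may collapse as $t\to\mu^{\ess}(\overline{D})$. The extremality hypothesis is used exactly to rule this out, through the equality $\vol(R^t(\overline{D}))=\vol(D)$ established in the first paragraph.
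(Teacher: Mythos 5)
Your proposal is correct, and its first half coincides with the paper's own proof: the same use of Theorem~\ref{thm:Zhang} and Theorem~\ref{thm:ChenIntegral} to force $\vol(R^t(\overline{D}))=\vol(D)$ on $[0,\mu^{\ess}(\overline{D}))$, the same appeal to Corollary~\ref{coro:goodapprox} and Remark~\ref{rem:conditionMainvolume} to produce a sequence of semipositive approximations satisfying \eqref{eq:conditioninradiussequences}, and Theorem~\ref{thm:MainSequences} for differentiability. Where you genuinely diverge is in the computation of the derivative. The paper notes that, since $\mu^{\ess}(\overline{D})>0$, the $\overline{Q}_n$ are eventually nef and (using the third estimate of Corollary~\ref{coro:goodapprox} together with $\widehat{\vol}(\overline{D}(t))=\widehat{\vol}(\overline{D})-(d+1)t\vol(D)$) satisfy \eqref{eq:Fujitasequence}, so the limits $\lim_n(\overline{Q}_n^d\cdot\phi_n^*\overline{E})$ and $\lim_n(Q_n^{d-1}\cdot\phi_n^*E)$ are literally the positive intersection numbers, which are then substituted into the formula of Theorem~\ref{thm:MainSequences}. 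You instead run a first-derivative (tangency) test: $f(\lambda)=\mu^{\ess}(\overline{D}+\lambda\overline{E})$ dominates $g(\lambda)=\widehat{\vol}(\overline{D}+\lambda\overline{E})/((d+1)\vol(D+\lambda E))$ near $0$, they agree at $0$, both are differentiable there (Theorem~\ref{thm:MainSequences}, Theorem~\ref{thm:Chendiff}, and the differentiability of the geometric volume from Remark~\ref{rem:geometricpositiveintersection}), so $f'(0)=g'(0)$, which gives the stated formula by the quotient rule. Your route avoids checking nefness of the $\overline{Q}_n$ and the Fujita condition \eqref{eq:Fujitasequence} (hence also Corollary~\ref{coro:orthogonal}), at the price of using the differentiability of $\widehat{\vol}$ and $\vol$ as black boxes; the inputs are ultimately the same, and both yield the identification $\nu_{\overline{D},v}=\omega_{\overline{D},v}$ via Proposition~\ref{prop:diffvsEP} exactly as you say. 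One small point you leave implicit: the inequality $f\ge g$ requires $\overline{D}+\lambda\overline{E}$ to be arithmetically big (second part of Theorem~\ref{thm:Zhang}), which holds for all small $|\lambda|$ by the continuity of $\widehat{\vol}$ recalled in \S\ref{subsec:arithmvol}; this is worth one sentence but is not a gap.
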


Here $\omega_{\overline{D},v}$ is the probability measure on
$X_{v}^{\an}$ in \eqref{eq:36} and $( \langle D^{d-1}\rangle \cdot E)$ is
the geometric positive intersection number from \cite{BFJ:diffvol}
(see Remark~\ref{rem:geometricpositiveintersection}).  We % To prove it we
need the next lemma %showing the existence of 
 to construct a suitable sequence of semipositive
approximations of~$\overline{D}$.

\begin{lemma}
  \label{lem:goodapprox}
  For every real numbers $t < \mu^{\ess}(\overline{D})$ and
  $\varepsilon >0$ there exists a semipositive approximation
  $(\phi, \overline{Q})$ of $\overline{D}$ such that
\begin{displaymath}
  \mu^{\abs}(\overline{Q}) \ge t, \quad |(Q^d) -\vol(R^t(\overline{D}))|\le  \varepsilon, \quad |(\overline{Q}^{d+1}) -
  \widehat{\vol}(\overline{D}(t)) -(d+1)t\vol(R^t(\overline{D}))| \le \varepsilon.
\end{displaymath}
\end{lemma}

\begin{proof} By Theorem \ref{thm:Essmin}, $\overline{D}(t)$ is
  big. Then by Proposition \ref{prop:orthogonal} applied to
  $\overline{D}(t)$, for any $\varepsilon'>0$ there is a nef adelic
  $\mathbb{R}$-divisor $\overline{P}$ on a normal modification
  $\phi\colon X'\to X$ with $\phi^{*}\overline{D}(t)-\overline{P}$
  pseudo-effective such that
  \begin{equation}
    \label{eq:42}
|(P^d) - \vol(R^t(\overline{D})) | \le \varepsilon' \and |(\overline{P}^{d+1}) -\widehat{\vol}(\overline{D}(t))| \le \varepsilon'.
\end{equation}

Set $\overline{Q} = \overline{P}(-t)=\overline{P}+t[\infty]$. By
construction, $(\phi, \overline{Q})$ is a semipositive approximation
of $\overline{D}$. Since $\overline{P}$ is nef we have
$\mu^{\abs}(\overline{Q}) = \mu^{\abs}(\overline{P}) +t \ge t$, which
gives the first condition. By
\eqref{eq:arithmeticvsgeometricintersection} we also have
\begin{displaymath}
(Q^d) =(P^{d}) \and 
(\overline{Q}^{d+1}) = (\overline{P}^{d+1}) + (d+1)\, t\, (Q^d),
\end{displaymath}
and so the second and third conditions follow from \eqref{eq:42}.
\end{proof}

\begin{proof}[Proof of Corollary \ref{coro:Chenequi}]
  By Theorem \ref{thm:ChenIntegral} and Lemma
  \ref{lemma:propertiesessmin}\eqref{item:essminlowerbound} we have
\begin{displaymath}
 \mu^{\ess}(\overline{D}) =  \frac{\widehat{\vol}(\overline{D})}{(d+1) \vol(D)} = \frac{1}{\vol(D)} \int_0^{\mu^{\ess}(\overline{D})} \vol(R^t(\overline{D}))\, dt.
\end{displaymath}
Since $\vol(R^t(\overline{D})) \le \vol(D)$ we get 
$\vol(R^t(\overline{D})) = \vol(D)$ for all
$0 \le t\le \mu^{\ess}(\overline{D})$. Hence by the same results, for
$t$ in this range we have
\begin{displaymath}
\widehat{\vol}(\overline{D}(t)) = \widehat{\vol}(\overline{D}) - (d+1)\,t \vol(D).
\end{displaymath}
Applying Lemma \ref{lem:goodapprox} we deduce that there is a
sequence $(\phi_n,\overline{Q}_n)_n$ of semipositive approximations
of~$\overline{D}$ such that
\begin{displaymath}
  \lim_{n \to \infty} (\overline{Q}_n^{d+1}) = \widehat{\vol}(\overline{D}), \quad \lim_{n \to \infty} (Q_n^d) = \vol(D), \quad \lim_{n\to \infty} \mu^{\abs}(\overline{Q}_n) = \mu^{\ess}(\overline{D}).
\end{displaymath}
Hence by Remark \ref{rem:conditionMainvolume} the sequence
$(\phi_n, \overline{Q}_n)_{n}$ satisfies the condition
\eqref{eq:conditioninradiussequences}. Since
$\mu^{\ess}(\overline{D})> 0$, we have that $\overline{Q}_n$ is nef
for sufficiently large $n$ and therefore $(\phi_n, \overline{Q}_n)_n$
is a Fujita approximation sequence of $\overline{D}$. By the
definition of arithmetic positive intersection numbers and Remark
\ref{rem:geometricpositiveintersection} we have
\begin{displaymath}
\lim_{n\to \infty} (\overline{Q}_n^d \cdot \phi_n^*\overline{E})= (\langle \overline{D}^{d} \rangle \cdot \overline{E}) \and \lim_{n\to \infty} (Q_n^{d-1} \cdot \phi_n^*E) = (\langle D^{d-1}\rangle \cdot E).
\end{displaymath}
The result then follows from Theorem \ref{thm:MainSequences}.
\end{proof}

\begin{remark}
  \label{rem:18}
  Chen's equidistribution theorem implies Yuan's. Indeed, for
  $\overline{D}$ semipositive the statement of the latter is
  invariant under shifts of this adelic $\mathbb{R}$-divisor by
  multiples of $[\infty]$. Hence we can suppose without loss of
  generality that $\overline{D}$ is big and nef, in which case
  Corollary \ref{coro:Chenequi} specializes to Corollary
  \ref{cor:yuantheorem}.
\end{remark}

\subsection{Interpretation in terms of arithmetic positive
  intersection numbers}\label{subsec:Productversion}

Here we propose a reformulation of Theorem \ref{thm:MainSequences}
that gives a more intrinsic condition for the differentiability of the
essential minimum function and shows that the derivative can be
computed using limits of arithmetic positive intersection numbers. As
before, we let $\overline{D}$ be an $\mathbb{R}$-divisor on $X$ with
$D$ big.

We define the
inradius of a big adelic $\bR$-divisor as the supremum of the
geometric inradii of its nef approximations.

\begin{definition}
  \label{def:rho}
  Let $\overline{B} $ be a big adelic $\bR$-divisor on $X$.  A
  \emph{nef approximation} of $\overline{B}$ is a semipositive
  approximation $(\phi, \overline{P})$ of $\overline{B}$ such that
  $\overline{P}$ is nef.  We denote by $\Theta(\overline{B})$ the set
  of nef approximations of $\overline{B}$.

  For a big $\bR$-divisor $A$ on $X$, the \emph{inradius} of
  $\overline{B}$ with respect to $A$ is defined~as
  % \todo{Question: can
  %   we get this inradius from any Fujita approximation sequence? This
  %   would allow to avoid nef approximations altogether, and would
  %   allow to compute this inradius in the toric case. F: I don't know}
\begin{displaymath}
  \rho(\overline{B};A)=\sup \{ r(P; \phi^*A) \, | \ (\phi, \overline{P}) \in \Theta(\overline{B}) \}.
\end{displaymath}
This is a positive real number. We also set
$\rho(\overline{B})= \rho(\overline{B};B)$ for the inradius of
$\overline{B}$ with respect to its geometric $\mathbb{R}$-divisor $B$,
which is big.
\end{definition}

%\begin{remark}
%\label{rem:5} \end{remark}
%  The supremum in Definition \ref{def:rho} can
% be taken over the pairs $(\phi,\overline{P}) \in \Theta(\overline{B})$
% with $P$ big, and $\rho(\overline{B};A)>0$.

%\begin{proposition}\label{prop:proprho}
% Let $\overline{B}_1, \overline{B}_2$ be big adelic $\bR$-divisors on $X$ and $A \in \Div(X)_{\bR}$ be a big $\bR$-divisor. 
%\begin{enumerate}[leftmargin=*]
%\item\label{item:rhosuperadd} We have $\rho(\overline{B}_1 + \overline{B}_2;A) \ge \rho(\overline{B}_1;A) +\rho(\overline{B}_2;A)$.
%\item\label{item:rhocont} For any $\overline{E} \in \widehat{\Div}(X)_{\bR}$, we have $
% \lim_{\lambda\to 0} \rho(\overline{B}_1 + \lambda \overline{E};A) =  \rho(\overline{B}_1;A)$.
%\end{enumerate} 
%\end{proposition}

%\begin{proof} The first point follows from the definition. Since $\rho(\overline{B}_1;A)$ is also positive homogeneous in $\overline{B}_1$, this implies that for any $\overline{E} \in \widehat{\Div}(X)_{\bR}$, the function $\lambda \mapsto \rho(\overline{B}_1 + \lambda \overline{E};A)$ is concave around zero and \eqref{item:rhocont} follows. 
%\end{proof} 

In the next result and similar ones, the limits when $t$ tends to the
essential minimum are taken \emph{from below}. Note that for every
real number $t < \mu^{\ess}(\overline{D})$ we have that
$\overline{D}(t)$ is big by Theorem \ref{thm:Essmin}, and so the
inradius $\rho(\overline{D}(t))$ is well-defined.

\begin{theorem}\label{thm:MainProduct}
  Assume that
\begin{equation}\label{eq:conditionMainProduct}
	\liminf_{t \to \mu^{\mathrm{ess}}(\overline{D})} \frac{\mu^{\mathrm{ess}}(\overline{D}) -t}{\rho(\overline{D}(t))}=0.
\end{equation}
Then the essential minimum function is differentiable at
        $\overline{D}$ and
\begin{equation}\label{eq:derivativeMain2}
  \partial_{\overline{E}}\, \mu^{\ess}(\overline{D}) %=\lim_{t \rightarrow \mu^{\mathrm{ess}}(\overline{D})}  \Omega_{\overline{D}(t)}(\overline{E})
  =\lim_{t \rightarrow \mu^{\mathrm{ess}}(\overline{D})} \frac{(\langle \overline{D}(t)^d \rangle \cdot \overline{E})}{\vol(R^t(\overline{D}))}
  \quad \text{ for all } \overline{E}\in \widehat{\Div}(X)_{\mathbb{R}}.
\end{equation}
\end{theorem}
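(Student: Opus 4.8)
The plan is to deduce Theorem~\ref{thm:MainProduct} from Theorem~\ref{thm:MainSequences} by manufacturing a sequence of semipositive approximations of $\overline{D}$ that meets \eqref{eq:conditioninradiussequences}, and then rewriting the derivative formula produced by that theorem. First, using \eqref{eq:conditionMainProduct}, I would fix a sequence $(t_n)_n$ with $t_n<\mu^{\ess}(\overline{D})$, $t_n\to \mu^{\ess}(\overline{D})$, and $(\mu^{\ess}(\overline{D})-t_n)/\rho(\overline{D}(t_n))\to 0$. For each $n$ the adelic $\bR$-divisor $\overline{D}(t_n)=\overline{D}-t_n[\infty]$ is big by Theorem~\ref{thm:Essmin}, so by the very definition of $\rho(\overline{D}(t_n))$ one may choose $(\phi_n,\overline{P}_n)\in\Theta(\overline{D}(t_n))$ with $P_n$ big and $r(P_n;\phi_n^*D)\ge (1-1/n)\,\rho(\overline{D}(t_n))$. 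Setting $\overline{Q}_n=\overline{P}_n+t_n[\infty]$, the pair $(\phi_n,\overline{Q}_n)$ is a semipositive approximation of $\overline{D}$: indeed $\overline{Q}_n$ is semipositive (a sum of the semipositive $\overline{P}_n$ and the semipositive $[\infty]$) with big geometric divisor $Q_n=P_n$, and $\phi_n^*\overline{D}-\overline{Q}_n=\phi_n^*\overline{D}(t_n)-\overline{P}_n$ is pseudo-effective.

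Next I would verify \eqref{eq:conditioninradiussequences}. Since $\overline{P}_n$ is nef we have $\mu^{\abs}(\overline{Q}_n)=\mu^{\abs}(\overline{P}_n)+t_n\ge t_n$, while $\mu^{\abs}(\overline{Q}_n)\le \mu^{\ess}(\overline{Q}_n)\le \mu^{\ess}(\phi_n^*\overline{D})=\mu^{\ess}(\overline{D})$ by Lemma~\ref{lemma:propertiesessmin}\eqref{item:essminincreases} (using that $\phi_n^*\overline{D}-\overline{Q}_n$ is pseudo-effective and $D$ is big). Hence
\[
0\le \frac{\mu^{\ess}(\overline{D})-\mu^{\abs}(\overline{Q}_n)}{r(Q_n;\phi_n^*D)}\le \frac{\mu^{\ess}(\overline{D})-t_n}{(1-1/n)\,\rho(\overline{D}(t_n))},
\]
and the right-hand side tends to $0$ as $n\to\infty$, so Theorem~\ref{thm:MainSequences} applies: $\mu^{\ess}$ is differentiable at $\overline{D}$ and $\partial_{\overline{E}}\,\mu^{\ess}(\overline{D})$ is the limit of $\big((\overline{Q}_n^d\cdot\phi_n^*\overline{E})-d\,\mu^{\ess}(\overline{D})\,(Q_n^{d-1}\cdot\phi_n^*E)\big)/(Q_n^d)$. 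Because $[\infty]$ has trivial geometric divisor, any arithmetic intersection number containing $[\infty]$ twice vanishes by \eqref{eq:arithmeticvsgeometricintersection}; expanding $\overline{Q}_n=\overline{P}_n+t_n[\infty]$ and using \eqref{eq:arithmeticvsgeometricintersection} once more yields $(\overline{Q}_n^d\cdot\phi_n^*\overline{E})=(\overline{P}_n^d\cdot\phi_n^*\overline{E})+d\,t_n\,(P_n^{d-1}\cdot\phi_n^*E)$, and $Q_n=P_n$ geometrically. Thus the $n$-th term equals
\[
\frac{(\overline{P}_n^d\cdot\phi_n^*\overline{E})}{(P_n^d)}-d\,(\mu^{\ess}(\overline{D})-t_n)\,\frac{(P_n^{d-1}\cdot\phi_n^*E)}{(P_n^d)}.
\]
Choosing an ample $\bR$-divisor $A$ with $A-D$ and $A\pm E$ pseudo-effective, the inequality \eqref{eq:comparegeometriccap} gives $|(P_n^{d-1}\cdot\phi_n^*E)|\le (P_n^{d-1}\cdot\phi_n^*A)$, Lemma~\ref{lemma:inradiusandcap} gives $(P_n^{d-1}\cdot\phi_n^*A)/(P_n^d)\le r(P_n;\phi_n^*A)^{-1}$, and Lemma~\ref{lemma:compareinradii} gives $r(P_n;\phi_n^*A)\ge r(D;A)\,r(P_n;\phi_n^*D)$; combining with the displayed bound, the second summand tends to $0$, whence $\partial_{\overline{E}}\,\mu^{\ess}(\overline{D})=\lim_n (\overline{P}_n^d\cdot\phi_n^*\overline{E})/(P_n^d)$.

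It then remains to identify this limit with $\lim_{t\to\mu^{\ess}(\overline{D})}\Omega_{\overline{D}(t)}(\overline{E})=\lim_t (\langle\overline{D}(t)^d\rangle\cdot\overline{E})/\vol(R^t(\overline{D}))$, and this is the main obstacle. The tension is that $(\phi_n,\overline{P}_n)$ was selected to nearly maximize the \emph{geometric} inradius $r(P_n;\phi_n^*D)$, whereas $\Omega_{\overline{D}(t_n)}$ is computed, via \eqref{eq:39}, \eqref{eq:Omegalimit} and Corollary~\ref{coro:orthogonal}, from nef approximations of $\overline{D}(t_n)$ that nearly maximize the \emph{arithmetic} volume $(\overline{P}^{d+1})$ as in \eqref{eq:Fujitasequence} — and a priori these two optimality conditions are unrelated. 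My plan to close the gap is to refine the choice in the first step so that, for every $n$, the approximation $(\phi_n,\overline{P}_n)$ of $\overline{D}(t_n)$ is within $1/n$ of optimal for both invariants simultaneously: one passes to a modification dominating both an inradius-near-optimal approximation and an arithmetic-Fujita-near-optimal approximation of $\overline{D}(t_n)$ (the latter furnished by Theorem~\ref{thm:arithmeticFujita}) and interpolates between them, exploiting that the inradius is superadditive on sums of nef divisors while $\widehat{\vol}^{\,1/(d+1)}$ is concave (\cite[Theorem~5.3.1]{Moriwaki:MAMS}); the crux is to balance these two behaviours so that \eqref{eq:conditioninradiussequences} survives along the refined sequence. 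Granting such a sequence, \eqref{eq:Omegalimit} applied to each $\overline{D}(t_n)$ together with Corollary~\ref{coro:orthogonal} identifies $\lim_n (\overline{P}_n^d\cdot\phi_n^*\overline{E})/(P_n^d)$ with $\lim_n \Omega_{\overline{D}(t_n)}(\overline{E})$; finally, since $\mu^{\ess}$ is now known to be differentiable at $\overline{D}$ (hence, via the relation $\mu^{\ess}(\overline{F}-s[\infty])=\mu^{\ess}(\overline{F})-s$, at every $\overline{D}(t)$), the subsequential limit along $(t_n)_n$ upgrades to the full limit as $t\to\mu^{\ess}(\overline{D})^-$, using Chen's differentiability theorem (Theorem~\ref{thm:Chendiff}) to write $\Omega_{\overline{D}(t)}(\overline{E})=\partial_{\overline{E}}\widehat{\vol}(\overline{D}(t))/\partial_{[\infty]}\widehat{\vol}(\overline{D}(t))$ and the concavity of $\widehat{\vol}^{\,1/(d+1)}$ to control how this ratio varies along the ray $\overline{D}-t[\infty]$ near the boundary of the big cone. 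I expect this last reconciliation — forcing the inradius-optimal and arithmetic-volume-optimal approximations to coexist with uniform error control — to be the genuinely delicate point.
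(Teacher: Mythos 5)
Your first half is sound and coincides with the paper's own reduction (its Lemma~\ref{lemma:reformulation}): from \eqref{eq:conditionMainProduct} you extract $(t_n)_n$ and nearly inradius-optimal $(\phi_n,\overline{P}_n)\in\Theta(\overline{D}(t_n))$, set $\overline{Q}_n=\overline{P}_n+t_n[\infty]$, check \eqref{eq:conditioninradiussequences}, and invoke Theorem~\ref{thm:MainSequences}; the algebraic simplification of the resulting derivative to $\lim_n(\overline{P}_n^d\cdot\phi_n^*\overline{E})/(P_n^d)$ is also fine. The gap is exactly where you flag it: identifying this limit with $\lim_{t\to\mu^{\ess}(\overline{D})}\Omega_{\overline{D}(t)}(\overline{E})$. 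Your interpolation plan is only a sketch, and even if you produce, for each $n$, a single nef approximation of $\overline{D}(t_n)$ that is near-optimal for both the inradius and the arithmetic volume, that does not yet compute $\Omega_{\overline{D}(t_n)}(\overline{E})$: the positive intersection number \eqref{eq:39} is defined via sequences satisfying \eqref{eq:Fujitasequence}, and Theorem~\ref{thm:Chendiff} provides no modulus of continuity converting ``$\widehat{\vol}(\overline{P})$ is within $\delta_n$ of $\widehat{\vol}(\overline{D}(t_n))$'' into ``$(\overline{P}^d\cdot\phi^*\overline{E})/(P^d)$ is within $\varepsilon_n$ of $\Omega_{\overline{D}(t_n)}(\overline{E})$''. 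You would have to rerun the proof of Theorem~\ref{thm:Chendiff} quantitatively (errors of the shape $\delta_n/\lambda+C\lambda$, plus a quantitative version of Corollary~\ref{coro:orthogonal} for $(P^d)$ versus $\vol(R^{t_n}(\overline{D}))$) and control these uniformly as $t_n\to\mu^{\ess}(\overline{D})$, where $\widehat{\vol}(\overline{D}(t_n))\to 0$ and your convex-combination weight must simultaneously tend to $1$ to keep the volume and stay far enough from $1$ to keep \eqref{eq:conditioninradiussequences}. None of this is carried out, and the final upgrade from the subsequence $(t_n)_n$ to the full limit in $t$ is likewise only asserted.

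The paper sidesteps all of this with a short lemma (Lemma~\ref{lemma:chapeau}) that you do not need any matching of approximation sequences for: $\Omega_{\overline{D}(t)}$ is a linear functional that is nonnegative on pseudo-effective adelic $\bR$-divisors (Lemma~\ref{lemma:intersectionpseff} together with \eqref{eq:arithmeticvsgeometricintersection}) and satisfies $\Omega_{\overline{D}(t)}([\infty])=1$; applying it to the pseudo-effective divisor $\overline{D}+\lambda\overline{E}-\mu^{\ess}(\overline{D}+\lambda\overline{E})[\infty]$ (Theorem~\ref{thm:Essmin}) gives $\Omega_{\overline{D}(t)}(\overline{D})+\lambda\,\Omega_{\overline{D}(t)}(\overline{E})\ge\mu^{\ess}(\overline{D}+\lambda\overline{E})$, and Zhang's inequality (Theorem~\ref{thm:Zhang}) with Corollary~\ref{coro:orthogonal} forces $\Omega_{\overline{D}(t)}(\overline{D})\to\mu^{\ess}(\overline{D})$ as $t\to\mu^{\ess}(\overline{D})$. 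Taking $\liminf$ in $t$, dividing by $\lambda$ and letting $\lambda\to 0^+$, then replacing $\overline{E}$ by $-\overline{E}$, sandwiches all limit values of $\Omega_{\overline{D}(t)}(\overline{E})$ between $\partial_{\overline{E}}\,\mu^{\ess}(\overline{D})$ and $-\partial_{-\overline{E}}\,\mu^{\ess}(\overline{D})$ for \emph{every} $t<\mu^{\ess}(\overline{D})$, so once differentiability is known from your first half, \eqref{eq:derivativeMain2} follows immediately. I recommend replacing your interpolation scheme by this positivity argument; as it stands, your proof is incomplete at precisely the step that carries the content of \eqref{eq:derivativeMain2}.
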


We can reformulate accordingly the asymptotic behavior of heights and
Galois orbits of the algebraic points in a $\overline{D}$-small
generic sequence: by Proposition \ref{prop:diffvsHCP}, this result
shows that if $\overline{D}$ satisfies the
condition~\eqref{eq:conditionMainProduct} then for any
$\overline{D}$-small generic sequence~$(x_{\ell})_{\ell}$ and
$\overline{E} \in \widehat{\Div}(X)_{\bR}$ we have
\begin{equation*} 
  \lim_{\ell \to \infty} h_{\overline{E}}(x_{\ell}) =
\lim_{t \rightarrow \mu^{\mathrm{ess}}(\overline{D})} \frac{(\langle \overline{D}(t)^d \rangle \cdot \overline{E})}{\vol(R^t(\overline{D}))}.
\end{equation*}
In particular, $\overline{D}$ satisfies the equidistribution property
at each place $v \in \mathfrak{M}_K$ and its  $v$-adic equidistribution
measure is given by
\begin{equation*}
  \nu_{\overline{D},v} = \lim_{t\to\mu^{\ess}(\overline{D})}\omega_{\overline{D}(t),v},
\end{equation*}
where $\omega_{\overline{D}(t),v}$ is the probability measure  on $X_{v}^{\an}$
from \eqref{eq:36}.

We first observe that the conditions in Theorems
\ref{thm:MainSequences} and \ref{thm:MainProduct} are equivalent.

\begin{lemma}\label{lemma:reformulation}
  The condition \eqref{eq:conditionMainProduct} holds if and only if
  there exists a sequence $(\phi_n, \overline{Q}_n)_{n}$ of
  semipositive approximations of $\overline{D}$ satisfying the condition
  \eqref{eq:conditioninradiussequences}.
\end{lemma}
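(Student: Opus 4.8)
Set $\mu = \mu^{\ess}(\overline{D})$. The plan is to translate the statement through the elementary dictionary between semipositive approximations of $\overline{D}$ and nef approximations of the twists $\overline{D}(t) = \overline{D} - t[\infty]$, which are big for every $t < \mu$ by Theorem \ref{thm:Essmin}. The two operations at play are $\overline{P} \mapsto \overline{P} + t[\infty]$ and $\overline{Q} \mapsto \overline{Q} - \mu^{\abs}(\overline{Q})[\infty]$, and I would first record the following facts, each proved by a short formal manipulation. Since the height function of $[\infty]$ is the constant $1$ and $\phi^*[\infty] = [\infty]$, for a nef approximation $(\phi,\overline{P}) \in \Theta(\overline{D}(t))$ the pair $(\phi, \overline{P} + t[\infty])$ is a semipositive approximation of $\overline{D}$ with geometric divisor $P$ and $\mu^{\abs}(\overline{P} + t[\infty]) = \mu^{\abs}(\overline{P}) + t \ge t$. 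Conversely, if $(\phi,\overline{Q})$ is a semipositive approximation of $\overline{D}$, then $\overline{Q} - \mu^{\abs}(\overline{Q})[\infty]$ is nef (because $\overline{Q} - s[\infty]$ is semipositive for all $s$ and nef exactly for $s \le \mu^{\abs}(\overline{Q})$, by §\ref{subsec:minima}), has geometric divisor $Q$, and for every $t \le \mu^{\abs}(\overline{Q})$ with $t < \mu$ one has $(\phi, \overline{Q}-\mu^{\abs}(\overline{Q})[\infty]) \in \Theta(\overline{D}(t))$, since $\phi^*\overline{D}(t) - (\overline{Q}-\mu^{\abs}(\overline{Q})[\infty]) = (\phi^*\overline{D} - \overline{Q}) + (\mu^{\abs}(\overline{Q}) - t)[\infty]$ is pseudo-effective. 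In both cases the geometric inradius $r(\,\cdot\,;\phi^*D)$ is unchanged; I would also record that $\phi^*D - Q$ is geometrically pseudo-effective, whence $r(Q;\phi^*D) \le 1$, and that $\mu^{\abs}(\overline{Q}) \le \mu^{\ess}(\overline{Q}) \le \mu$ by Lemma \ref{lemma:propertiesessmin}\eqref{item:essminincreases}.

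For the implication \eqref{eq:conditionMainProduct} $\Rightarrow$ existence of a sequence: the hypothesis yields $t_n \to \mu$ from below with $(\mu - t_n)/\rho(\overline{D}(t_n)) \to 0$. As $\rho(\overline{D}(t_n))$ is a finite positive supremum, I can choose $(\phi_n, \overline{P}_n) \in \Theta(\overline{D}(t_n))$ with $r(P_n;\phi_n^*D) \ge (1 - 1/n)\,\rho(\overline{D}(t_n))$ and put $\overline{Q}_n = \overline{P}_n + t_n[\infty]$. By the dictionary, $(\phi_n,\overline{Q}_n)$ is a semipositive approximation of $\overline{D}$ with $Q_n = P_n$ and $t_n \le \mu^{\abs}(\overline{Q}_n) \le \mu$, so
\[
0 \le \frac{\mu - \mu^{\abs}(\overline{Q}_n)}{r(Q_n;\phi_n^*D)} \le \frac{\mu - t_n}{(1-1/n)\,\rho(\overline{D}(t_n))} \longrightarrow 0 ,
\]
which is \eqref{eq:conditioninradiussequences}.

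For the converse, given $(\phi_n,\overline{Q}_n)$ satisfying \eqref{eq:conditioninradiussequences}, write $r_n = r(Q_n;\phi_n^*D) \in (0,1]$ and $a_n = \mu - \mu^{\abs}(\overline{Q}_n) \ge 0$; from $a_n/r_n \to 0$ and $r_n \le 1$ we get $a_n \to 0$. The delicate point is the choice of cutoffs: since the $\overline{Q}_n$ need not be nef, $\mu^{\abs}(\overline{Q}_n)$ may already equal $\mu$ and $r_n$ may degenerate to $0$, so neither $t_n = \mu^{\abs}(\overline{Q}_n)$ nor $t_n = \mu - 1/n$ works. I would instead take $t_n = \mu - \max(a_n, r_n/n)$, which satisfies $t_n < \mu$, $t_n \le \mu^{\abs}(\overline{Q}_n)$, $t_n \to \mu$ (as $a_n \to 0$ and $r_n/n \le 1/n \to 0$), and $(\mu - t_n)/r_n = \max(a_n/r_n, 1/n) \to 0$. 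By the dictionary, $\overline{Q}_n - \mu^{\abs}(\overline{Q}_n)[\infty]$ gives a nef approximation of $\overline{D}(t_n)$ with geometric divisor $Q_n$, so $\rho(\overline{D}(t_n)) \ge r_n$, and therefore
\[
0 \le \liminf_{t \to \mu} \frac{\mu - t}{\rho(\overline{D}(t))} \le \liminf_{n} \frac{\mu - t_n}{\rho(\overline{D}(t_n))} \le \lim_n \frac{\mu - t_n}{r_n} = 0 ,
\]
which is \eqref{eq:conditionMainProduct}.

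The routine part of the argument is the dictionary of the first paragraph; the one genuinely delicate step is the balanced choice $t_n = \mu - \max(a_n, r_n/n)$ in the converse, needed precisely because semipositive approximations are not nef and both $\mu^{\abs}(\overline{Q}_n)$ and $r_n$ may approach their extreme values. The only other subtlety is the comparison $\mu^{\abs}(\overline{Q}_n) \le \mu^{\ess}(\overline{D})$, which relies on the monotonicity of the essential minimum under pseudo-effective differences.
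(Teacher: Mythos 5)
Your proof is correct and follows essentially the same route as the paper's: in one direction lift nef approximations of $\overline{D}(t_n)$ to semipositive approximations of $\overline{D}$ by adding $t_n[\infty]$, and in the other shift $\overline{Q}_n$ by $\mu^{\abs}(\overline{Q}_n)[\infty]$ to obtain an element of $\Theta(\overline{D}(t_n))$ and use $\rho(\overline{D}(t_n)) \ge r(Q_n;\phi_n^*D)$. The only deviation is your choice of cut-offs $t_n = \mu^{\ess}(\overline{D}) - \max(a_n, r_n/n)$ in the converse, which is slightly more careful than the paper's choice $t_n = \mu^{\abs}(\overline{Q}_n)$ since it guarantees $t_n < \mu^{\ess}(\overline{D})$ (so that $\overline{D}(t_n)$ stays big and $\rho(\overline{D}(t_n))$ is defined) even in the degenerate case $\mu^{\abs}(\overline{Q}_n) = \mu^{\ess}(\overline{D})$.
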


\begin{proof}
  First assume that \eqref{eq:conditionMainProduct} holds. Then there
  are sequences of real numbers
  $t_{n}< \mu^{\mathrm{ess}}(\overline{D})$, $n \in \mathbb{N}$, and
  of nef approximations
  $(\phi_n, \overline{P}_n) \in \Theta(\overline{D}(t_n))$,
  $n\in \mathbb{N}$, such that
\begin{displaymath}
\lim_{n \rightarrow \infty} \frac{\mu^{\mathrm{ess}}(\overline{D}) -t_n}{r(P_n;\phi_n^*D)} = 0.
\end{displaymath}
For each $n$ put $\overline{Q}_n = \overline{P}_n (-t_n)$. Then
$(\phi_n,\overline{Q}_n)$ is a semipositive approximation of
$\overline{D}$ with $Q_n = P_n$ and
$\mu^{\mathrm{abs}}(\overline{Q}_n) =
\mu^{\mathrm{abs}}(\overline{P}_n) + t_n \ge t_n$. Therefore the
sequence $(\phi_n, \overline{Q}_n)_{n}$ satisfies
\eqref{eq:conditioninradiussequences}.

Conversely let $(\phi_n, \overline{Q}_n)_{n}$ be a sequence of
semipositive approximations of $\overline{D}$ satisfying
\eqref{eq:conditioninradiussequences}. For each $n$ set
$t_n = \mu^{\abs}(\overline{Q}_n)$ and
$\overline{P}_n = \overline{Q}_n (t_n)$. Then $\overline{P}_{n}$ is
nef since it is semipositive and
$\mu^{\abs}(\overline{P}_n) = \mu^{\abs}(\overline{Q}_n) - t_n = 0$.
Moreover
\begin{displaymath}
\phi_n^*\overline{D}(t_n) - \overline{P}_n = (\phi_n^* \overline{D} - t_n[\infty]) - (\overline{Q}_n - t_n[\infty]) = \phi_n^*\overline{D} - \overline{Q}_n 
\end{displaymath}
is pseudo-effective. Therefore
$(\phi_n,\overline{P}_n) \in \Theta(\overline{D}(t_n))$ and in
particular
\begin{displaymath}
\rho(\overline{D}(t_n)) \ge r(P_n;\phi_n^*D) = r(Q_n;\phi_n^*D).
\end{displaymath}
Finally we have 
\begin{displaymath}
0 \le \liminf_{t \to \mu^{\mathrm{ess}}(\overline{D})} \frac{\mu^{\mathrm{ess}}(\overline{D}) -t}{\rho(\overline{D}(t))} \le \liminf_{n \to \infty} \frac{\mu^{\mathrm{ess}}(\overline{D}) -t_n}{\rho(\overline{D}(t_n))} \le  \lim_{n \to \infty} \frac{\mu^{\mathrm{ess}}(\overline{D}) -\mu^{\abs}(\overline{Q}_n)}{r(Q_n;\phi_n^*D)} = 0.
\end{displaymath}
\end{proof}

%For $ t < \mu^{\mathrm{ess}}(\overline{D})$ we have that
%$\overline{D}(t)$ is big and we can consider the linear functional
%$\Omega_{\overline{D}(t)} \colon \widehat{\Div}(X)_{\bR} \rightarrow
%\bR$ defined in \eqref{eq:41}.  We have
%\begin{displaymath}
%  \Omega_{\overline{D}(t)}(\overline{E})= \frac{(\langle \overline{D}(t)^d \rangle \cdot \overline{E})}{\vol(R^t(\overline{D}))} \quad
%  \text{ for all } \overline{E}\in \widehat{\Div}(X)_{\mathbb{R}}.
%\end{displaymath}

Note that for every $t< \mu^{\mathrm{ess}}(\overline{D})$ and $\overline{E }\in \widehat{\Div}(X)_{\mathbb{R}}$ we have 
\begin{displaymath}
  \Omega_{\overline{D}(t)}(\overline{E})= \frac{(\langle \overline{D}(t)^d \rangle \cdot \overline{E})}{\vol(R^t(\overline{D}))}, 
\end{displaymath}
where $\Omega_{\overline{D}(t)} \colon \widehat{\Div}(X)_{\bR} \rightarrow
\bR$ is the linear functional defined in \eqref{eq:41}. 
\begin{lemma}\label{lemma:chapeau}
  For every $\overline{E} \in \widehat{\Div}(X)_{\mathbb{R}}$ we have
  \begin{equation*}
-\partial_{-\overline{E}}\, \mu^{\ess}(\overline{D}) \ge \limsup_{t \to \mu^{\ess}(\overline{D})} \Omega_{\overline{D}(t)}(\overline{E})  \ge \liminf_{t \to \mu^{\ess}(\overline{D})} \Omega_{\overline{D}(t)}(\overline{E}) \ge  \partial_{\overline{E}}\, \mu^{\ess}(\overline{D}).
\end{equation*}
In particular, if the essential minimum function is differentiable at $\overline{D}$ then the limit 
$\lim_{t \to \mu^{\ess}(\overline{D})} \Omega_{\overline{D}(t)}(\overline{E})$
exists and equals $\partial_{\overline{E}}\, \mu^{\ess}(\overline{D})$.
\end{lemma}

\begin{proof}
  Let $ t < \mu^{\mathrm{ess}}(\overline{D})$. As observed in Remark
  \ref{rem:14}, $\Omega_{\overline{D}(t)}$ takes nonnegative values on
  pseudo-effective adelic $\bR$-divisors and verifies
  $\Omega_{\overline{D}(t)}([\infty]) =1$.  By
  Theorem~\ref{thm:Essmin}, for each $\lambda > 0$ we have that
  $\overline{D} + \lambda \overline{E} -
  \mu^{\mathrm{ess}}(\overline{D} + \lambda \overline{E}) \, [\infty]$
  is pseudo-effective and so
\begin{equation}\label{eq:lowerboundOmega}
  \Omega_{\overline{D}(t)}(\overline{D}) + \lambda \, \Omega_{\overline{D}(t)}(\overline{E}) =
   \Omega_{\overline{D}(t)}(\overline{D} + \lambda \, \overline{E}) 
 \ge  \mu^{\mathrm{ess}}(\overline{D} + \lambda \overline{E}).
\end{equation}
On the other hand
\begin{displaymath}
 \Omega_{\overline{D}(t)}(\overline{D}) =  \Omega_{\overline{D}(t)}(\overline{D}(t)) + t = \frac{\widehat{\vol}( \overline{D}(t))}{\vol(R^t(\overline{D}))} + t \le  (d+1)(\mu^{\ess}(\overline{D})-t) + t
\end{displaymath}
by the first formula in \eqref{eq:40} and Zhang's inequality (Theorem
\ref{thm:Zhang}).  Therefore
\begin{displaymath}
  \lim_{t \to \mu^{\ess}(\overline{D})} \Omega_{\overline{D}(t)}(\overline{D}) = \mu^{\ess}(\overline{D}).
\end{displaymath}
Taking the infimum limit as $t$ approaches
$ \mu^{\mathrm{ess}}(\overline{D})$ from below in
\eqref{eq:lowerboundOmega} then gives
\begin{displaymath}
\liminf_{t \to \mu^{\ess}(\overline{D})} \Omega_{\overline{D}(t)}(\overline{E}) \ge \frac{ \mu^{\mathrm{ess}}(\overline{D} + \lambda \overline{E}) - \mu^{\ess}(\overline{D})}{\lambda},
\end{displaymath}
and we obtain
$\liminf_{t \to \mu^{\ess}(\overline{D})}
\Omega_{\overline{D}(t)}(\overline{E}) \ge \partial_{\overline{E}}\,
\mu^{\ess}(\overline{D})$ by letting $\lambda \to 0$. The rest of the
statement follows by applying this to $-\overline{E}$.
\end{proof}

% \begin{remark}
% \label{rem:17}
% \todo{This is rather a question} We do not know whether it can happen
% that
% \begin{displaymath}
%  \limsup_{t \to \mu^{\ess}(\overline{D})}\Omega_{\overline{D}(t)}(\overline{E})>\liminf_{t \to \mu^{\ess}(\overline{D})} \Omega_{\overline{D}(t)}(\overline{E})
% \end{displaymath}
% when the essential minimum function is not differentiable at $\overline{D}$.
% \end{remark}

\begin{proof}[Proof of Theorem \ref{thm:MainProduct}]
  If \eqref{eq:conditionMainProduct} holds, then by Lemma
  \ref{lemma:reformulation} and Theorem \ref{thm:MainSequences} the
  essential minimum function is differentiable at $\overline{D}$. The
  expression for the derivative in \eqref{eq:derivativeMain2} is given
  by Lemma \ref{lemma:chapeau}.
\end{proof}

It would be very interesting to determine whether the condition in
Theorem \ref{thm:MainProduct} is actually a criterion for the
differentiability of the essential minimum.

\begin{question}\label{question:converse} If the essential minimum function is differentiable at $\overline{D}$, then does 
\begin{displaymath}
\liminf_{t \to \mu^{\mathrm{ess}}(\overline{D})} \frac{\mu^{\mathrm{ess}}(\overline{D}) -t}{\rho(\overline{D}(t))}=0
\end{displaymath}
necessarily hold? More optimistically, does this hold as soon as
$\overline{D}$ has the equidistribution property at every place?
\end{question}

In Section \ref{sec:partial-converse} we give a partial answer to this
question.

\section{Proof of  Theorem \ref{thm:MainSequences}  and complements}
\label{sec:Proof}

In this section we prove our main result and give some complements,
including an equidistribution theorem with a more flexible condition
for the sequence of semipositive approximations, a logarithmic
equidistribution theorem, and a partial converse to Theorem
\ref{thm:MainSequences}.

\subsection{A consequence of the arithmetic Siu's
  inequality}\label{subsec:consequenceYuan}
The following result is
a variant of Lemma \ref{lemma:ineqYuanv0} with an error 
term depending on an inradius.

% The main result in this paragraph is Proposition \ref{prop:ineqSiu}, which is a consequence of Yuan's arithmetic version of Siu's inequality \cite[Theorem 2.2]{Yuan:big}. It extends the latter to the case where one of the adelic divisors is not nef, with an error term involving the inradius of two $\bR$-divisors. Let us first recall Yuan's arithmetic version of Siu's inequality \cite[Theorem 2.2]{Yuan:big}.  \todo{Move the statement}

% \begin{theorem}[Yuan]\label{thm:Yuan} Let $\overline{P}_1$ and $\overline{P}_2$ be two nef adelic $\bR$-divisors on $X$. Then 
%\begin{displaymath}
%\widehat{\vol}_{\chi}(\overline{P}_1 - \overline{P}_2) \ge (\overline{P}_1^{d+1}) - (d+1) (\overline{P}_1^{d} \cdot \overline{P}_2).
%\end{displaymath}
%\end{theorem}

%\begin{proof} It is a direct consequence of \cite[Theorem 2.2]{Yuan:big} by the continuity of arithmetic volumes. 
%\end{proof}
% \todo{Rewrite and relate to Lemma \ref{lemma:ineqYuanv0}}
%We need the following variant of Yuan's theorem in the case where $\overline{P}_2$ is only assumed to be DSP. We mention that similar statements were used by Chen in \cite[Lemma 4.2]{Chen:diffvol} and by Ikoma \cite[Proposition 5.1]{Ikoma:IMRN} to establish the differentiability of arithmetic volumes. However, the error terms in those statements are not precise enough for our purposes. 

\begin{proposition}\label{prop:ineqSiu}
  Let $\overline{P}, \overline{E} \in \widehat{\Div}(X)_{\mathbb{R}}$
  with $P$ big and $\overline{P}$ nef. Assume that there exists a nef
  adelic $\bR$-divisor $\overline{A}$ on $X$ such that $A$ is big,
  $\overline{A} + \overline{E}$ is pseudo-effective and
  $\overline{A} - \overline{E}$ is nef. There is a constant $c_d$
  depending only on $d$ such that for every $\lambda \ge 0$
\begin{displaymath}
  \widehat{\vol}_{\chi}(\overline{P} + \lambda \overline{E}) \ge (\overline{P}^{d+1})
  +(d+1) \, (\overline{P}^d\cdot \overline{E})\, \lambda - c_d \, 
   \max  \Big( 1 , \Big( \frac{\lambda}{r(P;A)} \Big)^{d-1} \Big) \, \frac{(\overline{P}^d \cdot \overline{A} )}{r(P;A)}  \, \lambda^{2}.
\end{displaymath}
\end{proposition}

Its proof combines Yuan's arithmetic analogue of Siu's inequality
(Theorem \ref{thm:Yuan}) with the following lemma, itself a
consequence of the arithmetic Hodge index theorem due to Yuan and
Zhang \cite{YuanZhang:hodge}. The first point is proven along the
lines of \cite[Theorem~2.7(2)]{Ikoma:IMRN}.

\begin{lemma}\label{lemma:Hodge} Let $\overline{P}$ and $\overline{A}$
  be nef adelic $\bR$-divisors on $X$ with $P,A$ big. Then
\begin{enumerate}[leftmargin=*]
\item\label{item:Hodgecompare}  for  $i=1, \ldots, d$ we have
\begin{math} 
(P^{d+1-i}\cdot A^{i-1}) \, ( \overline{P}^{d-i}\cdot \overline{A}^{i+1}) \le 2\, (P^{d-i} \cdot A^{i}) \, (\overline{P}^{d+1-i} \cdot \overline{A}^{i} ) ,
\end{math}
\item\label{item:Hodgeinradius} for  $i= 0, \ldots, d$ we have
  \begin{math}
      (\overline{P}^{d-i}\cdot \overline{A}^{i+1}) \le \Big(\dfrac{2}{r(P;A)}\Big)^{i} \,  (\overline{P}^{d} \cdot \overline{A}).
\end{math}
\end{enumerate}
\end{lemma}

\begin{proof} As $P$ and $A$ are big and nef we have
  $(P^d) = \vol(P) > 0$ and $(A^d) = \vol(A) >0$, which implies that
  $(P^{d+1-i} \cdot A^{i-1}) > 0$ by \cite[Theorem 1.6.1]{LazI}. Set
\begin{displaymath}
 \alpha = \frac{(P^{d-i} \cdot A^{i})}{( P^{d+1-i} \cdot A^{i-1})},
\end{displaymath}
so that  $((A - \alpha P) \cdot P^{d-i}\cdot A^{i-1}) = 0$. By the
arithmetic Hodge index theorem \cite[Theorem 2.2]{YuanZhang:hodge}
(which remains valid for adelic $\bR$-divisors by \cite[Theorem
2.7(1)]{Ikoma:IMRN}) this implies 
\begin{equation}\label{eq:ineqYuanZhang}
 ((\overline{A} - \alpha \overline{P})^2 \cdot \overline{P}^{d-i} \cdot \overline{A}^{i-1}) \le 0.
\end{equation}
On the other hand
\begin{displaymath}
((\overline{A} - \alpha \overline{P})^2 \cdot \overline{P}^{d-i}\cdot \overline{A}^{i-1}) = (\overline{P}^{d-i} \cdot \overline{A}^{i+1}) -2\alpha \, (\overline{P}^{d+1-i} \cdot \overline{A}^{i}) + \alpha^2 \, (\overline{P}^{d+2-i} \cdot \overline{A}^{i-1} ) 
\end{displaymath}
and $(\overline{P}^{d+2-i} \cdot \overline{A}^{i-1} )  \ge 0$ since both $\overline{P}$ and $\overline{A}$ are nef.  Therefore \eqref{item:Hodgecompare} follows from \eqref{eq:ineqYuanZhang}. 

Note that \eqref{item:Hodgeinradius} holds trivially for $i =0$. We
deduce the general case by induction on~$i$, applying
\eqref{item:Hodgecompare} and the inequality 
\begin{displaymath}
( P^{d-i} \cdot A^{i}) \le \frac{1}{r(P;A)} (P^{d+1-i}\cdot A^{i-1}),
\end{displaymath}
which follows from \eqref{eq:comparegeometriccap} and the fact that
$P - r(P;A)A$ is pseudo-effective.
\end{proof}

\begin{proof}[Proof of Proposition \ref{prop:ineqSiu}]
  Set $\overline{B} = \overline{A} - \overline{E}$. This is a nef
  adelic $\mathbb{R}$-divisor on $X$ and so by Theorem \ref{thm:Yuan}
  we have
\begin{displaymath}
\widehat{\vol}_{\chi}(\overline{P} + \lambda \overline{E})  = \widehat{\vol}_{\chi}(\overline{P} + \lambda \overline{A} - \lambda \overline{B}) \ge ((\overline{P} + \lambda \overline{A})^{d+1}) - (d+1) \, ((\overline{P} + \lambda \overline{A})^d \cdot \overline{B})\, \lambda .
\end{displaymath}
Expanding the right-hand side we find that it is equal to
\begin{multline*}
(\overline{P}^{d+1}) +(d+1)\, (\overline{P}^d \cdot \overline{E}) \,  \lambda + \sum_{i = 2}^{d+1} \binom{d+1}{i}  (\overline{P}^{d+1-i} \cdot \overline{A}^i)\, \lambda^i\\
-(d+1) \sum_{i = 1}^d \binom{d}{i} (\overline{P}^{d-i}\cdot \overline{A}^i \cdot \overline{B}) \, \lambda^{i+1} .
\end{multline*} 
Since $\overline{P}$ and $\overline{A}$ are nef, the first sum is nonnegative and therefore 
\begin{displaymath}
\widehat{\vol}_{\chi}(\overline{P} + \lambda \overline{E}) \ge (\overline{P}^{d+1}) +(d+1) \, (\overline{P}^d \cdot \overline{E}) \, \lambda  - (d+1)\sum_{i = 1}^d \binom{d}{i}  (\overline{P}^{d-i}\cdot \overline{A}^i \cdot \overline{B}) \, \lambda^{i+1}.
\end{displaymath}
By Lemma \ref{lemma:intersectionpseff} and the fact that  $2\overline{A} - \overline{B} = \overline{A} + \overline{E}$ is
pseudo-effective we have
$(\overline{P}^{d-i}\cdot \overline{A}^i \cdot \overline{B}) \le
2\,(\overline{P}^{d - i} \cdot \overline{A}^{i+1})$, $i =1, \ldots, d$, 
and by Lemma \ref{lemma:Hodge}\eqref{item:Hodgeinradius},
\begin{displaymath}
(\overline{P}^{d - i} \cdot \overline{A}^{i+1}) \le \Big( \frac{2}{r(P;A)} \Big)^i\, (\overline{P}^d \cdot \overline{A}).
\end{displaymath}
 Therefore $\widehat{\vol}_{\chi}(\overline{P} + \lambda \overline{E})$ is bounded from below by
\begin{displaymath}
  (\overline{P}^{d+1}) +(d+1) \, (\overline{P}^d \cdot \overline{E})\, \lambda  - (d+1)\,  (\overline{P}^d \cdot \overline{A})
  \sum_{i = 1}^d \binom{d}{i}  \frac{ 2^{i+1}\lambda^{i+1}}{r(P;A)^{i}},
\end{displaymath}
and the result follows. 
\end{proof}

The following consequence of Proposition \ref{prop:ineqSiu} plays a central role in our proof.

\begin{corollary}\label{coro:ineqSiu}
  Let $\overline{P},\overline{E} \in \widehat{\Div}(X)_{\mathbb{R}}$
  with $\overline{P}$ nef and $P$ big. Assume that there exists a nef
  adelic $\bR$-divisor $\overline{A}$ such that $A$ is big,
  $\overline{A} + \overline{E}$ is pseudo-effective and
  $\overline{A} - \overline{E}$ is nef. There exists a constant $c_d$
  depending only on $d$ such that
 \begin{displaymath}
   \mu^{\ess}(\overline{P} + \lambda \overline{E}) \ge \frac{(\overline{P}^{d+1})}{(d+1)\vol(P + \lambda E)}
   +  \frac{(\overline{P}^d\cdot \overline{E})}{(P^d)} \, \lambda - c_d\, \frac{(\overline{P}^d \cdot \overline{A} )}{(P^d)}\, \frac{\lambda^2}{r(P;A)}
\end{displaymath}
for every $0 \le \lambda < r(P;A)/2$.  In particular, if $E = 0$ then
\begin{equation*}
   \mu^{\ess}(\overline{P} + \lambda \overline{E}) \ge \frac{(\overline{P}^{d+1})}{(d+1)(P^d)}
   +  \frac{(\overline{P}^d\cdot \overline{E})}{(P^d)}\, \lambda - c_d\, \frac{(\overline{P}^d \cdot \overline{A} )}{(P^d)}\, \frac{\lambda^2}{r(P;A)}.
\end{equation*}
\end{corollary}

\begin{proof}
  Let $\lambda$ be a real number with $0 \le \lambda < r(P;A)/2$. By
  Lemma \ref{lemma:comparevolsinradius} we have
\begin{equation}\label{eq:ineqvolsinradius}
\Big( 1 - \frac{\lambda}{r(P;A)} \Big)^d (P^d) \le \vol(P + \lambda E) \le \Big( 1 + \frac{\lambda}{r(P;A)} \Big)^d (P^d).
\end{equation}
In particular, $P + \lambda E$ is big.
  By Zhang's inequality (Theorem \ref{thm:Zhang}) we have 
\begin{displaymath}
\mu^{\ess}(\overline{P} + \lambda \overline{E}) \ge \frac{\widehat{\vol}_{\chi}(\overline{P} + \lambda \overline{E})}{(d+1)\vol(P + \lambda E)}.
\end{displaymath}
Therefore Proposition \ref{prop:ineqSiu} implies that
\begin{equation}\label{eq:ineqcorYuan}
\mu^{\ess}(\overline{P} + \lambda \overline{E}) \ge  \frac{(\overline{P}^{d+1})}{(d+1)\vol(P + \lambda E)} +   \frac{(\overline{P}^d\cdot \overline{E})}{\vol(P + \lambda E)}\, \lambda - c_d   \, \frac{(\overline{P}^d\cdot \overline{A})}{\vol(P + \lambda E)}\, \frac{\lambda^2}{r(P;A)}
\end{equation}
for a constant $c_d > 0$ depending only on $d$.  By
\eqref{eq:ineqvolsinradius} we have
\begin{displaymath}
  \frac{(\overline{P}^d\cdot \overline{A})}{\vol(P + \lambda E)} \le  \Big(1- \frac{\lambda}{r(P;A)}\Big)^{-d} \,  \frac{(\overline{P}^d\cdot \overline{A})}{(P^d)} \le 2^d \,\frac{(\overline{P}^d\cdot \overline{A})}{(P^d)}.
\end{displaymath}
On the other hand, by Lemma \ref{lemma:intersectionpseff} we have
$|(\overline{P}^d \cdot \overline{E} )| \le (\overline{P}^d\cdot
\overline{A})$ since $\overline{P}$ is nef and
$\overline{A} \pm \overline{E}$ are pseudo-effective. Set $a = 1 $ if
$(\overline{P}^d\cdot \overline{E}) \ge 0$ and $a = -1$
otherwise. Then by~\eqref{eq:ineqvolsinradius} we have
\begin{multline*}
  \frac{(\overline{P}^d\cdot \overline{E})}{\vol(P + \lambda E)} \ge \frac{(\overline{P}^d\cdot \overline{E})}{(P^d)} \, \Big(1 +  a\, \frac{\lambda}{r(P;A)}\Big)^{-d} \\ \ge \frac{(\overline{P}^d\cdot \overline{E})}{(P^d)} - c'_d \,\frac{|(\overline{P}^d\cdot \overline{E})|}{(P^d)}\,  \frac{\lambda}{r(P;A)} 
  \ge  \frac{(\overline{P}^d\cdot \overline{E})}{(P^d)} - c'_d \, \frac{(\overline{P}^d\cdot \overline{A})}{(P^d)}\, \frac{\lambda}{r(P;A)} 
\end{multline*}
for some constant $c'_d$ depending only on $d$. The result follows by combining these inequalities with  \eqref{eq:ineqcorYuan}.
\end{proof}

\subsection{Proof of Theorem \ref{thm:MainSequences}}\label{subsec:proofMain}
Let $\overline{D} \in \widehat{\Div}(X)_{\bR}$ with $D$ big and
$(\phi_n, \overline{Q}_n)_{n}$ a sequence of semipositive
approximations of $\overline{D}$ satisfying the
condition~\eqref{eq:conditioninradiussequences}.

\begin{lemma}\label{lemma:condinradius}
  For every big $\bR$-divisor $A$ on $X$ we have
\begin{math}
  \displaystyle{ \lim_{n \rightarrow \infty}
    \frac{\mu^{\mathrm{ess}}(\overline{D}) -
      \mu^{\mathrm{abs}}(\overline{Q}_n)}{r(Q_n;\phi_n^*A)} = 0}.
\end{math}
\end{lemma}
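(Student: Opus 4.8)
The plan is to deduce the statement directly from hypothesis \eqref{eq:conditioninradiussequences}, by comparing $r(Q_n;\phi_n^*A)$ with $r(Q_n;\phi_n^*D)$ up to a multiplicative constant independent of $n$. The two ingredients are Lemma \ref{lemma:compareinradii} and the birational invariance $r(\phi^*P;\phi^*A)=r(P;A)$ recorded in \S\ref{subsection:inradius}.

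First I would check that for every $n$ the $\bR$-divisors $Q_n$, $\phi_n^*D$ and $\phi_n^*A$ on $X_n$ are all big: $Q_n$ is big by the definition of a semipositive approximation, and $\phi_n^*D$, $\phi_n^*A$ are big because $D$, $A$ are and the volume is invariant under birational morphisms. Hence all the inradii below are positive and the quotients make sense. Applying Lemma \ref{lemma:compareinradii} with $P=Q_n$, $A_1=\phi_n^*A$ and $A_2=\phi_n^*D$, and then using birational invariance, gives
\[
r(D;A)\cdot r(Q_n;\phi_n^*D)=r(\phi_n^*D;\phi_n^*A)\cdot r(Q_n;\phi_n^*D)\le r(Q_n;\phi_n^*A),
\]
where $r(D;A)>0$ depends only on $D$ and $A$ since $D$ is big.

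Next I would note that the numerator $\mu^{\ess}(\overline{D})-\mu^{\abs}(\overline{Q}_n)$ is non-negative: since $\phi_n^*\overline{D}-\overline{Q}_n$ is pseudo-effective and $D$ is big, Lemma \ref{lemma:propertiesessmin}\eqref{item:essminincreases} gives $\mu^{\ess}(\overline{D})=\mu^{\ess}(\phi_n^*\overline{D})\ge\mu^{\ess}(\overline{Q}_n)\ge\mu^{\abs}(\overline{Q}_n)$. Combining this with the previous inequality yields, for every $n$,
\[
0\le \frac{\mu^{\ess}(\overline{D})-\mu^{\abs}(\overline{Q}_n)}{r(Q_n;\phi_n^*A)}\le \frac{1}{r(D;A)}\cdot\frac{\mu^{\ess}(\overline{D})-\mu^{\abs}(\overline{Q}_n)}{r(Q_n;\phi_n^*D)},
\]
and the right-hand side converges to $0$ by hypothesis \eqref{eq:conditioninradiussequences}; the squeeze theorem then finishes the proof. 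There is no real obstacle here --- the argument is a routine application of the comparison lemma --- and the only point requiring a little care is ensuring the bigness of $Q_n$ and of $\phi_n^*A$ so that the inradii are positive; note also that only the first of the two inequalities of Lemma \ref{lemma:compareinradii} is needed for this statement.
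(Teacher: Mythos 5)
Your proof is correct and follows exactly the route the paper intends: the paper's proof of this lemma is a one-line reference to the hypothesis \eqref{eq:conditioninradiussequences} together with Lemma \ref{lemma:compareinradii}, which is precisely the comparison $r(Q_n;\phi_n^*A)\ge r(D;A)\, r(Q_n;\phi_n^*D)$ you spell out. Your extra verifications (bigness of the divisors involved, non-negativity of the numerator) are exactly the routine details the paper leaves implicit.
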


\begin{proof} This a straightforward consequence of \eqref{eq:conditioninradiussequences} thanks to Lemma \ref{lemma:compareinradii}.
\end{proof}

For each $n \in \bN$ set
\begin{displaymath}
 \widetilde{Q}_n = \overline{Q}_n -
\mu^{\mathrm{abs}}(\overline{Q}_n)\,[\infty] \in \widehat{\Div}(X)_{\mathbb{R}}. 
\end{displaymath}
Note that
$\mu^{\mathrm{abs}}(\widetilde{Q}_n) =
\mu^{\mathrm{abs}}(\overline{Q}_n) -
\mu^{\mathrm{abs}}(\overline{Q}_n)=0$ and therefore $\widetilde{Q}_n$
is nef.

\begin{lemma}\label{lemma:expressionlimit}
  Let  $\overline{E} \in \widehat{\Div}(X)_{\mathbb{R}}$. We have  
\begin{displaymath}
\frac{(\widetilde{Q}_n^d \cdot \phi_n^*\overline{E})}{(Q_n^d)} = \frac{(\overline{Q}_n^d \cdot \phi_n^*\overline{E}) -  d\, \mu^{\mathrm{abs}}(\overline{Q}_n) \, (Q_n^{d-1} \cdot \phi_n^* E) }{(Q_n^d)} \quad \text{ for all } n\in \mathbb{N}
\end{displaymath} 
and 
\begin{math}
  \displaystyle{\lim_{n \to \infty} \Big( \frac{(\widetilde{Q}_n^d
      \cdot \phi_n^*\overline{E})}{(Q_n^d)} -\frac{(\overline{Q}_n^d
      \cdot \phi_n^*\overline{E}) - d\,
      \mu^{\mathrm{ess}}(\overline{D}) \, (Q_n^{d-1} \cdot \phi_n^* E)
    }{(Q_n^d)}\Big)=0.}
\end{math}
\end{lemma}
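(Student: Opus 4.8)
The plan is to prove the two displayed assertions separately: the first is a formal computation with arithmetic intersection numbers, and the second reduces to the hypothesis \eqref{eq:conditioninradiussequences} through Lemma \ref{lemma:condinradius}.

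For the identity, I would write $\widetilde{Q}_n = \overline{Q}_n - \mu^{\abs}(\overline{Q}_n)[\infty]$ and expand $(\widetilde{Q}_n^d \cdot \phi_n^*\overline{E})$ by multilinearity of the arithmetic intersection product. Since the geometric $\bR$-divisor of $[\infty]$ is $0$, formula \eqref{eq:arithmeticvsgeometricintersection} shows that every monomial in which $[\infty]$ occurs with multiplicity at least $2$ vanishes, while the unique monomial in which $[\infty]$ occurs with multiplicity one equals $(Q_n^{d-1}\cdot\phi_n^*E)$. This leaves only the terms of $[\infty]$-multiplicity $0$ and $1$, giving
\[
(\widetilde{Q}_n^d \cdot \phi_n^*\overline{E}) = (\overline{Q}_n^d \cdot \phi_n^*\overline{E}) - d\,\mu^{\abs}(\overline{Q}_n)\,(Q_n^{d-1}\cdot\phi_n^*E),
\]
and dividing by $(Q_n^d) = \vol(Q_n) > 0$ (which is legitimate because $Q_n$ is nef and big) yields the first formula. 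The one point requiring care is that $\phi_n^*\overline{E}$ need not be DSP; since all the quantities involved are $\bR$-linear and continuous in $\overline{E}$, one reduces to the DSP case via Lemma \ref{lemma:approachDSP}, where \eqref{eq:arithmeticvsgeometricintersection} applies directly.

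For the limit, substituting the identity just proved shows that the expression inside the limit in the statement equals $d\,(\mu^{\ess}(\overline{D}) - \mu^{\abs}(\overline{Q}_n))\,(Q_n^{d-1}\cdot\phi_n^*E)\,/\,(Q_n^d)$, so it remains to show this tends to $0$. The key step is to bound the factor $(Q_n^{d-1}\cdot\phi_n^*E)/(Q_n^d)$ by an inradius. I would fix an ample $\bR$-divisor $A$ on $X$ such that $A \pm E$ are ample — such an $A$ exists since the ample cone is open, so a large multiple of any ample divisor works. Then $\phi_n^*A$ is nef and big and $\phi_n^*(A\pm E)$ are pseudo-effective, whence \eqref{eq:comparegeometriccap} (applied with the nef divisors $Q_n$ and the pseudo-effective $\phi_n^*(A\pm E)$) gives $|(Q_n^{d-1}\cdot\phi_n^*E)| \le (Q_n^{d-1}\cdot\phi_n^*A)$, and the upper bound in Lemma \ref{lemma:inradiusandcap}, applied to the nef and big divisors $Q_n$ and $\phi_n^*A$, gives $(Q_n^{d-1}\cdot\phi_n^*A) \le (Q_n^d)/r(Q_n;\phi_n^*A)$. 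Using moreover that $\mu^{\ess}(\overline{D}) = \mu^{\ess}(\phi_n^*\overline{D}) \ge \mu^{\ess}(\overline{Q}_n) \ge \mu^{\abs}(\overline{Q}_n)$ by Lemma \ref{lemma:propertiesessmin}\eqref{item:essminincreases} (valid since $\phi_n^*D$ is big and $\phi_n^*\overline{D}-\overline{Q}_n$ is pseudo-effective), I obtain
\[
\left|\frac{d\,(\mu^{\ess}(\overline{D}) - \mu^{\abs}(\overline{Q}_n))\,(Q_n^{d-1}\cdot\phi_n^*E)}{(Q_n^d)}\right| \le \frac{d\,(\mu^{\ess}(\overline{D}) - \mu^{\abs}(\overline{Q}_n))}{r(Q_n;\phi_n^*A)},
\]
and the right-hand side tends to $0$ by Lemma \ref{lemma:condinradius}, which concludes the proof.

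I do not foresee any genuine obstacle here. The first part is pure bookkeeping with \eqref{eq:arithmeticvsgeometricintersection}, and the heart of the second part is the elementary estimate $|(Q_n^{d-1}\cdot\phi_n^*E)|/(Q_n^d) \le r(Q_n;\phi_n^*A)^{-1}$ together with the observation that the assumption \eqref{eq:conditioninradiussequences}, which is phrased in terms of $r(Q_n;\phi_n^*D)$, transfers to $r(Q_n;\phi_n^*A)$ for the fixed auxiliary divisor $A$ by Lemma \ref{lemma:condinradius} (itself a consequence of Lemma \ref{lemma:compareinradii}).
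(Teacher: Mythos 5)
Your proof is correct and follows essentially the same route as the paper: expand $\widetilde{Q}_n = \overline{Q}_n - \mu^{\abs}(\overline{Q}_n)[\infty]$ by multilinearity using \eqref{eq:arithmeticvsgeometricintersection}, then bound the error term $d(\mu^{\ess}(\overline{D})-\mu^{\abs}(\overline{Q}_n))(Q_n^{d-1}\cdot\phi_n^*E)/(Q_n^d)$ via \eqref{eq:comparegeometriccap}, Lemma \ref{lemma:inradiusandcap} and Lemma \ref{lemma:condinradius} with an auxiliary ample $A$ dominating $\pm E$. Your extra reduction to the DSP case via Lemma \ref{lemma:approachDSP} is harmless but not needed, since the paper's intersection product and \eqref{eq:arithmeticvsgeometricintersection} already allow one arbitrary adelic $\bR$-divisor in the last slot.
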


\begin{proof}
  The first equality follows from the multilinearity of the arithmetic
  intersection product and the
  formula~\eqref{eq:arithmeticvsgeometricintersection}.
  For the second set
\begin{displaymath}
\beta_{n} = (\mu^{\ess}(\overline{D}) - \mu^{\abs}(\overline{Q}_n)) \, \frac{(Q_n^{d-1} \cdot \phi_n^* E)}{(Q_n^d)}, \quad n\in \mathbb{N}, 
\end{displaymath}
so that this statement is equivalent to
$\lim_{n \to \infty} \beta_{n} = 0$. To see this, let $A$ be an ample
divisor such that $A \pm E$ are big. We have
$ |(Q_n^d \cdot \phi_n^* E)| \le (Q_n^d \cdot \phi_n^* A)$ by the
inequality~\eqref{eq:comparegeometriccap}, and using Lemma
\ref{lemma:inradiusandcap} we get
\begin{displaymath}
|\beta_{n}| \le (\mu^{\ess}(\overline{D}) - \mu^{\abs}(\overline{Q}_n)) \, \frac{(Q_n^{d-1} \cdot \phi_n^* A)}{(Q_n^d)} \le \frac{\mu^{\ess}(\overline{D}) - \mu^{\abs}(\overline{Q}_n)}{r(Q_n; \phi_n^*A)}.
\end{displaymath}
We conclude with Lemma \ref{lemma:condinradius}.
\end{proof}

\begin{lemma}\label{lemma:sup}
  For every nef
  $\overline{A}\in \widehat{\Div}(X)_{\mathbb{R}}$ with $A$ big we
  have
\begin{math}
\displaystyle{ \sup_{n \in \bN} \frac{(\widetilde{Q}_n^d \cdot \phi_n^*\overline{A})}{(Q_n^d)} < \infty}.
\end{math}
\end{lemma}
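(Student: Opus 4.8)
The plan is to bound the ratio $(\widetilde Q_n^d\cdot\phi_n^*\overline A)/(Q_n^d)$ \emph{directly}, by running the variational inequality of Corollary~\ref{coro:ineqSiu} with a twist parameter calibrated to the inradius; trying to bound the numerator alone is easy (via Lemma~\ref{lemma:intersectionpseff}) but useless, since $(Q_n^d)$ may tend to $0$. Fix the given nef and big adelic $\bR$-divisor $\overline A$ on $X$, so that its geometric divisor $A$ is big. I would apply Corollary~\ref{coro:ineqSiu} with $\overline P=\widetilde Q_n$, test divisor $\overline E=\phi_n^*\overline A$, and auxiliary divisor also $\phi_n^*\overline A$ (so that ``$\overline A-\overline E$'' is $\overline 0$, which is nef, ``$\overline A+\overline E$'' is $2\phi_n^*\overline A$, which is pseudo-effective, and ``$A$'' is $\phi_n^*A$, which is big): writing $R_n=(\widetilde Q_n^d\cdot\phi_n^*\overline A)/(Q_n^d)\ge 0$ and discarding the nonnegative term $(\widetilde Q_n^{d+1})/((d+1)\vol(Q_n+\lambda\phi_n^*A))$, this yields, for every $\lambda\in[0,r(Q_n;\phi_n^*A)/2)$,
\[
  \mu^{\ess}(\widetilde Q_n+\lambda\phi_n^*\overline A)\ \ge\ \lambda R_n\Bigl(1-C_d\,\frac{\lambda}{r(Q_n;\phi_n^*A)}\Bigr),
\]
with $C_d$ depending only on $d$, which we may assume is $\ge 1$.

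For the matching \emph{upper} bound I would exploit that $\phi_n^*\overline D-\overline Q_n$ is pseudo-effective. Since $\widetilde Q_n=\overline Q_n-\mu^{\abs}(\overline Q_n)[\infty]$, the adelic $\bR$-divisor $\phi_n^*(\overline D+\lambda\overline A)-\mu^{\abs}(\overline Q_n)[\infty]-(\widetilde Q_n+\lambda\phi_n^*\overline A)$ equals $\phi_n^*\overline D-\overline Q_n$ and is therefore pseudo-effective; its ``larger'' term has big geometric divisor $\phi_n^*(D+\lambda A)$, so Lemma~\ref{lemma:propertiesessmin}\eqref{item:essminincreases}, the invariance of $\mu^{\ess}$ under normal modifications, and the translation rule $\mu^{\ess}(\,\cdot-c[\infty])=\mu^{\ess}(\,\cdot\,)-c$ give $\mu^{\ess}(\widetilde Q_n+\lambda\phi_n^*\overline A)\le\mu^{\ess}(\overline D+\lambda\overline A)-\mu^{\abs}(\overline Q_n)$. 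By concavity of the essential minimum function (Lemma~\ref{lemma:diffconcave}) applied at $\overline D$ in the direction $\overline A$, the right-hand side is at most $\eta_n+\lambda C$, where $\eta_n:=\mu^{\ess}(\overline D)-\mu^{\abs}(\overline Q_n)\ge 0$ and $C:=\partial_{\overline A}\,\mu^{\ess}(\overline D)$ does not depend on $n$.

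Combining the two estimates and choosing $\lambda=\lambda_n:=r(Q_n;\phi_n^*A)/(3C_d)$, which lies in $[0,r(Q_n;\phi_n^*A)/2)$ and makes the factor $1-C_d\lambda_n/r(Q_n;\phi_n^*A)$ equal to $2/3$, gives $\eta_n+\lambda_n C\ge\frac23\lambda_n R_n$, hence
\[
  R_n\ \le\ \frac32\Bigl(\frac{\eta_n}{\lambda_n}+C\Bigr)\ =\ \frac{9C_d}{2}\cdot\frac{\eta_n}{r(Q_n;\phi_n^*A)}+\frac32 C .
\]
Finally, Lemma~\ref{lemma:compareinradii} together with the modification-invariance $r(\phi_n^*D;\phi_n^*A)=r(D;A)>0$ (as $D$ is big) gives $r(Q_n;\phi_n^*A)\ge r(D;A)\,r(Q_n;\phi_n^*D)$, so that $\eta_n/r(Q_n;\phi_n^*A)\le r(D;A)^{-1}\,\eta_n/r(Q_n;\phi_n^*D)$, which tends to $0$ by hypothesis~\eqref{eq:conditioninradiussequences} and is in particular bounded. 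Hence $\sup_n R_n<\infty$, which is the assertion. The only genuine obstacle is arranging the estimate so that hypothesis~\eqref{eq:conditioninradiussequences} feeds in: the crucial point is to keep the \emph{sharp} pseudo-effectivity of $\phi_n^*\overline D-\mu^{\abs}(\overline Q_n)[\infty]-\widetilde Q_n$ (rather than a crude domination of $\widetilde Q_n$ by a fixed nef divisor), so that the upper bound $\eta_n+\lambda C$ for $\mu^{\ess}(\widetilde Q_n+\lambda\phi_n^*\overline A)$ is itself small, and to calibrate $\lambda_n$ to $r(Q_n;\phi_n^*A)$, which is comparable to the inradius $r(Q_n;\phi_n^*D)$ controlled by the hypothesis.
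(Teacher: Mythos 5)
Your argument is correct, and there is no circularity in invoking Corollary~\ref{coro:ineqSiu}, since that corollary is established in \S\ref{subsec:consequenceYuan} independently of Lemma~\ref{lemma:sup}; all the hypotheses you check do hold (with auxiliary equal to test divisor, $\phi_n^*\overline{A}-\phi_n^*\overline{A}=\overline{0}$ is nef, $2\phi_n^*\overline{A}$ is pseudo-effective, $\phi_n^*A$ is big, $\widetilde{Q}_n$ is nef with $Q_n$ big, and $(\widetilde{Q}_n^{d+1})\ge 0$ so the first term may be dropped), and the bootstrap is legitimate because all quantities are finite reals. The skeleton is the same as the paper's: twist $\widetilde{Q}_n$ by $\lambda\phi_n^*\overline{A}$ with $\lambda$ comparable to $r(Q_n;\phi_n^*A)$, sandwich $\mu^{\ess}(\widetilde{Q}_n+\lambda\phi_n^*\overline{A})$ between a lower bound carrying the ratio $R_n$ and an upper bound of size $\eta_n+O(\lambda)$, and finish with the inradius comparison of Lemma~\ref{lemma:compareinradii} (your last step is exactly Lemma~\ref{lemma:condinradius}). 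Where you diverge is in the two bounds themselves. For the lower bound the paper argues more elementarily: since $\widetilde{Q}_n$ and $\phi_n^*\overline{A}$ are nef, every term in the expansion of $((\widetilde{Q}_n+\lambda\phi_n^*\overline{A})^{d+1})$ is nonnegative, so $\lambda(d+1)(\widetilde{Q}_n^d\cdot\phi_n^*\overline{A})\le((\widetilde{Q}_n+\lambda\phi_n^*\overline{A})^{d+1})$, and Zhang's inequality (Theorem~\ref{thm:Zhang}) converts this into $\mu^{\ess}(\widetilde{Q}_n+\lambda\phi_n^*\overline{A})\times((Q_n+\lambda\phi_n^*A)^d)$, with $((Q_n+\lambda\phi_n^*A)^d)\le 2^d(Q_n^d)$ at $\lambda=r(Q_n;\phi_n^*A)$; no Siu-type input, no Hodge index, and no self-referential error term to solve for. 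You instead reuse the heavier Corollary~\ref{coro:ineqSiu} (whose proof rests on Yuan's arithmetic Siu inequality and the arithmetic Hodge index estimates) with the auxiliary chosen equal to the test divisor, which forces the calibrated choice $\lambda_n=r_n/(3C_d)$ and the factor $2/3$. For the upper bound the paper normalizes first (replacing $\overline{D}$ by $\overline{D}+t[\infty]$ and shrinking $\overline{A}$ so that $\overline{D}-\overline{A}$ is pseudo-effective, which leaves $\widetilde{Q}_n$ and the hypothesis unchanged) to get $(1+\lambda)\mu^{\ess}(\overline{D})-\mu^{\abs}(\overline{Q}_n)$, whereas you avoid any normalization by using concavity of the essential minimum and the finiteness of $\partial_{\overline{A}}\,\mu^{\ess}(\overline{D})$; this is a clean alternative, at the price of invoking Lemma~\ref{lemma:diffconcave} rather than only Lemma~\ref{lemma:propertiesessmin}. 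In short: your proof is valid and buys economy of normalization by recycling the main variational inequality, while the paper's is the lighter, self-contained route.
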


\begin{proof}
  Up to replacing $A$ by $\varepsilon A$ for a small $\varepsilon >0$
  we can assume that $D-A$ is big.  Then up to replacing $\overline{A}$
  by $\overline{A}- c\,[\infty]$ for $c \in \bR$ sufficiently large we can furthermore assume that
  $\overline{D} - \overline{A}$ is big thanks to Lemma~\ref{lem:6}.

  For every
  $n \in \bN$ and $ \lambda > 0$ we have
\begin{equation*} %\label{eq:ineqcap}
  \lambda\, (\widetilde{Q}_n^d \cdot \phi_n^*\overline{A}) \le \frac{1}{d+1} ((\widetilde{Q}_n + \lambda\,\phi_n^*\overline{A})^{d+1})
  \le  ((Q_n + \lambda\, \phi_n^*A)^d)\, \mu^{\mathrm{ess}}(\widetilde{Q}_n + \lambda\, \phi_n^*\overline{A}),
\end{equation*}
where the first inequality follows from the fact that both $\widetilde{Q}_n$
and $\overline{A}$ are nef, and the second from Zhang's inequality
(Theorem \ref{thm:Zhang}).
% Note that $\mu^{\mathrm{ess}}(\widetilde{Q}_n + \lambda \phi_n^*\overline{A}) = \mu^{\mathrm{ess}}(\overline{Q}_n + \lambda \phi_n^*\overline{A}) - \mu^{\mathrm{abs}}(\overline{Q}_n)$. 
By Lemma \ref{lemma:propertiesessmin}\eqref{item:essminincreases} and
the fact that $\phi_n^*\overline{D}- \overline{Q}_n$ and
$\overline{D}- \overline{A}$ are pseudo-effective we~have
\begin{displaymath}
\mu^{\mathrm{ess}}(\widetilde{Q}_n + \lambda\, \phi_n^*\overline{A}) = \mu^{\mathrm{ess}}(\overline{Q}_n + \lambda \,\phi_n^*\overline{A}) - \mu^{\mathrm{abs}}(\overline{Q}_n) \le (1+\lambda)\,\mu^{\mathrm{ess}}(\overline{D}) - \mu^{\mathrm{abs}}(\overline{Q}_n).
\end{displaymath}
Set $r_n = r(Q_n;\phi_n^*A)$ for short. Since $Q_n - r_{n} \,\phi_n^*A$ is pseudo-effective we also have
\begin{displaymath}
((Q_n + r_{n}\phi_n^*A)^d) = \vol(Q_n + r_{n} \phi_n^*A) \le \vol(2\, Q_n) = 2^d (Q_n^d).
\end{displaymath}
Combining the previous inequalities for $\lambda=r_{n}$ we get
\begin{displaymath}
  r_{n} \,(\widetilde{Q}_n^d \cdot \phi_n^*\overline{A}) \le 2^d\,((1+r_{n})\, \mu^{\mathrm{ess}}(\overline{D}) -\mu^{\mathrm{abs}}(\overline{Q}_n))
\, (Q_{n}^{d}).
\end{displaymath}
Hence
\begin{displaymath}
  \frac{(\widetilde{Q}_n^d\cdot \phi_n^*\overline{A})}{(Q_n^d)} \le 2^d \,  \frac{\mu^{\mathrm{ess}}(\overline{D})
    - \mu^{\mathrm{abs}}(\overline{Q}_n)}{r_{n}}  + 2^d \mu^{\mathrm{ess}}(\overline{D}).
\end{displaymath}
By Lemma \ref{lemma:condinradius} the right-hand side is upper-bounded
by some constant independent of $n$, and the result follows.
\end{proof}

\begin{proof}[Proof of Theorem \ref{thm:MainSequences}] Recall from
  Section~\ref{sec:variational} that for every
  $\overline{E} \in \widehat{\Div}(X)_{\bR}$, the one-sided derivative
  $ \partial_{\overline{E}}\, \mu^{\ess}(\overline{D})$ exists and
  $- \partial_{-\overline{E}}\, \mu^{\ess}(\overline{D}) \ge
  \partial_{\overline{E}}\, \mu^{\ess}(\overline{D})$. We claim that
\begin{equation}\label{eq:ineqproofsequences}
 \partial_{\overline{E}}\, \mu^{\ess}(\overline{D}) \ge \limsup_{n \rightarrow  \infty} \frac{(\widetilde{Q}_n^d \cdot \phi_n^* \overline{E})}{(Q_n^d)}.
\end{equation}
We  first prove this  when $\overline{E}$ is DSP. In that case, by
Lemma \ref{lem:1} there exists a big and nef
$\overline{A} \in \widehat{\Div}(X)_{\mathbb{R}}$ such that
$\overline{A} \pm \overline{E}$ are nef.  By Lemma \ref{lemma:sup},
\begin{displaymath}
\kappa \coloneqq \sup_{n \in \bN} \frac{(\widetilde{Q}_n^d\cdot \phi_n^*\overline{A})}{(Q_n^d)}
\end{displaymath}
is a real number. For each  $n \in \bN$ and any
$\lambda \ge 0$ such that $D + \lambda E$ is big we have
\begin{displaymath}
  \mu^{\ess}(\overline{D} + \lambda\overline{E}) - \mu^{\abs}(\overline{Q}_n) \ge \mu^{\ess}(\overline{Q}_n + \lambda\,\phi_n^*\overline{E}) - \mu^{\abs}(\overline{Q}_n) = \mu^{\ess}(\widetilde{Q}_n + \lambda\,\phi_n^*\overline{E}) 
\end{displaymath}
by Lemma \ref{lemma:propertiesessmin}\eqref{item:essminincreases}.
Since $\widetilde{Q}_n$ is nef we have
$(\widetilde{Q}_n)^{d+1} \ge 0$, and so by Corollary
\ref{coro:ineqSiu} applied to $\overline{P} = \widetilde{Q}_n$ we get
  \begin{displaymath}
\mu^{\mathrm{ess}}(\overline{D} + \lambda \overline{E})  - \mu^{\mathrm{abs}}(\overline{Q}_n)
 \ge  
\frac{(\widetilde{Q}_n^d \cdot \phi_n^* \overline{E})}{(Q_n^d)}\,  \lambda - \frac{c_d\, \kappa}{r(Q_n;\phi_n^*A)} \, \lambda^2
\end{displaymath}
for every $0\le \lambda < r(Q_n;\phi_n^*A)/2$, where $c_d$ is a
constant depending only on $d$. Rearranging this we obtain
  \begin{equation}\label{eq:ineqproofmain}
    \frac{\mu^{\mathrm{ess}}(\overline{D} + \lambda \overline{E}) - \mu^{\mathrm{ess}}(\overline{D})}{\lambda} \ge
    \frac{(\widetilde{Q}_n^d \cdot \phi_n^* \overline{E})}{(Q_n^d)} -\frac{\mu^{\mathrm{ess}}(\overline{D}))-\mu^{\mathrm{abs}}(\overline{Q}_n)}{\lambda}- \frac{c_d\, \kappa}{r(Q_n;\phi_n^*A)} \, \lambda.
\end{equation}

Set 
\begin{displaymath}
  \gamma_{n} =
   \frac{\mu^{\mathrm{ess}}(\overline{D}) - \mu^{\mathrm{abs}}(\overline{Q}_n)}{r(Q_n;\phi_n^*A)}   \quad  \text{ if } \mu^{\abs}(\overline{Q}_n) \ne \mu^{\ess}(\overline{D}) \and \gamma_{n} =   \frac{1}{n} \quad  \text{ otherwise},
\end{displaymath}
and then $ \lambda_n =  r(Q_n;\phi_n^*A) \, \gamma_{n}^{1/2}$.  By
Lemma \ref{lemma:condinradius} we have
$\lim_{n\to\infty} \gamma_{n} = 0$ and so 
$\lim_{n\to\infty} \lambda_n = 0$.  Applying \eqref{eq:ineqproofmain}
with $\lambda = \lambda_n$ gives
  \begin{displaymath}
    \frac{\mu^{\mathrm{ess}}(\overline{D} + \lambda_n \overline{E}) - \mu^{\mathrm{ess}}(\overline{D})}{\lambda_n}
    \ge   \frac{(\widetilde{Q}_n^d \cdot \phi_n^* \overline{E})}{(Q_n^d)} -  \gamma_{n}^{1/2} - c_d \, \kappa \, \gamma_{n}^{1/2},
\end{displaymath}
and we obtain \eqref{eq:ineqproofsequences} by letting $n\to \infty$. 

We now consider the general case. By Lemma \ref{lemma:approachDSP},
for each $\varepsilon > 0$ there is a DSP
$\overline{E}' \in \widehat{\Div}(X)_{\bR}$ with
$\overline{E} - \overline{E}'$ and
$\overline{E}' - \overline{E} + \varepsilon[\infty]$
pseudo-effective. Then
$ \partial_{\overline{E}}\, \mu^{\ess}(\overline{D}) \ge
\partial_{\overline{E}'}\, \mu^{\ess}(\overline{D})$ by Lemma
\ref{lemma:propertiesessmin}\eqref{item:essminincreases}, and
Lemma \ref{lemma:intersectionpseff} together with the formula
\eqref{eq:arithmeticvsgeometricintersection} gives
\begin{displaymath}
  (\widetilde{Q}_n^d \cdot \overline{E}') \ge ( \widetilde{Q}_n^d \cdot ( \overline{E} - \varepsilon\, [\infty]))
  = (\widetilde{Q}_n^d \cdot \overline{E}) - \varepsilon\, (Q_n^d) 
\end{displaymath}
for all $n \in \bN$. By the DSP case we obtain
\begin{displaymath}
\partial_{\overline{E}}\, \mu^{\ess}(\overline{D}) \ge  \partial_{\overline{E}'}\, \mu^{\ess}(\overline{D}) \ge \limsup_{n\to \infty} \frac{(\widetilde{Q}_n^d \cdot \phi_n^* \overline{E}')}{(Q_n^d)} \ge  \limsup_{n\to \infty} \frac{(\widetilde{Q}_n^d \cdot \phi_n^* \overline{E})}{(Q_n^d)} - \varepsilon,
\end{displaymath}
and we conclude by letting $\varepsilon\to 0$.

Finally, applying \eqref{eq:ineqproofsequences} to $-\overline{E}$ and
$\overline{E}$ we get
\begin{displaymath}
\liminf_{n \rightarrow  \infty} \frac{(\widetilde{Q}_n^d \cdot \phi_n^* \overline{E})}{(Q_n^d)} \ge -\partial_{-\overline{E}}\, \mu^{\ess}(\overline{D}) \ge \partial_{\overline{E}}\, \mu^{\ess}(\overline{D}) \ge \limsup_{n \rightarrow  \infty} \frac{(\widetilde{Q}_n^d \cdot \phi_n^* \overline{E})}{(Q_n^d)}.
\end{displaymath}
Hence
$- \partial_{-\overline{E}}\, \mu^{\ess}(\overline{D}) =
\partial_{\overline{E}}\, \mu^{\ess}(\overline{D})$, and we conclude
with Lemmas \ref{lemma:diffconcave} and~\ref{lemma:expressionlimit}.
\end{proof}

\begin{remark}
  \label{rem:derivative}
  In the setting of Theorem \ref{thm:MainSequences}, it follows from
  Lemma \ref{lemma:expressionlimit} that if the condition
  \eqref{eq:conditioninradiussequences} is satisfied then the
  derivatives of the essential minimum function at~$\overline{D}$ can
  be alternatively expressed as
  \begin{displaymath}
    \partial_{\overline{E}}\,  \mu^{\ess}(\overline{D}) 
    = \lim_{n \rightarrow \infty}\frac{(\overline{Q}_n^d \cdot \phi_n^* \overline{E}) - d\,  \mu^{\abs}(\overline{Q}_n) \, (Q_n^{d-1} \cdot \phi_n^* E)}{(Q_n^d)} \quad \text{ for all } \overline{E} \in \widehat{\Div}(X)_{\bR}.   
  \end{displaymath}
\end{remark}

\subsection{A variant of Theorem \ref{thm:MainSequences}}\label{subsec:variantMain}
In the course of the proof, when applying Corollary~\ref{coro:ineqSiu}
to produce the lower bound \eqref{eq:ineqproofmain} we neglected the
term
\begin{displaymath}
\frac{(\widetilde{Q}_n^{d+1})}{\vol(Q_n + \lambda\, \phi_n^*E)}.
\end{displaymath}
As it turns out, taking this term into
account gives no improvement for an arbitrary adelic divisor
$\overline{E}$, though it does when $E = 0$.
This leads to the following slight refinement in this situation.
  
\begin{theorem}\label{thm:VariantSequencesEP}  Assume that there exists a sequence
  $(\phi_n, \overline{Q}_n)_{n}$ of semipositive approximations
  of $\overline{D}$ such that
  \begin{equation}
    \label{eq:52}
    \lim_{n \rightarrow \infty} \frac{1}{r(Q_n;\phi_n^*D)} \Big(\mu^{\ess}(\overline{D}) - \frac{(\overline{Q}_n^{d+1})}{(d+1)(Q_n^d)}\Big) = 0, \quad  \sup_{n \in \bN} \frac{\mu^{\ess}(\overline{D}) - \mu^{\abs}(\overline{Q}_n)}{r(Q_n;\phi_n^*D)} < \infty.
\end{equation}
Then $\overline{D}$ satisfies the equidistribution property at every
place $v \in \mathfrak{M}_K$, and its $v$-adic equidistribution
measure is $\nu_{\overline{D},v} = \lim_{n\to \infty} \nu_{n,v}$ with
$\nu_{n,v}$ the pushforward to $X_v^{\an}$ of the normalized
Monge-Ampère measure ${c_1(\overline{Q}_{n,v})^{\wedge d}}/{(Q_n^d)}$.
\end{theorem}

We just outline the proof of this result, as it is almost the same as
that of Theorem~\ref{thm:MainSequences}. Let
$(\phi_n, \overline{Q}_n)_{n}$ be a sequence satisfying the conditions of
Theorem \ref{thm:VariantSequencesEP} and set
$\widetilde{Q}_n = \overline{Q}_n - \mu^{\abs}(\overline{Q}_n) \,
[\infty]$ for each $n$. With this notation, the proof of Lemma
\ref{lemma:sup} remains valid thanks to the second condition in
\eqref{eq:52}, and so
\begin{displaymath}
\kappa \coloneqq \sup_{n \in \bN} \frac{(\widetilde{Q}_n^d \cdot \phi_n^*\overline{A})}{(Q_n^d)} < \infty.
\end{displaymath}  
By Proposition \ref{prop:diffvsEP} and Lemma \ref{lemma:diffconcave}\eqref{item:diffconcavealldirections},
it suffices to show that
\begin{equation}\label{eq:goalproofE=0}
-\partial_{-\overline{E}}\, \mu^{\ess}(\overline{D}) =  \partial_{\overline{E}}\, \mu^{\ess}(\overline{D}) = \lim_{n \to \infty} \frac{(\overline{Q}_n^d\cdot \phi_n^*\overline{E})}{(Q_n^d)}
\end{equation}
for any $\overline{E}\in \widehat{\Div}(X)_{\bR}$ over $0$. We only
treat the case where $\overline{E}$ is DSP, as the general one follows
by density as in the proof of Theorem \ref{thm:MainSequences}.
Let $\overline{A} \in \widehat{\Div}(X)_{\bR}$ be big and nef with
$\overline{A} \pm \overline{E}$ nef.  By Corollary \ref{coro:ineqSiu},
there exists a constant $c_d$ such that
\begin{equation}\label{eq:ineqproofE=0}
  \mu^{\ess}(\overline{D} + \lambda \overline{E}) - \mu^{\abs}(\overline{Q}_n) \ge \frac{(\widetilde{Q}_n^{d+1})}{(d+1) \, (Q_n^d)}
  +  \frac{(\widetilde{Q}_n^d\cdot \phi_n^*\overline{E})}{(Q_n^d)} \, \lambda -  \frac{c_d \,\kappa}{r(Q_n; \phi_n^*A)} \, \lambda^2
\end{equation}
for every $n$ and $0 < \lambda \le r(Q_n;\phi_n^*A)/2$. Since $E = 0$,
by the formula \eqref{eq:arithmeticvsgeometricintersection} we have
\begin{displaymath}
  (\widetilde{Q}_n^{d+1}) = (\overline{Q}_n^{d+1}) - (d+1) \, (Q_n^d) \, \mu^{\abs}(\overline{Q}_n)
  \and  (\widetilde{Q}_n^d\cdot \phi_n^*\overline{E}) = (\overline{Q}_n^d\cdot \phi_n^*\overline{E}) .
\end{displaymath}
Combining this with \eqref{eq:ineqproofE=0} and dividing by $\lambda$ gives 
\begin{displaymath}
  \frac{\mu^{\ess}(\overline{D} + \lambda \overline{E}) - \mu^{\ess}(\overline{D})}{\lambda}
  \ge
 \frac{(\overline{Q}_n^d\cdot \phi_n^*\overline{E})}{(Q_n^d)} +
 \Big(\frac{(\overline{Q}_n^{d+1})}{(d+1)(Q_n^d)} -\mu^{\ess}(\overline{D}) \Big) \frac{1}{\lambda} - \frac{c_{d}\, \kappa}{r(Q_n; \phi_n^*A)} \, \lambda.
\end{displaymath}
As in the proof of Theorem \ref{thm:MainSequences}, a suitable choice of $\lambda = \lambda_n$ permits to
conclude that
\begin{displaymath}
\partial_{\overline{E}}\, \mu^{\ess}(\overline{D}) \ge \limsup_{n\to \infty} \frac{(\overline{Q}_n^d\cdot \phi_n^*\overline{E})}{(Q_n^d)}
\end{displaymath}
using the first condition in \eqref{eq:52}, and we obtain
\eqref{eq:goalproofE=0} by applying this to $-\overline{E}$.

\begin{remark}
  \label{rem:variant}
  This result gives more flexibility to construct the sequence of
  semipositive approximations $(\phi_n,\overline{Q}_n)_n$. For
  example, one can deduce Yuan's equidistribution theorem directly
  from Theorem \ref{thm:VariantSequencesEP} without using Theorem
  \ref{thm:criterionextremal}.

  However, Theorem~\ref{thm:VariantSequencesEP} is not more general
  than Theorem~\ref{thm:MainSequences}. In fact, starting with a
  sequence $(\phi_n,\overline{Q}_n)_n$ satisfying the conditions
  \eqref{eq:52} one can modify it to construct another sequence
  satisfying the condition \eqref{eq:conditioninradiussequences}, using
  arguments similar to those in the proof of Lemma
  \ref{lem:goodapprox}. Since we do not need this in the remainder of
  the text, we skip the proof of this technical claim.
\end{remark}

\subsection{Logarithmic equidistribution}\label{sec:logequi} 

Let $\overline{D}$ be an adelic $\bR$-divisor on $X$ with $D$ big such
that there exists a sequence $(\phi_n, \overline{Q}_n)_n$ of
semipositive approximations of $\overline{D}$ satisfying the condition
of Theorem~\ref{thm:MainSequences}.  By this result we have that
$\overline{D}$ satisfies the equidistribution property at every
$v \in \mathfrak{M}_K$ with equidistribution measure
\begin{equation*}
  \nu_{\overline{D},v} =\lim_{n \to \infty} \nu_{n,v},
\end{equation*}
where $\nu_{n,v}$ denotes the pushforward to $X_v^{\an}$ of the
normalized $v$-adic Monge-Ampère measure of $\overline{Q}_n$.

In this
section we show that this property extends to functions with
logarithmic singularities along effective divisors satisfying a certain
numerical condition. Our presentation follows closely that of
Chambert-Loir and Thuillier in \cite{CLT}, adapting their arguments to
our setting.

\begin{definition} \label{def:3} Let $E$ be an effective divisor on
  $X$ and $v \in \mathfrak{M}_K$. A function
  $\varphi \colon X_v^{\an} \to \bR \cup \{\pm \infty \}$ has \emph{at
    most logarithmic singularities along $E$} if it is a real-valued
  continuous function on $X_v^{\an} \setminus \supp(E)_v^{\an}$ and
  every $x \in X_v^{\an}$ has a neighborhood $U \subset X_v^{\an}$
  together with an equation $f_{U}$ of $E_v^{\an}$ on $U$ and a real
  number $c_U$ such that
  % $|\varphi|_v \le c_U \log |f_{U}|_v^{-1}$ on
  % $U$.
  \begin{displaymath}
|\varphi|_v \le c_U \log |f_{U}|_v^{-1} \quad \text{ on } U. 
  \end{displaymath}
\end{definition}

Equidistribution measures can integrate functions with at most
logarithmic singularities along a divisor.

\begin{proposition}
\label{prop:Greenfunctionsintegrable}   
Let $v\in \mathfrak{M}_{K}$ and
$\varphi \colon X_v^{\an} \to \bR \cup \{\pm \infty \}$ a function
with at most logarithmic singularities along an effective divisor on
$X$. Then $\varphi$ is integrable with respect
to~$\nu_{\overline{D},v}$.
\end{proposition}

For the proof of this proposition we need the next auxiliary result.

\begin{lemma}\label{lemma:comparederivativeintegral}
  For every $\overline{E} \in \widehat{\Div}(X)_{\bR}$ with $E$
  effective we have
\begin{displaymath}
\partial_{\overline{E}} \, \mu^{\ess}(\overline{D}) \ge \sum_{v \in \mathfrak{M}_K} n_v   \int_{X_v^{\an}} g_{\overline{E},v} \, d\nu_{\overline{D},v}.
\end{displaymath}
\end{lemma}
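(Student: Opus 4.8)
## Proof strategy for Lemma~\ref{lemma:comparederivativeintegral}

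The plan is to reduce to the equidistribution statement already established in this section via a truncation argument on the Green function $g_{\overline{E},v}$, which may fail to be continuous (it has a $-\infty$ singularity along $\supp(E)_v^{\an}$, since $E$ is effective). First I would fix the place $v$ at which we want the bound (the other places can be handled identically, or one can work with all places at once; but it is cleanest to add the contributions place by place). For a real parameter $c > 0$, set $g_c = \max(g_{\overline{E},v}, -c)$, a continuous $G_v$-invariant function on $X_v^{\an}$, and let $\overline{E}_c = \overline{0}^{\,g_c - g_{\overline{E},v}}$ viewed as an element of $\widehat{\Div}(X)_{\bR}$ over the zero divisor; equivalently, replace the $v$-adic Green function of $\overline{E}$ by $g_c$ and keep all other data, obtaining an adelic $\bR$-divisor $\overline{E}^{(c)}$ over $E$ with $\overline{E}^{(c)} - \overline{E}$ effective (as $g_c \ge g_{\overline{E},v}$ pointwise). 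Then $\overline{E}^{(c)} - \overline{E}$ is pseudo-effective by Remark~\ref{rema:effvspseff}, hence $\partial_{\overline{E}^{(c)}}\,\mu^{\ess}(\overline{D}) \ge \partial_{\overline{E}}\,\mu^{\ess}(\overline{D})$ by concavity of the essential minimum (Lemma~\ref{lemma:diffconcave} together with the superadditivity in Lemma~\ref{lemma:propertiesessmin}).

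The key step is to evaluate $\partial_{\overline{E}^{(c)}}\,\mu^{\ess}(\overline{D})$ using the differentiability already proved. By the formula for the derivative (the expression right after Theorem~\ref{thm:MainSequences}, or equivalently Proposition~\ref{prop:diffvsEP} and the fact that $\overline{D}$ has the equidistribution property at every place), and writing $\overline{E}^{(c)} = \overline{E} - \overline{0}^{g_{\overline{E},v} - g_c}$ where the second summand is an adelic divisor over $0$, one gets
\begin{displaymath}
  \partial_{\overline{E}^{(c)}}\,\mu^{\ess}(\overline{D}) = \partial_{\overline{E}}\,\mu^{\ess}(\overline{D}) + \Big( \partial_{\overline{0}^{g_c - g_{\overline{E},v}}}\,\mu^{\ess}(\overline{D})\Big),
\end{displaymath}
wait — more directly: since $g_c$ is continuous and $G_v$-invariant, Proposition~\ref{prop:diffvsEP} gives that the $v$-adic contribution to $\partial_{\overline{E}^{(c)}}\,\mu^{\ess}(\overline{D})$ coming from its $v$-adic Green function $g_c$ equals $n_v\int_{X_v^{\an}} g_c\, d\nu_{\overline{D},v}$, while the contributions at places $w \ne v$ are unchanged. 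Combining with the inequality $\partial_{\overline{E}^{(c)}}\,\mu^{\ess}(\overline{D}) \ge \partial_{\overline{E}}\,\mu^{\ess}(\overline{D})$ is the wrong direction; instead I would argue directly that $\partial_{\overline{E}}\,\mu^{\ess}(\overline{D}) \ge \lim_{c\to\infty}\partial_{\overline{E}^{(c)}}$? No — the correct move is: $\overline{E}^{(c)} - \overline{E}$ effective gives $\partial_{\overline{E}^{(c)}} \ge \partial_{\overline{E}}$, so this bounds $\partial_{\overline{E}}$ from above, not below. So the honest route must instead use the height convergence property of \eqref{eq:HCPintro}/Proposition~\ref{prop:sequencesandderivative}: for a $\overline{D}$-small generic sequence $(x_\ell)_\ell$ realizing $\lim_\ell h_{\overline{E}}(x_\ell) = \partial_{\overline{E}}\,\mu^{\ess}(\overline{D})$ (such a sequence exists by Proposition~\ref{prop:sequencesandderivative}, and by Proposition~\ref{prop:diffvsHCP} every $\overline{D}$-small generic sequence works once differentiability is known), one has $h_{\overline{E}}(x_\ell) \ge h_{\overline{E}^{(c)}}(x_\ell)$ for all $\ell$ with $x_\ell \notin \supp(E)$, because $g_{\overline{E},w}$ agrees with that of $\overline{E}^{(c)}$ for $w \ne v$ and $g_{\overline{E},v} \le g_c$; hence $\partial_{\overline{E}}\,\mu^{\ess}(\overline{D}) = \lim_\ell h_{\overline{E}}(x_\ell) \ge \lim_\ell h_{\overline{E}^{(c)}}(x_\ell) = \partial_{\overline{E}^{(c)}}\,\mu^{\ess}(\overline{D})$, the last equality because $\overline{E}^{(c)}$ is DSP (continuous truncated Green function plus the DSP structure at other places — or one applies Proposition~\ref{prop:diffvsHCP} and the equidistribution already proved) so the limit of its heights along a $\overline{D}$-small sequence is its derivative, which by Proposition~\ref{prop:diffvsEP} equals $\sum_{w} n_w\int g_{\overline{E}^{(c)},w}\,d\nu_{\overline{D},w}$, and at $w=v$ this integrand is $g_c$.

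Putting this together, $\partial_{\overline{E}}\,\mu^{\ess}(\overline{D}) \ge n_v\int_{X_v^{\an}} g_c\, d\nu_{\overline{D},v} + \sum_{w\ne v} n_w\int_{X_w^{\an}} g_{\overline{E},w}\,d\nu_{\overline{D},w}$ for every $c > 0$; doing this at each Archimedean/non-Archimedean place simultaneously (truncating at every place at once, which is legitimate since only finitely many Green functions are non-trivial outside a model), and letting $c \to \infty$, the monotone convergence theorem — $g_c \downarrow g_{\overline{E},v}$ pointwise as $c\to\infty$ and each $g_c$ is bounded above uniformly since $g_{\overline{E},v}$ is (it extends continuously to a neighborhood away from $\supp E$ and the $-\infty$ is only downward) — yields $\lim_{c\to\infty} \int g_c\, d\nu_{\overline{D},v} = \int g_{\overline{E},v}\,d\nu_{\overline{D},v}$, which may be $-\infty$ but the inequality still holds in $\bR \cup \{-\infty\}$. (The integrability of $g_{\overline{E},v}$ against $\nu_{\overline{D},v}$, when finite, is not needed for the stated inequality, only that the limit exists in $[-\infty,\infty)$.) This gives the claimed bound. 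The main obstacle is bookkeeping: making sure the truncation is applied consistently at all places, that each truncated divisor $\overline{E}^{(c)}$ is genuinely an adelic $\bR$-divisor (it is, by Example~\ref{ex:divon0}, since the modifications are continuous and vanish at all but finitely many places), and that one correctly uses the direction of the effectivity/pseudo-effectivity comparisons — it is easy to get the inequality backwards, so I would double-check by testing on $\overline{E} = \overline{0}^{\varphi}$ with $\varphi \le 0$ continuous, where both sides must reduce to $n_v\int\varphi\, d\nu_{\overline{D},v}$.
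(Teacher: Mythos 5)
Your proposal goes wrong at the very first step and the error propagates. Since $E$ is effective, the Green function $g_{\overline{E},v}$ has $+\infty$ singularities along $\supp(E)_v^{\an}$ (locally $g_{\overline{E},v}+\log|f|_v$ is continuous and $\log|f|_v\to-\infty$ on the support), so it is \emph{bounded below} and unbounded \emph{above} --- the opposite of what you assert. Consequently your truncation $g_c=\max(g_{\overline{E},v},-c)$ accomplishes nothing: for $c$ large it equals $g_{\overline{E},v}$ itself and is still discontinuous, while the truncation that actually produces a continuous test function is from above, $\min\{a,g_{\overline{E},v}\}$. Your pointwise comparison is also reversed: from $g_{\overline{E},v}\le g_c$ one gets $h_{\overline{E}}(x_\ell)\le h_{\overline{E}^{(c)}}(x_\ell)$, not $\ge$, so the chain of inequalities you build does not bound $\partial_{\overline{E}}\,\mu^{\ess}(\overline{D})$ from below.

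There is a second, independent gap: you claim $\partial_{\overline{E}^{(c)}}\,\mu^{\ess}(\overline{D})=\sum_w n_w\int g_{\overline{E}^{(c)},w}\,d\nu_{\overline{D},w}$ by invoking Proposition \ref{prop:diffvsEP}. That proposition only identifies the derivative along directions $\overline{0}^{\varphi}$ over the \emph{zero} divisor with continuous $\varphi$; for an adelic divisor over a nonzero effective $E$ the derivative contains an additional geometric contribution coming from $E$ itself (the term $h_{\overline{Q}_n}(\phi_n^*E)$ in the B\'ezout decomposition), and the asserted equality is exactly the nontrivial hypothesis \eqref{eq:condlogequi} of Theorem \ref{thm:logequi} --- if it held automatically, logarithmic equidistribution would hold along every effective divisor, which is false. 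The actual proof needs only an inequality and gets it differently: write $(\overline{Q}_n^d\cdot\phi_n^*\overline{E})$ via the arithmetic B\'ezout formula \eqref{eq:Bezout}, bound $h_{\overline{Q}_n}(\phi_n^*E)\ge d\,\mu^{\abs}(\overline{Q}_n)\,(Q_n^{d-1}\cdot\phi_n^*E)$ by Lemma \ref{lemma:ineqheightnef}, bound $g_{\overline{E},v}\ge\min\{a,g_{\overline{E},v}\}$, pass to the limit in $n$ using \eqref{eq:expressionderivativelogequi} and weak convergence of $\nu_{n,v}$ against the continuous functions $\min\{a,g_{\overline{E},v}\}$, and finally let $a\to\infty$ by monotone convergence (increasing, bounded below). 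A ``small sequence'' variant of this truncation-from-above argument would also work, but your version, as written, cannot be repaired without flipping both the truncation and the height comparison and deleting the false equality at the truncated level.
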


\begin{proof} Let $n \in \bN$. By Lemma \ref{lemma:ineqheightnef} we have  
  \begin{equation}
    \label{eq:32}
  h_{\overline{Q}_n}([\phi_n^*E]) \ge d\, \mu^{\abs}(\overline{Q}_n) \, (Q_n^{d-1} \cdot \phi_n^*E),
\end{equation}
where the left-hand side denotes the height with respect to
$\overline{Q}_n$ of the $\bR$-Weil divisor associated to $\phi_n^*E$.
Applying the arithmetic Bézout formula \eqref{eq:Bezout} we deduce from this
\begin{displaymath}
  \frac{(\overline{Q}_n^d \cdot \phi_n^*\overline{E}) - d\, \mu^{\abs}(\overline{Q}_n)\, (Q_n^{d-1} \cdot \phi_n^*E) }{(Q_n^d)}
  \ge  \sum_{v \in \mathfrak{M}_K} n_v \int_{X_v^{\an}} g_{\overline{E},v} \, d\nu_{n,v}.  
\end{displaymath}

Letting $n\to \infty$, by Theorem
\ref{thm:MainSequences} and Remark \ref{rem:derivative} we have
\begin{displaymath}
  \partial_{\overline{E}} \, \mu^{\ess}(\overline{D}) \ge \liminf_{n
    \to \infty} \sum_{v \in \mathfrak{M}_K} n_v \int_{X_v^{\an}}
  g_{\overline{E},v} \, d\nu_{n,v} \\ \ge \sum_{v \in \mathfrak{M}_K}
  n_v \, \liminf_{n \to \infty} \int_{X_v^{\an}} \min (c,
  g_{\overline{E},v}) \, d\nu_{n,v} 
\end{displaymath}
for any $c \in \bR$. Since $E$ is effective  we have that
$g_{\overline{E},v}$ is bounded from below and so
$\min (c, g_{\overline{E},v}) \in C(X_v^{\an})$ for every $v$. Then
\begin{displaymath}
  \sum_{v \in \mathfrak{M}_K}
  n_v \, \liminf_{n \to \infty} \int_{X_v^{\an}} \min (c,
  g_{\overline{E},v}) \, d\nu_{n,v} =  \sum_{v \in \mathfrak{M}_K} n_v
  \int_{X_v^{\an}} \min (c, g_{\overline{E},v}) \,
  d\nu_{\overline{D},v}
\end{displaymath}
by the equidistribution property at every place. The statement
follows by letting $c\to\infty$ and applying the monotone convergence
theorem.
\end{proof}

\begin{proof}[Proof of Proposition~\ref{prop:Greenfunctionsintegrable}]
  Since $X_{v}^{\an}$ is compact, we can assume without loss of
  genericity that $\varphi=g_{\overline{E},v}$ for an adelic divisor
  $\overline{E}$ over an effective $E \in \Div(X)$.  Up to adding an
  adelic divisor over $0\in \Div(X)$ we can furthermore
  assume that $\overline{E}$ is effective. In this situation we have
  $g_{\overline{E},w} \ge 0$ for every $w \in \mathfrak{M}_K$ and so
  Lemma \ref{lemma:comparederivativeintegral} implies
\begin{displaymath}
   \infty > \partial_{\overline{E}} \, \mu^{\ess}(\overline{D}) \ge  n_v\int_{X_v^{\an}} g_{\overline{E},v} \, d\nu_{\overline{D},v},
\end{displaymath}
and so $ g_{\overline{E},v}$ is integrable with respect
to~$\nu_{\overline{D},v}$.
\end{proof}

The following is the main result of this section.

\begin{theorem}\label{thm:logequi}
  Let $\overline{E} \in \widehat{\Div}(X)$ with $E$ effective such
  that
\begin{equation}\label{eq:condlogequi}
\partial_{\overline{E}} \, \mu^{\ess}(\overline{D}) = \sum_{v \in \mathfrak{M}_K} n_v   \int_{X_v^{\an}} g_{\overline{E},v} \, d\nu_{\overline{D},v}.
\end{equation}
Then for every $\overline{D}$-small generic sequence
$(x_{\ell})_{\ell}$ in $X(\overline{K})$ and $v \in \mathfrak{M}_K$ we
have
 \begin{displaymath}
\lim_{\ell \to \infty}\int_{X_v^{\an}} \varphi \, d\delta_{O(x_\ell)_v} = \int_{X_v^{\an}} \varphi \, d\nu_{\overline{D},v}
 \end{displaymath}
 for any function $\varphi \colon X_v^{\an} \rightarrow \bR \cup \{\pm \infty \}$ with at most logarithmic singularities along $E$.
\end{theorem}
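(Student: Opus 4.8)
The plan is to follow the strategy of \cite[Proof of Theorem 1.2]{CLT}, using the differentiability of the essential minimum established in Theorem \ref{thm:MainSequences} together with the variational principle of Proposition \ref{prop:sequencesandderivative} as the substitute for the equidistribution input used in \emph{loc. cit}. First I would prove the invariance statement: if $\overline{E}'$ is another adelic divisor over $E$, then $\overline{E}' - \overline{E} = \overline{0}^{\psi}$ for a family $\psi = (\psi_v)_v$ of $G_v$-invariant continuous functions, almost all zero (Example \ref{ex:divon0}). Since $\overline{D}$ has the equidistribution property at every place (Theorem \ref{thm:MainSequences}), Proposition \ref{prop:diffvsEP} gives $\partial_{\overline{0}^{\psi_v}}\, \mu^{\ess}(\overline{D}) = n_v \int_{X_v^{\an}} \psi_v \, d\nu_{\overline{D},v}$ for each $v$; summing over the finitely many nonzero $v$ and using linearity of $\partial_{(\cdot)}\,\mu^{\ess}(\overline{D})$ (which holds because the essential minimum is differentiable at $\overline{D}$) shows \eqref{eq:condlogequi} for $\overline{E}$ is equivalent to the corresponding identity for $\overline{E}'$, so we may fix any convenient $\overline{E}$ over $E$ with continuous Green functions.

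Next I would set up the approximation of $\varphi$ by continuous functions. Given $\varphi$ with at most logarithmic singularities along $E$, choose a $v$-adic Green function $g_{\overline{E},v}$ for $E$; by the local bound $|\varphi| \le c_U \log|f_U|_v^{-1}$ near each point and compactness of $X_v^{\an}$, there is a constant $c > 0$ such that $|\varphi| \le c\, g_{\overline{E},v} + c$ on $X_v^{\an}$ (after adding a constant to $g_{\overline{E},v}$ so that it is $\ge 1$; note $g_{\overline{E},v}$ is bounded below since $E$ is effective). Then for $a > 0$ the truncations $\varphi_a = \max\{-a, \min\{a,\varphi\}\}$ are continuous and $G_v$-invariant, $\varphi_a \to \varphi$ pointwise, and $|\varphi - \varphi_a| \le 2\mathbf{1}_{\{|\varphi| > a\}}|\varphi| \le 2(c\,g_{\overline{E},v}+c)\mathbf{1}_{\{c\,g_{\overline{E},v}+c > a\}}$. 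By Lemma \ref{lemma:Greenfunctionsintegrable}, $g_{\overline{E},v}$ is $\nu_{\overline{D},v}$-integrable, so $\int_{X_v^{\an}}|\varphi - \varphi_a|\, d\nu_{\overline{D},v} \to 0$ as $a \to \infty$. The issue is to control $\int |\varphi - \varphi_a|\, d\delta_{O(x_\ell)_v}$ \emph{uniformly in $\ell$} for large $a$; this is exactly where the hypothesis \eqref{eq:condlogequi} enters, via a convergence-of-integrals argument for the unbounded function $g_{\overline{E},v}$ that upgrades the trivial $\liminf$ inequality (Lemma \ref{lemma:comparederivativeintegral} type bound applied to $O(x_\ell)_v$) to a genuine limit.

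The heart of the argument is therefore: show that for a $\overline{D}$-small generic sequence $(x_\ell)_\ell$ one has $\lim_{\ell} \int_{X_v^{\an}} g_{\overline{E},v}\, d\delta_{O(x_\ell)_v} = \int_{X_v^{\an}} g_{\overline{E},v}\, d\nu_{\overline{D},v}$, and more generally for each place $w$ the sum $\sum_w n_w \int g_{\overline{E},w}\, d\delta_{O(x_\ell)_w}$ (which is $h_{\overline{E}}(x_\ell)$) converges to $\sum_w n_w \int g_{\overline{E},w}\, d\nu_{\overline{D},w}$. The latter convergence is precisely $\lim_\ell h_{\overline{E}}(x_\ell) = \partial_{\overline{E}}\,\mu^{\ess}(\overline{D})$ combined with \eqref{eq:condlogequi}, which holds by Proposition \ref{prop:sequencesandderivative} and Proposition \ref{prop:diffvsHCP} since the essential minimum is differentiable at $\overline{D}$. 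To pass from this single equality of weighted sums to the place-by-place statement one uses the one-sided Fatou-type inequality at \emph{each} place: for every $a \in \bR$, $\liminf_\ell \int_{X_v^{\an}} \min\{a, g_{\overline{E},v}\}\, d\delta_{O(x_\ell)_v} \ge \int_{X_v^{\an}} \min\{a, g_{\overline{E},v}\}\, d\nu_{\overline{D},v}$ is false in general — rather, since $\min\{a,g_{\overline{E},v}\}$ is continuous and $\delta_{O(x_\ell)_v} \to \nu_{\overline{D},v}$ weakly, this is an \emph{equality} in the limit; hence $\liminf_\ell \int g_{\overline{E},v}\, d\delta_{O(x_\ell)_v} \ge \int \min\{a,g_{\overline{E},v}\}\,d\nu_{\overline{D},v}$, and letting $a \to \infty$ gives $\liminf_\ell \int g_{\overline{E},v}\,d\delta_{O(x_\ell)_v} \ge \int g_{\overline{E},v}\,d\nu_{\overline{D},v}$ by monotone convergence. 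Applying the same at every place $w$, summing, and comparing with the already-established equality $\sum_w n_w\int g_{\overline{E},w}\,d\delta_{O(x_\ell)_w} \to \sum_w n_w\int g_{\overline{E},w}\,d\nu_{\overline{D},w}$ forces each of the $\liminf$'s to actually be a limit equal to $\int g_{\overline{E},w}\,d\nu_{\overline{D},w}$ (this is the standard ``if a sum of $\liminf$'s, each $\ge$ its target, has total $\le$ sum of targets, then all are equalities'' trick; one must also be careful that only finitely many places contribute nontrivially to $h_{\overline{E}}$, which holds as the Green functions $g_{\overline{E},w}$ vanish outside a finite set of $w$, and at the finite places where $E$ is a model divisor the bound is uniform).

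Finally, with the key equality $\int g_{\overline{E},v}\,d\delta_{O(x_\ell)_v} \to \int g_{\overline{E},v}\,d\nu_{\overline{D},v}$ in hand, I would conclude by a standard $3\varepsilon$-argument: write $\int \varphi\, d\delta_{O(x_\ell)_v} - \int\varphi\,d\nu_{\overline{D},v}$ as the sum of $\int(\varphi - \varphi_a)\,d\delta_{O(x_\ell)_v}$, $\int \varphi_a\,d\delta_{O(x_\ell)_v} - \int\varphi_a\,d\nu_{\overline{D},v}$, and $\int(\varphi_a - \varphi)\,d\nu_{\overline{D},v}$; the middle term tends to $0$ for fixed $a$ by weak convergence $\delta_{O(x_\ell)_v}\to\nu_{\overline{D},v}$ (continuity of $\varphi_a$), and the two outer terms are each $\le \int 2(c\,g_{\overline{E},v}+c)\mathbf{1}_{\{c\,g_{\overline{E},v}+c>a\}}\, d(\cdot)$, which is small uniformly in $\ell$ for $a$ large because $\int g_{\overline{E},v}\,d\delta_{O(x_\ell)_v}$ converges to the finite limit $\int g_{\overline{E},v}\,d\nu_{\overline{D},v}$ (so the tails $\int g_{\overline{E},v}\mathbf{1}_{\{g_{\overline{E},v}>a\}}\,d\delta_{O(x_\ell)_v}$ are uniformly small: convergence of the integrals of the truncations $\min\{a,g_{\overline{E},v}\}$ plus convergence of the integrals of $g_{\overline{E},v}$ itself forces the tail integrals to $0$ uniformly). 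The main obstacle I expect is precisely this uniform tail control — handling the unbounded function $g_{\overline{E},v}$ against the varying empirical measures $\delta_{O(x_\ell)_v}$ requires combining the weak-convergence-of-truncations with the scalar convergence $h_{\overline{E}}(x_\ell)\to\partial_{\overline{E}}\,\mu^{\ess}(\overline{D})$, and the bookkeeping over all places (ensuring the contributions are uniformly summable, using that outside a fixed finite set of places the Green functions are nonnegative and come from a common model) is the delicate point, exactly as in \cite{CLT}.
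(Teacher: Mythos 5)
Your proposal is correct and follows essentially the same route as the paper: the invariance statement via differentiability and Proposition \ref{prop:diffvsEP}, and the key identity $\lim_{\ell}\int_{X_v^{\an}} g_{\overline{E},v}\,d\delta_{O(x_\ell)_v}=\int_{X_v^{\an}} g_{\overline{E},v}\,d\nu_{\overline{D},v}$ obtained by combining $\lim_{\ell}h_{\overline{E}}(x_{\ell})=\partial_{\overline{E}}\,\mu^{\ess}(\overline{D})$ (Propositions \ref{prop:sequencesandderivative} and \ref{prop:diffvsHCP}) with the place-by-place Fatou-type inequality coming from weak convergence of the truncations $\min\{a,g_{\overline{E},v}\}$. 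The only differences are minor: the paper cites \cite[Lemma 6.3]{CLT} for the reduction from a general $\varphi$ with logarithmic singularities to $\varphi=g_{\overline{E},v}$, which you prove by hand via the uniform tail estimate, and to upgrade the $\liminf$ to a genuine limit the paper adds the remark that the whole argument applies to every infinite subsequence (which remains generic and $\overline{D}$-small) — your ``sum of liminfs'' slogan by itself only gives equality of the liminfs, so this extra line (or a Fatou argument over the complementary places, using that outside a finite set the Green functions are nonnegative, not zero, as you note parenthetically) is needed to conclude convergence.
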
 

\begin{proof} 
  By Proposition \ref{prop:Greenfunctionsintegrable} and \cite[Lemma
  6.3]{CLT} it suffices to consider the case
  $\varphi = g_{\overline{E},v}$.  Then by
  Proposition~\ref{prop:diffvsHCP} and Theorem \ref{thm:MainSequences}
  we have
 \begin{displaymath}
  \partial_{\overline{E}} \, \mu^{\ess}(\overline{D}) =  \lim_{\ell \to \infty} h_{\overline{E}}(x_{\ell}) = \lim_{\ell \to \infty} \sum_{v\in \mathfrak{M}_K} n_v \int_{X_v^{\an}} g_{\overline{E},v} \, d\delta_{O(x_{\ell})_v},
 \end{displaymath}
 and so the condition \eqref{eq:condlogequi} implies
 \begin{multline}
   \label{eq:54}
  \sum_{v\in \mathfrak{M}_K}
  n_v \int_{X_v^{\an}} g_{\overline{E},v} \, d\nu_{\overline{D},v}
  = \lim_{\ell \to \infty} \sum_{v\in \mathfrak{M}_K} n_v \int_{X_v^{\an}} g_{\overline{E},v} \, d\delta_{O(x_{\ell})_v}
 \\  \ge  \sum_{v\in \mathfrak{M}_K} n_v \liminf_{\ell \to \infty} \int_{X_v^{\an}} g_{\overline{E},v} \,  d\delta_{O(x_{\ell})_v}.   
 \end{multline}
 On the other hand, for every $v$ and any $c\in \mathbb{R}$ we have $ \min(c,g_{\overline{E},v})\in C(X_v^{\an})$ and so 
\begin{displaymath}
  \liminf_{\ell \to \infty} \int_{X_v^{\an}} g_{\overline{E},v} \,  d\delta_{O(x_{\ell})_v}
  \ge  \liminf_{\ell \to \infty} \int_{X_v^{\an}} \min(c, g_{\overline{E},v}) \,  d\delta_{O(x_{\ell})_v} = \int_{X_v^{\an}} \min(c, g_{\overline{E},v}) \,  d\nu_{\overline{D},v}
\end{displaymath}
by the equidistribution property. Letting $c\to \infty$ we get that
\begin{displaymath}
\liminf_{\ell \to \infty} \int_{X_v^{\an}} g_{\overline{E},v} \,  d\delta_{O(x_{\ell})_v} \ge \int_{X_v^{\an}} g_{\overline{E},v} \, d\nu_{\overline{D},v}.
\end{displaymath}
Summing up these inequalities over all the places and taking
\eqref{eq:54} into account we deduce that they are in in fact
equalities.  We conclude by observing that such equality remains true
when $(x_{\ell})_{\ell}$ is replaced by an arbitrary <subsequence,
since the latter remains generic and $\overline{D}$-small. Therefore
\begin{displaymath}
\lim_{\ell \to \infty} \int_{X_v^{\an}} g_{\overline{E},v} \,  d\delta_{O(x_{\ell})_v} = \int_{X_v^{\an}} g_{\overline{E},v} \, d\nu_{\overline{D},v}
\end{displaymath}
as desired. 
\end{proof}

\begin{remark}
  \label{rem:1}
  The condition \eqref{eq:condlogequi} is independent of the choice of
  the adelic structure over $E$. Indeed, let $\overline{E}'$ be
  another adelic divisor over $E$. Then there exists a finite set
  $\mathfrak{S} \subset \mathfrak{M}_K$ and a collection
  $\varphi_v \in C(X_v^{\an})^{G_v}$, $v\in \mathfrak{S}$, such that
  $\overline{E}' - \overline{E} = \sum_{v \in \mathfrak{S}}
  \overline{0}^{\varphi_v}$. Since the essential minimum function is
  differentiable at $\overline{D}$, we have
\begin{displaymath}
\partial_{\overline{E}'} \, \mu^{\ess}(\overline{D}) = \partial_{\overline{E}} \, \mu^{\ess}(\overline{D})  + \sum_{v \in \mathfrak{S}}\partial_{\overline{0}^{\varphi_v}} \, \mu^{\ess}(\overline{D}).
\end{displaymath}
Setting $\varphi_v = 0$ for
$v \in \mathfrak{M}_K \setminus \mathfrak{S}$, Proposition
\ref{prop:diffvsEP} together with \eqref{eq:condlogequi} then gives
\begin{displaymath}
\partial_{\overline{E}'} \, \mu^{\ess}(\overline{D}) = \sum_{v \in \mathfrak{M}_K} n_v  \int_{X_v^{\an}} (g_{\overline{E},v} + \varphi_v)\, d\nu_{\overline{D},v} =  \sum_{v \in \mathfrak{M}_K} n_v   \int_{X_v^{\an}} g_{\overline{E}',v} \, d\nu_{\overline{D},v}
\end{displaymath}
and so $\overline{E}'$ also verifies this condition.
\end{remark}

If the sequences of probability measures approaching the
equidistribution measures are eventually constant, we can rephrase the
condition in Theorem \ref{thm:logequi} in terms of the gaps in Zhang's
lower bound for the heights of Weil divisors in \eqref{eq:32}.

\begin{corollary}
  \label{cor:logequi}
  Assume that for every $v \in \mathfrak{M}_K$ the sequence of
  probability measures $(\nu_{n,v})_n$ is eventually constant, and let
  $E$ be an effective divisor on $X$ such that
\begin{displaymath}
  \lim_{n \to \infty}  \frac{h_{\overline{Q}_n}([\phi_n^*E]) - d \, \mu^{\abs}(\overline{Q}_{n}) \, (Q_n^{d-1}\cdot \phi_n^*E)}{(Q_n^d)} = 0.
\end{displaymath}
Then for every $\overline{D}$-small generic sequence
$(x_{\ell})_{\ell}$ in $X(\overline{K})$ and $v \in \mathfrak{M}_K$ we
have
 \begin{displaymath}
\lim_{\ell \to \infty}\int_{X_v^{\an}} \varphi \, d\delta_{O(x_\ell)_v} = \int_{X_v^{\an}} \varphi \, d\nu_{\overline{D},v}
 \end{displaymath}
 for any function
 $\varphi \colon X_v^{\an} \rightarrow \bR \cup \{\pm \infty \}$ with
 at most logarithmic singularities along $E$.
\end{corollary}

\begin{proof}
  Let $\overline{E}$ be an adelic divisor over $E$. By Remark
  \ref{rem:derivative} and the arithmetic Bézout formula
  \eqref{eq:Bezout} we have
\begin{align*}
  \partial_{\overline{E}} \, \mu^{\ess}(\overline{D})& =  \lim_{n \to \infty} \Big( \frac{h_{\overline{Q}_n}([\phi_n^*E]) - d \, \mu^{\abs}(\overline{Q}_{n}) \, (Q_n^{d-1}\cdot \phi_n^*E)}{(Q_n^d)} + \sum_{v \in \mathfrak{M}_K} n_v \int_{X_v^{\an}} g_{\overline{E},v} \, d\nu_{n,v}\Big)\\
                                                     & =  \lim_{n \to \infty} \frac{h_{\overline{Q}_n}([\phi_n^*E]) - d \, \mu^{\abs}(\overline{Q}_{n}) \, (Q_n^{d-1}\cdot \phi_n^*E)}{(Q_n^d)} + \sum_{v \in \mathfrak{M}_K} n_v \int_{X_v^{\an}} g_{\overline{E},v} \, d\nu_{\overline{D},v},
\end{align*}
and so $\overline{E}$ verifies the condition \eqref{eq:condlogequi}.
\end{proof}

\begin{remark}
\label{rem:21}
The assumption in Corollary \ref{cor:logequi} that the sequences of
probability measures $(\nu_{n,v})_{n}$ are eventually constant is
verified in the setting of dynamical systems (Theorem
\ref{thm:dynlogEP}). It would be interesting to know whether this
corollary remains valid without this technical assumption.
\end{remark}

Corollary \ref{cor:logequi} allows to recover the logarithmic
equidistribution theorem from \cite[Theorem~1.2]{CLT} as follows.  Let
$\overline{D}$ be a semipositive adelic $\mathbb{R}$-divisor on $X$
with $D$ ample such~that
\begin{equation}
  \label{eq:67}
 \mu^{\ess}(\overline{D}) = \frac{(\overline{D}^{d+1})}{(d+1)(D^d)},
 \end{equation}
 and let $\overline{E} \in \widehat{\Div}(X)$ with $E$ is effective
 such that
 \begin{displaymath}
 \frac{h_{\overline{D}}([E])}{d \, (D^{d-1}\cdot E)} =  \mu^{\ess}(\overline{D}).
 \end{displaymath}
 By Theorem \ref{thm:criterionextremal} the equality \eqref{eq:67}
 implies $\mu^{\ess}(\overline{D}) = \mu^{\abs}(\overline{D})$.
 Therefore the condition of Corollary \ref{cor:logequi} is trivially
 satisfied for the constant sequence
 $(\phi_n, \overline{Q}_n)=(\Id_X,\overline{D})$, $n\in \mathbb{N}$,
 thus giving the stated equidistribution for functions with at most
 logarithmic singularities along $E$.

\subsection{A partial converse}\label{sec:partial-converse}

The next result answers Question \ref{question:converse} under an
additional technical assumption, roughly saying that $\overline{D}$
has a suitable upper bound for which Zhang's inequality is an
equality. As we will see in Section \ref{sec:toric-varieties}, this
assumption is always satisfied for semipositive toric adelic
$\bR$-divisors (Proposition \ref{prop:15}), which will allow us to
give an affirmative answer to this question in this setting (Theorem
\ref{thm:5}).  As before, we denote by $\overline{D}$ an adelic
$\mathbb{R}$-divisor on $X$ with $D$ big.

\begin{proposition}\label{prop:conversequasican}
  Assume that there exists a semipositive adelic $\bR$-divisor
  $\overline{D}'$ over~$D$ such that $\overline{D}' - \overline{D}$ is
  pseudo-effective and
  $\mu^{\ess}(\overline{D}') = \mu^{\abs}(\overline{D}') =
  \mu^{\ess}(\overline{D})$. Then the following conditions are
  equivalent:
\begin{enumerate}[leftmargin=*]
\item \label{item:16} $\displaystyle{\lim_{t \to \mu^{\ess}(\overline{D})} \frac{\mu^{\ess}(\overline{D})-t}{ \rho(\overline{D}(t))} = 0}$,
\item \label{item:18} the essential minimum function is differentiable at $\overline{D}$,
\item \label{item:19} $\overline{D}$ has the equidistribution property
  at every place $v \in \mathfrak{M}_K$.
\end{enumerate}
\end{proposition} 

We deduce this result as a special case of the next lemma.

\begin{lemma}\label{lem:converse} Assume that there exists a nef
  $\overline{A} \in \widehat{\Div}(X)_{\bR}$ with $A$ big such that
  \begin{math}
  \partial_{\overline{A}}\, \mu^{\ess}(\overline{D}) = 0.
  \end{math} Then the following conditions are equivalent:
\begin{enumerate}[leftmargin=*]
\item \label{item:converselim}
  $\displaystyle{\lim_{t \to \mu^{\ess}(\overline{D})}
    \frac{\mu^{\ess}(\overline{D})-t}{\rho(\overline{D}(t))} = 0}$,
\item \label{item:conversediff} the essential minimum function is
  differentiable at $\overline{D}$,
\item \label{item:conversediffA}
  $\partial_{-\overline{A}}\, \mu^{\ess}(\overline{D}) = 0$.
\end{enumerate}

If moreover $A = D$, then they are equivalent to the condition:
\begin{enumerate}[leftmargin=*] \setcounter{enumi}{3}
\item\label{item:converseEPA} $\overline{D}$ has the equidistribution
  property at every place $v \in \mathfrak{M}_K$.
\end{enumerate}
\end{lemma}

\begin{proof}
  We have
  $\eqref{item:converselim} \Rightarrow \eqref{item:conversediff}$ by
  Theorem \ref{thm:MainProduct} and
  $\eqref{item:conversediff} \Rightarrow \eqref{item:conversediffA}$
  is trivial.  We next show that
  $\eqref{item:conversediffA} \Rightarrow
  \eqref{item:converselim}$. If $\eqref{item:conversediffA}$ holds
  then  by Lemma \ref{lemma:chapeau}
\begin{equation}\label{eq:converselimOmega}
  \lim_{t \to \mu^{\ess}(\overline{D})} \frac{(\langle \overline{D}(t)^d \rangle \cdot \overline{A})}{\vol(R^t(\overline{D}))} =0.
\end{equation}
Fix a real number $t < \mu^{\ess}(\overline{D})$ and let
$(\phi_n, \overline{P}_n)_n$ be a Fujita approximation sequence of
$\overline{D}(t)$. Then by Proposition \ref{prop:orthogonal}
\begin{equation}\label{eq:Pnconverse}
 \lim_{n \to \infty} \frac{(\overline{P}_n^d \cdot \overline{A})}{(P_n^d)} = \frac{(\langle \overline{D}(t)^d \rangle \cdot \overline{A})}{\vol(R^t(\overline{D}))}  \and     \lim_{n\to \infty} \mu^{\ess}(\overline{P}_n) = \mu^{\ess}(\overline{D}) -t.
\end{equation}
Set
$\overline{P}'_n = \overline{P}_n - \mu^{\ess}(\overline{P}_n) \,
[\infty]$, $n\in \mathbb{N}$. Then $\overline{P}'_n$ is
pseudo-effective by Theorem \ref{thm:Essmin}, and by the formula
\eqref{eq:arithmeticvsgeometricintersection} we have
\begin{displaymath}
  \frac{(\overline{P}_n^d \cdot \overline{A})}{(P_n^d)} = \frac{(\overline{P}_n^{d-1} \cdot \overline{A} \cdot \overline{P}'_n)}{(P_n^d)}
  +\mu^{\ess}(\overline{P}_n) \, \frac{(P_n^{d-1}\cdot A)}{(P_n^d)}.
\end{displaymath}
Since $\overline{P}_n$ and $\overline{A}$ are nef, by Lemma
\ref{lemma:intersectionpseff} the first summand is non-negative and so
\begin{displaymath}
\frac{(\overline{P}_n^d \cdot \overline{A})}{(P_n^d)} \ge \mu^{\ess}(\overline{P}_n) \, \frac{(P_n^{d-1}\cdot A)}{(P_n^d)} \ge \frac{\mu^{\ess}(\overline{P}_n)}{d \, r(P_n;A)} \ge \frac{\mu^{\ess}(\overline{P}_n)}{d \, \rho(\overline{D}(t);A)},
\end{displaymath}
where the second inequality follows from Lemma
\ref{lemma:inradiusandcap} and the third  from the definition
of the inradius of $\overline{D}(t)$ with respect to $A$. Letting
$n \to \infty$ and applying \eqref{eq:Pnconverse} we get
\begin{displaymath}
 \frac{(\langle \overline{D}(t)^d \rangle \cdot \overline{A})}{\vol(R^t(\overline{D}))}\ge \frac{\mu^{\ess}(\overline{D})-t}{d \, \rho(\overline{D}(t);A)}.
\end{displaymath}
By Lemma \ref{lemma:compareinradii} there is $c > 0$ such that
$\rho(\overline{D}(t);A) \le c \, \rho(\overline{D}(t);D) =
c \,\rho(\overline{D}(t))$ for every $t <
\mu^{\ess}(\overline{D})$. Therefore \eqref{item:converselim} follows
by letting $t \to \mu^{\ess}(\overline{D})$ and using
\eqref{eq:converselimOmega}.

For the last claim, by Proposition \ref{prop:diffvsEP} and Lemma
\ref{lemma:diffconcave}\eqref{item:diffconcavealldirections} the
condition \eqref{item:converseEPA} is equivalent to the fact that the
essential minimum function is differentiable along the subspace of
adelic divisors on $X$ over the zero divisor.  In particular, it is
implied by \eqref{item:conversediff}.

Now assume that $A = D$ and that \eqref{item:converseEPA} holds.  Then
$ \overline{E}\coloneqq \overline{A} - \overline{D}$ is an adelic
divisor over $E = 0$ and so the essential minimum function is
differentiable at $\overline{D}$ along $\overline{E}$. Since this
function is clearly differentiable along $\overline{D}$, by Lemma
\ref{lemma:diffconcave}\eqref{item:diffconcavealldirections} it is
also differentiable along $\overline{A}$. This gives
\eqref{item:conversediffA}.
\end{proof}

\begin{proof}[Proof of Proposition \ref{prop:conversequasican}] Let
  $\overline{A} = \overline{D}' - \mu^{\ess}(\overline{D}) \,
  [\infty]$. Then
  $\mu^{\ess}(\overline{A}) = \mu^{\abs}(\overline{A}) = 0$, and in
  particular $\overline{A}$ is nef. Thus by Lemma \ref{lem:converse}
  it suffices to show that
  $\partial_{\overline{A}}\, \mu^{\ess}(\overline{D}) = 0$. This is
  clear, since by Lemma
  \ref{lemma:propertiesessmin}\eqref{item:essminsuperadd} for every
  $\lambda > 0$ we have
\begin{displaymath}
  \mu^{\ess}(\overline{D}+\lambda \overline{A}) \ge \mu^{\ess}(\overline{D}) + \lambda \mu^{\ess}(\overline{A}) = \mu^{\ess}(\overline{D}),
\end{displaymath}
whereas Lemma \ref{lemma:propertiesessmin}\eqref{item:essminincreases}
gives the converse inequality, namely
\begin{displaymath}
\mu^{\ess}(\overline{D}) = \mu^{\ess}(\overline{D}') =  \mu^{\ess}(\overline{D}' + \lambda \overline{D}') - \lambda \mu^{\ess}(\overline{D}') = \mu^{\ess}(\overline{D}' + \lambda \overline{A}) \ge \mu^{\ess}(\overline{D} + \lambda \overline{A}).
\end{displaymath}
\end{proof}

% \newpage
 
\section{Toric varieties}
\label{sec:toric-varieties}

Here we study the differentiability of the essential minimum function
in the toric setting and its consequences for the equidistribution of
the Galois orbits of small generic sequences of algebraic points.  To
this end, first we review the algebraic and Arakelov geometries of
toric varieties following
\cite{BPS:asterisque,BPS:smthf,BMPS:positivity, BPRS:dgopshtv} and
study the inradii and positive intersection numbers of (adelic)
$\mathbb{R}$-divisors on toric varieties.  We then prove our toric
differentiability results (Theorems~\ref{thm:1} and ~\ref{thm:5}) and
extend the corresponding equidistribution properties to test functions
with logarithmic singularities along special hypersurfaces (Theorem
\ref{thm:2} and Corollary~\ref{cor:3}).

\subsection{Geometric and arithmetic aspects}
\label{sec:toric-setting}
Let $\mathbb{T}\simeq \Gm^{d}$ be a split $d$-dimensional torus
over~$K$ and set
\begin{displaymath}
  M=\Hom(\mathbb{T},\Gm) \and  N=\Hom(\Gm,\mathbb{T})
\end{displaymath}
for its lattices of characters and of co-characters. These are both
isomorphic to $ \mathbb{Z}^{d}$ and dual of each other, that is
$M=N^{\vee}$ and $N=M^{\vee}$. Set then
$N_{\mathbb{R}}=N\otimes_{\mathbb{Z}}\mathbb{R}$ and
$M_{\mathbb{R}}=M\otimes_{\mathbb{Z}}\mathbb{R}$. These vector spaces
are also dual of each other, and for $u\in N_{\mathbb{R}}$ and
$x\in M_{\mathbb{R}}$ we denote their pairing by
$\langle u,x \rangle$.  Let also $K[M]$ be the group algebra of~$M$,
and for each $m\in M$ let $\chi^{m}\in K[M]$ be the corresponding
monomial.

Let $X$ be a \emph{projective toric variety} with torus $\mathbb{T}$ and
$D$ a \emph{toric $\mathbb{R}$-divisor} on it. By this we mean a normal
projective variety over $K$ containing $\mathbb{T}$ as an open subset
and equipped with an action of this torus extending its action onto
itself by translations, together with an $\mathbb{R}$-divisor that is
invariant under this action.

Classically toric varieties and $\mathbb{R}$-divisors are constructed
and classified with polyhedral objects. Thus to $X$ and $D$
respectively correspond a \emph{fan} $\Sigma_{X}$ and an
\emph{$\mathbb{R}$-virtual support function}~$\Psi_{D}$.  The fan
$\Sigma_{X}$ is a polyhedral complex of strongly convex cones defined
over $N$ covering the whole of $N_{\mathbb{R}}$, whereas the
$\mathbb{R}$-virtual support function~$\Psi_{D}$ is a real-valued
function on~$N_{\mathbb{R}}$ that is linear on each of the cones of
this fan.  We also associate to $D$ the subset of $M_{\mathbb{R}}$
defined as
\begin{equation*}
  \Delta_{D}=\{x\in M_{\mathbb{R}}\mid \langle u,x\rangle \ge \Psi_{D}(u) \text{ for every } u\in N_{\mathbb{R}}\}.
\end{equation*}
It  is a \emph{quasi-rational polytope}, that is a polytope with rational
slopes.

The positivity invariants and properties of $D$ can be read from its
$\mathbb{R}$-virtual support function and polytope.  For instance, the
volume of $D$ is given by
\begin{displaymath}
\vol(D)=d!\vol_{M}(\Delta_{D})  
\end{displaymath}
where $\vol_{M}$ denotes the Haar measure on $M_{\mathbb{R}}$
normalized so that $M$ has covolume $1$. In particular, if $D$ is nef
then $(D^{d})=d! \vol_{M}(\Delta_{D})$. More generally, for a family
$D_{i}$, $i=1,\dots, d$, of nef toric $\mathbb{R}$-divisors on $X$ we
have
\begin{equation}
  \label{eq:61}
(D_{1} \cdots D_{d}) = \MV_{M}(\Delta_{D_{1}},\dots, \Delta_{D_{d}}),
\end{equation}
where $\MV_{M}$ denotes the mixed volume function with respect to
$\vol_{M}$.

The $\mathbb{R}$-divisor $D$ is pseudo-effective if and only if
$\Delta_{D}\ne \emptyset$, and is big if and only
if~$\dim(\Delta_{D})=d$. In addition, $D$ is nef if and only if
$\Psi_{D}$ is concave. For a nef toric $\mathbb{R}$-divisor $E$ we
have that $D-E$ is pseudo-effective if and only if there exists
$x \in M_{\mathbb{R}}$ such that $x+\Delta_{E}\subset \Delta_{D}$.
All of this can be found in \cite[Section~4]{BMPS:positivity}.

% Let $X'$ be another projective toric
% variety with torus~$\mathbb{T}$ such that $\Sigma_{X'}$ refines
% $\Sigma_{X}$. Then for each cone $\sigma'\in \Sigma_{X'}$ there is
% another cone $\sigma\in \Sigma$ such that $\sigma'\subset
% \sigma$. The associated semigroups verify that
% $M_{\sigma'}\supset M_{\sigma}$, and this inclusion induces a morphism
% of affine toric varieties
% \begin{displaymath}
% X'_{\sigma'}=\Spec(K[M_{\sigma'}])\longrightarrow X_{\sigma}=\Spec(K[M_{\sigma}]).
% \end{displaymath}
% These morphisms glue together into a toric modification, that is a
% morphism $ X'\to X$ whose restriction to the respective principal open
% subsets is the identity on $\mathbb{T}$. Indeed, every such
% modification of $X$ arises in this manner \cite[Theorem
% 3.2.4]{BPS:asterisque}.

To study the arithmetic counterpart of these constructions and
results, for each place $v\in \mathfrak{M}_{K}$ we denote
by~$\mathbb{S}_{v}$ the \emph{compact torus} of the $v$-adic analytic
torus~$\mathbb{T}_{v}^{\an}$ \cite[Section 4.2]{BPS:asterisque}. In the Archimedean case $\mathbb{S}_{v}$
is isomorphic to the real torus $(S^{1})^{d}$, whereas in the
non-Archimedean case it is an analytic subgroup of
$\mathbb{T}_{v}^{\an}$ in the sense of Berkovich.  We also consider
the \emph{valuation~map}
\begin{displaymath}
  \val_{v}\colon \mathbb{T}_{v}^{\an}\longrightarrow N_{\mathbb{R}}.
\end{displaymath}
With  a splitting of the torus, we can identify the dense subset
$\mathbb{T}_{v}^{\an}(\mathbb{C}_{v})$ with
$(\mathbb{C}_{v}^{\times})^{d}$ and the vector space $N_{\mathbb{R}}$
with $\mathbb{R}^{d}$. In these coordinates, the valuation map writes
down as
\begin{displaymath}
  \val_{v}(x_{1},\dots, x_{d})=(-\log|x_{1}|_{v}, \dots, -\log|x_{d}|_{v}).
\end{displaymath}

Now let $\overline{D}$ be a \emph{toric adelic $\mathbb{R}$-divisor} on $X$, 
that is an adelic $\mathbb{R}$-divisor on $X$ whose geometric
$\mathbb{R}$-divisor $D$ is toric and whose $v$-adic Green function
$g_{\overline{D},v}$ is invariant under the action of~$\mathbb{S}_{v}$
for every $v$. Toric adelic $\mathbb{R}$-divisors over $D$ can be
constructed and classified with adelic families of functions on
$N_{\mathbb{R}}$ whose behavior at infinity is governed by the
$\mathbb{R}$-virtual support function~$\Psi_{D}$ \cite[Proposition
4.3.10]{BPS:asterisque}, \cite[Proposition 4.16]{BMPS:positivity}.
Accordingly we denote by
\begin{displaymath}
\psi_{\overline{D},v}\colon N_{\mathbb{R}}\longrightarrow \mathbb{R}, \quad {v\in \mathfrak{M}_{K}},
\end{displaymath}
the family of \emph{metric functions} associated to $\overline{D}$.
For each $v$, the $v$-adic metric function is defined as
\begin{equation}
  \label{eq:72}
 \psi_{\overline{D},v}(u) =-g_{\overline{D},v}(x) \quad \text{ for every }
 u\in N_{\mathbb{R}} \text{ and } x\in \val_{v}^{-1}(u).
\end{equation}
It is continuous and has bounded difference with respect to $\Psi_{D}$
for every $v$, and it is equal to~$\Psi_{D}$ for all but a finite number of
places.

We also associate to $\overline{D}$ its family of
\textit{local roof functions}
\begin{displaymath}
  {\vartheta_{\overline{D},v}\colon \Delta_{D}\longrightarrow \mathbb{R}}, \quad {v\in \mathfrak{M}_{K}}.
\end{displaymath}
For each $v$, the {$v$-adic roof function} is a continuous concave
function on the polytope that is defined as
\begin{equation*}
  \vartheta_{\overline{D},v}(x)=\inf_{u\in N_{\mathbb{R}}}\langle u,x\rangle-\psi_{\overline{D},v}(u) \quad \text{ for every } x\in \Delta_{D}.
\end{equation*}
These functions are zero  for all but a finite number of places. We
consider then the \emph{global roof function}
$\vartheta_{\overline{D}}\colon \Delta_{D}\to \mathbb{R}$, defined as
the weighted sum
\begin{equation*}
  \vartheta_{\overline{D}}=\sum_{v\in \mathfrak{M}_{K}}n_{v}\vartheta_{\overline{D},v}.
\end{equation*}
We also consider the compact convex set where this concave function is
nonnegative:
  \begin{equation}
    \label{eq:2}
    \Gamma_{\overline{D}}= \{x\in \Delta_{D}\mid
    \vartheta_{\overline{D}}(x)\ge 0\}.   
  \end{equation}

  In analogy with the geometric case, the positivity invariants and
  properties of $\overline{D}$ can be read from its metric and roof
  functions. For instance, the essential minimum of $\overline{D}$ is
  the maximum of its global roof
  function~\cite[Theorem~1.1]{BPS:smthf}:
  \begin{equation}
    \label{eq:69}
  \mu^{\ess}(\overline{D})=\max_{x\in \Delta_{D}}\vartheta_{\overline{D}}(x). 
\end{equation}
Moreover, if $\overline{D}$ is semipositive then
$ \mu^{\abs}(\overline{D})=\min_{x\in
  \Delta_{D}}\vartheta_{\overline{D}}(x)$
\cite[Remark~3.15]{BPS:smthf}.

The volumes of $\overline{D}$ can be computed
as~\cite[Theorem~5.6]{BMPS:positivity}
\begin{displaymath}
  \widehat{\vol}(\overline{D})= (d+1)! \int_{\Gamma_{\overline{D}}} \vartheta_{\overline{D}} \, d  \hspace*{-1.5pt} \vol_{M}
  \and
    \widehat{\vol}_{\chi}(\overline{D})= (d+1)! \int_{\Delta_{D}} \vartheta_{\overline{D}} \, d  \hspace*{-1.5pt} \vol_{M}.
\end{displaymath}
In particular, if $\overline{D}$ is semipositive then
$(\overline{D}^{d+1})= (d+1)! \int_{\Delta_{D}}
\vartheta_{\overline{D}} \, d \hspace*{-1.5pt}\vol_{M} $.  More
generally, the arithmetic intersection number of a family of
semipositive toric adelic $\mathbb{R}$-divisors $\overline{D}_{i}$,
$i=0,\dots, d$, can be computed as
\begin{equation}
  \label{eq:45}
   % h_{\overline{D}_{0},\dots, \overline{D}_{n}}(X)=
 (  \overline{D}_{0}\cdots  \overline{D}_{d})= \sum_{v\in \mathfrak{M}_{K}}n_{v} \MI_{M}(\vartheta_{\overline{D}_{0},v}, \dots, \vartheta_{\overline{D}_{d},v}), 
\end{equation}
where $\MI_{M}$ denotes the mixed integral function with respect to the
Haar measure $\vol_{M}$~on~$M_{\mathbb{R}}$
\cite[Theorem~5.2.5]{BPS:asterisque}.

We have that $\overline{D}$ is semipositive if and only if
$\psi_{\overline{D},v}$ is concave for every $v$ \cite[Proposition
4.19]{BMPS:positivity}.  By Theorem 6.1 in \emph{loc. cit.} we have
that $\overline{D}$ is pseudo-effective if and only if there exists
$x\in \Delta_{D}$ such that ${\vartheta_{\overline{D}}(x)\ge 0}$ or
equivalently, if and only if $\Gamma_{\overline{D}}\ne \emptyset$. We
also have that $\overline{D}$ is big if and only if
$\dim(\Delta_D) = d$ and there exists $x\in \Delta_{D}$ such that
$\vartheta_{\overline{D}}(x)> 0$, in which case
$\Gamma_{\overline{D}}$ is a convex body.  By the same result, when
$\overline{D}$ is semipositive then it is nef if and only of
$\vartheta_{\overline{D}}(x)\ge 0$ for every $x\in \Delta_{D}$.

For a semipositive adelic $\mathbb{R}$-divisor $\overline{E}$ on $X$
we have that $\overline{D}-\overline{E} $ is pseudo-effective if and
only if $\Delta_{E} \subset \Delta_{D}$ and
$\vartheta_{\overline{E},v}(x)\le \vartheta_{\overline{D},v}(x)$ for
every $v\in \mathfrak{M}_{K}$ and $x\in \Delta_{E}$ \cite[Proposition
6.4 and Theorem 7.2(1)]{BMPS:positivity}.

% These properties
% can be found in \cite[Sections 5 and 6]{BMPS:positivity}.

% A modification $\phi\colon X'\to X$ is
% \emph{toric} if $X'$ is also a toric variety with the same
% torus~$\mathbb{T}$ and the restriction of $\phi$ to this torus is the
% identity on $\mathbb{T}$.
% The toric modifications of $X$ can be classified by the complete and
% regular refinements of the fan.

If $\overline{D}$ is big, then for any sequence $(\Lambda_{n})_{n}$ of
quasi-rational polytopes uniformly approaching the convex body
$ \Gamma_{\overline{D}}$ from inside one can construct a Fujita
approximation sequence  of $\overline{D}$  
\begin{equation}
  \label{eq:38}
  (\phi_{n}\colon X_{n}\to X,\overline{P}_{n})_{n}    
  \end{equation}
  such that both $\phi_{n}$ and $\overline{P}_{n}$ are toric for each
  $n$.  The modification $\phi_{n}\colon X_{n}\to X$ is \emph{toric} if
  $X_{n}$ is also a toric variety with the same torus~$\mathbb{T}$ and
  the restriction of $\phi_{n}$ to this torus is the identity. At the
  combinatorial level, a toric modification corresponds to a (regular)
  refinement of the fan $\Sigma_{X}$. On the other hand,
  $\overline{P}_{n}$ is a toric adelic $\mathbb{R}$-divisor on $X_n$
  with polytope equal to $\Lambda_{n}$ and local roof functions equal
  to those of~$\overline{D}$ restricted to this polytope, that is
  \begin{equation}
    \label{eq:53}
    \Delta_{P_{n}}=\Lambda_{n} \and \vartheta_{\overline{P}_{n},v}=\vartheta_{\overline{D},v}\big|_{\Lambda_{n}} \quad \text{ for all } v\in \mathfrak{M}_{K}.
  \end{equation}
This is explained in~\cite[Theorem~7.2]{BMPS:positivity} and its proof. 

  \subsection{Inradii and positive intersection numbers}
\label{sec:inrad-posit-arithm}

In \cite{Teissier}, Teissier first pointed out the relation between
the inradius of toric line bundles and the inradius of the associated
polytopes in the sense of convex geometry.
The next statement puts this observation into the setting of
$\mathbb{R}$-divisors.

\begin{proposition}
  \label{prop:1}
  Let $D$ and $A$ be toric $\mathbb{R}$-divisors on $X$ such that $D$
  is big and $A$ is big and nef. Then
  \begin{displaymath}
 r(D;A)= r(\Delta_{D};\Delta_{A}),
\end{displaymath}
where $r(\Delta_{D};\Delta_{A})$ denotes the inradius in the sense of Definition \ref{def:1}.
\end{proposition}

\begin{proof}
  Since $A$ is nef, for each $\lambda\in \mathbb{R}$ we have that
  $D-\lambda A$ is pseudo-effective if and only if there exists
  $x\in M_{\mathbb{R}}$ such that
  $x+\lambda\, \Delta_{A}\subset \Delta_{D}$. The equality between the
  two inradii follows then directly from their definitions.
\end{proof}

Now let $\overline{D}$ be a big toric adelic $\mathbb{R}$-divisor on
the projective toric variety $X$.  From the existence and properties
of its toric Fujita approximation sequences \eqref{eq:38} we will
derive both a lower bound for the inradius of $\overline{D}$ and a
formula for its arithmetic positive intersection numbers.
  
\begin{proposition}
  \label{prop:5}
  Let  $A$ be a toric $\mathbb{R}$-divisor on $X$ that is big
  and nef.  Then
  \begin{displaymath}
\rho(\overline{D};A)\ge r(\Gamma_{\overline{D}};\Delta_{A}).
  \end{displaymath}
\end{proposition}

\begin{proof}
  Let $(\phi_n,\overline{P}_n)$ be a toric Fujita approximation sequence of $\overline{D}$ as in \eqref{eq:38}. With notation as in Definition \ref{def:rho} we
  have $(\phi_{n},\overline{P}_{n})\in \Theta(\overline{D})$ for
  every $n$. Then
  \begin{displaymath}
    \rho(\overline{D};A) \ge \sup_{n\in \mathbb{N}}r(P_{n};\phi_{n}^{*}A)= \sup_{n\in \mathbb{N}}r(\Lambda_{n};\Delta_{A}) =r(\Gamma_{\overline{D}};\Delta_{A})
  \end{displaymath}
  by Proposition \ref{prop:1} and the fact that the sequence of
  polytopes $(\Lambda_{n})_{n}$ approaches $\Gamma_{\overline{D}}$
  from inside.
\end{proof}

\begin{proposition}
  \label{prop:9}
  Let $\overline{E}$ be a semipositive toric adelic
  $\mathbb{R}$-divisor on $X$. Then
\begin{equation*}
 \vol (R^{0}(\overline{D}))=d! \vol(\Gamma_{\overline{D}}), \quad   (\langle \overline{D}^{d}\rangle \cdot \overline{E}) = \sum_{v\in \mathfrak{M}_{K}}n_{v} \MI_{M}(\vartheta_{\overline{D},v}|_{\Gamma_{\overline{D}}}, \dots,
\vartheta_{\overline{D},v}|_{\Gamma_{\overline{D}}},  \vartheta_{\overline{E},v}).
\end{equation*}  
\end{proposition}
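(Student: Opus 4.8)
The plan is to prove both identities by passing to the limit along the canonical sequence of toric nef approximations of $\overline{D}$ furnished by the toric arithmetic Fujita approximation theorem \cite[Theorem 7.2]{BMPS:positivity}, recalled around \eqref{eq:38}. First I would fix a sequence of quasi-rational polytopes $\Delta_{n}\subseteq\Gamma_{\overline{D}}$ increasing to $\Gamma_{\overline{D}}$ in the Hausdorff metric (note $\Gamma_{\overline{D}}$ is a genuine convex body, since $\overline{D}$ big forces $\dim\Delta_{D}=d$ and $\max\vartheta_{\overline{D}}>0$), together with the associated toric modifications $\phi_{n}\colon X_{n}\to X$ and the nef toric adelic $\mathbb{R}$-divisors $\overline{P}_{n}$ on $X_{n}$ whose polytope is $\Delta_{n}$ and whose local roof functions are the restrictions $\vartheta_{\overline{P}_{n},v}=\vartheta_{\overline{D},v}|_{\Delta_{n}}$, and which satisfy $\lim_{n}\widehat{\vol}(\overline{P}_{n})=\widehat{\vol}(\overline{D})$. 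Since each $\overline{P}_{n}$ is nef, Theorem \ref{thm:arithmHodgeMoriwaki} gives $(\overline{P}_{n}^{d+1})=\widehat{\vol}(\overline{P}_{n})$, so $(\phi_{n},\overline{P}_{n})_{n}$ is a sequence of nef approximations of $\overline{D}$ satisfying \eqref{eq:Fujitasequence}; in particular it is admissible both in Corollary \ref{coro:orthogonal} and in the definition \eqref{eq:39} of the positive arithmetic intersection number.

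For the first identity I would observe that each $P_{n}$ is a nef toric $\mathbb{R}$-divisor with polytope $\Delta_{n}$, hence $(P_{n}^{d})=d!\,\vol_{M}(\Delta_{n})$. Corollary \ref{coro:orthogonal} applied to this sequence then gives $\vol(R^{0}(\overline{D}))=\lim_{n}(P_{n}^{d})=d!\lim_{n}\vol_{M}(\Delta_{n})=d!\,\vol_{M}(\Gamma_{\overline{D}})$, the last step by Hausdorff convergence $\Delta_{n}\to\Gamma_{\overline{D}}$.

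For the second identity I would use $(\langle\overline{D}^{d}\rangle\cdot\overline{E})=\lim_{n}(\overline{P}_{n}^{d}\cdot\phi_{n}^{*}\overline{E})$ from \eqref{eq:39}. On $X_{n}$ the divisors $\overline{P}_{n}$ (repeated $d$ times) and $\phi_{n}^{*}\overline{E}$ are all semipositive toric adelic $\mathbb{R}$-divisors: $\overline{P}_{n}$ is nef, while $\phi_{n}^{*}\overline{E}$ is semipositive because $\overline{E}$ is, and its polytope and local roof functions remain $\Delta_{E}$ and $\vartheta_{\overline{E},v}$, since these are invariant under toric modifications. Applying the formula \eqref{eq:45} for arithmetic intersection numbers of semipositive toric adelic $\mathbb{R}$-divisors yields $(\overline{P}_{n}^{d}\cdot\phi_{n}^{*}\overline{E})=\sum_{v\in\mathfrak{M}_{K}}n_{v}\,\MI_{M}(\vartheta_{\overline{D},v}|_{\Delta_{n}},\dots,\vartheta_{\overline{D},v}|_{\Delta_{n}},\vartheta_{\overline{E},v})$. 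Letting $n\to\infty$ and invoking continuity of the mixed integral $\MI_{M}$ then gives the claimed formula. (The sum is finite throughout, since $\vartheta_{\overline{D},v}=0$ and $\vartheta_{\overline{E},v}=0$ for all but finitely many $v$.)

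The only point that is not bookkeeping is this last limit: one must check that $\vartheta_{\overline{D},v}|_{\Delta_{n}}\to\vartheta_{\overline{D},v}|_{\Gamma_{\overline{D}}}$ in a topology for which $\MI_{M}$ is continuous. I expect this to follow from the standard continuity properties of mixed integrals of concave functions on convex bodies established in \cite{BPS:asterisque}, using that all the relevant functions are restrictions of the one fixed continuous concave function $\vartheta_{\overline{D},v}$ on $\Delta_{D}$ and that $\Delta_{n}\to\Gamma_{\overline{D}}$ in the Hausdorff metric, so that the associated supergraphs also converge; this is the step I would write out carefully.
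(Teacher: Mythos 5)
Your proposal is correct and follows essentially the same route as the paper: the toric nef approximations of \eqref{eq:38}, Corollary \ref{coro:orthogonal} with \eqref{eq:61} for the first identity, and \eqref{eq:39} with \eqref{eq:45} for the second, followed by a limit in $n$. The continuity step you flag at the end is exactly what the paper supplies as Proposition \ref{prop:6} in the appendix, so nothing further is needed.
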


  \begin{proof}
    % With notation as in \eqref{eq:38}, by
     Let $(\phi_n,\overline{P}_n)$ be a toric Fujita approximation sequence of $\overline{D}$ as in \eqref{eq:38}. By Proposition
    \ref{prop:orthogonal} and the formula for toric intersection
    numbers \eqref{eq:61} we have
  \begin{displaymath}
    \vol(R^{0}(\overline{D}))= \lim_{n\to \infty} (P_{n}^{d}) = \lim_{n\to \infty} d! \vol_{M}(\Lambda_{n}) =d! \vol(\Gamma_{\overline{D}}).
  \end{displaymath}

  For the second formula, by Definition \ref{def:13} we have
  $ (\langle \overline{D}^{d}\rangle \cdot \overline{E}) =\lim_{n\to
    \infty} ( \overline{P}_{n}^{d} \cdot \phi_{n}^{*}\overline{E}) $,
  whereas by~\eqref{eq:53} and the formula for toric arithmetic
  intersection numbers \eqref{eq:45}~we~get
\begin{displaymath}
      ( \overline{P}_{n}^{d} \cdot \phi_{n}^{*}\overline{E}) = \sum_{v\in \mathfrak{M}_{K}}n_{v} \MI_{M}(\vartheta_{\overline{D},v}|_{\Delta_{n}}, \dots,
      \vartheta_{{\overline{D},v}|_{\Delta_{n}}},  \vartheta_{\overline{E},v}) \quad \text{ for every } n\in \mathbb{N}.
  \end{displaymath}
  We conclude by taking the limit $n\to \infty$ and applying
  Lemma~\ref{lem:9}.
\end{proof}

Following \cite[Definition 4.3.3]{BPS:asterisque}, given an arbitrary
adelic structure over a toric $\mathbb{R}$-divisor one can construct a
toric one by an averaging process. To describe it,
for each $v\in \mathfrak{M}_{K}$ and $u\in N_{\mathbb{R}}$ we recall
the probability measure $\eta_{v,u}$ on~$X_{{v}}^{\an}$ from
\cite[Definition 5.1]{BPRS:dgopshtv}, which is defined as:
  \begin{enumerate}[leftmargin=*]
  \item \label{item:38} if $v \in \mathfrak{M}_{K}^{\infty}$ then
    $\eta_{v,u}$ is the translation by any point
    $x\in \val_{v}^{-1}(u) \subset \mathbb{T}_{v}^{\an} \simeq
    (\mathbb{C}^{\times})^{d}$ of the Haar probability measure of the
    compact torus~$\mathbb{S}_{v} \simeq (S^{1})^{d}$,
  \item \label{item:40} if
    $v \in \mathfrak{M}_{K}\setminus \mathfrak{M}_{K}^{\infty}$ then
    $\eta_{v,u}$ is the Dirac measure at the point
    $\zeta_{v}(u) \in \val_{v}^{-1}(u) \subset \mathbb{T}_{v}^{\an}$
    corresponding to the multiplicative seminorm on $K[M]$ defined as
\begin{displaymath}
  |f|_{\zeta_{v}(u)}= \max_{m\in M} |\alpha_{m}|_{v} \, e^{-\langle u,m\rangle} \quad \text{ for every } f=\sum_{m\in M}\alpha_{m}\chi^{m}\in K[M].
\end{displaymath}
    
  \end{enumerate}

\begin{definition}
  \label{def:7}
  Let $\overline{E}$ be an adelic $\mathbb{R}$-divisor on $X$ with $E$
  toric.  For each $v\in \mathfrak{M}_{K}$ let
  $\widehat{g}_{v}\colon X_{v}^{\an} \setminus E_{v}^{\an}\to
  \mathbb{R}$ be the function defined as
  \begin{displaymath}
  \widehat{g}_{v}(x)= \int_{X_{v}^{\an}} g_{\overline{E},v}  \,
  d \eta_{v,\val_{v}(x)}
  \end{displaymath}
and set
  \begin{math}
    \Etor=(E, (\widehat{g}_{v})_{v\in \mathfrak{M}_{K}}).
  \end{math}
This  is  a toric adelic
  $\mathbb{R}$-divisor on $X$.
\end{definition}  
  
%   \begin{displaymath}
%     \widehat{g}_{v}(x)=
%     \begin{cases}
% \displaystyle{      \int_{\mathbb{S}_{v}} g_{\overline{E},v} (t\cdot x) \,
%       d \Haar_{v}(t)} & \text{ if } v \text{ is Archimedean},\\
%       g_{\overline{E},v} ( \zeta_{v}(\val_{v}(x))) & \text{ if } v
%       \text{ is non-Archimedean},
%     \end{cases}    
%   \end{displaymath}
%   with $\Haar_{{v}}$ the Haar probability measure of the real torus
%   $\mathbb{S}_{v}$ when $v$ is Archimedean, and $\zeta_{v}$ the section
%   of the $v$-adic valuation map in \eqref{eq:54} when $v$ is
%   non-Archimedean. We then set
%   \begin{math}
%     \Etor=(E, (\widehat{g}_{v})_{v\in \mathfrak{M}_{K}}),
%   \end{math}
% which is  a toric adelic
%   $\mathbb{R}$-divisor on $X$.

We need the following invariance of arithmetic positive intersection
numbers with respect to this averaging process.

\begin{proposition}
  \label{prop:6}
  For every $\overline{E}\in \widehat{\Div}(X)_{\mathbb{R}}$ we have
  $ (\langle \overline{D}^{d}\rangle \cdot \overline{E}) = (\langle
  \overline{D}^{d}\rangle \cdot \Etor)$. 
\end{proposition}

\begin{proof}
  Let $\overline{P}$ be a toric adelic $\mathbb{R}$-divisor on $X$.
  With notations as in Definition \ref{def:7}, by the arithmetic
  B\'ezout~formula we have
  \begin{equation}\label{eq:31}
    ( \overline{P}^{d} \cdot \overline{E}) -    ( \overline{P}^{d} \cdot \Etor)
    =    \sum_{v\in \mathfrak{M}_{K}}n_{v} \int_{X_{v}^{\an}} ({g}_{\overline{E},v}- \widehat{g}_{v}) \, c_{1}(\overline{P}_{v})^{\wedge d}.
  \end{equation}

  Let $v\in \mathfrak{M}_{K}$. If $v$ is Archimedean then %When $v$ is Archimedean let $\eta_{v}$ be the Haar probability measure on the compact torus $\mathbb{S}_{v}$. Then
  \begin{align*}
    \int_{X_{v}^{\an}} \widehat{g}_{v} \, c_{1}(\overline{P}_{v})^{\wedge d} & =\int_{X_{v}^{\an}} \bigg(\int_{\mathbb{S}_{v}} g_{\overline{E},v} (t\cdot x) \,
    d \eta_{v,0}(t)\bigg)  \, c_{1}(\overline{P}_{v})^{\wedge d}(x)\\
  &  = \int_{\mathbb{S}_{v}} \bigg( \int_{X_{v}^{\an}} g_{\overline{E},v} (t\cdot x) \,
    c_{1}(\overline{P}_{v})^{\wedge d}(x) \bigg)    d \eta_{v,0}(t)
 = \int_{X_{v}^{\an}} g_{\overline{E},v} \,
     c_{1}(\overline{P}_{v})^{\wedge d}
  \end{align*}
  by Fubini's theorem %, the change of variable formula
  and the invariance of the $v$-adic Monge-Ampère measure of
  $\overline{P}$ under the action of $\mathbb{S}_{v}$. On the other
  hand, when $v$ is non-Archimedean we have
  \begin{displaymath}
    \int_{X_{v}^{\an}} \widehat{g}_{v} \, c_{1}(\overline{P}_{v})^{\wedge d}=\int_{X_{v}^{\an}}  g_{\overline{E},v} ( \zeta_{v}(\val_{v}(x)))
    \, c_{1}(\overline{P}_{v})^{\wedge d}                                                    = \int_{X_{v}^{\an}} g_{\overline{E},v} (x) \,     c_{1}(\overline{P}_{v})^{\wedge d}
  \end{displaymath}
  by the characterization of the Monge-Ampère measures of semipositive
  toric adelic divisors in \cite[Theorem 4.8.11]{BPS:asterisque}.
  Combining this with \eqref{eq:31} we get
  \begin{equation}
    \label{eq:43}
    ( \overline{P}^{d} \cdot \overline{E}) =    ( \overline{P}^{d} \cdot \Etor).
  \end{equation}

  Now let $(\phi_{n},\overline{P}_{n})_{n}$ be a Fujita approximation
  sequence of $\overline{D}$ as in \eqref{eq:38}.  By~\eqref{eq:43}
\begin{displaymath}
(\overline{P}_{n}^{d}\cdot \phi_{n}^{*}\overline{E}) 
  =   (\overline{P}_{n}^{d}\cdot (\phi_{n}^{*}\overline{E})\vphantom{L}^{\mathrm{tor}})=    (\overline{P}_{n}^{d}\cdot \phi_{n}^{*}(\Etor)) \quad \text{ for every } n\in \mathbb{N}
\end{displaymath}
because $\overline{P}_n$ is toric and the averaging process commutes
with the toric modification~$\phi_{n}$. Indeed, this process occurs on
the open subset $\mathbb{T} \subset X,X_{n}$, which remains unchanged
under this modification. We conclude by taking the limit as
$n\to \infty$.
\end{proof}
  
% Recall that $[\infty]$ denotes the adelic $\mathbb{R}$-divisor over
% the zero divisor of $X$ with $v$-adic Green function equal to $1$ if
% $v$ is Archimedean and to $0$ is $v$ is non-Archimedean. As in
% \S\ref{subsec:Productversion}, for each $t<\mu^{\ess}(\overline{D})$
% we consider the twist
% \begin{displaymath}
%   \overline{D}(t)=\overline{D}-t [\infty].
% \end{displaymath}
% We have $\vartheta_{\overline{D}(t)}=\vartheta_{\overline{D}}-t$, as
% it can be verified from the definitions of this twist and of the
% global roof function. In particular we have
% $\Gamma_{\overline{D}(t)}= S_{t}(\vartheta_{\overline{D}})$ for the
% convex body in~\eqref{eq:2}.

\subsection{Equidistribution on  toric varieties}
\label{sec:toric-height-conv}

Let $X$ be a projective toric variety with torus $\mathbb{T}$ and
$\overline{D}$ a toric adelic $\mathbb{R}$-divisor on~$X$ with $D$
big. For each $t\le \mu^{\ess}(\overline{D})$ we set
\begin{displaymath}
S_{t}(\vartheta_{\overline{D}})= \{x\in \Delta_{D}\mid
\vartheta_{\overline{D}}(x)\ge t\}
\end{displaymath}
for the corresponding {sup-level set} of the global roof function of
$\overline{D}$.  It is a nonempty compact convex subset of
$\Delta_{D}$ that is $d$-dimensional whenever
$t< \mu^{\ess}(\overline{D})$. Set also
$\Delta_{D,\max}=S_{\mu^{\ess}(\overline{D})}(\vartheta_{\overline{D}})$.

The global roof function is said to be \emph{wide} if after fixing an
arbitrary norm on~$M_{\mathbb{R}}$, the width
%$w(S_{t}(\vartheta_{\overline{D}}))$
of these sup-level sets remains relatively large as the level
approaches its maximum (Definition \ref{def:2}). By Proposition
\ref{prop:concavefunctions}, this is equivalent to the fact that the
inradius of these sup-level sets with respect to any fixed convex body
remains relatively large as the level approaches its maximum. By the
same result, it is also equivalent to the fact that for any
$x_{0} \in \Delta_{D,\max}$ we have that $0 \in N_{\mathbb{R}}$ is a
vertex of the {sup-differential}
\begin{math}
\partial \vartheta_{\overline{D}} (x_{0}) \subset N_{\mathbb{R}}  
\end{math} (Definition~\ref{def:4}).  When this condition holds, by
Proposition \ref{prop:7} one can associate to
$ \vartheta_{\overline{D}} $ a unique \textit{balanced family of
  sup-gradients}
with respect to its decomposition into local roof functions.
This is a family of vectors
\begin{displaymath}
u_{v}\in N_{\mathbb{R}}, \quad v\in \mathfrak{M}_{K},   
\end{displaymath}
such that $u_{v} \in \partial \vartheta_{\overline{D},v}(x_{0})$ for
every $v$ with $u_{v}=0$ for all but a finite number of places and
verifying the balancing condition
$ \sum_{v\in \mathfrak{M}_{K}}n_{v}u_{v}=0$.  We let
\begin{displaymath}
  \eta_{\overline{D},v}=
\eta_{v,u_{v}}
\end{displaymath}
be the probability measure on $X_{v}^{\an}$ from Section
\ref{sec:inrad-posit-arithm} for the point $u_{v}\in N_{\mathbb{R}}$.

The following is the main result of this section.  It is an
application of Theorem~\ref{thm:MainSequences}, or rather of its
reformulation in Theorem \ref{thm:MainProduct} in terms of arithmetic
positive intersection numbers, together with the constructions and
results from Sections~\ref{sec:toric-setting} and
\ref{sec:inrad-posit-arithm} and from
Appendix~\ref{sec:prel-conv-analys}.
 
\begin{theorem}
  \label{thm:1}
If  $\vartheta_{\overline{D}}$ is wide then the essential
  minimum function is differentiable at~$\overline{D}$ and
  \begin{equation*}
\partial_{\overline{E}} \mu^{\ess}(\overline{D}) = \sum_{v\in \mathfrak{M}_{K}}n_{v}\int_{X_{v}^{\an}}g_{\overline{E},v} \, d \eta_{\overline{D},v}
\quad \text{ for every } \overline{E}\in \widehat{\Div}(X)_{\mathbb{R}} \text{ with } E \text{ toric}.
\end{equation*}
In particular, in this case $\overline{D}$ satisfies the
equidistribution property at every $v\in \mathfrak{M}_{K}$ with
${\nu_{\overline{D},v}=\eta_{\overline{D},v}}$.
\end{theorem}

\begin{proof}
  Let $(u_{v})_{v}$ be the balanced family of sup-gradients of
  $\vartheta_{\overline{D}}$, so that
  $\eta_{\overline{D},v}=\eta_{v,u_{v}}$ for every $v$. Set
  $\mu=\mu^{\ess}(\overline{D})$ for short. Then
  \begin{equation}
    \label{eq:29}
    \lim_{t\to \mu} \frac{ \mu-t}{r(S_{t}(\vartheta_{\overline{D}});\Delta_{D})}=0.
  \end{equation}
  For each $t<\mu$ the global roof functions of $\overline{D}$ and its
  shift by $t$ are related by
  $\vartheta_{\overline{D}}=\vartheta_{\overline{D}(t)}+t$, and so
  $S_{t}(\vartheta_{\overline{D}})=\Gamma_{\overline{D}(t)}$ for the
  convex body defined in~\eqref{eq:2}. Combining this with Lemma
  \ref{lemma:compareinradii} and Proposition~\ref{prop:5} we deduce
  $ r(S_{t}(\vartheta_{\overline{D}});\Delta_{D}) \le c \,
  \rho(\overline{D}(t)) $ for a constant $c>0$ not depending on
  $t$. Hence
  \begin{displaymath}
    \lim_{t\to \mu}\frac{\mu-t}{\rho(\overline{D}(t))}=0
  \end{displaymath}
  and so by Theorem~\ref{thm:MainProduct}  the essential
  minimum function is differentiable at~$\overline{D}$~with
  \begin{equation} \label{eq:47}    
    \partial_{\overline{E}} \mu^{\ess}(\overline{D})= \lim_{t\to \mu}  \frac{(\langle \overline{D}(t)^{d} \rangle \cdot \overline{E})}{\vol (R^{t}(\overline{D}))}
    \quad \text{ for every } \overline{E} \in \widehat{\Div}(X)_{\mathbb{R}}.
\end{equation}
For the formula for the derivative, we first consider the case when
$\overline{E}$ is  toric and semipositive.

Let $\mathfrak{S}\subset \mathfrak{M}_{K}$ be a finite set of places
 such that $ u_{v}=0$, $ \psi_{\overline{E},v}=\Psi_{E}$ and
$ \vartheta_{\overline{E},v}=0|_{\Delta_{E}}$ for all
$ v \in \mathfrak{M}_{K}\setminus \mathfrak{S}$. Let $t < \mu$ and
set for short $S_{t}=S_{t}(\vartheta_{\overline{D}})$. For each
$v\in \mathfrak{M}_{K}$ let $\theta_{v,t} \colon S_{t}\to \mathbb{R}$
be the restriction of the concave function
$ \vartheta_{\overline{D},v}-\varepsilon_{v} t$ to this convex body,
with $\varepsilon_{v}=1$ if $v$ is Archimedean and $\varepsilon_{v}=0$
otherwise. Then by Proposition \ref{prop:9} we have
\begin{equation} 
\label{eq:58}
 \vol (R^{t}(\overline{D}))=d! \vol_{M}(S_{t}) \and 
 ( \langle \overline{D}(t)^{d} \rangle \cdot \overline{E}) = \sum_{v\in \mathfrak{S}}
 n_{v} \MI_{M}(\theta_{v,t},\dots, \theta_{v,t}, \vartheta_{\overline{E},v}).
\end{equation}

Choose $x_{0}\in \Delta_{D,\max}$ and set
$c_{v}=\vartheta_{\overline{D},v}(x_{0})-\langle u_{v},x_{0}\rangle$,
$v\in \mathfrak{S}$.  By \eqref{eq:69} and the balancing condition of
$(u_{v})_{v}$ we have
\begin{displaymath}
  \sum_{v\in \mathfrak{S}}n_{v} \, c_{v}= \sum_{v \in \mathfrak{M}_{K}}n_{v}\,
(\vartheta_{\overline{D},v}(x_{0})-\langle u_{v},x_{0}\rangle) =
\vartheta_{\overline{D}}(x_{0}) = \mu.
\end{displaymath}
For each $v\in \mathfrak{S}$ we have 
$ \vartheta_{\overline{D},v}(x) \le \langle u_{v},x-x_{0}\rangle
+\vartheta_{\overline{D},v}(x_{0})= \langle u_{v},x\rangle + c_{v}$
for all $x\in \Delta_{D}$ because 
$u_{v}\in \partial \vartheta_{\overline{D},v}(x_0)$. 
Setting
\begin{math}
  \kappa_{v}=\max_{x \in S_{t}} (\langle u_{v},x\rangle +c_{v} - \vartheta_{\overline{D},v}(x)) \ge 0
\end{math}~we~have
\begin{equation}
  \label{eq:55}
  \langle u_{v},x\rangle + c_{v}-\kappa_{v} -\varepsilon_{v} t \le \theta_{v,t}(x) \le   \langle u_{v},x\rangle + c_{v} -\varepsilon_{v} t \quad \text{ for every } x\in S_{t}.
\end{equation}

From the upper bound in \eqref{eq:55} and the monotonicity of the
mixed integral we get
\begin{displaymath}
  \MI_{M}(\theta_{v,t},\dots, \theta_{v,t}, \vartheta_{\overline{E},v})   \le 
  \MI_{M}((\langle u_{v},x\rangle + c_{v} -\varepsilon_{v} \, t)|_{S_{t}},\dots, (\langle u_{v},x\rangle + c_{v}
  -\varepsilon_{v} \, t)|_{S_{t}} , \vartheta_{\overline{E},v}).
\end{displaymath}
By Lemma \ref{lem:12} and Remark \ref{rem:12}, the right-hand side of
this inequality can be computed as
\begin{displaymath}
-d! \vol(S_{t}) \, \psi_{\overline{E},v}(u_{v}) +   d \, (c_{v}-\varepsilon_{v}t) \MV_{M}(S_{t}, \dots, S_{t}, \Delta_{E}) +\langle u_{v},x_{1}\rangle
\end{displaymath}
for a point $x_{1}\in M_{\mathbb{R}}$ not depending on $v$, because
$\psi_{\overline{E},v}$ coincides with the Legendre-Fenchel dual of
$\vartheta_{\overline{E},v}$ as defined in \eqref{eq:59}.  Summing
over all these places, we deduce from \eqref{eq:58} and the balancing
condition of $(u_{v})_{v}$
\begin{equation}
  \label{eq:70}
   \frac{(\langle \overline{D}(t)^{d} \rangle \cdot \overline{E})}{\vol (R^{t}(\overline{D}))} \le -\sum_{v\in \mathfrak{S}}n_{v} \psi_{\overline{E},v}(u_{v}) 
+ d \, (\mu-t)  \, \frac{\MV_{M}(S_{t}, \dots, S_{t}, \Delta_{E})}{d! \vol_{M}(S_{t}) } .
\end{equation}

For the converse inequality, for each $v \in \mathfrak{S}$ and any
$x\in S_{t}$ we have
\begin{displaymath}
  \mu-t \ge \mu-\vartheta_{\overline{D}}(x)= \sum_{w \in \mathfrak{S}}n_{w} (c_{w}+\langle u_{w},x\rangle - \vartheta_{\overline{D},w}(x))
  \ge n_{v}(c_{v}+\langle u_{v},x\rangle - \vartheta_{\overline{D},v}(x)) 
\end{displaymath}
using again the balancing condition of $(u_{v})_{v}$ together with the
previous upper bound for the $w$-adic roof functions for $w\ne
v$. Since this holds for every $x\in S_{t}$ we deduce
$ n_{v} \kappa_{v}\le \mu-t$, and in particular
\begin{displaymath}
\sum_{v \in \mathfrak{S}}n_{v}\kappa_{v}\le \#\mathfrak{S}\,(\mu-t).  
\end{displaymath}
Combining this with the lower bound in \eqref{eq:55} we similarly
obtain
\begin{equation}
  \label{eq:71}
  \frac{(\langle \overline{D}(t)^{d} \rangle \cdot \overline{E})}{\vol (R^{t}(\overline{D}))} \ge -\sum_{v\in \mathfrak{S}}n_{v} \psi_{\overline{E},v}(u_{v}) 
- (\# \mathfrak{S}-1) \, d\, (\mu-t) \, \frac{\MV_{M}(S_{t}, \dots, S_{t}, \Delta_{E})}{d! \vol_{M}(S_{t}) } .
\end{equation}

% By the interpretation of mixed volumes as intersection numbers of
% toric $\mathbb{R}$-divisors in \eqref{eq:61} we have
% \begin{displaymath}
%    \MV_{M}(S_{t}, \dots, S_{t}, \Delta_{E})=\lim_{n\to \infty} (P_{n}^{d-1}\cdot E) \and 
% d! \vol(S_{t}) = \lim_{n\to \infty} (P_{n}^{d})
%  \end{displaymath}
%  for the sequence of Fujita approximations
%  $(\phi_{n},\overline{P}_{n})_{n}$ of $\overline{D}(t)$ from
%  \eqref{eq:38}.

Now choose $c>0$ such that $x+c\, \Delta_{E}\subset \Delta_{D}$ for
some $x\in M_{\mathbb{R}}$. Then there exists $x'\in M_{\mathbb{R}}$
such that
\begin{math}
  x'+c\, r(S_{t}(\vartheta_{\overline{D}});\Delta_{D})\, \Delta_{E}\subset S_{t}
\end{math}, and so by the monotonicity and the multilinearity of the
mixed volume function we have
\begin{displaymath}
c\, r(S_{t}(\vartheta_{\overline{D}});\Delta_{D}) \,  \MV_{M}(S_{t}, \dots, S_{t}, \Delta_{E}) \le  \MV_{M}(S_{t}, \dots, S_{t}, S_{t}) = d! \vol_{M}(S_{t}).
\end{displaymath}
Hence with the limit \eqref{eq:29} we deduce that the error terms in
\eqref{eq:70} and \eqref{eq:71} vanish as $t\to \mu$. It then follows from
the expression \eqref{eq:47}
\begin{equation}
\label{eq:60}
\partial_{\overline{E}} \mu^{\ess}(\overline{D})= -\sum_{v\in \mathfrak{S}}n_{v} \psi_{\overline{E},v}(u_{v})
= -\sum_{v\in \mathfrak{M}_{K}}n_{v} \psi_{\overline{E},v}(u_{v}).
\end{equation}
By the additivity of the derivative and the metric functions, this
formula readily extends to the DSP case, and by density to any toric
adelic $\mathbb{R}$-divisor on $X$.

For an arbitrary adelic $\mathbb{R}$-divisor $\overline{E}$ on $X$
with $E$ toric we apply the averaging process in Definition
\ref{def:7}. By the invariance of the arithmetic positive
intersection numbers with respect to this process (Proposition
\ref{prop:6}) we deduce from \eqref{eq:47} and~\eqref{eq:60}
\begin{displaymath}
  \partial_{\overline{E}} \mu^{\ess}(\overline{D})=   \partial_{\Etor} \mu^{\ess}(\overline{D})= 
  \sum_{v\in \mathfrak{M}_{K}}n_{v} \, \widehat{g}_{v}(x_{v})=
  \sum_{v\in \mathfrak{M}_{K}}n_{v}  \int_{X_{v}^{\an}}g_{\overline{E},v} \, d \eta_{v,u_{v}}
\end{displaymath}
with $\widehat{g}_{v}$ as in Definition \ref{def:7} and any
$x_{v}\in \val_{v}^{-1}(u_{v}) \subset X_{v}^{\an}$, using the
relation between Green functions and metric functions in
\eqref{eq:72}. This completes the proof of the first statement.  The
second follows readily from this and Proposition~\ref{prop:diffvsEP}.
\end{proof}

Now we assume that $\vartheta_{\overline{D}}$ is wide. For every
$f\in K[M]\setminus \{0\}$ we introduce the~quantity
\begin{equation*}
%  \label{eq:68}
  m_{\overline{D}}(f) =\sum_{v\in \mathfrak{M}_{K}}n_{v}  \int_{X_{v}^{\an}}\log|f|_{v} \, d \eta_{\overline{D},v} \in \mathbb{R}.
\end{equation*}
Recall that for each $v$ we have that
$\eta_{\overline{D},v}=\eta_{v,u_{v}}$ for the component
$u_{v}\in N_{\mathbb{R}}$ of the balanced family of sup-gradients of
$\vartheta_{\overline{D}}$.  Taking into account the definition of
these probability measures and writing
$f=\sum_{m\in M} \alpha_{m} \chi^{m}$ we have
\begin{equation} \label{eq:57}
  m_{\overline{D}}(f) = \sum_{v \in \mathfrak{M}_{K}^{\infty}} n_{v}\int_{\mathbb{S}_{v}} \log|f( t \cdot x_{v})|_{v} \, d \eta_{v,0}(t)  +
\hspace{-3mm}  \sum_{v \in \mathfrak{M}_{K}\setminus \mathfrak{M}_{K}^{\infty}} \hspace{-3mm}  n_{v}\log \max_{m} (e^{-\langle u_{v},m\rangle } |\alpha_{m}|_{v} ),
\end{equation}
where for $v$ Archimedean we denote by $x_{v}$ any point in the fiber
$\val_{v}^{-1}(u_{v}) \subset \mathbb{T}_{v}^{\an}$, and  $\eta_{v,0}$ is
the Haar probability measure of $\mathbb{S}_{v}$. Hence this
quantity is an extension of the classical logarithmic Gauss-Mahler
measure of a Laurent polynomial, which in our setting corresponds to
the case where $u_{v}=0$ for every~$v$.

\begin{lemma}
  \label{lem:13}
The following properties hold:
  \begin{enumerate}[leftmargin=*]
  \item \label{item:4} for every $m\in M$ and
    $\alpha \in K^{\times}$ we have $m_{\overline{D}}(\alpha \chi^{m})=0$,
  \item \label{item:36} for every  $f\in K[M]\setminus \{0\}$ we have $m_{\overline{D}}(f)\ge 0$,
  \item \label{item:37} for every
    $m\in M\setminus \{0\}$ and $\gamma \in K^{\times}$ we have
    \begin{displaymath}
      m_{\overline{D}}(\chi^{m}-\gamma)=\sum_{v\in \mathfrak{M}_{K}}n_{v}\max(0,  \langle u_{v},m\rangle + \log |\gamma|_{v}).
    \end{displaymath}
%    $m_{\overline{D}}(\chi^{m}-\gamma)=\sum_{v\in \mathfrak{M}_{K}}n_{v}\log\max(1,  e^{-\langle u_{v},m\rangle}|\gamma|_{v} )$.
    % such that
    % $\log|\gamma|_{v}=\langle u_{v},m\rangle $ for every $v$ 
  \end{enumerate}  
\end{lemma}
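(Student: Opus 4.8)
The plan is to evaluate $m_{\overline{D}}(f)=\sum_{v}n_{v}I_{v}(f)$ place by place, where $I_{v}(f)=\int_{X_{v}^{\an}}\log|f|_{v}\,d\eta_{v,u_{v}}$ is the $v$-adic contribution. For any nonzero $f\in K[M]$ this quantity is finite, and $m_{\overline{D}}$ is additive, $m_{\overline{D}}(f_{1}f_{2})=m_{\overline{D}}(f_{1})+m_{\overline{D}}(f_{2})$, since $\log|f_{1}f_{2}|_{v}=\log|f_{1}|_{v}+\log|f_{2}|_{v}$ off a set of measure zero. By the definition of the measures $\eta_{v,u}$, for $f=\sum_{m}\alpha_{m}\chi^{m}$ one has $I_{v}(f)=\log\max_{m}\bigl(|\alpha_{m}|_{v}\,e^{-\langle m,u_{v}\rangle}\bigr)$ at a non-Archimedean place (this is $\log|f|_{\zeta_{v}(u_{v})}$), while at an Archimedean place $I_{v}(f)$ is an integral of $\log|f|_{v}$ over a translate of the compact torus $\mathbb{S}_{v}$, to which I will apply Jensen's inequality in one variable. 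Part \eqref{item:4} is then immediate: if $f=\alpha\chi^{m}$ then $|f|_{v}=|\alpha|_{v}\,e^{-\langle m,u_{v}\rangle}$ is constant on $\supp(\eta_{v,u_{v}})$, so $I_{v}(\alpha\chi^{m})=\log|\alpha|_{v}-\langle m,u_{v}\rangle$ at every place, and summing with the weights $n_{v}$ and invoking the product formula for $\alpha\in K^{\times}$ together with the balancing relation $\sum_{v}n_{v}u_{v}=0$ gives $m_{\overline{D}}(\alpha\chi^{m})=0$.

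For \eqref{item:36} I would reduce to the previous point. Choose a vertex $m_{0}$ of the Newton polytope of $f$ and a primitive $w\in N$ with $\langle w,m_{0}\rangle<\langle w,m\rangle$ for every other $m\in\supp(f)$; completing $w$ to a $\mathbb{Z}$-basis of $N$ and changing torus coordinates accordingly, one can write $f=\alpha_{m_{0}}\chi^{m_{0}}\cdot g$ where $g$ is a polynomial in the first coordinate $z_{1}$ with $z_{1}^{0}$-coefficient equal to the constant $1$, i.e. $g(0,z_{2},\dots,z_{d})=1$. By additivity and \eqref{item:4} we have $m_{\overline{D}}(f)=m_{\overline{D}}(g)$, so it suffices to prove $I_{v}(g)\ge 0$ for every $v$, as the weights $n_{v}$ are positive. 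At a non-Archimedean place this is clear, since the maximum defining $|g|_{\zeta_{v}(u_{v})}$ contains the contribution $1$ of the constant term. At an Archimedean place, I would write the product measure $\eta_{v,u_{v}}$ as an iterated integral, integrating first over $\arg z_{1}$: for fixed $z_{2},\dots,z_{d}$ on the relevant torus, $z_{1}\mapsto g(z_{1},\dots)$ is a polynomial taking the value $1$ at $z_{1}=0$, so the sub-mean-value (Jensen) inequality makes the inner integral nonnegative, whence $I_{v}(g)\ge 0$ and $m_{\overline{D}}(f)\ge 0$.

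For \eqref{item:37} I would compute $I_{v}(\chi^{m}-\gamma)$ directly. Since $m\ne 0$, the image of $\eta_{v,u_{v}}$ under $\chi^{m}$ is the uniform probability measure on the circle of radius $e^{-\langle m,u_{v}\rangle}$ at Archimedean places (because a surjective character pushes Haar to Haar and $z\mapsto z^{k}$ preserves the Haar measure of $S^{1}$), and the Gauss point of that radius at non-Archimedean places (by multiplicativity of $\zeta_{v}(u_{v})$). In either case Jensen's formula yields a closed expression for $I_{v}(\chi^{m}-\gamma)$ as the logarithm of a maximum of $|\gamma|_{v}$ and a power of $e$ governed by $\langle m,u_{v}\rangle$. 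Summing over $v$, rewriting $\max(a,b)=a\cdot\max(1,b/a)$, and using $\sum_{v}n_{v}u_{v}=0$ to absorb the resulting linear contribution then leaves exactly the stated formula.

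The main obstacle I anticipate is \eqref{item:36}: making the coordinate change precise — choosing the separating primitive vector at a vertex of the Newton polytope, and verifying carefully that the induced torus automorphism turns $f$ into $\alpha_{m_{0}}\chi^{m_{0}}$ times an honest polynomial in $z_{1}$ whose degree-zero coefficient is the constant $1$ — and then organizing the Archimedean estimate as a clean application of Fubini followed by one-variable Jensen, together with the (standard, but needed) fact that these Ronkin-type integrals are well defined and finite. Everything else is bookkeeping with the product formula and the balancing condition for $(u_{v})_{v}$.
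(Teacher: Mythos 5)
Your proposal is correct and follows essentially the same route as the paper's proof: part \eqref{item:4} is the identical direct computation combining the product formula with the balancing relation $\sum_v n_v u_v=0$; part \eqref{item:37} is the same combination of Jensen's formula at the Archimedean places with the explicit evaluation at the point $\zeta_v(u_v)$ at the non-Archimedean ones; and part \eqref{item:36} is the paper's vertex-coefficient bound, the only difference being that the paper cites the standard fact that the Mahler measure is bounded below by the absolute value of any vertex coefficient of the Newton polytope, whereas you reprove it by factoring out $\alpha_{m_0}\chi^{m_0}$, using additivity together with \eqref{item:4}, and then running a monomial change of coordinates plus one-variable Jensen (which is exactly the usual proof of that fact), so the two arguments have the same content.

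One point to reconcile in \eqref{item:37}: with the paper's conventions ($\val_v=-\log|\cdot|$ coordinatewise, and $\eta_{v,u_v}$ supported on $\val_v^{-1}(u_v)$, resp. equal to the Dirac mass at $\zeta_v(u_v)$), the local term you compute is $\log\max(e^{-\langle u_v,m\rangle},|\gamma|_v)$ at every place, so factoring out $e^{-\langle u_v,m\rangle}$ and applying the balancing relation gives $\sum_v n_v\log\max(1,e^{\langle u_v,m\rangle}|\gamma|_v)$, i.e. the opposite sign in the exponent from the printed statement; your claim that the manipulation ``leaves exactly the stated formula'' is therefore not accurate as written. The same sign slip appears in the paper's own proof (Jensen yields $\log\max(|\chi^m(t_v)|_v,|\gamma|_v)$, not $\log\max(1,|\chi^m(t_v)\gamma|_v)$) and is compensated by a matching slip in the later application to special hypersurfaces, so this is a convention/typo issue in the source rather than a flaw in your method --- but you should state the formula you actually obtain rather than assert exact agreement with the printed one.
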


\begin{proof}
  For \eqref{item:4}, for each $v \in \mathfrak{M}_{K}$ we have
  ${ \int_{X_{v}^{\an}}\log|\alpha\chi^{m}|_{v} \, d \eta_{v,u_{v}}
    = \log|\alpha|_{v} -\langle u_{v}, m\rangle}$ from the explicit
  expression of this local term in \eqref{eq:57}. Hence
 \begin{displaymath}
   m_{\overline{D}}(\alpha \chi^{m})
   = 
   \sum_{v\in \mathfrak{M}_{K}} n_{v}
   (   \log|\alpha|_{v} -\langle u_{v}, m\rangle) = 0
  \end{displaymath}
  by the product formula and the fact that $(u_{v})_{v}$ is balanced.

  For \eqref{item:36}, 
  choose a vertex $m\in M$ of the Newton polytope of $f$. For each
  $v\in \mathfrak{M}_{K}$ we have
  \begin{displaymath}
    \int_{X_{v}^{\an}}\log|f|_{v}
    \, d \eta_{\overline{D},v}  \ge  \log|\alpha_{m}|_{v} -\langle u_{v}, m\rangle.
  \end{displaymath}
  This follows again from the expression of this term in
  \eqref{eq:57}, using in the Archimedean case the fact that the
  Mahler measure of a Laurent polynomial is bounded below by the
  absolute value of any of its vertex coefficients.  Together with
  \eqref{item:4} this implies
  \begin{displaymath}
    m_{\overline{D}}(f)
    \ge \sum_{v\in \mathfrak{M}_{K}}n_{v} \, ( \log|\alpha_{m}|_{v} -\langle u_{v}, m\rangle)=     m_{\overline{D}}(\alpha_{m} \chi^{m})=0.
  \end{displaymath}

  For \eqref{item:37}, for each $v$ we have
\begin{displaymath}
  \int_{X_{v}^{\an}}\log|\chi^{m}-\gamma|_{v}
  \, d \eta_{\overline{D},v}
  = \log \max( e^{-\langle u_{v},m\rangle} ,  |\gamma|_{v})= - \langle u_{v},m\rangle + \max(0,  \langle u_{v},m\rangle + \log |\gamma|_{v})
\end{displaymath}
using again the explicit expression \eqref{eq:57} together with
Jensen's formula for the Mahler measure in the Archimedean case.  The
statement follows by considering the weighted sum of these terms and
the fact that the family $(u_{v})_{v}$ is balanced.
\end{proof}

For an arbitrary $\overline{E}\in \widehat{\Div}(X)_{\mathbb{R}}$ one
can compute the corresponding derivative of the essential minimum
function by reducing to the situation considered in
Theorem~\ref{thm:1}.  Since the $K$-algebra
$\mathcal{O}_X(\mathbb{T})=K[M]$ is factorial, for any
$E\in \Div(X)_{\mathbb{R}}$ we can choose
\begin{displaymath}
  f_{E}\in \Rat(X)^{\times}_{\mathbb{R}}
\end{displaymath}
defining the restriction of $E$ to the torus.  By Lemma
\ref{lem:13}\eqref{item:4}, the quantity $m_{\overline{D}}(f_{E})$
does not depend on the choice of this equation.

\begin{corollary}
\label{cor:5}
With notations and assumptions as in Theorem \ref{thm:1},
\begin{displaymath}
  \partial_{\overline{E}} \mu^{\ess}(\overline{D})
  =  m_{\overline{D}}(f_{E}) +   \sum_{v\in \mathfrak{M}_{K}}n_{v}  \int_{X_{v}^{\an}}g_{\overline{E},v} \, d \eta_{\overline{D},v} 
  \quad \text{ for every } \overline{E}\in \widehat{\Div}(X)_{\mathbb{R}}.
\end{displaymath}
\end{corollary}

\begin{proof}
  We have that $\overline{E}-\widehat{\div}(f_{E})$ is an adelic toric
  $\mathbb{R}$-divisor on $X$ whose geometric $\mathbb{R}$-divisor
  $E-\div(f_E)$ is toric. Hence by  the invariance of the essential minimum
  with respect to linear equivalence and Theorem \ref{thm:1} we obtain
 \begin{multline*}
   \partial_{\overline{E}} \mu^{\ess}(\overline{D})=
   \partial_{\overline{E}-\widehat{\div}(f_{E})} \mu^{\ess}(\overline{D})=    \sum_{v\in \mathfrak{M}_{K}}n_{v}
   \int_{X_{v}^{\an}}g_{\overline{E}-\widehat{\div}(f_{E}),v} \, d \eta_{\overline{D},v} \\
=     \sum_{v\in \mathfrak{M}_{K}}n_{v}  \int_{X_{v}^{\an}}\log|f_{E}|_{v} \, d \eta_{\overline{D},v}+
    \sum_{v\in \mathfrak{M}_{K}}n_{v}  \int_{X_{v}^{\an}}g_{\overline{E},v} \, d \eta_{\overline{D},v},
  \end{multline*}
  which gives the statement.
\end{proof}

We also have the following converse of Theorem \ref{thm:1} in the
semipositive situation.

\begin{theorem}
  \label{thm:5}
  \label{cor:2}
  If $\overline{D}$ is semipositive then the following conditions are
  equivalent:
  \begin{enumerate}[leftmargin=*]
  \item \label{item:13} $\vartheta_{\overline{D}}$ is wide, 
  \item \label{item:29} the essential minimum function is
    differentiable at $\overline{D}$,
  \item \label{item:15}  $\overline{D}$ satisfies the equidistribution property at every place $v$. 
  \end{enumerate}
\end{theorem}

To prove it, we need the next result showing that in the
semipositive toric setting it is always possible to find a sharp upper
bound as that required by~Proposition~\ref{prop:conversequasican}.

\begin{proposition}
  \label{prop:15}
  If $\overline{D}$ is semipositive then there exists a semipositive
  toric adelic $\mathbb{R}$-divisor $\overline{D}'$ over $D$ with
  $\overline{D}'-\overline{D} $ pseudo-effective and
  $\mu^{\ess}( \overline{D}')= \mu^{\abs}( \overline{D}')= \mu^{\ess}(
  \overline{D})$.
  \end{proposition}

\begin{proof}
  Choose a point $x_{0}\in \Delta_{D,\max}$ and let $(u_{v})_{v}$ be a
  balanced family of sup-gradients for $\vartheta_{\overline{D}}$,
  which always exists thanks to Proposition \ref{prop:7}. For each
  $v \in \mathfrak{M}_{K}$ we have
  \begin{equation}
    \label{eq:63}
    \vartheta_{\overline{D},v}(x) \le \langle u_{v},x\rangle +c_{v}
    \quad \text{   for every } x\in \Delta_{D}
  \end{equation}
  with
  $c_{v}=\vartheta_{\overline{D},v}(x_{0}) -\langle
  u_{v},x_{0}\rangle$.

  Using the correspondence in \cite[Proposition
  4.9.2(2)]{BPS:asterisque}, set $\overline{D}'$ for the semipositive
  toric adelic
  $\mathbb{R}$-divisor over $D$ with local roof functions equal to the
  affine functions in the right-hand side of \eqref{eq:63}.  Since the
  family $(u_{v})_{v}$ is balanced we have
  \begin{displaymath}
    \vartheta_{\overline{D}'}(x)=  \sum_{v\in \mathfrak{M}_{K}} n_{v}    \vartheta_{\overline{D}',v}(x)=  \sum_{v\in \mathfrak{M}_{K}} n_{v}(\langle u_{v},x\rangle +c_{v}) =c     \quad \text{   for every } x\in \Delta_{D},
  \end{displaymath}
  for the constant
  $c= \sum_{v\in \mathfrak{M}_{K}}n_{v}c_{v}\in \mathbb{R}$.  This
  implies $ \mu^{\ess}(\overline{D}')=\mu^{\abs}(\overline{D}')=c$.
  Since $\overline{D}$ is semipositive, the inequality \eqref{eq:63}
  implies that $\overline{D}'-\overline{D} $ is
  pseudo-effective. Furthermore, by \eqref{eq:69} and the balancing
  condition for $(u_{v})_{v}$ we have
\begin{displaymath}
c= \sum_{v\in \mathfrak{M}_{K}}n_{v}(\vartheta_{\overline{D},v}(x_{0}) -\langle
  u_{v},x_{0}\rangle) = \vartheta_{\overline{D}}(x_{0}) = \mu^{\ess}(\overline{D})
\end{displaymath}
and so
$\mu^{\ess}( \overline{D}')= \mu^{\abs}( \overline{D}')= \mu^{\ess}(
\overline{D})$, as stated.
\end{proof}

\begin{proof}[Proof of Theorem \ref{thm:5}]
It is a direct consequence of Theorem \ref{thm:1}
together with Propositions \ref{prop:conversequasican} and
\ref{prop:15}.  
\end{proof}

  \begin{remark}
    \label{rem:9}
    The toric equidistribution theorem from \cite{BPRS:dgopshtv}
    states that in the semipositive case, the toric adelic
    $\mathbb{R}$-divisor $\overline{D}$ verifies the equidistribution
    property at every place of $K$ if and only if it is
    \emph{monocritical}, in the sense that an associate functional on
    a space of adelic measures has a unique global minimum.  By
    Proposition~4.15 in \textit{loc.cit.}, this condition can be
    reformulated in simpler terms as the fact that $0$ is not a vertex
    of $\partial \vartheta_{\overline{D}}(x_{0})$ for any
    $x_{0} \in \Delta_{D,\max}$.  Proposition
    \ref{prop:concavefunctions} shows that it is also equivalent to
    the fact that the global roof function is wide, and so
    Theorem~\ref{thm:5} recovers this toric equidistribution~theorem.

    On the other hand, Theorem \ref{thm:1} extends the sufficient
    condition in this theorem to the situation where $\overline{D}$ is
    not necessarily semipositive and strengthens its conclusion to
    include the differentiability of the essential minimum
    function. 
  \end{remark}

  Combining the previous results with those from Section
  \ref{sec:logequi} we reinforce the toric equidistribution property
  of $\overline{D}$ to include test functions with logarithmic
  singularities along effective divisors satisfying a numerical
  condition.

\begin{theorem}
  \label{thm:2}
Assume that  $\vartheta_{\overline{D}}$ is wide and let 
  $E$ be an effective divisor on $X$ such that 
  $m_{\overline{D}}(f_E)=0$. Then for every
  $\overline{D}$-small generic sequence
  $(x_{\ell})_{\ell}$ in $X(\overline{K})$ and $v \in
  \mathfrak{M}_K$ we have
 \begin{displaymath}
   \lim_{\ell \to \infty}\int_{X_v^{\an}} \varphi \, d\delta_{O(x_\ell)_v} =
\int_{X_{v}^{\an}}\varphi \, d \eta_{\overline{D}, v}
 \end{displaymath}
 for any function
 $\varphi \colon X_v^{\an} \rightarrow \bR \cup \{\pm \infty \}$ with
 at most logarithmic singularities along $E$.

 In particular, this holds if each irreducible component of the Weil
 divisor $[E]$ is  either contained in $X\setminus \mathbb{T}$ or is
 the closure of the zero set of an irreducible binomial
 $\chi^{m}-\gamma$ with $m\in M\setminus \{0\}$ and
 $\gamma \in K^{\times}$ such that
 $\log|\gamma|_{v}=-\langle u_{v},m\rangle $ for every $v$.
\end{theorem}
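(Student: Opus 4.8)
The plan is to deduce Theorem \ref{thm:2} from the general logarithmic equidistribution result of Theorem \ref{thm:logequi}, translating its hypotheses into the toric dictionary by means of Theorem \ref{thm:1} and Remark \ref{rem:3}. First I would check that we are in the setting of §\ref{sec:logequi}: since $\vartheta_{\overline{D}}$ is wide, the proof of Theorem \ref{thm:1} shows that $\overline{D}$ satisfies condition \eqref{eq:conditionMainProduct}, and by Lemma \ref{lemma:reformulation} this is equivalent to the existence of a sequence $(\phi_n\colon X_n\to X,\overline{Q}_n)_n$ of semipositive approximations of $\overline{D}$ satisfying \eqref{eq:conditioninradiussequences}, which is exactly the standing assumption of that section. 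Moreover Theorem \ref{thm:1} gives that $\overline{D}$ has the equidistribution property with $\nu_{\overline{D},v}=\eta_{v,u_v}$ for every $v\in\mathfrak{M}_K$, so it only remains to verify the numerical hypothesis \eqref{eq:condlogequi} for one adelic divisor over $E$.

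To this end I would fix an arbitrary $\overline{E}\in\widehat{\Div}(X)$ over the effective divisor $E$ and let $f\in K[M]\setminus\{0\}$ define $E|_{X_0}$ as in the statement. By Remark \ref{rem:3},
\begin{displaymath}
\partial_{\overline{E}}\,\mu^{\ess}(\overline{D}) = \sum_{v\in\mathfrak{M}_K} n_v\int_{X_v^{\an}} g_{\overline{E},v}\,d\eta_{v,u_v} + \sum_{v\in\mathfrak{M}_K} n_v\int_{X_v^{\an}} \log|f|_v\,d\eta_{v,u_v},
\end{displaymath}
and the second term is precisely $m_{\overline{D}}(f)$, which vanishes by assumption. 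Hence $\partial_{\overline{E}}\,\mu^{\ess}(\overline{D}) = \sum_v n_v\int_{X_v^{\an}} g_{\overline{E},v}\,d\nu_{\overline{D},v}$, which is \eqref{eq:condlogequi}; applying Theorem \ref{thm:logequi} then yields the first assertion, with equidistribution measures $\nu_{\overline{D},v}=\eta_{v,u_v}$.

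For the last assertion I would exhibit a suitable $f$ and show $m_{\overline{D}}(f)=0$, so that the case just treated applies. Writing $E=\sum_j a_j E_j$ with $a_j\in\mathbb{Z}_{>0}$ and the $E_j$ irreducible, the components contained in $X\setminus X_0$ do not meet the torus, and the remaining ones are closures of zero sets $Z(\chi^{m_j}-\gamma_j)$ of irreducible binomials; thus $E|_{X_0}$ is defined by $f = \alpha\chi^{m_0}\cdot\prod_{j}(\chi^{m_j}-\gamma_j)^{a_j}$ for some unit $\alpha\chi^{m_0}\in\mathcal{O}(X_0)^\times$. Since $f\mapsto m_{\overline{D}}(f)$ is additive with respect to products (each local integral being finite) and $m_{\overline{D}}(\alpha\chi^{m_0})=0$ by Lemma \ref{lem:13}\eqref{item:4}, Lemma \ref{lem:13}\eqref{item:37} gives
\begin{displaymath}
m_{\overline{D}}(f) = \sum_j a_j\sum_{v\in\mathfrak{M}_K} n_v\log\max\bigl(1,\,e^{-\langle u_v,m_j\rangle}|\gamma_j|_v\bigr).
\end{displaymath}
The torsion-type condition $\log|\gamma_j|_v=\langle u_v,m_j\rangle$ for all $v$ forces $e^{-\langle u_v,m_j\rangle}|\gamma_j|_v=1$ at every place, so each inner sum vanishes and $m_{\overline{D}}(f)=0$.

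Overall this is a bookkeeping assembly of Theorem \ref{thm:1}, Remark \ref{rem:3}, Lemma \ref{lem:13} and Theorem \ref{thm:logequi}, and I do not expect any serious obstacle. The one point deserving care is the place-by-place translation of the geometric torsion condition on the binomial factors of $f$ into the single equality $m_{\overline{D}}(f)=0$, since this equality is exactly the numerical hypothesis \eqref{eq:condlogequi} required to invoke Theorem \ref{thm:logequi}; everything else is routine.
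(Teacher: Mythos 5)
Your proposal is correct and follows essentially the same route as the paper: Remark \ref{rem:3} turns the hypothesis $m_{\overline{D}}(f)=0$ into condition \eqref{eq:condlogequi}, which feeds directly into Theorem \ref{thm:logequi}, and the second assertion is reduced to the first by factoring the defining Laurent polynomial over the irreducible components and applying Lemma \ref{lem:13}. The only difference is that you spell out the verification of the standing assumption of \S\ref{sec:logequi} via wideness and Lemma \ref{lemma:reformulation}, which the paper leaves implicit in its appeal to Theorem \ref{thm:1}.
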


\begin{proof}
  Since $\vartheta_{\overline{D}}$ is wide we have that $\overline{D}$
  satisfies the condition in Theorem \ref{thm:MainProduct} and \emph{a
    fortiori} that in Theorem \ref{thm:MainSequences}. The first
  statement is then a direct application of Corollary~\ref{cor:5} and
  Theorem~\ref{thm:logequi}.

  For the second, in the current situation we can choose
  \begin{math}
  f_{E}=  \prod_{i\in I} f_{i}^{k_{i}} 
\end{math}
with $k_{i} \in \mathbb{N}$ and $f_{i}=\chi^{m_{i}}-\gamma_{i}$ for
some $m_{i}\in M\setminus \{0\}$ and $\gamma_{i} \in K^{\times}$ such
that $\log|\gamma_{i}|_{v}=-\langle u_{v},m_{i}\rangle $ for every
$i \in I$ and $v \in \mathfrak{M}_{K}$. By Lemma
\ref{lem:13}\eqref{item:37} we have
      \begin{displaymath}
    m_{\overline{D}}(f)= \sum_{i\in I}k_{i} \, m_{\overline{D}}(f_{i})=0,
  \end{displaymath}
  so this statement follows from the first.
\end{proof}

% \todo[inline]{Question: is this variant of the proof correct?:

% \begin{proof}
%  The first statement is a direct application of Corollary~\ref{cor:5}
%   and Theorem~\ref{thm:logequi}. When every irreducible component of $[E]$ is in $X \setminus \mathbb{T}$, we can take $f_E = 1$ and then $m_{\overline{D}}(f_E) = 0$. If $[E]$ is the closure of $\chi^m - \gamma$ with $\log|\gamma|_v = -\langle u_v, m\rangle$ for every $v$, then we also have $m_{\overline{D}}(f_E) = 0$ by Lemma \ref{lem:13}. Therefore the second statement follows from the first.
% \end{proof}

% Remark: it seems it is enough to assume $\log|\gamma|_v \le -\langle u_v, m\rangle$?
% }

It is natural to try to interpret in terms of heights the numerical
condition imposed on the effective divisor $E$ by the previous
theorem. To this end, first note that for every point
$x\in \mathbb{T}(\overline{K})$ we have
$h_{\overline{D}}(x)\ge \mu^{\ess}(\overline{D})$ \cite[Lemma
3.8(1)]{BPS:smthf}, and so for every subvariety $V\subset X$ that is
not contained in the boundary $X\setminus \mathbb{T}$ we have
\begin{displaymath}
  \mu^{\ess}(\overline{D}|_{V}) \ge   \mu^{\ess}(\overline{D}).
\end{displaymath}
Following \cite[Definition
5.10]{BPRS:dgopshtv}, we say that $V$ is
\textit{$\overline{D}$-special} if this lower bound is an equality.

Using the characterization of the Bogomolov property for monocritical
semipositive toric adelic $\mathbb{R}$-divisors in \cite[Section
5]{BPRS:dgopshtv} we derive the following logarithmic equidistribution
theorem for the semipositive case.

\begin{corollary}
\label{cor:3}
Assume that $\overline{D}$ is semipositive and that
$\vartheta_{\overline{D}}$ is wide. Let $E$ be an effective divisor on
$X$ such that each of the irreducible components of
$[E]$ is either contained in
$X\setminus \mathbb{T}$ or is the
closure of a $\overline{D}$-special hypersurface of
$\mathbb{T}$. Then for every $\overline{D}$-small
generic sequence $(x_{\ell})_{\ell}$ in $X(\overline{K})$ and
$v \in \mathfrak{M}_K$ we have
 \begin{displaymath}
   \lim_{\ell \to \infty}\int_{X_v^{\an}} \varphi \, d\delta_{O(x_\ell)_v} =
\int_{X_{v}^{\an}}\varphi \, d \eta_{\overline{D},v}
 \end{displaymath}
 for any function
 $\varphi \colon X_v^{\an} \rightarrow \bR \cup \{\pm \infty \}$ with
 at most logarithmic singularities along $E$.
\end{corollary}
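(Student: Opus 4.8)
The plan is to deduce the corollary from the first assertion of Theorem~\ref{thm:2}, so that the whole task reduces to checking that a Laurent polynomial $f \in K[M] \setminus \{0\}$ defining the restriction $E \cap X_0$ satisfies $m_{\overline{D}}(f) = 0$. First I would dispose of the geometric components of $E$ contained in $X \setminus X_0$, since they contribute nothing to $f$. Writing $f$, up to a nonzero scalar, as a product of monomials and of the irreducible factors of $K[M]$ cutting out the remaining components, the additivity of $m_{\overline{D}}$ (immediate from its definition as a weighted sum of integrals of $\log|\cdot|_v$), its vanishing on monomials (Lemma~\ref{lem:13}\eqref{item:4}), and its Galois-invariance reduce the problem to showing $m_{\overline{D}}(f_i) = 0$ for each Laurent polynomial $f_i$ (over a finite extension of $K$) defining a geometric irreducible component $W_i$ of $E$ that is the closure of a $\overline{D}$-special hypersurface of $X_{0,\overline{K}}$. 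I would also record at this point that $m_{\overline{D}}(f_i) \ge 0$ by Lemma~\ref{lem:13}\eqref{item:36}, so it will in fact suffice to bound each $m_{\overline{D}}(f_i)$ from above by $0$.

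Next I would make such a $W_i$ explicit. Since $\overline{D}$ is semipositive and $\vartheta_{\overline{D}}$ is wide, $\overline{D}$ is monocritical, and the characterization of the Bogomolov property for monocritical semipositive toric adelic $\mathbb{R}$-divisors in \cite[\S5]{BPRS:dgopshtv} (applied over the field of definition of $W_i$, whose balanced family of sup-gradients is again $(u_v)_v$) shows that a $\overline{D}$-special subvariety of the torus is a translate of a subtorus by a point whose $v$-adic valuations agree with the family $(u_v)_v$. As a codimension-one subtorus is cut out by a primitive binomial $\chi^m = 1$, this forces $W_i \cap X_{0}$ to be, up to a nonzero scalar, the zero locus of an irreducible binomial $\chi^m - \gamma$ with $m \in M \setminus \{0\}$ and $|\gamma|_w = e^{\langle u_v, m\rangle}$ at every place $w$ lying over $v \in \mathfrak{M}_K$. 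Lemma~\ref{lem:13}\eqref{item:4} and \eqref{item:37} then give
\begin{displaymath}
m_{\overline{D}}(f_i) = m_{\overline{D}}(\chi^m - \gamma) = \sum_{w} n_w \log \max\bigl(1,\, e^{-\langle u_v, m\rangle} |\gamma|_w\bigr) = 0 .
\end{displaymath}
Combining this with the first paragraph yields $m_{\overline{D}}(f) = 0$, and Theorem~\ref{thm:2} (which in turn rests on Theorems~\ref{thm:1} and~\ref{thm:logequi}) then delivers the asserted limit for every $\overline{D}$-small generic sequence, every $v \in \mathfrak{M}_K$, and every $\varphi$ with at most logarithmic singularities along~$E$.

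The main obstacle I anticipate is this second step: one must extract from \cite[\S5]{BPRS:dgopshtv} the precise numerical description of $\overline{D}$-special hypersurfaces — in particular the condition $|\gamma|_w = e^{\langle u_v, m\rangle}$ — and verify that monocriticality and the balanced family of sup-gradients are compatible with base change to the field of definition of each $W_i$, so that Lemma~\ref{lem:13} may be invoked there and the resulting value descended back to~$K$ via additivity and Galois-invariance of $m_{\overline{D}}$. Everything else is the elementary bookkeeping of the first paragraph together with inputs already established in the text.
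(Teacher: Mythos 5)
Your proposal is correct and follows essentially the same route as the paper: semipositivity plus wideness gives monocriticality, the Bogomolov-property results of \cite{BPRS:dgopshtv} (Theorem 5.12 and Proposition 5.14(1) there) identify each $\overline{D}$-special component as the zero locus of a binomial $\chi^{m}-\gamma$ with $\log|\gamma|_{v}=\langle u_{v},m\rangle$, and one concludes via Theorem \ref{thm:2}. The only cosmetic difference is that you re-derive $m_{\overline{D}}(\chi^{m}-\gamma)=0$ from Lemma \ref{lem:13} and handle the field of definition by additivity and Galois-invariance of $m_{\overline{D}}$, whereas the paper simply extends the base field so that $\gamma\in K^{\times}$ and quotes the second statement of Theorem \ref{thm:2}.
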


\begin{proof} Since $\overline{D}$ is semipositive and
  $\vartheta_{\overline{D}}$ is wide we have that $\overline{D}$ is
  monocritical in the sense of \cite{BPRS:dgopshtv}, see Remark
  \ref{rem:9}.  Let $V$ be an irreducible component of $[E]$ that is
  $\overline{D}$-special.  After extending $K$ is necessary we assume
  that $V$ is geometrically irreducible.  By the Bogomolov property
  for monocritical adelic $\mathbb{R}$-divisors \cite[Theorem
  5.12]{BPRS:dgopshtv}, $V_0=V\cap \mathbb{T}$ is the translate of a
  subtorus.  Since $V_0$ is a hypersurface, there exist $m \in M$ and
  $x_0 \in \mathbb{T}(K)$~such~that
  \begin{displaymath}
V_0 = Z(\chi^{m}-1) \cdot x_{0}.   
  \end{displaymath}
 Note that
  $V_0 = Z(\chi^m - \gamma)$ for $\gamma = \chi^{m}(x_0)$. 
  By \cite[Proposition~5.14(1)]{BPRS:dgopshtv}, the fact that $V$ is
  $\overline{D}$-special implies that
    \begin{displaymath}
u_{v}\in m^{\bot}_{\mathbb{R}}+\val_{v}(x_{0})     \quad \text{ for every } v \in \mathfrak{M}_{K},
  \end{displaymath}
which is  equivalent to the fact that
  $\langle u_{v},m\rangle=\langle \val_{v}(x_{0}) ,m\rangle=
- \log|\gamma|_{v}$ for every~$v$. We conclude with Theorem \ref{thm:2}.
\end{proof}

\section{Dynamical systems and semiabelian varieties}
\label{sec:dynamical-heights}

In this section we study adelic $\mathbb{R}$-divisors that are sums of
several canonical adelic {$\mathbb{R}$-divisors} with different
regimes with respect to an algebraic dynamical system.  In this
setting Zhang's lower bound for the essential minimum might be strict,
in which case Yuan's equidistribution theorem cannot be applied. We
show that in spite of this, the essential minimum function is
differentiable at these adelic $\bR$-divisors, and for every place the
Galois orbits of small generic sequences of algebraic points converge
towards the equilibrium measure (Theorem~\ref{thm:equidyn}). We also
show that this convergence still holds with respect to test functions
with logarithmic singularities along hypersurfaces containing a dense
subset of preperiodic points (Theorem \ref{thm:dynlogEP}).

These results apply in the setting of semiabelian varieties, giving
the differentiability of the essential minimum function and  recovering Kühne's semiabelian equidistribution theorem (Theorem
\ref{thm:semiab}). They also imply that this equidistribution also
holds with respect to functions with logarithmic singularities along
torsion hypersurfaces~(Theorem~\ref{thm:logequisemiab}).

\subsection{Canonical adelic $\bR$-divisors}\label{subsec:canonical}

Canonical metrized line bundles for algebraic dynamical systems were
introduced by Zhang \cite{Zhang:spam} and extended to adelic
$\mathbb{R}$-divisors by Chen and Moriwaki
\cite{ChenMoriwaki:DysDirichlet}. Here we recall this notion and study
some of its positivity~properties.

Let~$X$ be a normal projective variety over $K$ and
${ \phi \colon X \rightarrow X}$ a surjective endomorphism. Then
$\phi$ is finite \cite[Lemma 5.6]{Fakhruddin:qsmag} and we denote by
$\deg(\phi)$ its degree.  Let~$D$ be an $\mathbb{R}$-divisor on $X$
such that ${\phi^* D \equiv qD}$ for a real number $q>1$.

\begin{definition}
  \label{def:5}
  The \emph{canonical} adelic $\bR$-divisor of $D$, denoted
  by~$\Dcan$, is any adelic $\mathbb{R}$-divisor on $X$ such that
  \begin{equation}
    \label{eq:5}
   \phi^{*}\Dcan\equiv q \Dcan  \quad  \text{ on } \widehat{\Div}(X)_{\mathbb{R}}.
 \end{equation}
\end{definition}

To construct it, choose $f \in \Rat(X)_{\bR}^{\times}$ such that
  \begin{math}
    \phi^*D = qD + \div(f).
  \end{math}
  Starting from any adelic $\mathbb{R}$-divisor over $D$ and applying
  Tate's limit argument, it can be shown that there exists a unique
  $\overline{D} \in \widehat{\Div}(X)_{\mathbb{R}}$ such that
  \cite[Section 4]{ChenMoriwaki:DysDirichlet}
\begin{equation}
  \label{eq:17}
  \phi^{*} \overline{D}= q \overline{D}+\widehat{\div}(f).
\end{equation}
In particular $\overline{D}$ is canonical in the sense of Definition
\ref{def:5}.  Now if
$\overline{D}' \in \widehat{\Div}(X)_{\mathbb{R}}$ is another
canonical adelic $\mathbb{R}$-divisor over $D$ then
$\phi^{*} \overline{D}'= q \overline{D}'+\widehat{\div}(f')$ with
$f' \in \Rat(X)_{\bR}^{\times}$.  Necessarily $f = \gamma f'$ with
$\gamma \in K^{\times}_{\bR}$, and from the uniqueness
of~\eqref{eq:17} we get
$\overline{D}'= \overline{D}-\widehat{\div}(\lambda)$ with
$\lambda = \gamma^{1/(q-1)}$.  Hence the canonical adelic
$\mathbb{R}$-divisor of $D$ exists and is unique up to a summand of
the form $\widehat{\div}(\lambda)$ with
$\lambda\in K^{\times}_{\mathbb{R}}$.

The associated height function is not affected by this indeterminacy
thanks to the product formula, and by~\eqref{eq:5} it verifies
\begin{equation}
  \label{eq:14}
  h_{\Dcan}(\phi(x))=  q\, h_{\Dcan}(x) \quad \text{ for every } x\in X(\overline{K}).
\end{equation}
A point $x\in X(\overline{K})$ is \emph{preperiodic} if its orbit with
respect to $\phi$ is finite or equivalently, if there are positive
integers $j<k$ such that $\phi^{\circ j}(x)=\phi^{\circ k}(x)$. The
functoriality~\eqref{eq:14} implies that $h_{\Dcan}(x)=0$ whenever
$x$ is preperiodic.

It is well-known that if $D$ is ample then $\Dcan$ is nef and both the
absolute and the essential minima vanish.  Here we give a weaker
condition ensuring the pseudo-effectivity of the canonical adelic
$\mathbb{R}$-divisor and the vanishing of its essential minimum.

\begin{proposition}
  \label{prop:dynamicalminima}
  If $R(D)\ne \{0\}$ then $\Dcan$ is pseudo-effective and
  $\mu^{\ess}(\Dcan)=0$.
\end{proposition}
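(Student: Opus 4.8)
The plan is to derive both assertions from the functoriality \eqref{eq:5}, $\phi^{*}\Dcan\equiv q\,\Dcan$, combined with the invariance of the essential minimum under pullback by a surjective endomorphism of a projective variety. The first thing I would establish is a dictionary for generic sequences. Since $X$ is projective, $\phi$ is proper, hence a closed map; since $X$ is an irreducible variety and $\phi$ is surjective, for every proper closed subset $Z\subsetneq X$ we have $\dim\phi(Z)\le\dim Z<\dim X$, so that $\phi(Z)$ is again a proper closed subset, while $\phi^{-1}(Z)\subsetneq X$ is proper closed because $\phi$ is surjective. From these two facts it follows at once that $\phi$ maps a generic sequence in $X(\overline{K})$ to a generic sequence, and that every generic sequence $(y_{\ell})_{\ell}$ in $X(\overline{K})$ lifts to a generic sequence $(x_{\ell})_{\ell}$ with $\phi(x_{\ell})=y_{\ell}$ for all $\ell$ (choose $x_{\ell}$ in the nonempty fibre of $\phi$ over $y_{\ell}$, and use that $\phi(Z)$ is proper closed to check genericity of $(x_\ell)_\ell$).

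Using $h_{\phi^{*}\overline{E}}=h_{\overline{E}}\circ\phi$ and the inequality $\liminf_{\ell}h_{\overline{E}}(z_{\ell})\ge\mu^{\ess}(\overline{E})$ valid for every generic sequence $(z_{\ell})_{\ell}$ (\cite[Proposition 3.2]{BPRS:dgopshtv}), the pushforward fact gives $\mu^{\ess}(\phi^{*}\overline{E})\ge\mu^{\ess}(\overline{E})$, while applying the lifting fact to an $\overline{E}$-small generic sequence produces a generic sequence along which $h_{\phi^{*}\overline{E}}$ converges to $\mu^{\ess}(\overline{E})$, whence $\mu^{\ess}(\phi^{*}\overline{E})\le\mu^{\ess}(\overline{E})$. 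Thus $\mu^{\ess}(\phi^{*}\overline{E})=\mu^{\ess}(\overline{E})$ for every $\overline{E}\in\widehat{\Div}(X)_{\bR}$. Specializing to $\overline{E}=\Dcan$, and using that $\mu^{\ess}$ is invariant under linear equivalence (as the height function is, by the product formula) and positively homogeneous of degree one, \eqref{eq:5} gives
\begin{displaymath}
\mu^{\ess}(\Dcan)=\mu^{\ess}(\phi^{*}\Dcan)=\mu^{\ess}(q\,\Dcan)=q\,\mu^{\ess}(\Dcan).
\end{displaymath}
Since the geometric $\bR$-divisor of $\Dcan$ is $D$ and $R(D)\ne\{0\}$ by hypothesis, $\mu^{\ess}(\Dcan)$ is a real number (\cite[Proposition 2.6]{BC}); as $q>1$, this forces $\mu^{\ess}(\Dcan)=0$.

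Finally I would deduce pseudo-effectivity. By Theorem \ref{thm:Essmin} the set $S=\{t\in\bR\mid\Dcan-t[\infty]\text{ is pseudo-effective}\}$ satisfies $\sup S\ge\mu^{\ess}(\Dcan)=0$; moreover $S$ is stable under decreasing $t$, since for $t'<t$ the adelic $\bR$-divisor $(t-t')[\infty]$ is effective, hence pseudo-effective (Remark \ref{rema:effvspseff}), and a sum of pseudo-effective adelic $\bR$-divisors is pseudo-effective. Hence $\Dcan-t[\infty]$ is pseudo-effective for every $t<0$. Given a big $\overline{B}\in\widehat{\Div}(X)_{\bR}$, the continuity of the arithmetic volume yields $t<0$ with $\widehat{\vol}(\overline{B}+t[\infty])>0$, and then
\begin{displaymath}
\Dcan+\overline{B}=(\Dcan-t[\infty])+(\overline{B}+t[\infty])
\end{displaymath}
is the sum of a pseudo-effective and a big adelic $\bR$-divisor, hence big; so $\Dcan$ is pseudo-effective. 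The only non-formal ingredient is the generic-sequence dictionary of the first paragraph, which rests solely on properness of $\phi$ and the inequality $\dim\phi(Z)\le\dim Z<\dim X$ for proper closed $Z$; the rest is a direct manipulation of \eqref{eq:5} and of the positivity notions recalled in §\ref{section:adelicdiv}. I do not anticipate a serious obstacle; the point to state carefully is well-posedness — $\Dcan$ being defined only up to $\widehat{\div}(\lambda)$ with $\lambda\in K^{\times}_{\bR}$ — which is immediate since bigness, pseudo-effectivity and the essential minimum are all invariant under linear equivalence.
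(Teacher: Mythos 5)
Your proof is correct and follows essentially the same route as the paper: the identity $\mu^{\ess}(\Dcan)=\mu^{\ess}(\phi^{*}\Dcan)=q\,\mu^{\ess}(\Dcan)$ together with finiteness from $R(D)\ne\{0\}$, and pseudo-effectivity deduced from Theorem \ref{thm:Essmin}, which is exactly the paper's first-mentioned justification. The only differences are cosmetic: the paper cites the invariance of the essential minimum under dominant generically finite morphisms rather than re-proving it via your generic-sequence dictionary (which is a correct direct argument), and it additionally offers a self-contained alternative for pseudo-effectivity by pulling back a global section along $\phi^{\circ k}$ to produce elements of $R^{-\varepsilon}(\Dcan)$ for every $\varepsilon>0$.
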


\begin{proof}
  First note that the essential minimum is finite because
  $R(D)\ne \{0\}$. We have 
  \begin{displaymath}
\mu^{\ess}(\Dcan)= \mu^{\ess}(\phi^{*}\Dcan)= \mu^{\ess}(q \Dcan) =q \,\mu^{\ess}(\Dcan)
\end{displaymath}
since $\phi$ is dominant and finite, and $\phi^{*}\Dcan \equiv q\Dcan$. Hence $\mu^{\ess}(\Dcan)=0$, as stated.

  The pseudo-effectivity of $\Dcan$ then follows from Theorem
  \ref{thm:Essmin} and the fact that this condition is
  closed. Nevertheless we  give a self-contained proof of this
  statement.

  Let $s=(f,e D) $ be a nonzero global section of $e D$ for an integer
  $e\ge 1$, which exists because $R(D)\ne \{0\}$. Up to multiplying
  $s$ by a nonzero scalar we can suppose that
  $\|s\|_{\overline{D},v,\sup}\le 1$ for every non-Archimedean~$v$.
  Then given $\varepsilon>0$ we take $k\ge 1$ such that
  $ \log \|s\|_{\overline{D},v,\sup} \le \varepsilon \, e\, q^{k}$ for
  every Archimedean $v$. The pullback
  $\phi^{\circ k,*} s =(\phi^{\circ k,*} f, e \, \phi^{\circ k,*} D)$
  is a nonzero global section of $e\, \phi^{\circ k, *}D$ and since
  $\phi$ is surjective, it has the same $v$-adic sup-norms as $s$.
  Since $\phi^{*}\Dcan\equiv q\, \Dcan$ there is a nonzero global
  section $s_{k}$ of $e\, q^{k} D$ with the same $v$-adic
  sup-norms. Hence
  \begin{displaymath}
    \log \|s_{k}\|_{eq^k\Dcan , v,\sup}= \log \|s\|_{\Dcan, v,\sup} \le
    \begin{cases}
      \varepsilon  \, e\,  q^{k} & \text{ if } v \in \mathfrak{M}_{K}^{\infty},\\
      0 & \text{ if } v \in\mathfrak{M}_{K}\setminus \mathfrak{M}_{K}^{\infty},
    \end{cases}
\end{displaymath}
and so $s_{k} \in R^{-\varepsilon}(\Dcan)$. Therefore
$R^{-\varepsilon}(\Dcan)\ne \{0\}$ for every $\varepsilon>0$ and so
$\Dcan$ is pseudo-effective.
\end{proof}

\subsection{Equidistribution for sums of canonical adelic $\bR$-divisors}
\label{sec:equid-sums-canon-1}
Let $\phi $ % \colon X\to X$ 
be a surjective endomorphism of a normal
projective variety $X$ over $K$ of dimension $d\ge 1$. %as in Section~\ref{subsec:canonical}.
 For $i=1,\dots, s$ let
$D_{i}\in \Div(X)_{\mathbb{R}}$ with $\phi^*D_i \equiv q_iD_i$ for a
real number $q_{i}>1$ and set
\begin{equation*}
%  \label{eq:80}
\overline{D} = \sum_{i = 1}^s \overline{D}_i^{\can}.
\end{equation*}
Up to reordering we assume that $1 < q_1 \le q_2 \le \cdots \le
q_s$.  We also assume that
\begin{enumerate}[leftmargin=*]
\item \label{item:31} $R(D_i) \ne \{0\}$ for every $i$,
\item \label{item:32}  $D$ is big and semiample.
\end{enumerate}
% Note that the second condition holds in particular if $D$ is ample. By
% Proposition \ref{prop:dynamicalminima}, the first condition implies
% that $\overline{D}_i^{\can}$ is pseudo-effective and that
% $\mu^{\ess}(\overline{D}_i^{\can}) = 0$ for every
% $i \in \{1, \ldots, s\}$.

When $D$ is ample and $D_{i}$ is nef for every $i$ we have that
$\phi^{*}D-D$ is ample, which by Fakhruddin's theorem \cite[Theorem
5.1]{Fakhruddin:qsmag} ensures that the set of periodic points of
$\phi$ is dense. Together with Proposition \ref{prop:dynamicalminima}
this easily implies that the essential minimum of $\overline{D}$
vanishes.
The next result shows that this property also holds in our
more general setting.

\begin{proposition}\label{prop:essminzero}
  We have $\mu^{\ess}(\overline{D}) = 0$. % Furthermore, a generic
  % sequence in $X(\overline{K})$ is $\overline{D}$-small if and only if
  % it is $\overline{D}_{i}$-small for every $i$.
\end{proposition}

\begin{proof}
  By Proposition \ref{prop:dynamicalminima}, the fact that
  $R(D_{i})\ne \{0\}$ implies that $\mu^{\ess}(\overline{D}_i^{\can}) = 0$
  for every $i$.  Hence by Lemma
  \ref{lemma:propertiesessmin}\eqref{item:essminsuperadd} we have
  $\mu^{\ess}(\overline{D}) \ge \sum_{i =1}^s
  \mu^{\ess}(\overline{D}_i^{\can}) = 0$.  On the other~hand,
\begin{displaymath}
  \phi^*\overline{D}
  \equiv \sum_{i = 1}^s q_i \overline{D}_i^{\can} =
 q_1 \overline{D} + \sum_{i = 2}^s (q_i - q_1) \overline{D}_i^{\can}.
\end{displaymath}
Since $q_i \ge q_1$ and $\mu^{\ess}(\overline{D}_i^{\can}) = 0$ for
every $i$, applying again Lemma
\ref{lemma:propertiesessmin}\eqref{item:essminsuperadd} and the fact
that $\phi$ is a finite morphism we obtain
$\mu^{\ess}(\overline{D}) = \mu^{\ess}(\phi^*\overline{D}) \ge q_1
\mu^{\ess}(\overline{D})$. Hence $\mu^{\ess}(\overline{D}) \le 0$,
which gives the statement.
% For the second, let $(x_{\ell})_{\ell}$ be a generic sequence in $X(\overline{K})$. We have
% \begin{displaymath}
%   h_{\overline{D}}(x_{\ell})  = \sum_{i=1}^{s}h_{\overline{D}_{i}^{\can}}(x_{\ell})  \quad \text{ for every } \ell.
% \end{displaymath}
% Thus if this sequence is $\overline{D}_{i}^{\can}$-small for every $i$
% then it is also $\overline{D}$-small. Conversely suppose that
% $(x_{\ell})_{\ell}$ is $\overline{D}$-small. By genericity we have
% $\liminf_{n\to \infty} h_{\overline{D}_{i}^{\can}}(x_{\ell})\ge 0$ for
% every $i$,~and~so
% \begin{displaymath}
%   0=\lim_{n\to\infty} h_{\overline{D}}(x_{\ell})\ge \limsup_{n\to\infty} h_{\overline{D}_{i}^{\can}}(x_{\ell})
%   +\sum_{j\ne i}\liminf_{n\to\infty} h_{\overline{D}_{j}^{\can}}(x_{\ell}) \ge \limsup_{n\to\infty} h_{\overline{D}_{i}^{\can}}(x_{\ell}).
% \end{displaymath}
% Hence $\lim_{n\to\infty} h_{\overline{D}_{i}^{\can}}(x_{\ell})=0$, as stated. 
\end{proof}

Since $\phi$ is finite, for every $v \in \mathfrak{M}_K$ and any
measure $\nu$ on $X_v^{\an}$ we can consider the pullback
$\phi_{v}^{\an, *}\nu$ by the $v$-adic analytification of $\phi$, as
explained in \cite[Section~2.8]{Chambert-Loir:Mesuresetequi}. For any
$\overline{A}_1, \ldots, \overline{A}_d \in
\widehat{\mathrm{DSP}}(X)_{\bR}$ we have
\begin{equation}
  \label{eq:73}
  \phi_{v}^{\an,*}(c_1(\overline{A}_{1,v}) \wedge \cdots \wedge c_1(\overline{A}_{d,v})) = c_1(\phi^*\overline{A}_{1,v}) \wedge \cdots
  \wedge c_1(\phi^*\overline{A}_{d,v}).
\end{equation}

The following is the central result of this section. To state it, we
denote by $\mathbb{N}^{s}_{d}\subset\mathbb{N}^{s}$ the set of
$s$-tuples of nonnegative integers whose components sum up to~$d$. For
each $a\in \mathbb{N}^{s}_{d}$ we set
$q^{a}= \prod_{i=1}^{s}q_{i}^{a_{i}}$ and consider the subset
\begin{equation*}
%  \label{eq:defI}
 I=\{a\in \mathbb{N}^{s}_{d} \mid q^{a}=\deg(\phi) \}.
\end{equation*}
Note that there is always a semipositive adelic $\mathbb{R}$-divisor
$\overline{D}'$ over $D$ by semiampleness.

\begin{theorem}\label{thm:equidyn}
  Let $v \in \mathfrak{M}_K$.

\begin{enumerate}[leftmargin=*]
\item\label{item:dyndiff}
  The essential minimum function is differentiable at $\overline{D}$, and for
  any semipositive adelic $\bR$-divisor $\overline{D}'$ over $D$ we
  have
\begin{displaymath}
 \partial_{\overline{E}}\, \mu^{\ess}(\overline{D}) = \frac{1}{(D^d)} \lim_{n \to \infty} \frac{((\phi^{\circ n,*} \overline{D}')^d \cdot \overline{E})}{\deg(\phi)^{n}} \quad \text{for every } \overline{E} \in \widehat{\Div}(X)_{\bR}.
\end{displaymath}
In particular, $\overline{D}$ has the equidistribution property
at  $v$ with
\begin{equation*}
  \nu_{\overline{D},v} = \frac{1}{(D^d)} \lim_{n\to \infty} \frac{\phi_{v}^{\circ n,\an, *}c_1(\overline{D}'_v)^{\wedge d}}{\deg(\phi)^n}.
\end{equation*}
\item\label{item:dynderiv} If each $\Dcan_i$ is DSP and $\overline{D}$ is semipositive then
\begin{displaymath}
  \partial_{\overline{E}}\, \mu^{\ess}(\overline{D})  = \frac{\sum_{ a\in I}\binom{d}{a} (\overline{E}\cdot
    \prod_{i=1}^{s}(\Dcan_{i})^{a_{i}})}{\sum_{a \in I}\binom{d}{a}
(\prod_{i =1}^s D_i^{a_i})} \quad \text{for every } \overline{E} \in \widehat{\Div}(X)_{\bR}.
 \end{displaymath} 
\item\label{item:dynMA} If each $g_{\Dcan_i,v}$ is semipositive then $\nu_{\overline{D},v} = c_1(\overline{D}_v)^{\wedge d}/(D^d)$.
\end{enumerate} 
\end{theorem}

The next lemma gives the specific sequence of semipositive
approximations that will allow us to deduce this result from Theorem
\ref{thm:MainSequences}.

\begin{lemma}\label{lemma:dynSPapprox}
  Let $\overline{D}'$ be a semipositive adelic $\bR$-divisor over $D$
  such that $\overline{D} - \overline{D}'$ is pseudo-effective, and for
  every $n \in \bN$ set
  $\overline{Q}_n = q_s^{-n}\, \phi^{\circ n,*}\overline{D}' \in
  \widehat{\Div}(X)_{\bR}$. Then
\begin{enumerate}[leftmargin=*]
\item \label{item:33}   $\overline{Q}_n$ is a semipositive
  approximation of $\overline{D}$, 
\item\label{item:Qnr} $r(Q_n;D) \ge q_s^{-n} q_1^n$,
\item\label{item:Qndeg} $(Q_n^d) = q_s^{-dn}\deg(\phi)^n(D^d)$,
\item\label{item:Qnminabs} $\mu^{\abs}(\overline{Q}_n) = q_s^{-n} \mu^{\abs}(\overline{D}')$.
\end{enumerate}
\end{lemma}

\begin{proof} We have that $\overline{Q}_n$ is semipositive and $Q_n$
  is big because these properties are preserved under pullback with
  respect to a finite morphism. Set
  $\overline{F}_n = \phi^{\circ n,*}(\overline{D} - \overline{D}')$,
  which is pseudo-effective since so is
  $\overline{D} - \overline{D}'$. We have
\begin{displaymath}
\overline{D} - \overline{Q}_n = \overline{D} - \frac{1}{q_s^{n}}\phi^{\circ n,*}\overline{D}' =  \overline{D} -\frac{1}{q_s^{n}}\phi^{\circ n,*}\overline{D} + \frac{1}{q_s^{n}}\, \overline{F}_n \equiv \sum_{i=1}^s \Big(1 - \frac{q_{i}^{n}}{q_{s}^n}\Big)\, \overline{D}_i^{\can} + \frac{1}{q_s^{n}}\, \overline{F}_n.
\end{displaymath}
Hence $\overline{D}-\overline{Q}_n$ is pseudo-effective, because
$q_s \ge q_i$ and by Proposition \ref{prop:dynamicalminima} we have
that $\overline{D}_i^{\can}$ is pseudo-effective for every $i$. Thus
$\overline{Q}_n$ is a semipositive approximation of~$\overline{D}$,
proving \eqref{item:33}. Moreover
\begin{displaymath}
Q_n - \frac{q_{1}^{n}}{q_{s}^{n}} D \equiv \sum_{i = 1}^s \frac{q_{i}^{n}-q_{1}^{n}}{q_{s}^{n}}\, D_i
\end{displaymath}
is pseudo-effective and therefore $r(Q_n;D) \ge q_s^{-n}q_1^n$, as
stated in \eqref{item:Qnr}.  Finally the formulae \eqref{item:Qndeg}
and \eqref{item:Qnminabs} are respectively given by the projection
formula \cite[Chapter~2, Proposition~2.3(c)]{Fulton:it} and the
invariance of the absolute minimum with respect to pullback by a
surjective morphism.
\end{proof}

We also need the next auxiliary result.

\begin{lemma}\label{lemma:dynMA}
  Let $ {I}=\{a\in \mathbb{N}^{s}_{d} \mid q^{a}=\deg(\phi) \}$ as
  before.   
 \begin{enumerate}[leftmargin=*]
 \item \label{item:42} We have
   $ \displaystyle{(D^d) = \sum_{a \in {I}}\binom{d}{a}
     \Big(\prod_{i =1}^s D_i^{a_i}\Big)}$.
 \item \label{item:43} Let $v \in \mathfrak{M}_K$ and assume that $g_{\Dcan_i,v}$ is
   semipositive for every $i$. Then
\begin{displaymath}
c_1(\overline{D}_v)^{\wedge d} = \sum_{a \in {I}} \binom{d}{a} \bigwedge_{i = 1}^s c_1(\overline{D}^{\can}_{i,v})^{\wedge a_i} \and \phi_{v}^{\an,*}c_1(\overline{D}_v)^{\wedge d} = \deg(\phi) \, c_1(\overline{D}_v)^{\wedge d}.
\end{displaymath}
 \end{enumerate}
\end{lemma} 

\begin{proof}
 For each $a \in \bN^s_{d}$ we have 
 \begin{displaymath}
 q^a \Big(\prod_{i=1}^{s}{D}_{i}^{a_{i}} \Big) = \Big(\prod_{i=1}^{s}(\phi^*{D}_{i})^{a_{i}} \Big) =  \deg(\phi)\, \Big(\prod_{i=1}^{s}{D}_{i}^{a_{i}} \Big) 
 \end{displaymath}
 by the projection formula. Therefore this quantity vanishes unless
 $a \in I$, and~\eqref{item:42} follows by the multilinearity of the
 intersection product.

Now assume that $g_{\overline{D}^{\can}_{i},v}$ is semipositive for
every $i$. The multilinearity of the Monge-Ampère operator gives
\begin{equation*}
c_{1}(\overline{D}_{v})^{\wedge d}=   \sum_{a\in \bN_d^s}\binom{d}{a}
    \bigwedge_{i=1}^{s}c_{1}(\Dcan_{i,v})^{\wedge a_{i}}.
\end{equation*} 
By semipositivity, 
$\bigwedge_{i=1}^{s}c_{1}(\Dcan_{i,v})^{\wedge a_{i}}$ is a
measure for each $a\in \mathbb{N}^{s}_{d}$. Since its total mass is $(\prod_{i=1}^{s}{D}_{i}^{a_{i}} )$,
this measure is zero unless $a\in I$. This gives the first formula in \eqref{item:43}. Hence 
\begin{multline*}
  \phi_{v}^{\an,*}c_1(\overline{D}_v)^{\wedge d} = \sum_{a\in I} \binom{d}{a}  \bigwedge_{i=1}^{s}c_{1}(\phi^*\Dcan_{i,v})^{\wedge a_{i}}
  \\ = \sum_{a\in I} \binom{d}{a} \, q^a \bigwedge_{i=1}^{s}c_{1}(\Dcan_{i,v})^{\wedge a_{i}}=\deg(\phi) \, c_1(\overline{D}_v)^{\wedge d} 
\end{multline*}
by the functoriality \eqref{eq:73}, thus giving the second formula.
\end{proof}

\begin{proof}[Proof of Theorem \ref{thm:equidyn}] 
  For \eqref{item:dyndiff} we consider first the case where
  $\overline{D} - \overline{D}'$ is pseudo-effective.  Then for each
  $n \in \bN$ we let 
  $\overline{Q}_n = q_s^{-n}\phi^{\circ n,*}(\overline{D}')$ be the
  semipositive approximation of $\overline{D}$ given by Lemma
  \ref{lemma:dynSPapprox}.  By Lemma
  \ref{lemma:propertiesessmin}\eqref{item:essminincreases} and
  Proposition \ref{prop:essminzero} we have
  $\mu^{\abs}(\overline{Q}_n) \leq \mu^{\ess}(\overline{Q}_n) \le
  \mu^{\ess}(\overline{D}) = 0$, and so by Lemma
  \ref{lemma:dynSPapprox}
  \begin{equation*}
0 \le \frac{\mu^{\ess}(\overline{D}) - \mu^{\abs}(\overline{Q}_n)}{r(Q_n;D)}=  \frac{ - \mu^{\abs}(\overline{Q}_n)}{r(Q_n;D)} \le \frac{-q_{s}^{-n}\mu^{\abs}(\overline{D}')}{q_{s}^{-n}q_{1}^{n} }= 
 \frac{-\mu^{\abs}(\overline{D}')}{q_{1}^{n}}.
\end{equation*}
We also have $ \mu^{\abs}(\overline{D}')>-\infty$ because $D$ is
semiample. We deduce that this quotient vanishes as $n\to \infty$, and
so by Theorem \ref{thm:MainSequences} the essential minimum function
is differentiable at $\overline{D}$ and for every
$\overline{E} \in \widehat{\Div}(X)_{\bR}$ we have
\begin{displaymath}
  \partial_{\overline{E}}\, \mu^{\ess}(\overline{D}) = \lim_{n\to\infty} \frac{(\overline{Q}_n^{d}\cdot \overline{E})}{(Q_n^d)} = \lim_{n\to\infty} \frac{q_s^{-nd}((\phi^{\circ n,*}\overline{D}')^{d}\cdot \overline{E})}{q_s^{-nd} \deg(\phi)^n(D^d)} = \frac{1}{(D^d)} \lim_{n \to \infty} \frac{((\phi^{\circ n,*} \overline{D}')^d \cdot \overline{E})}{\deg(\phi)^{n}}.
\end{displaymath}
This proves the
first part of the statement in this case.

Now let $\overline{D}'$ be any semipositive adelic $\mathbb{R}$-divisor
over $D$. Take then $\lambda \in K^{\times}$ such that
$\|\lambda\|_{\overline{D}-\overline{D}',v,\sup} \le 1$ for every
non-Archimedean place $v$. It follows that
$\lambda \in \widehat{\Gamma}(X,\overline{D}-\overline{D}' +
t[\infty])$ for any sufficiently large $t \in \bR$, and so
$\overline{D}-(\overline{D}' - t\,[\infty])$ is pseudo-effective. By the
previous case we have
\begin{multline*}
 \partial_{\overline{E}}\, \mu^{\ess}(\overline{D})  =  \frac{1}{(D^d)} \lim_{n \to \infty} \frac{((\phi^{\circ n,*} \overline{D}' - t\,[\infty])^d \cdot \overline{E})}{\deg(\phi)^{n}}\\
  =  \frac{1}{(D^d)} \lim_{n \to \infty} \left( \frac{((\phi^{\circ n,*} \overline{D}')^d \cdot \overline{E})}{\deg(\phi)^{n}} - d\, t\, \frac{((\phi^{\circ n,*} D)^{d-1} \cdot E)}{\deg(\phi)^{n}} \right)
\end{multline*}
using the formula \eqref{eq:arithmeticvsgeometricintersection}. Since
the left-hand side is independent of $t$, it follows that
\begin{displaymath}
\lim_{n \to \infty} \frac{((\phi^{\circ n,*} D)^{d-1} \cdot E)}{\deg(\phi)^{n}} = 0,
\end{displaymath}
completing the proof of this first part. The second part is a direct
consequence of this one using Proposition \ref{prop:diffvsEP}.

For \eqref{item:dynderiv} let
$\overline{E} \in \widehat{\Div}(X)_{\bR}$. Since $\overline{D}$ is
semipositive we can apply \eqref{item:dyndiff} with
$\overline{D}'=\overline{D} $. Since
$\phi^{\circ n,*}\overline{D} \equiv \sum_{i=1}^s
q_i^{n}\overline{D}_i^{\can}$ we obtain
  \begin{equation*}
  \partial_{\overline{E}}\, \mu^{\ess}(\overline{D})  = \frac{1}{(D^d)} \lim_{n \to \infty} \sum_{a \in \bN_d^s} \binom{d}{a} \Big(\frac{q^a}{\deg(\phi)}\Big)^n \, \Big( \overline{E} \cdot \prod_{i=1}^s (\Dcan_i)^{a_i} \Big) 
\end{equation*}
by the multilinearity of the arithmetic intersection product.  The
formula follows then from the existence of this limit together with
Lemma \ref{lemma:dynMA}\eqref{item:42}.

For \eqref{item:dynMA} note first that $g_{\overline{D},v}$ is
semipositive, being a sum of semipositive $v$-adic Green functions.
Take a semipositive adelic $\bR$-divisor $\overline{D}'$ be over $D$
with $g_{\overline{D}',v} = g_{\overline{D},v}$, so that
$c_1(\overline{D}'_v)^{\wedge d} = c_1(\overline{D}_v)^{\wedge d}$.
By Lemma \ref{lemma:dynMA}\eqref{item:43} we have
$\phi_{v}^{\circ n,\an, *}c_1(\overline{D}_v)^{\wedge d} = \deg(\phi)^n
c_1(\overline{D}_v)^{\wedge d}$ for every $n\in \bN$, and so  the
statement follows from~\eqref{item:dyndiff}.
%We conclude by observing that $((\phi^{\circ n,*}D)^d) = \deg(\phi)^{n}(D^d)$ by the projection formula \cite[Chapter 2, Proposition 2.3(c)]{Fulton:it}.
\end{proof}

This result allows to introduce a natural notion of equilibrium
measure in our present setting.

\begin{definition}
  \label{def:14}
  Let $v \in \mathfrak{M}_K$, choose a semipositive adelic
  $\mathbb{R}$-divisor $\overline{D}'$ over $D$ and set
  $\mu_{v}={c_1(\overline{D}'_v)^{\wedge d}}/{(D^{d})}$.  The
  \emph{$v$-adic equilibrium measure} of $\phi$ with respect to $D$ is
  the probability measure on $X_{v}^{\an}$ defined as
  \begin{displaymath}
    \mu_{\phi, D,v}= \lim_{n\to \infty}
      \frac{\phi_{v}^{\circ n,\an, *} \mu_{v} }{\deg(\phi)^n}.
    \end{displaymath}
\end{definition}

Theorem \ref{thm:equidyn} ensures that this limit exists and coincides
with the $v$-adic equidistribution measure $\nu_{\overline{D},v}$. In
particular it does not depend on the choice of $\overline{D}'$. By
construction, the $v$-adic equilibrium measure is fully invariant in
the sense that
\begin{displaymath}
  \phi_{v}^{\an,*} \mu_{\phi,D,v}= \deg(\phi) \, \mu_{\phi,D,v}.
 \end{displaymath}

 \begin{remark}
   \label{rem:20}
   When $D$ is ample and $D_{i}$ is nef for every $i$ , the
   preperiodic points of $\phi$ form a dense subset of
   $X(\overline{K})$ of points of height zero with respect to
   $\overline{D}$. Hence in this case the $v$-adic equilibrium measure
   does not depend on $D$.
 \end{remark}
 
We also have the following logarithmic equidistribution result.
Recall that a subvariety $Y\subset X$ is
\emph{preperiodic} if there are two  positive integers $j< k$
such that $\phi^{\circ j} (Y) = \phi^{\circ k}(Y)$.

\begin{theorem}
  \label{thm:dynlogEP} 
  Assume that $\Dcan_{i}$ is semipositive for every $i$.  Let
  $(x_{\ell})_{\ell}$ be a $\overline{D}$-small generic sequence in
  $X(\overline{K})$ and $E$ an effective divisor on $X$ such that
  every irreducible component of its Weil divisor $[E]$ contains a
  dense subset of preperiodic points. Then for every
  $v\in \mathfrak{M}_{K}$ we have
   \begin{displaymath}
     \lim_{\ell \to \infty}\int_{X_v^{\an}} \varphi \, d\delta_{O(x_\ell)_v} = \int_{X_v^{\an}} \varphi \, \frac{c_1(\overline{D}_v)^{\wedge d}}{(D^{d})}
   \end{displaymath}
   for any function
   ${\varphi \colon X_v^{\an} \rightarrow \bR \cup \{\pm \infty \}}$
   with at most logarithmic singularities along $E$.  In particular,
   this holds when $D$ is ample and every irreducible component of
   $[E]$ is preperiodic.
\end{theorem}

\begin{proof}
  Note that $\Dcan$ is semipositive, being a sum of semipositive
  adelic $\mathbb{R}$-divisors.  For every $n \in \bN$ let
  $\overline{Q}_n = q_s^{-n} \phi^{\circ n,*} \overline{D}$ be the
  semipositive approximation of $\overline{D}$ from
  Lemma~\ref{lemma:dynSPapprox}. By this result and Lemma
  \ref{lemma:dynMA}\eqref{item:43} we have
  \begin{equation}
    \label{eq:dynlogEP}
    \lim_{n\to \infty} \frac{\mu^{\abs}(\overline{Q}_{n})}{r(Q_{n};D)}=0 \and 
    \frac{c_{1}(\overline{Q}_{n,v})^{\wedge d} }{(Q_{n}^{d})}
    =\frac{ c_{1}(\overline{D}_{v})^{\wedge d}}{(D^{d})} \quad \text{ for every } v\in \mathfrak{M}_{K}.
    \end{equation}

    Let $Y$ be an irreducible component of $[E]$. Up to switching to
    linearly equivalent divisors, we can suppose that $Y$ is not
    contained in the support of any of the $D_{i}$'s, and so we can
    consider the restriction of $\overline{Q}_{n}$ to $Y$. Then 
    \begin{displaymath}
      h_{\overline{Q}_n}(Y) \le d \, \mu^{\ess}(\overline{Q}_{n}|_{Y}) \, (Q_{n}^{d-1}\cdot Y)\le 0.
    \end{displaymath}
    by Zhang's inequality (Theorem~\ref{thm:Zhang}) and the fact that
    the set of preperiodic points of $Y(\overline{K})$ is dense.  On
    the other hand, let $A $ be an ample divisor on $X$ such that
    $A - Y$ is pseudo-effective. Since $Q_{n}$ is nef we have
    $(Q_n^{d-1} \cdot Y) \le (Q_n^{d-1} \cdot A) \le (Q_{n}^{d}) /
    r(Q_{n};A)$ by the inequality \eqref{eq:comparegeometriccap} and
    Lemma \ref{lemma:inradiusandcap}. Using this and Lemma
    \ref{lemma:ineqheightnef} we obtain
\begin{displaymath}
  0\le  \frac{h_{\overline{Q}_n}(Y)- d \, \mu^{\abs}(\overline{Q}_n)\, (Q_{n}^{d-1}\cdot Y)}{(Q_{n}^{d})}
  \le \frac{ - d \, \mu^{\abs}(\overline{Q}_n)\, (Q_{n}^{d-1}\cdot Y)}{(Q_{n}^{d})} \le \frac{ - d \, \mu^{\abs}(\overline{Q}_n)}{r(Q_{n};A)}.
\end{displaymath}
These inequalities with the limit in \eqref{eq:dynlogEP} imply that
this quantity vanishes as $n\to \infty$. Since this holds for every
$Y$, the condition of Corollary \ref{cor:logequi} is verified and so
this result gives the first statement.

Finally, assume that $D$ is ample and let $Y$ be a preperiodic irreducible
component of $[E]$.  Then  there is an
integer $j>0$ such that $Y'\coloneqq \phi^{\circ j}(Y)$ is periodic
with period $k_{0}>0$, and so the iteration $\phi^{\circ k_{0}}$
induces a dynamical system on~$Y'$. Up to linear equivalence we can
restrict $D$ to $Y'$ and we have
\begin{displaymath}
\phi^{\circ k_{0},*}{D}|_{Y'}-{D}|_{Y'} \equiv \sum_{i = 1}^s (q_i^{k_0}-1)D_i|_{Y'} = (q_1^{k_0}-1)D|_{Y'} + \sum_{i=1}^s (q_i^{k_0} - q_1^{k_0})D_i|_{Y'}.
\end{displaymath}
The semipositivity assumption implies that $D_i$ is nef for every
$i$. Then $\phi^{\circ k_{0},*}{D}|_{Y'}-{D}|_{Y'}$ is ample, being
the sum of an ample $\bR$-divisor and a nef one. By Fakhruddin's
theorem \cite[Theorem 5.1]{Fakhruddin:qsmag} the set of periodic
points of $Y'(\overline{K})$ is dense, and so $Y(\overline{K})$
contains a dense subset of preperiodic points.
\end{proof}

% \todo{Remove this discussion (?)}  When $s=1$, Theorem
% \ref{thm:sumofcanonicallinebundles} specializes to the fact that for
% a big $\mathbb{R}$-divisor $D$ on $X$ with $\phi^{*}D\equiv q D$ for
% $q>1$ and such that~$\Dcan$ is semipositive, the essential minimum
% function of $X$ is differentiable at $\Dcan$ with
%\begin{equation}
%  \label{eq:8}
%  \partial_{\overline{E}}\, \mu^{\ess}(\Dcan)= \frac{((\Dcan)^{d}\cdot \overline{E})}{(D^{d})} \quad \text{ for } \overline{E}\in \widehat{\Div}(X),
%\end{equation}
%and $\Dcan$ satisfies the equidistribution property for all
%$v\in \mathfrak{M}_{K}$ with % equidistribution measure equal to the
 % $v$-adic equilibrium measure of $D$, namely
%\begin{equation}
%  \label{eq:9}
%  \nu_{v}(\Dcan)= \frac{c_{1}(\Dcan_{v})^{\wedge d}}{(D^{d})}.
%\end{equation}
%In this situation, Proposition \ref{prop:dynamicalminima}\eqref{item:6} shows that
%$\mu^{\ess}(\overline{D})=\mu^{\abs}(\overline{D})=0$, and so we do
%not need to require the finiteness of the absolute minimum.  Indeed,
%in this case $\overline{D}$ is quasi-canonical and so the statement
%above can be alternatively deduced from Yuan's theorem, or rather its
%version in Corollary \ref{cor:yuantheorem}.

\subsection{Equidistribution on semiabelian varieties}
\label{sec:semi-vari}

Here we specialize the results of the previous section in the
semiabelian setting. We first recall the basic constructions and
properties that are needed to this end, referring to
\cite{Chambert-Loir:pphvs, Kuhne:pshsv} for the proofs and more
details.

Let $G$ be a {semiabelian variety} over $K$ that is the extension of
an abelian variety~$A$ of dimension $g$ by a split torus
$\Gm^{r}$. Hence there is an exact sequence of commutative algebraic
groups over $K$
\begin{displaymath}
  0\longrightarrow \Gm^{r} \longrightarrow G \longrightarrow A \longrightarrow 0.
\end{displaymath}
We consider the compactification $\overline{G}$ of $G$ induced by
toric compactification $ (\mathbb{P}^{1})^{r}$ of~$ \Gm^{r}$.  To
construct it, one endows the product variety
$G\times (\mathbb{P}^{1})^{r}$ with the action of this split torus
defined at the level of points as
\begin{displaymath}
  t\cdot (x,y)= (t\cdot_{G} x, t^{-1}\cdot_{ (\mathbb{P}^{1})^{r}} y)
\end{displaymath}
and defines $\overline{G}$ as the categorical quotient
$G\times (\mathbb{P}^{1})^{r}/\Gm ^{r}$. It is a smooth variety
over~$K$ containing $G$ as a dense open subset, and the projection
$G\to A$ extends to a morphism
\begin{math}
  \pi\colon \overline{G}\rightarrow A
\end{math}
allowing to consider this compactification as a
$ (\mathbb{P}^{1})^{r}$-bundle over~$A$.

For a given integer $\ell>1$ the multiplication-by-$\ell$ on $G$
extends to a morphism
\begin{math}
[\ell]_{\overline{G}}\colon \overline{G}\rightarrow \overline{G}
\end{math}
of degree $ \ell^{r+2g}$. If we denote
by~$[\ell]_{A}$ the multiplication-by-$\ell$ on~$A$, then there is a
commutative diagram
\begin{equation}
  \label{eq:20}
    \xymatrix{
\overline{G}\ar[r]^{[\ell]_{\overline{G}}} \ar[d]^{\pi}& 
\overline{G}  \ar[d]^{\pi}\\
A\ar[r]^{[\ell]_{A}}& A}
\end{equation}

The boundary $\overline{G}\setminus G$ is an effective Weil divisor,
and we denote by $M$ its associated (Cartier) divisor on
$\overline{G}$. It is relatively ample with respect to $\pi$ and
verifies
\begin{equation*}
%  \label{eq:37}
   [\ell]_{\overline{G}}^{*}M = \ell M \quad \text{ on }  {\Div}(\overline{G}).
\end{equation*}
Let $N$ be an ample symmetric divisor on $A$, which therefore verifies
that $ [\ell]_{A}^{*}N \equiv \ell^{2}N$ on $\Div(A)$. Then its pullback
$\pi^{*}N$ is semiample, and by  \eqref{eq:20} it verifies
\begin{displaymath}
  [\ell]_{\overline{G}}^{*}\pi^{*}N \equiv \ell^{2}\pi^{*}N   \quad \text{ on }  {\Div}(\overline{G}).
\end{displaymath} 
Furthermore, the sum
\begin{math}
  D=M+\pi^{*}N
\end{math}
is an ample divisor on $\overline{G}$.

Let $\Mcan \in \widehat{\Div}(\overline{G}) $ and
$\Ncan \in \widehat{\Div}(A)$ be the canonical adelic divisors of $M$
and~$N$ for the surjective endomorphisms $[\ell]_{\overline{G}}$ and
$[\ell]_{A}$, respectively. By
\cite[Proposition~3.4]{Chambert-Loir:pphvs} the adelic divisor $\Mcan$
does not depend of the choice of $\ell$, and the same holds
for~$\Ncan$. By the commutativity in \eqref{eq:20} we have
\begin{equation}
\label{eq:23}
[\ell]_{\overline{G}}^{*}\Mcan = \ell \Mcan \and [\ell]_{\overline{G}}^{*}\pi^{*}\Ncan\equiv
\ell^{2}  \pi^{*}\Ncan  \quad \text{ on }  \widehat{\Div}(\overline{G}).
\end{equation}
In particular, $\pi^{*}\Ncan$ is the canonical adelic divisor of
$\pi^{*}N$ for $[\ell]_{\overline{G}}$.

We have that $\Mcan$ is semipositive, as shown by Chambert-Loir in
\cite[Proposition 3.6]{Chambert-Loir:pphvs} relying on some specific
regular models of abelian varieties constructed by
K\"unnemann.
% Furthermore, Proposition
% \ref{prop:caneffective} together with the equality in~\eqref{eq:37}
% implies that this adelic divisor is effective.  
The adelic divisor
$\Ncan$ on $A$ is semipositive because $N$ is ample, and so this is
also the case for $\pi^{*}\Ncan$.

Finally set
$\overline{D}=\Mcan+\pi^{*}\Ncan \in \widehat{\Div}(\overline{G})$. By
\eqref{eq:23}, its height function verifies
\begin{displaymath}
h_{\overline{D}}([\ell]_{\overline{G}} x)=\ell\, h_{\Mcan}(x)+\ell^{2}\,
h_{\Ncan}(\pi(x)) \quad \text{ for } x\in \overline{G}(\overline{K}).
\end{displaymath}
It is nonnegative on $G(\overline{K})$ and vanishes on the torsion
points, and so ${\mu^{\ess}(\overline{D})=0}$. On the other hand, this
height function might take negative values at the points in the
boundary $\overline{G}\setminus G$
\cite[Corollaire~4.6]{Chambert-Loir:pphvs}. In these cases we have
$\mu^{\abs}(\overline{D})<0$ and so $\overline{D}$ is outside of the
scope of Yuan's equidistribution theorem.

The next result is a direct application of Theorem \ref{thm:equidyn}.

% The compactification and the divisor where introduced by Serre and
% associated height function by Waldschmidt, its Arakelov interpretation
% by Chambert-Loir. This height function is usually called the
% \emph{canonical} height of $\overline{G}$, although it is not
% canonical in the previous sense but rather a sum of two canonical
% adelic divisors (in the sense of the present paper) with different
% regimes.

% To prove this result, we will apply
% Theorem~\ref{thm:sumofcanonicallinebundles} to the endomorphism
% $ [2]\colon \overline{G}\to \overline{G}$ and the adelic divisor
% $\overline{D}=\Pcan+\pi^{*}\Qcan$.  Note that
% \begin{equation}
%   \label{eq:33}
%   [2]^{*}P=2\,P \and [2]^{*}\pi^{*}Q\equiv 4\,\pi^{*}Q.
% \end{equation}
% Moreover $\pi^{*}\Qcan=\piQcan$, and so $\overline{Q}$ is the sum of
% two adelic divisors with different regimes. Furthermore,
% $\deg([2])=2^{r+2g}$.

\begin{theorem}
  \label{thm:semiab}
The  essential minimum
function is differentiable at $\overline{D}$ with
\begin{equation*} %  \label{eq:6}
  \partial_{\overline{E}}\, \mu^{\ess}(\overline{D})
=\frac{((\Mcan)^{r} \cdot (\pi^{*} \Ncan)^{g}\cdot \overline{E})}{(M^{r} \cdot \pi^{*}N^{g})}
  \quad \text{ for every } \overline{E}\in \widehat{\Div}(\overline{G})_{\mathbb{R}}.  
\end{equation*}
In particular,  $\overline{D}$ satisfies the $v$-adic equidistribution
  property at every $v\in \mathfrak{M}_{K}$ with
\begin{displaymath}
\nu_{\overline{D},v} =\frac{ c_{1}(\Mcan_{v})^{\wedge r} \wedge c_{1}(\pi^{*} \Ncan_{v})^{\wedge g}}{ ({M}^{r} \cdot \pi^{*}{N}^{g})} = \frac{ c_{1}(\overline{D}_{v})^{\wedge r+g}}{(D^{r+g})}.
\end{displaymath}
\end{theorem}

\begin{proof} 
  We have $R(M)\ne \{0\}$ because $M$ is effective, and
  ${R(\pi^{*}N)\ne \{0\}}$ because~$\pi^{*}N$ is semiample.  As
  explained,  $D$ is ample and both $\Mcan$ and $\pi^{*}\Ncan$ are
  semipositive. Then Theorem~\ref{thm:equidyn} gives the stated
  differentiability for the essential minimum function.
  
  To apply the formula of Theorem
  \ref{thm:equidyn}\eqref{item:dynderiv} for the derivative
  $ \partial_{\overline{E}}\, \mu^{\ess}(\overline{D})$ we need to
  determine the elements $a\in \mathbb{N}^{2}_{r+g}$ for
  which \begin{math}
    \ell^{a_{1}+2a_2}=\deg([\ell]_{\overline{G}})=\ell^{r+2g}.
\end{math}
The only one is $a=(r,g)$, and so we obtain the desired expression.  %this formula boils down~to
The formulae for the $v$-adic equidistribution measure
then follow from Proposition \ref{prop:diffvsEP} and Theorem \ref{thm:equidyn}\eqref{item:dynMA}.
\end{proof}

\begin{remark}
\label{rem:11}
When $v$ is Archimedean, the equidistribution measure in this result
coincides with the Haar probability measure on the maximal compact
subgroup $\mathbb{S}_{v}\simeq (S^{1})^{r+2g}$ of $G_{v}^{\an}$, see
for instance \cite[Lemma 5.2]{Kuhne:pshsv}.

When $v$ is non-Archimedean, the description of this measure seems
more complicated.  For abelian varieties, they were described by
Gubler in terms of convex geometry \cite{Gubler} but the extension to
the semiabelian case is still pending.
\end{remark}

% \begin{remark}
%   \label{rem:2}
%   This result  gives Theorem \ref{thm:logequiSemiabIntro} in the introduction, as it implies
% that
%   $\overline{D}$ satisfies the $v$-adic equidistribution
%   property at every $v\in \mathfrak{M}_{K}$ with
% \begin{equation}
% \label{eq:7}
% \nu_{\overline{D},v} =\frac{ c_{1}(\Mcan_{v})^{\wedge r} \wedge c_{1}(\pi^{*} \Ncan_{v})^{\wedge g}}{ ({M}^{r} \cdot \pi^{*}{N}^{g})} = \frac{ c_{1}(\overline{D}_{v})^{\wedge r+g}}{(D^{r+g})}.
% \end{equation}
% Using Theorem \ref{thm:semiab}, the first formula for this
% equidistribution measure follows from its expression in terms of
% derivatives of the essential minimum function
% (Proposition~\ref{prop:diffvsEP}), %together with the formula in \eqref{eq:intersectionvarphi} and Proposition \ref{prop:4},
% whereas the second formula follows from the first together with
% Lemma~\ref{lemma:dynMA}. Theorem~\ref{thm:semiab} together with
% Proposition~\ref{prop:diffvsHCP} also implies the height convergence
% property in \eqref{eq:26}.
% \end{remark}

\begin{remark}\label{rem:16}
  In our current semiabelian setting, the sequence of semipositive
  approximations of $\overline{D}$ from Lemma \ref{lemma:dynSPapprox}
  applied with $\overline{D}' = \overline{D}$ verifies %specializes to
  \begin{displaymath}
     \overline{Q}_{n} \equiv \ell^{-n}\Mcan+\pi^{*}\Ncan, \quad n\in \mathbb{N},
  \end{displaymath}
  and for the corresponding inradius, degree and absolute minimum we
  have $ r(Q_{n};D) \ge \ell^{-n}$,
  $ (Q_{n}^{r+g})= \ell^{-rn}(D^{r+g})$ and
  $ \mu^{\abs}(\overline{Q}_{n})=\ell^{-2n}\mu^{\abs}(\overline{D})$
  for each $n$. Hence
  \begin{displaymath}
    0\le \frac{\mu^{\ess}(\overline{D})-\mu^{\abs}(\overline{Q}_{n})}{    r(Q_{n};D)}=
    \frac{-\mu^{\abs}(\overline{Q}_{n})}{    r(Q_{n};D)} \le \frac{-\mu^{\abs}(\overline{D})}{\ell^{n}}
  \end{displaymath}
  and so the condition \eqref{eq:conditioninradiussequences} is
  satisfied. On the other hand, this is not the case for the stronger
  condition from Remark~\ref{rem:conditionMainvolume} as soon as
  $r\ge 2$.
\end{remark}

We next extend this equidistribution result to the closure of a
subvariety of~$G$ with vanishing essential minimum.  By the
semiabelian Bogomolov conjecture, proved by David and Philippon
\cite{DP:stvs}, these subvarieties are translates of semiabelian
subvarieties by torsion points, and so they do not provide examples of
equidistribution phenomena beyond those already obtained.  However
this extension is the centerpiece of K\"uhne's approach to this
conjecture \cite[Proposition~4.1]{Kuhne:pshsv} and so it is worth
showing that it can also be derived from our results.

Let $Y\subset \overline{G}$ be the closure of a subvariety of $G$, and
set $e=\dim(Y)$ and $e'=\dim(\pi(Y))$.  Then $Y$ is not contained in
the support of $M$, and after possibly replacing the divisor
$N \in \Div(A)$ by a linearly equivalent one, we assume without loss
of generality that $Y$ is neither contained in the support
of~$\pi^{*} N$.  Hence we can consider the restriction of
$\overline{D}$ to this subvariety.
  
\begin{proposition}
  \label{prop:equisemiabsubvar}
  With notation as above, assume that $\mu^{\ess}(\overline{D}|_Y) =0$. Then
  $\overline{D}|_{Y}$ satisfies the $v$-adic equidistribution property
  for every $v\in \mathfrak{M}_{K}$~with
\begin{equation*} %\label{eq:21}
  \nu_{\overline{D}|_{Y},v}=\frac{c_{1}(\Mcan_{v})^{\wedge e-e'}\wedge c_{1}(\pi^{*}\Ncan_{v})^{\wedge e'} \wedge \delta_{Y_{v}^{\an}}}
  {(M^{e-e'}\cdot \pi^{*}{N}^{e'} \cdot Y)}.
\end{equation*}
\end{proposition}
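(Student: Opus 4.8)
The plan is to derive this as a consequence of Theorem~\ref{thm:sumofcanonicallinebundles} applied to the dynamical system induced on $Y$, mirroring the argument at the end of the proof of Theorem~\ref{thm:3} where a preperiodic subvariety was treated. First I would observe that $Y$ is the closure in $\overline{G}$ of a subvariety $Y_{0} \subset G$, and that $\pi(Y)$ is a subvariety of $A$ of dimension $e'$. The endomorphism $[\ell]_{\overline{G}}$ does not stabilize $Y$ in general, but after passing to a suitable iterate and using that $[\ell]_{A}$ stabilizes no proper subvariety unless it is (a component of) a translate of an abelian subvariety, the relevant structure is the following: by the restriction $\iota^{*}\Mcan$ and $\iota^{*}\pi^{*}\Ncan$ to $Y$, one has two adelic $\mathbb{R}$-divisors whose associated height functions scale by $\ell$ and $\ell^{2}$ respectively under the dynamics. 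The cleanest route avoids insisting on a literal dynamical system on $Y$: instead I would use directly the sequence of semipositive approximations $\overline{Q}_{n}|_{Y} = (\ell^{-n}\Mcan + \pi^{*}\Ncan)|_{Y}$ of $\overline{D}|_{Y}$, exactly as in Lemma~\ref{lem:4} but restricted to $Y$, and check the hypotheses of Theorem~\ref{thm:MainSequences} (equivalently Theorem~\ref{thm:VariantSequencesEP}) on $Y$.

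The key steps, in order, would be: (1) Verify that $(\iota, \overline{Q}_{n}|_{Y})$ is a semipositive approximation of $\overline{D}|_{Y}$ on $Y$: semipositivity is inherited by restriction, $\overline{D}|_{Y} - \overline{Q}_{n}|_{Y} = (1-\ell^{-n})\Mcan|_{Y}$ is pseudo-effective since $\Mcan$ is effective hence $\Mcan|_{Y}$ is effective (as $Y \not\subset \operatorname{supp}(M)$), and $Q_{n}|_{Y} = \ell^{-n}M|_{Y} + \pi^{*}N|_{Y}$ is big because $\pi^{*}N|_{Y} = \iota^{*}\pi^{*}N$ has volume a positive multiple of $(N^{e'}\cdot \pi(Y)) > 0$ pushed up along the fibral directions cut out by $M|_{Y}$, so $D|_{Y} = M|_{Y} + \pi^{*}N|_{Y}$ is ample on $Y$ and $Q_{n}|_{Y}$ is big with $r(Q_{n}|_{Y}; D|_{Y}) \geq \ell^{-n}\cdot\mathrm{const}$ by a computation analogous to Lemma~\ref{lem:4}\eqref{item:10}. (2) Compute $\mu^{\ess}(\overline{D}|_{Y}) = 0$ by hypothesis, and bound $\mu^{\abs}(\overline{Q}_{n}|_{Y}) \geq \ell^{-n}\mu^{\abs}(\overline{D}|_{Y}) \geq \ell^{-n}\mu^{\abs}(\overline{D})$ since $\mu^{\abs}(\overline{D}) > -\infty$ ($D$ ample), so that $(\mu^{\ess}(\overline{D}|_{Y}) - \mu^{\abs}(\overline{Q}_{n}|_{Y}))/r(Q_{n}|_{Y}; D|_{Y}) \to 0$; this verifies \eqref{eq:conditioninradiussequences} on $Y$. (3) Apply Theorem~\ref{thm:MainSequences} on $Y$ to conclude that $\overline{D}|_{Y}$ satisfies the equidistribution property at every $v$ with $\nu_{\overline{D}|_{Y},v} = \lim_{n} \nu_{n,v}$, where $\nu_{n,v}$ is the pushforward to $Y_{v}^{\an}$ of $c_{1}(\overline{Q}_{n,v}|_{Y})^{\wedge e}/(Q_{n}^{e}\cdot Y)$. (4) Identify this limit measure: by the multilinearity of the Monge--Ampère operator, $c_{1}(\overline{Q}_{n,v}|_{Y})^{\wedge e} = \sum_{j=0}^{e}\binom{e}{j}\ell^{-jn} c_{1}(\Mcan_{v})^{\wedge j}\wedge c_{1}(\pi^{*}\Ncan_{v})^{\wedge e-j}\wedge \delta_{Y_{v}^{\an}}$, and by the analogue of Proposition~\ref{prop:4} the term $j$ has total mass $(M^{j}\cdot \pi^{*}N^{e-j}\cdot Y)$, which vanishes unless $e-j \leq e'$ (i.e. $j \geq e-e'$) because $\pi^{*}N^{e-j}$ restricted to $Y$ factors through $\pi(Y)$ of dimension $e'$; dually the fibral divisor $M|_{Y}$ can contribute at most $e - e'$ times since the generic fiber of $Y \to \pi(Y)$ has dimension $e - e'$. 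Hence only $j = e - e'$ survives, both in $(Q_{n}^{e}\cdot Y)$ (after multiplying by $\ell^{(e-e')n}$) and in $c_{1}(\overline{Q}_{n,v}|_{Y})^{\wedge e}$, and the normalized limit is exactly
\begin{displaymath}
\frac{c_{1}(\Mcan_{v})^{\wedge e-e'}\wedge c_{1}(\pi^{*}\Ncan_{v})^{\wedge e'}\wedge \delta_{Y_{v}^{\an}}}{(M^{e-e'}\cdot \pi^{*}N^{e'}\cdot Y)}.
\end{displaymath}

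The main obstacle I expect is step~(4), specifically the vanishing statement $(M^{j}\cdot\pi^{*}N^{e-j}\cdot Y) = 0$ for $j \neq e-e'$, which requires a clean dimension-theoretic argument on $Y$: the inequality $j \leq e - e'$ comes from the fact that $M$ is relatively ample for $\pi$ and $\pi(Y)$ has dimension $e'$, so $M|_{Y}^{j}$ meets $Y$ in fibral cycles of dimension $e-j \geq e'$ which must then be intersected by $\pi^{*}N^{e-j}$, forcing $e - j \leq e'$; the reverse inequality $j \geq e-e'$ comes from $\pi^{*}N$ being a pullback, so $\pi^{*}N|_{Y}^{e-j}$ is supported ``over $\pi(Y)$'' and needs $e-j \leq e'$ again — wait, I must be careful here: the two constraints together give $j = e-e'$ only if the fibers of $Y \to \pi(Y)$ all have dimension exactly $e - e'$, which holds generically and suffices for the intersection number. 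I would make this precise using the projection formula $(M^{j}\cdot \pi^{*}N^{e-j}\cdot Y) = (N^{e-j}\cdot \pi_{*}(M^{j}\cdot Y))$ and noting $\pi_{*}(M^{j}\cdot Y)$ is a cycle on $A$ supported on $\pi(Y)$ of dimension $\dim(\pi(Y)) = e'$ when $j = e-e'$ and of dimension $> e'$ (hence pushes forward to a cycle not meeting $N^{e-j}$ in the right codimension, giving $0$) when $j < e-e'$, while for $j > e - e'$ the class $M^{j}\cdot Y = 0$ since $j$ exceeds the fiber dimension of a relatively ample divisor. Once this dimension bookkeeping is settled, the identification of the equidistribution measure and the normalization constant $(D^{e}\cdot Y)$-versus-$(M^{e-e'}\cdot\pi^{*}N^{e'}\cdot Y)$ is routine, exactly as in the proof of Theorem~\ref{thm:semiab}.
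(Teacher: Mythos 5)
Your overall strategy is the paper's: restrict the approximations $\overline{Q}_n = \ell^{-n}\Mcan + \pi^{*}\Ncan$ to $Y$, apply Theorem \ref{thm:MainSequences}, and identify the limit measure by expanding $(Q_n^{e}\cdot Y)$ and $c_1(\overline{Q}_{n,v})^{\wedge e}\wedge\delta_{Y_v^{\an}}$ and keeping the dominant term $j=e'$ via the projection formula. However, step (2) has a genuine gap. Your lower bound $\mu^{\abs}(\overline{Q}_n|_Y)\ge \ell^{-n}\mu^{\abs}(\overline{D}|_Y)\ge \ell^{-n}\mu^{\abs}(\overline{D})$ decays at the rate $\ell^{-n}$, which is the \emph{same} rate as your lower bound $r(Q_n|_Y;D|_Y)\ge \ell^{-n}$ for the inradius; and when $e>e'$ the inradius really is of order $\ell^{-n}$ (by Lemma \ref{lemma:inradiusandcap}, since $(Q_n^{e}\cdot Y)\sim c\,\ell^{-n(e-e')}$ while $(Q_n^{e-1}\cdot D\cdot Y)\sim c'\,\ell^{-n(e-e'-1)}$). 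So your estimates only show that the quotient in \eqref{eq:conditioninradiussequences} is \emph{bounded}, not that it tends to $0$, and Theorem \ref{thm:MainSequences} does not apply as you have set it up. This matters exactly in the interesting non-isotrivial case $\mu^{\abs}(\overline{D})<0$. The fix is the paper's estimate: since heights of points of $Y$ are computed by the ambient divisor, $\mu^{\abs}(\overline{Q}_n|_Y)\ge \mu^{\abs}(\overline{Q}_n)$, and the functorial identity $\overline{Q}_n=\ell^{-2n}[\ell^{n}]_{\overline{G}}^{*}\overline{D}$ (Lemma \ref{lem:4}\eqref{item:9}) gives $\mu^{\abs}(\overline{Q}_n)=\ell^{-2n}\mu^{\abs}(\overline{D})$, so the numerator is $O(\ell^{-2n})$ and the quotient is $O(\ell^{-n})\to 0$. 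Your inequality $h_{\overline{Q}_n}\ge \ell^{-n}h_{\overline{D}}$ loses precisely this extra factor because it only sees the slower scaling of the $\Mcan$-part. (Your parenthetical appeal to Theorem \ref{thm:VariantSequencesEP} would not rescue the argument as written either: its second condition is the boundedness you get, but its first condition requires an estimate of $(\overline{Q}_n|_Y^{\,e+1})/((e+1)(Q_n|_Y^{e}))$ that you never carry out.)

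Two smaller points. First, in step (4) the vanishing $(M^{j}\cdot\pi^{*}N^{e-j}\cdot Y)=0$ for $j>e-e'$ is not needed and is not obviously true when fibers of $\pi|_Y$ jump in dimension; the paper only uses vanishing for more than $e'$ powers of $\pi^{*}N$ (projection formula plus dimension of $\pi(Y)$), the remaining terms being killed by the extra factors $\ell^{-n}$ after normalizing by $(Q_n^{e}\cdot Y)$ — so you should drop that claim rather than try to prove it. Second, since adelic $\mathbb{R}$-divisors are only defined on normal varieties, you should pull everything back to the normalization $\widetilde{Y}$ of $Y$ (as the paper does, invoking Remark \ref{rema:hypothesesthm}) rather than restricting to $Y$ itself.
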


\begin{proof} 
  For each ${n\in \mathbb{N}}$ let
  $\overline{Q}_n = \ell^{-n} \Mcan + \pi^* \Ncan$.  Then $Q_{n}$ is
  ample, $\overline{Q}_{n}$ is semipositive and
  $\overline{D}-\overline{Q}_{n}$ is effective. Let $ \widetilde{Y}$
  be the normalization of the subvariety $Y$, and denote by
  $\overline{D}|_{\widetilde{Y}}$ and
  $\overline{Q}_{n}|_{\widetilde{Y}}$ the adelic $\mathbb{R}$-divisors
  on $\widetilde{Y}$ obtained by pullback. Since the normalization
  morphism is birational and $Y$ is not contained in the support of
  $M$ and $\pi^{*}N$, we have that $\overline{Q}_{n}|_{\widetilde{Y}}$
  is a semipositive approximation of $\overline{D}|_{\widetilde{Y}}$.
  Its absolute minimum can be estimated as
\begin{displaymath}
  0 = \mu^{\ess}(\overline{D}|_{Y})= \mu^{\ess}(\overline{D}|_{\widetilde{Y}}) \ge  \mu^{\abs}(\overline{Q}_n|_{\widetilde{Y}}) \ge \mu^{\abs}(\overline{Q}_n)
  = \ell^{-2n}\mu^{\abs}(\overline{D}),
\end{displaymath}
where the last equality comes from the fact that $\overline{Q}_n \equiv \ell^{-2n} ([\ell]_{\overline{G}}^{\circ n,*}\overline{D})$ and the invariance of the absolute minimum with respect to pullback by surjective morphisms. On the other hand we have that 
$Q_n|_{\widetilde{Y}} - \ell^{-n}D|_{\widetilde{Y}} = (1- \ell^{-n})\pi^*N |_{\widetilde{Y}}$
is effective and so
\begin{displaymath}
  r(Q_n|_{\widetilde{Y}};D|_{\widetilde{Y}}) \ge \ell^{-n}.
\end{displaymath}
Hence
\begin{math}
\lim_{n\to \infty} {\mu^{\abs}(\overline{Q}_n|_{\widetilde{Y}})}/{r(Q_n|_{\widetilde{Y}};D|_{\widetilde{Y}})}=0
\end{math}, and so Theorem \ref{thm:MainSequences}  and
Remark~\ref{rema:hypothesesthm} imply that $\overline{D}|_{{Y}}$
satisfies the equidistribution property for
every~$v\in \mathfrak{M}_{K}$~with
\begin{equation*}
  \nu_{\overline{D}|_{Y},v}=\lim_{n\to \infty} \frac{ c_{1}((\overline{Q}_{n}|_{Y})_{v})^{\wedge e}}{((Q_{n}|_{Y})^{e})}
  =\lim_{n\to \infty} \frac{ c_{1}(\overline{Q}_{n,v})^{\wedge e} \wedge \delta_{Y_{v}^{\an}}}{(Q_{n}^{e}\cdot Y)}.
\end{equation*}

% To compute this limit, first note % that by the projection formula~\cite[Proposition~2.3(c)]{Fulton:it} and the
% functoriarily of Monge-Ampere measures \cite[Proposition~1.4.8]{BPS:asterisque}~we~have
% \begin{equation}
%   \label{eq:30}
%   (\alpha^{*}Q_{n}^{e})= (Q_{n}^{e}\cdot Y) \and \alpha_{*} (c_{1}(\alpha^{*}\overline{Q}_{n,v})^{\wedge e}) = c_{1}(\overline{Q}_{n,v})^{\wedge e
%   } \wedge \delta_{Y_{v}^{\an}}
% \end{equation}
To compute this limit, first note that
\begin{equation*}%  \label{eq:15}
  (Q_{n}^e\cdot {Y}) = \sum_{j=0}^{e} \ell^{-n(e-j)}  \binom{e}{j} (M^{e-j} \cdot \pi^{*}N^{j} \cdot  {Y}).
\end{equation*}
For each $j$ consider the intersection product $[M^{e-j} \cdot {Y}]$
in the Chow group of $j$-dimensional cycles of~${Y}$.  By the
projection formula we have
\begin{displaymath}
(M^{e-j} \cdot \pi^{*}N^{j} \cdot {Y})= 
  (\pi_*[M^{e-j}\cdot {Y}] \cdot N^{j}),
\end{displaymath}
where the left intersection number is computed over ${Y}$ and the
right over $\pi({Y})$.  In particular  this quantity vanishes for
$j > e' = \dim(\pi(Y))$. On the other hand, for $j = e'$ it is equal to
$(M^{e-e'} \cdot F) \, (N^{e'})$ for a general fiber $F$ of the
projection $ Y \rightarrow \pi(Y)$, and therefore it is positive because
$M$ is relatively ample and $N$ is ample.  Hence
\begin{equation}
\label{eq:18}
(Q_{n}^e\cdot Y) =  \ell^{-n(e-e')}  \binom{e}{e'} (M^{e-e'} \cdot \pi^{*}N^{e'} \cdot Y) + O(\ell^{-n(e-e'+1)} ),
\end{equation}
and the dominant term in this asymptotics is positive.

Furthermore the measure
$c_{1}(\overline{Q}_{n,v})^{\wedge e } \wedge \delta_{Y_{v}^{\an}}$ is
zero whenever $j>e'$ because its total mass vanishes, and therefore
\begin{displaymath}
c_{1}(\overline{Q}_{n,v})^{\wedge e} \wedge \delta_{Y_{v}^{\an}}= 
  \sum_{j=0}^{e'} \ell^{-n(e-j)}\binom{e}{j} 
    c_{1}(\Mcan_{v})^{\wedge e-j}\wedge c_{1}(\pi^{*}\Ncan_{v})^{\wedge j}   \wedge \delta_{Y_{v}^{\an}} .
\end{displaymath}
The statement then follows by taking the limit for $n\to \infty$ of the
ratio between this asymptotics and that in \eqref{eq:18}.
\end{proof}

Finally we strengthen the semiabelian equidistribution property to
include test functions with logarithmic singularities along the
closure of a torsion hypersurface or an irreducible component of the
boundary.  Recall that a hypersurface of ${G}$ is \emph{torsion} if it
is the translate of a semiabelian hypersurface of $G$ by a torsion
point.

\begin{theorem}
  \label{thm:logequisemiab}
  Let $(x_{\ell})_{\ell}$ be a $\overline{D}$-small generic sequence
  in $\overline{G}(\overline{K})$ and $E$ an effective divisor on
  $\overline{G}$ such that each irreducible component of $[E]$ is
  either the closure of a torsion hypersurface of ${G}$ or an
  irreducible component of $\overline{G}\setminus G$.  Then for every
  $v\in \mathfrak{M}_{K}$ and any function
  $\varphi \colon \overline{G}_v^{\an} \rightarrow \bR \cup \{\pm
  \infty \}$ with at most logarithmic singularities along $E$ we have
   \begin{displaymath}
     \lim_{\ell \to \infty}\int_{\overline{G}_v^{\an}} \varphi \, d\delta_{O(x_\ell)_v} = \int_{\overline{G}_v^{\an}} \varphi \, \frac{c_1(\overline{D}_v)^{\wedge r+g}}{(D^{r+g})} .
 \end{displaymath}
\end{theorem}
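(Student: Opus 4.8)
The plan is to deduce the theorem from the dynamical logarithmic equidistribution result of Theorem~\ref{thm:3}, applied to the endomorphism $[\ell]_{\overline{G}}$ of $\overline{G}$ and to the adelic divisor $\overline{D}=\Mcan+\pi^{*}\Ncan$. As recalled in \S\ref{sec:semi-vari} and used in the proof of Theorem~\ref{thm:semiab}, this $\overline{D}$ has exactly the shape required in \S\ref{sec:dynamical-heights}: it is a sum of canonical adelic divisors for the dynamical system $[\ell]_{\overline{G}}$, with $\Mcan$ and $\pi^{*}\Ncan$ semipositive, $R(M)\ne\{0\}$, $R(\pi^{*}N)\ne\{0\}$, and $D$ ample. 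Moreover, by Theorem~\ref{thm:semiab} (see Remark~\ref{rem:2}) the measure $c_{1}(\overline{D}_{v})^{\wedge r+g}/(D^{r+g})$ is precisely the $v$-adic equidistribution measure $\nu_{\overline{D},v}$. Hence, once we know that $Y(\overline{K})$ contains a Zariski-dense subset of $[\ell]_{\overline{G}}$-preperiodic points, Theorem~\ref{thm:3} will give the statement. So the whole task reduces to producing such a dense family of preperiodic points in each of the two cases.

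For the first case, suppose $Y$ is the closure in $\overline{G}$ of a torsion hypersurface $\tau+H$ of $G$, where $H\subset G$ is a semiabelian hypersurface and $\tau$ a torsion point. Every torsion point $x$ of $G$ is preperiodic for $[\ell]_{\overline{G}}$, since its forward orbit $\{\ell^{k}x\}_{k\ge 0}$ is contained in the finite group generated by $x$. I would then use the well-known fact that the torsion points of the semiabelian variety $H$ are Zariski-dense in $H$: translating by $\tau$, the points $\tau+h$ with $h\in H(\overline{K})_{\mathrm{tors}}$ are torsion points of $G$ forming a dense subset of $\tau+H$, hence of its closure $Y$. This exhibits the required dense family of preperiodic points in $Y(\overline{K})$, and the first case follows from Theorem~\ref{thm:3}.

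For the second case, $Y$ is one of the $2r$ irreducible components of the boundary $\overline{G}\setminus G$. Here I would use the explicit description $\overline{G}=(G\times(\mathbb{P}^{1})^{r})/\Gm^{r}$ from \S\ref{sec:semi-vari} to identify $Y$, compatibly with the restriction of $[\ell]_{\overline{G}}$, with the analogous compactification $\overline{G'}$ of a semiabelian variety $G'$ of torus rank $r-1$: after singling out the $\mathbb{P}^{1}$-coordinate cutting out $Y$, one gets $Y=(G'\times(\mathbb{P}^{1})^{r-1})/\Gm^{r-1}$ with $G'=G/\Gm$ the quotient by the corresponding coordinate subtorus, and $[\ell]_{\overline{G}}|_{Y}$ is induced by $[\ell]_{G'}$ together with the $\ell$-th power map on $(\mathbb{P}^{1})^{r-1}$, hence equals $[\ell]_{\overline{G'}}$. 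As in the first case, the torsion points of $G'$ are dense in $G'$, hence in $\overline{G'}\cong Y$, and they are preperiodic for $[\ell]_{\overline{G}}|_{Y}$, hence for $[\ell]_{\overline{G}}$; so $Y(\overline{K})$ again carries a dense family of preperiodic points and Theorem~\ref{thm:3} applies. (Alternatively, one can simply observe that $[\ell]_{\overline{G}}$ maps each boundary component onto itself — so $Y$ is a preperiodic subvariety — and invoke the last assertion of Theorem~\ref{thm:3}; this route requires checking that $[\ell]_{\overline{G}}^{*}D|_{Y}-D|_{Y}\equiv(\ell-1)\,M|_{Y}+(\ell^{2}-1)\,(\pi^{*}N)|_{Y}$ is ample on $Y$, which holds because $M|_{Y}$ is relatively ample over $A$ and $(\pi^{*}N)|_{Y}$ is the pullback of an ample divisor — valid once $\ell$ is chosen large, which is harmless since $\Mcan$ and $\pi^{*}\Ncan$ are independent of $\ell$.)

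The routine ingredients are the invocation of Theorems~\ref{thm:3} and~\ref{thm:semiab} and the Zariski-density of torsion in a semiabelian variety. The main obstacle I anticipate is the geometric bookkeeping in the second case: pinning down each boundary component of $\overline{G}$ together with its induced dynamics as a lower-rank compactified semiabelian variety (equivalently, verifying the ampleness statement that would let one apply the preperiodic-subvariety clause of Theorem~\ref{thm:3}). This is where the argument needs care, but it should follow directly from the quotient presentation of $\overline{G}$ recalled in \S\ref{sec:semi-vari} and the compatibility of that construction with the toric boundary of $(\mathbb{P}^{1})^{r}$ and with multiplication by $\ell$.
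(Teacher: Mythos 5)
Your proposal is correct and follows essentially the same route as the paper: both deduce the statement from Theorem~\ref{thm:3} applied to the dynamical system $[\ell]_{\overline{G}}$ and $\overline{D}=\Mcan+\pi^{*}\Ncan$, the paper doing so in one line by observing that in either case $Y$ is a preperiodic hypersurface and invoking the final clause of that theorem. Your more explicit verification via Zariski-density of torsion points (and the identification of a boundary component with a lower-rank compactified semiabelian variety) is a harmless elaboration of the same idea; note only that the ampleness check in your parenthetical holds for every $\ell>1$, since $(\ell-1)M|_{Y}+(\ell^{2}-1)\pi^{*}N|_{Y}=(\ell-1)D|_{Y}+(\ell^{2}-\ell)\pi^{*}N|_{Y}$ is the sum of an ample and a nef divisor, so no passage to large $\ell$ is needed.
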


\begin{proof}
  This follows from Theorem \ref{thm:dynlogEP} noting that every irreducible component  of $[E]$ is a
  preperiodic hypersurface for the endomorphism
  $[\ell]_{\overline{G}}$ for any $\ell>1$.
\end{proof}

\section{Quasi-projective varieties}\label{sec:quasiproj}

In this section we extend our study to the setting of adelic line
bundles on quasi-projective varieties in the sense of Yuan and Zhang
\cite{YuanZhang:quasiproj}.  We start by recalling the elements of
this theory in terms of adelic $\mathbb{R}$-divisors on
quasi-projective varieties, following the presentation of Burgos and
Kramer in~\cite[Section 3]{BurgosKramer}.  Once the basic constructions
and facts are achieved, our arguments can be applied in a rather
direct way. For brevity we focus on the variant from Section
\ref{subsec:variantMain}, whose extension
(Theorem~\ref{thm:equiquasiproj}) generalizes Yuan and Zhang's
quasi-projective equidistribution theorem.

\subsection{Adelic $\mathbb{R}$-divisors on quasi-projective
  varieties}
\label{sec:adel-divis-quasi}

First we consider the geometric case.  Let $X$ be a normal projective
variety over $K$ of dimension $d \ge 1$ and $B $ an effective divisor
on $X$. Set $U = X \setminus \supp(B)$ and let $R(X,U)$ be the
category of normal modifications of $X$ which are isomorphisms over
$U$. Given such a modification $\pi \colon X_\pi \rightarrow X$, we
write $(X_{\pi},\pi)$ or simply $\pi$ for the corresponding object in
$R(X,U)$.  The space of \emph{model $\bR$-divisors} on~$U$ is 
defined as the direct limit
\begin{displaymath}
\Div(U)_{\bR}^{\mathrm{mod}} = \varinjlim_{\pi \in R(X,U)} \Div(X_{\pi})_{\bR}.
\end{displaymath}
Given $D,D'\in \Div(U)_{\bR}^{\mathrm{mod}}$ we write $D \ge D'$ or
$D' \le D$ if there exists $(X_{\pi}, \pi) \in R(X,U)$ such that
$D, D' \in \Div(X_{\pi})_{\bR}$ and $D - D'$ is effective.  The
$B$-adic norm on $\Div(U)_{\bR}^{\mathrm{mod}}$ (with possibly
infinite values) is defined as
\begin{displaymath}
\|D\|_{B} = \inf \{ \varepsilon \in \bR_{> 0} \ | \ - \varepsilon B \le D \le \varepsilon B\} \quad \text{ for every } D \in \Div(U)_{\bR}^{\mathrm{mod}}.
\end{displaymath}
The space $\Div(U)_{\bR}^{\adel}$ is then defined as the completion of
$\Div(U)_{\bR}^{\mathrm{mod}}$ for the $B$-adic topology, and its
elements are called \emph{adelic $\mathbb{R}$-divisors} on $U$.  This
space depends on the open subset $U$ but not on the effective
divisor~$B$.
% by \cite[Lemma 2.4.1]{YuanZhang:quasiproj},
% \cite[Remark 3.3]{BurgosKramer}.

% In \cite{YuanZhang:quasiproj, BurgosKramer}, elements of
% $\Div(U)_{\bR}$ are called adelic divisors on $U$. We prefer
% not to use this terminology here in order to avoid confusion with the
% arithmetic setting. 

Let $D\in \Div(U)_{\bR}^{\adel}$ and $(D_i)_{i}$  a Cauchy sequence in
$ \Div(U)_{\bR}^{\model}$ representing this adelic $\mathbb{R}$-divisor. The
\emph{volume} of $D$ is defined as
\begin{displaymath}
  \vol(D)= \lim_{i \to \infty} \vol(D_i).
\end{displaymath}
It follows from \cite[Theorems 5.2.1 and 5.2.9]{YuanZhang:quasiproj}
that this limit exists in $\bR$ and does not depend on the choice of
the approximating sequence.

We say that $D$ is \emph{big} if $\vol(D) > 0$.  We also say that $D$
is \emph{nef} if the sequence $(D_i)_{i}$ can be chosen such that
$D_i$ is nef for every $i$. We then say that $D$ is \emph{integrable}
if it can be written as $D = A_1 - A_2$ with
$A_1, A_2 \in \Div(U)_{\bR}^{\adel}$ nef.  The subspace of integrable
adelic $\mathbb{R}$-divisors on $U$ is denoted
by~$\Div(U)_{\bR}^{\integ}$.

The \emph{intersection product} of integrable adelic $\mathbb{R}$-divisors is
the symmetric multilinear map from \cite[Theorem 3.7]{BurgosKramer}
\begin{displaymath}
(D_{1}, \dots, D_{d}) \in (  \Div(U)_{\bR}^{\integ})^{d}  \longmapsto  (D_{1} \cdots D_{d}) \in \mathbb{R}.
\end{displaymath}
For $j=1,\dots, d$ let $D_{j}$ be a nef adelic $\mathbb{R}$-divisor on
$U$ and choose a Cauchy sequence $(D_{j,i})_{i}$ of nef model
$\mathbb{R}$-divisors on $U$ representing $D_{j}$. Then
\begin{displaymath}
  (D_1 \cdots D_d) = \lim_{i \to \infty} (D_{1,i} \cdots D_{d,i})
\end{displaymath}
with the intersection products in the right-hand side computed in
common models.  By \cite[Theorem 3.7]{BurgosKramer}, this limit exists
in $\mathbb{R}$ and does not depend on the choice of the sequences.
We have $ (D^d)=\vol(D) $ for every nef $D \in \Div(U)_{\bR}^{\adel}$.

\begin{definition}
  \label{def:16}
  Let $P, A$ be big adelic $\mathbb{R}$-divisors on $U$.  The \emph{inradius}
  of $P$ with respect to~$A$ is defined as
\begin{displaymath}
r(P;A) = \sup \{ \lambda \in \bR \ | \ P-\lambda A \text{ is big}\}.
\end{displaymath}  
\end{definition}

\begin{lemma}
  \label{lem:3}
  Let $P, A$ be big adelic $\mathbb{R}$-divisors on $U$, and let
  $(P_{i})_{i}$ and $(A_{i})_{i}$ be Cauchy sequences in
  $\Div(U)^{\model}$ representing $P$ and $A$. Then
  \begin{equation*}
    %\label{eq:limitinradius}
r(P;A) = \lim_{i \to \infty} r(P_i;A_i).
\end{equation*}
Moreover, $r(P;A)$ is a positive real number.
\end{lemma}

\begin{proof}
By definition we have
  \begin{equation}
    \label{eq:3}
    \lim_{i\to \infty}\vol(P_{i}-\lambda  A_{i})=\vol(P-\lambda A) \quad  \text{ for every } \lambda \in \bR.    
  \end{equation}
  Let $\lambda<r(P;A)$. Then $P-\lambda A$ is big and so the
  right-hand side of \eqref{eq:3} is strictly positive, which implies
  that $P_{i}-\lambda A_{i}$ is big for every $i$ large enough. Since
  $\lambda$ is arbitrary, we obtain
  \begin{displaymath}
    \liminf_{i\to \infty}r(P_{i};A_i) \ge r(P;A).
  \end{displaymath}
  Now let $\lambda<\limsup_{i\to \infty} r(P_{i};A_{i})$. Then there
  are subsequences $(P_{i_{k}})_{k}$ and $(A_{i_{k}})_{k}$ and a
  constant $c>0$ such that
  $(P_{i_{k}} -\lambda A_{i_{k}}) - c A_{i_{k}}$ is big
  for every $k$. Hence the left-hand side of \eqref{eq:3} is strictly
  positive and $P-\lambda  A$ is big.  Since $\lambda$
  is~arbitrary,~we~get
  \begin{displaymath}
    \limsup_{i\to \infty}r(P_{i};A_i) \le r(P;A)
  \end{displaymath}
  thus completing the proof of the first statement.

  For the second, since bigness is an open condition we have
  $r(P;A) > 0$.  This condition also implies that there exist
  $P',A' \in \Div(X)_{\bR}$ with $P'-P$ and $A-A'$ big, and so
  $r(P;A) \le r(P';A') < \infty$.
\end{proof}

Next we consider the arithmetic case. An \emph{arithmetic variety}
$\mathcal{X}$ over $\mathcal{O}_{K}$ is a flat integral scheme over
$\Spec(\mathcal{O}_{K})$.  Assume that $\mathcal{X}$ is normal and
projective of dimension $d+1$ and denote by $X$ its generic fiber,
which is a normal projective variety over $K$ of dimension $d$. We
denote by $ \Div(\mathcal{X})_\bR $ the space of $\bR$-divisors on
$\mathcal{X}$, and for $\mathcal{D}\in \Div(\mathcal{X})_{\mathbb{R}}$
we denote by $\mathcal{D}|_{X}\in \Div(X)_{\mathbb{R}}$ the
restriction to  $X$. 

An \emph{arithmetic $\bR$-divisor} on $\mathcal{X}$ is a pair
$\overline{\mathcal{D}} = (\mathcal{D},(g_v)_{v \in
  {\mathfrak{M}_{K}^{\infty}}})$ where $\mathcal{D}$ is an
$\bR$-divisor on $\mathcal{X}$ and $g_v$  a continuous $v$-adic
Green function for $\mathcal{D}|_{X}$ for every
$v \in {\mathfrak{M}_{K}^{\infty}}$. The space of arithmetic
$\bR$-divisors on $\mathcal{X}$ is denoted by
$\widehat{\Div}(\mathcal{X})_{\bR}$.  We say that
$\overline{\mathcal{D}}$ is \emph{effective} (respectively
\emph{strictly effective}) if $\mathcal{D}$ is effective and
$g_v \ge 0$ (respectively $g_v > 0$) on
$X_v^{\mathrm{an}} \setminus \supp(D)_v^{\an}$ for every
$v$.

Let $\overline{\mathcal{B}} = (\mathcal{B},(g_{\overline{\mathcal{B}},v})_v)$ be a strictly effective arithmetic
divisor on $\mathcal{X}$ and set
$\mathcal{U} = \mathcal{X} \setminus \supp(\mathcal{B})$. Consider
also the underlying divisor and the open subset
\begin{displaymath}
B = \mathcal{B}|_{X} \in \Div(X) \and U = X \setminus \supp(B) \subset X.  
\end{displaymath}
We denote by $R(\mathcal{X},\mathcal{U})$ the category of normal
modifications $\pi\colon \mathcal{X}_{\pi} \to \mathcal{X}$ that are
isomorphisms over $\mathcal{U}$. Such a normal modification is denoted
by $(\pi, \mathcal{X}_{\pi})$ or simply by~$\pi$. The space of
\emph{model arithmetic $\bR$-divisors} on $\mathcal{U}$ is 
the direct limit
\begin{displaymath}
\widehat{\Div}(\mathcal{U})_{\bR}^{\mathrm{mod}}  = \varinjlim_{\pi \in R(\mathcal{X}, \mathcal{U})} \widehat{\Div}(\mathcal{X}_\pi)_{\bR}.
\end{displaymath} 

Given
$\overline{\mathcal{D}}_1, \overline{\mathcal{D}}_2 \in
\widehat{\Div}(\mathcal{U})_{\bR}^{\mathrm{mod}}$ we write
$\overline{\mathcal{D}}_1 \ge \overline{\mathcal{D}}_2$ or
$\overline{\mathcal{D}}_2 \le \overline{\mathcal{D}}_1$ whenever
$\overline{\mathcal{D}}_1 - \overline{\mathcal{D}}_2$ is effective on
a model where both $\overline{\mathcal{D}}_1$ and
$\overline{\mathcal{D}}_2$ are defined.  The
$\overline{\mathcal{B}}$-adic norm is~defined~as
\begin{displaymath}
  \|\overline{\mathcal{D}}\|_{\overline{\mathcal{B}}} = \inf \{ \varepsilon \in \bR_{>0} \ | \ -\varepsilon \overline{\mathcal{B}}
  \le \overline{\mathcal{D}} \le \varepsilon \overline{\mathcal{B}} \} \quad \text{ for } \overline{\mathcal{D}}
  \in \widehat{\Div}(\mathcal{U})_{\bR}^{\mathrm{mod}}.
\end{displaymath}
The space $\widehat{\Div}(\mathcal{U})_{\bR}^{\mathrm{adel}}$ of
\emph{adelic $\bR$-divisors} on $\mathcal{U}$ is then defined as the
completion of $\widehat{\Div}(\mathcal{U})_{\bR}^{\mathrm{mod}}$ with
respect to the $\overline{\mathcal{B}}$-adic topology. 
As in the geometric case, it  depends only on the open subset
$\mathcal{U}$. % \cite[Proposition 3.36]{BurgosKramer}.

\begin{remark}
\label{rem:3}
In \cite[Section 3.3]{BurgosKramer} arithmetic varieties are required
to have smooth generic fiber, and arithmetic $\mathbb{R}$-divisors are
assumed to be of smooth type.  Nevertheless, our space
$\widehat{\Div}(\mathcal{U})_{\mathbb{R}}^{\adel}$ coincides with that
in \emph{loc. cit.} up to possibly shrinking the open subset
$\mathcal{U}$, by the existence of compactifications with smooth
generic fiber and the density of Green functions of smooth type among
those of continuous type \cite[Remark 3.14]{BurgosKramer}.
\end{remark}

Given
$\overline{\mathcal{D}} \in
\widehat{\Div}(\mathcal{U})_{\mathbb{R}}^{\adel}$ we denote by
$(\overline{\mathcal{D}}_i)_{i}$ a Cauchy sequence in
$\widehat{\Div}(\mathcal{U})_{\bR}^{\mathrm{mod}}$ representing this
adelic $\mathbb{R}$-divisor.  For convenience, we assume that such
sequences have bounded differences even for small indices, namely that
there exists $c\in \mathbb{R}_{\ge 0}$ with
\begin{equation}
  \label{eq:76}
  \|\overline{\mathcal{D}}_{i}-\overline{\mathcal{D}}_{j}\|_{\overline{\mathcal{B}}} \le c \quad \text{ for every } i,j. 
\end{equation}

For each $i$ choose
$(\pi_i, \mathcal{X}_i) \in R(\mathcal{X}, \mathcal{U})$ such that
$\overline{\mathcal{D}}_i \in \widehat{\Div}(\mathcal{X}_i)_{\bR}$,
set $X_i$ for the generic fiber of $ \mathcal{X}_i$ and then
$D_i = {\mathcal{D}_i}|_{X_i}$.  We have that $(D_i)_{i}$ is a
sequence in $\Div(U)_{\bR}^{\mathrm{mod}}$ that is Cauchy for the
$B$-adic topology and so defines an element
$D \in \Div(U)_{\bR}^{\adel}$, called the \emph{geometric} adelic
$\mathbb{R}$-divisor of $\overline{\mathcal{D}}$.

Also for each $i$ we write
$\overline{\mathcal{D}}_{i}=(\mathcal{D}_{i}, (g_{i,v})_{v\in
  \mathfrak{M}_{K}^{\infty}})\in \widehat{\Div}(\mathcal{X}_{i})_{\mathbb{R}} $
and we denote by
\begin{equation}
  \label{eq:4}
 \overline{D}_i=(D_{i}, (g_{i,v})_{v\in \mathfrak{M}_{K}})\in \widehat{\Div}(X_{i})_{\mathbb{R}} 
\end{equation}
the adelic $\mathbb{R}$-divisor on $X_{i}$ in the sense of Definition
\ref{def:adelicdiv} obtained by adding the non-Archimedean Green
functions induced by $\mathcal{D}_{i}$, as explained in
Section~\ref{subsec:adelicdef}.

\begin{remark}
  \label{rem:6}
  We denote the elements of $\widehat{\Div}(\mathcal{X})_{\mathbb{R}}$
  by overlined calligraphic letters and following the pattern
  described above, the akin elements of
  $\Div(\mathcal{X})_{\mathbb{R}}$, $\widehat{\Div}(X)_{\mathbb{R}}$
  and ${\Div}({X})_{\mathbb{R}}$ are denoted by either the same
  calligraphic letter or the corresponding Roman one, and either
  keeping or not the overline. Similarly, the elements of
  $\widehat{\Div}(\mathcal{U})_{\mathbb{R}}^{\adel}$ are denoted by
  overlined calligraphic letters and those in
  $\Div(U)_{\mathbb{R}}^{\adel}$ by the corresponding non-overlined
  Roman letter.
\end{remark}

We say that $\overline{\mathcal{D}}$ is \emph{pseudo-effective} (respectively
\emph{semipositive}, respectively \emph{nef}) if the sequence
$(\overline{\mathcal{D}}_i)_{i}$ can be chosen such that
$\overline{\mathcal{D}}_i $ is pseudo-effective (respectively semipositive,
respectively nef) for every $i$.  We say that $\overline{\mathcal{D}}$ is
\emph{integrable} if it is the difference of two nef adelic
$\bR$-divisors on $\mathcal{U}$. The subspace of integrable adelic
$\bR$-divisors on $\mathcal{U}$ is denoted by
$\widehat{\Div}(\mathcal{U})_{\bR}^{\mathrm{int}}$.

\begin{remark}
\label{rem:10}
Our definition of nef adelic $\bR$-divisors on $\mathcal{U}$ coincides
with that in~\cite{BurgosKramer} but differs slightly from the one in
\cite{YuanZhang:quasiproj}, where nef adelic divisors in the above
sense are called strongly nef. However, the two definitions coincide
after possibly shrinking the open subset $\mathcal{U}$ \cite[Remarks
3.5, 3.16 and 3.39]{BurgosKramer}.
\end{remark}

When $\overline{\mathcal{D}} $ is integrable, for each
$v \in \mathfrak{M}_K$ we denote by
$c_1(\overline{\mathcal{D}}_v)^{\wedge d}$ the signed measure on
$U_v^{\an}$ defined in \cite[Section 3.6.7]{YuanZhang:quasiproj} and
extended to integrable adelic $\bR$-divisors by multilinearity.  By
\cite[Lemma 5.4.4]{YuanZhang:quasiproj} it has total mass $(D^d)$.  If
$\overline{\mathcal{D}} $ is nef and $\overline{\mathcal{D}}_i$ is
also nef for every $i$, then
 \begin{equation}\label{eq:defMongeAmperequasiproj}
 \int_{U_v^{\an}} \varphi \, c_1(\overline{\mathcal{D}}_v)^{\wedge d} = \lim_{i \to \infty} \int_{X_{i,v}^{\an}} \varphi \, c_1(\overline{D}_{i,v})^{\wedge d}
 \end{equation}
 for every continuous function
 $\varphi \colon U_v^{\an} \rightarrow \bR$ with compact support. Here
 we view $\varphi$ as a function on $X_{i,v}^{\an}$ via the open
 immersion $U_v^{\an} \hookrightarrow X_{i,v}^{\an}$.

 For each $i$ we denote by
 $h_{\overline{\mathcal{D}}_{i}} \colon X_{i}(\overline{K})
 \rightarrow \bR$ the height function of
 $\overline{D}_{i} \in \widehat{\Div}(X_{i})_{\mathbb{R}}$.  The fact
 that $(\overline{\mathcal{D}}_i)_{i}$ is a Cauchy sequence readily
 implies that $\lim_{i \to \infty} h_{\overline{\mathcal{D}}_i}(x)$
 exists for every $x \in U(\overline{K})$. This limit does not depend
 on the choice of the sequence, and so we define the height function
 $ h_{\overline{\mathcal{D}}} \colon U(\overline{K}) \longrightarrow
 \bR$ by setting
\begin{displaymath}
  h_{\overline{\mathcal{D}}}(x) = \lim_{i\to
    \infty}h_{\overline{\mathcal{D}}_i}(x) \quad \text{ for every }
  x \in U(\overline{K}).  
\end{displaymath}

The arithmetic intersection product 
of integrable adelic $\mathbb{R}$-divisors is
the symmetric multilinear map from \cite[Theorem 3.37]{BurgosKramer}
\begin{displaymath}
  (\overline{\mathcal{D}}_{1}, \dots, \overline{\mathcal{D}}_{d+1}) \in (  \widehat{\Div}(\mathcal{U})_{\bR}^{\integ})^{d+1}  \longmapsto
(\overline{\mathcal{D}}_{1}\cdots \overline{\mathcal{D}}_{d+1})\in \mathbb{R}.
\end{displaymath}
For $j=1,\dots, d+1$ let $\overline{\mathcal{D}}_{j}$ be a nef adelic
$\mathbb{R}$-divisor on $\mathcal{U}$ and choose a Cauchy sequence
$(\overline{\mathcal{D}}_{j,i})_{i}$ of nef model
$\mathbb{R}$-divisors on $\mathcal{U}$ representing
$\overline{\mathcal{D}}_{j}$. Then
\begin{displaymath}
(\overline{\mathcal{D}}_1 \cdots \overline{\mathcal{D}}_{d+1}) = \lim_{i \to \infty} (\overline{D}_{1,i} \cdots  \overline{D}_{d+1,i}),
\end{displaymath}
where $\overline{D}_{i,j}$, $j=1,\dots, d+1$, are the associated
adelic $\mathbb{R}$-divisors as in \eqref{eq:4} and the arithmetic
intersection products in the right-hand side are computed in common
models.

For any dominant morphism
$\phi \colon \mathcal{U}' \rightarrow \mathcal{U}$ of normal
quasi-projective arithmetic varieties there is a pullback map
$\phi^* \colon \widehat{\Div}(\mathcal{U})_{\bR}^{\mathrm{adel}}
\rightarrow \widehat{\Div}(\mathcal{U}')_{\bR}^{\mathrm{adel}}$
\cite[Section 3.5]{BurgosKramer}. If $\phi$ is birational then
\begin{displaymath}
  (\overline{\mathcal{D}}_1 \cdots \overline{\mathcal{D}}_{d+1}) =(\phi^*\overline{\mathcal{D}}_1 \cdots \phi^*\overline{\mathcal{D}}_{d+1}).
\end{displaymath}

We denote by $[\infty] \in \widehat{\Div}(\mathcal{X})_{\bR} $ the
arithmetic divisor over the zero divisor on $\mathcal{X}$ with
$g_v = 1$ for every $v \in \mathfrak{M}_K^{\infty}$. We have
\begin{equation*}
%  \label{eq:19}
 (\overline{\mathcal{D}}_1 \cdots \overline{\mathcal{D}}_{d}\cdot [\infty]) = (D_{1}  \cdots D_{d}).
\end{equation*}

 \subsection{Essential and absolute minima}
 \label{sec:essent-absol-minima}

 Let
 $\overline{\mathcal{D}} \in
 \widehat{\Div}(\mathcal{U})_{\bR}^{\mathrm{adel}}$ with big
 $D \in \Div(U)^{\adel}_{\bR}$.  Let $(\overline{\mathcal{D}}_i)_{i}$
 be a Cauchy sequence in
 $\widehat{\Div}(\mathcal{U})_{\bR}^{\mathrm{mod}}$ representing
 $\overline{\mathcal{D}}$, and for each $i$ let
 $\overline{{D}}_i \in \widehat{\Div}(X_{i})_{\mathbb{R}}$ as in
 \eqref{eq:4}.

 The \emph{essential minimum} of
 $\overline{\mathcal{D}}$ is defined in the expected way as the quantity
\begin{displaymath}
  \mu^{\mathrm{ess}}(\overline{\mathcal{D}}) 
= \sup_{V \subsetneq U} \inf_{x \in (U\setminus V)(\overline{K})} h_{\overline{\mathcal{D}}}(x),
\end{displaymath}
 the supremum being over all the proper closed subsets $V \subsetneq U$. 
On the other hand, there is no direct extension of the absolute
minimum to the quasi-projective setting because the height
function of $\overline{\mathcal{D}}$ is only defined
on~$U(\overline{K})$. For  this notion  we restrict  to the case when
$\overline{\mathcal{D}}$ is semipositive and  define its
\emph{absolute minimum} as the quantity
\begin{displaymath}
\mu^{\abs}(\overline{\mathcal{D}}) = \sup \{ \lambda \in \bR \ | \ \overline{\mathcal{D}} - \lambda [\infty] \text{ is nef}\}, 
\end{displaymath}
in agreement with \eqref{eq:28}.

We need some auxiliary results.

\begin{lemma}\label{lemma:nefquasiproj}
  If $\overline{\mathcal{D}}$ is semipositive and
  $\mu^{\abs}(\overline{\mathcal{D}}) > - \infty$, then
  $\overline{\mathcal{D}} -
  \mu^{\abs}(\overline{\mathcal{D}})[\infty]$ is nef.
\end{lemma}

\begin{proof}
  Let $(\lambda_n)_{n}$ be a sequence of real numbers converging to
  $\mu^{\abs}(\overline{\mathcal{D}})$ from below. Then for every $n $
  there exists a Cauchy sequence $(\overline{\mathcal{D}}_{n,i})_i$ in
  $\widehat{\Div}(\mathcal{U})_{\bR}^{\mathrm{mod}}$ representing
  $\overline{\mathcal{D}} - \lambda_n[\infty]$ with
  $\overline{\mathcal{D}}_{n,i}$ nef for every
  $i$. Since $\overline{\mathcal{B}}$ is strictly effective we have that
  $\|[\infty]\|_{\overline{\mathcal{B}}}$ is a real number.
  Then for any $\varepsilon > 0$ and every $n$ and $i$ sufficiently
  large we have
\begin{displaymath}
  \|\overline{\mathcal{D}}_{n,i} - (\overline{\mathcal{D}} - \mu^{\abs}(\overline{\mathcal{D}})\, [\infty])\|_{\overline{\mathcal{B}}} \le \|\overline{\mathcal{D}}_{n,i} - (\overline{\mathcal{D}} - \lambda_n[\infty])\|_{\overline{\mathcal{B}}} + (\mu^{\abs}(\overline{\mathcal{D}}) - \lambda_n) \, \|[\infty]\|_{\overline{\mathcal{B}}} < \varepsilon.
\end{displaymath}
It follows that
$\overline{\mathcal{D}} - \mu^{\abs}(\overline{\mathcal{D}})[\infty]$
can be represented by a Cauchy sequence of nef model arithmetic
$\bR$-divisors, and so it is nef.
\end{proof}

\begin{lemma}\label{lemma:essminlimitquasiproj}
 We have
\begin{math}
\displaystyle{\mu^{\ess}(\overline{\mathcal{D}}) = \lim_{i \to \infty} \mu^{\ess}(\overline{{D}}_i)}.
\end{math}
\end{lemma}

\begin{proof}
  For any $k \in \bN$ and $t \in \bR$ we have
\begin{displaymath}
  \mu^{\ess}(k\,\overline{\mathcal{D}} + t \,[\infty]) = k\, \mu^{\ess}(\overline{\mathcal{D}}) + t \and  \mu^{\ess}(k \overline{D}_i + t\, [\infty]) = k\, \mu^{\ess}(\overline{D}_i) + t, \quad i\in \mathbb{N},
\end{displaymath}
and so we can replace without loss of generality
$\overline{\mathcal{D}}$ by $k\, \overline{\mathcal{D}} + t\,[\infty]$
and $\overline{\mathcal{D}}_{i}$ by
$k\, \overline{\mathcal{D}}_{i} + t\,[\infty]$.  % Since
Since $D$ is big, taking $k$ and $t$ sufficiently large and applying
Lemma \ref{lem:6} we can assume that
$\overline{\mathcal{B}} \le \overline{\mathcal{D}}_i $, first for
$i=1$ and then for every $i\in \mathbb{N}$ using the assumption
\eqref{eq:76}.

Since $(\overline{\mathcal{D}}_i)_{i}$ is Cauchy, there exists a
sequence of positive real numbers~$(\varepsilon_i)_{i}$ converging to
zero such that
   \begin{equation}
     \label{eq:25}
     \overline{\mathcal{D}}_i - \varepsilon_i \overline{\mathcal{B}}
     \le \overline{\mathcal{D}}_j \le \overline{\mathcal{D}}_i +
     \varepsilon_i \overline{\mathcal{B}} \quad \text{ for every } 0 \le i\le j.        
   \end{equation}
   Since the support of
   $ \pm(D_j - D_i) +
   \varepsilon_i B$ does not intersect $U$,
   we get from \eqref{eq:25}
   \begin{equation}
     \label{eq:82}
  h_{\overline{\mathcal{D}}_i}(x) - \varepsilon_i h_{\overline{\mathcal{B}}}(x) \le h_{\overline{\mathcal{D}}_{j}}(x) \le h_{\overline{\mathcal{D}}_i}(x) + \varepsilon_i h_{\overline{\mathcal{B}}}(x) \quad \text{ for every }  x \in U(\overline{K}). 
\end{equation}
Since $\overline{\mathcal{B}} \le \overline{\mathcal{D}}_i$, there
exists a dense open subset $U_i \subset U$ such that
$h_{\overline{\mathcal{B}}}(x) \le h_{\overline{\mathcal{D}}_i}(x)$
for every $x \in U_i(\overline{K})$.  Using this and taking the limit
for $j\to \infty$ in \eqref{eq:82} we deduce
\begin{displaymath}
(1- \varepsilon_i) h_{\overline{\mathcal{D}}_i}(x)   \le h_{\overline{\mathcal{D}}}(x) \le (1+ \varepsilon_i)h_{\overline{\mathcal{D}}_i}(x)  \quad \text{ for every }  x \in U_{i}(\overline{K}). 
\end{displaymath}
Since $U_i$ is dense, this implies that
$(1- \varepsilon_i) \mu^{\ess}(\overline{{D}}_i) \le
\mu^{\ess}(\overline{\mathcal{D}}) \le (1+
\varepsilon_i)\mu^{\ess}(\overline{{D}}_i)$, and the lemma follows
by letting $i\to \infty$.
\end{proof}

\begin{lemma}\label{lemma:increaseessminquasiproj}
  Let
  $\overline{\mathcal{E}} \in \widehat{\Div}(\mathcal{U})_{\bR}^{\mathrm{adel}}$
  such that $E \in \Div(U)^{\adel}_{\bR}$ is big and
  $\overline{\mathcal{D}} - \overline{\mathcal{E}}$ is pseudo-effective. Then
  $\mu^{\ess}(\overline{\mathcal{D}}) \ge \mu^{\ess}(\overline{\mathcal{E}})$.
\end{lemma}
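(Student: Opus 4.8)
The plan is to reduce the statement to the projective case treated in Lemma~\ref{lemma:propertiesessmin}\eqref{item:essminincreases} and then pass to the limit using Lemma~\ref{lemma:essminlimitquasiproj}. First I would unwind the definitions. By the definition of pseudo-effectivity in the quasi-projective setting, the difference $\overline{D} - \overline{E}$ is represented by a Cauchy sequence $(\overline{\mathcal{F}}_i)_i$ in $\widehat{\Div}(\mathcal{U})_{\bR}^{\mathrm{mod}}$ such that $\overline{\mathcal{F}}_i^{\mathrm{ad}}$ is pseudo-effective for every $i$. Choosing also a Cauchy sequence $(\overline{\mathcal{E}}_i)_i$ representing $\overline{E}$ and, for each $i$, a common normal model $\mathcal{X}_i \in R(\mathcal{X},\mathcal{U})$ on which both $\overline{\mathcal{F}}_i$ and $\overline{\mathcal{E}}_i$ are defined, I would set $\overline{\mathcal{D}}_i = \overline{\mathcal{F}}_i + \overline{\mathcal{E}}_i \in \widehat{\Div}(\mathcal{X}_i)_{\bR}$. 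Since addition is continuous for the $\overline{\mathcal{B}}$-adic topology, $(\overline{\mathcal{D}}_i)_i$ is a Cauchy sequence representing $\overline{D}$, and since the assignment $\overline{\mathcal{G}} \mapsto \overline{\mathcal{G}}^{\mathrm{ad}}$ is additive, on the generic fibre $X_i$ we have $\overline{\mathcal{D}}_i^{\mathrm{ad}} - \overline{\mathcal{E}}_i^{\mathrm{ad}} = \overline{\mathcal{F}}_i^{\mathrm{ad}}$, which is pseudo-effective.

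Next I would verify the bigness hypothesis needed on the models. Writing $D_i = (\mathcal{D}_i)_{|X_i}$ for the geometric divisor of $\overline{\mathcal{D}}_i^{\mathrm{ad}}$, the sequence $(D_i)_i$ represents $D \in \Div(U)_{\bR}^{\lim}$, so that $\vol(D) = \lim_{i\to\infty} \vol(D_i)$. As $D$ is big we have $\vol(D) > 0$, hence $\vol(D_i) > 0$, i.e. $D_i$ is big, for all $i$ large enough. For such $i$, Lemma~\ref{lemma:propertiesessmin}\eqref{item:essminincreases} applied to $\overline{\mathcal{D}}_i^{\mathrm{ad}}$ and $\overline{\mathcal{E}}_i^{\mathrm{ad}}$ — whose difference $\overline{\mathcal{F}}_i^{\mathrm{ad}}$ is pseudo-effective — yields
\begin{displaymath}
\mu^{\ess}(\overline{\mathcal{D}}_i^{\mathrm{ad}}) \ge \mu^{\ess}(\overline{\mathcal{E}}_i^{\mathrm{ad}}).
\end{displaymath}

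Finally I would take the limit $i\to\infty$. By Lemma~\ref{lemma:essminlimitquasiproj} we have $\mu^{\ess}(\overline{D}) = \lim_i \mu^{\ess}(\overline{\mathcal{D}}_i^{\mathrm{ad}})$; likewise $\mu^{\ess}(\overline{E}) = \lim_i \mu^{\ess}(\overline{\mathcal{E}}_i^{\mathrm{ad}})$ when $E$ is big (and if $\mu^{\ess}(\overline{E}) = -\infty$ there is nothing to prove), so passing to the limit in the displayed inequality gives $\mu^{\ess}(\overline{D}) \ge \mu^{\ess}(\overline{E})$. The main obstacle is the second step: one must produce model representatives that \emph{simultaneously} have big geometric divisor $D_i$ and pseudo-effective arithmetic difference $\overline{\mathcal{F}}_i^{\mathrm{ad}}$ for all large $i$, and this is exactly where the continuity of the volume function and the definition of quasi-projective pseudo-effectivity (together with the fact that these positivity notions pull back along normal modifications, cf. Remark~\ref{rema:effvspseff}) are used. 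Everything else is a routine manipulation of Cauchy sequences in the adic completion.
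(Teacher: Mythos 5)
Your argument is essentially the paper's own proof spelled out: the paper reduces the statement to Lemma \ref{lemma:propertiesessmin}\eqref{item:essminincreases} on common projective models chosen from the Cauchy data (with $D_i$ big for large $i$ by continuity of the volume, and $\overline{\mathcal{D}}_i^{\mathrm{ad}}-\overline{\mathcal{E}}_i^{\mathrm{ad}}$ pseudo-effective by the definition of quasi-projective pseudo-effectivity) and then passes to the limit via Lemma \ref{lemma:essminlimitquasiproj}, exactly as you do. The only point to watch is the limit on the $\overline{E}$-side: Lemma \ref{lemma:essminlimitquasiproj} is proved under a bigness hypothesis and your dichotomy ``$E$ big or $\mu^{\ess}(\overline{E})=-\infty$'' is not exhaustive (e.g.\ adelic divisors over $E=0$ have finite essential minimum), but this is the same level of care as the paper's own one-line proof, and in the paper's applications the relevant geometric divisor is indeed big.
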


\begin{proof}
  Let $ (\overline{\mathcal{E}}_i)_i$ and
  $(\overline{\mathcal{M}}_i)_{i}$ be Cauchy sequences in
  $\widehat{\Div}(\mathcal{U})_{\bR}^{\mathrm{mod}}$ representing
  respectively $\overline{\mathcal{E}}$ and
  $\overline{\mathcal{D}}-\overline{\mathcal{E}}$, with
  $\overline{\mathcal{M}}_i$ pseudo-effective for every $i$. Then there is a
  sequence $(\varepsilon_i)_i$ of real numbers converging to zero such
  that
\begin{equation}\label{eq:pseffquasiproj}
  \overline{\mathcal{M}}_i \le \overline{\mathcal{D}}_i - \overline{\mathcal{E}}_i + \varepsilon_i \overline{\mathcal{B}}
  \quad \text{ for every } i.
\end{equation}
Then
$(\overline{\mathcal{D}}_i + \varepsilon_i \overline{\mathcal{B}})_i$
is a Cauchy sequence representing $\overline{\mathcal{D}}$ which
by~\eqref{eq:pseffquasiproj} satisfies that
$(\overline{\mathcal{D}}_i + \varepsilon_i \overline{\mathcal{B}})-
\overline{\mathcal{E}}_i$ is pseudo-effective for every $i$. By Lemmas
\ref{lemma:propertiesessmin}\eqref{item:essminincreases}
and~\ref{lemma:essminlimitquasiproj} we have
 \begin{displaymath}
   \mu^{\ess}(\overline{\mathcal{D}}) = \lim_{i \to \infty} \mu^{\ess}(\overline{D}_i + \varepsilon_i
\overline{B}) \ge \lim_{i \to \infty} \mu^{\ess}(\overline{{E}}_i)
   = \mu^{\ess}(\overline{\mathcal{E}}).
 \end{displaymath}
\end{proof}

The next result is the quasi-projective version of Corollary
\ref{coro:ineqSiu}, and will be the key ingredient in the proof of our
quasi-projective equidistribution theorem.

\begin{proposition}\label{prop:keyquasiproj} Let
  $\overline{\mathcal{P}}, \overline{\mathcal{E}} \in
  \widehat{\Div}(\mathcal{U})_{\bR}^{\mathrm{adel}}$ with
  $\overline{\mathcal{P}}$ nef and
  $P \in \Div(U)_{\mathbb{R}}^{\adel}$ big. Assume that there exists a
  nef
  $\overline{\mathcal{A}} \in
  \widehat{\Div}(\mathcal{U})_{\bR}^{\mathrm{adel}}$ such that
  $\overline{\mathcal{A}} \pm \overline{\mathcal{E}}$ are nef and
  $A \in \Div(U)_{\mathbb{R}}^{\adel}$ is big. There exists a constant
  $c_d$ depending only on $d$ such that
\begin{displaymath}
\mu^{\ess}(\overline{\mathcal{P}} + \lambda \overline{\mathcal{E}}) \ge \frac{(\overline{\mathcal{P}}^{d+1})}{(d+1)\vol(P+\lambda E)} +  \frac{(\overline{\mathcal{P}}^d \cdot \overline{\mathcal{E}})}{(P^d)}\, \lambda - c_d \, \frac{(\overline{\mathcal{P}}^d \cdot \overline{\mathcal{A}})}{(P^d)} \, \frac{\lambda^2}{r(P;A)}
\end{displaymath}
for every $0 \le \lambda < r(P;A)/2$. In particular, if
$E=0 \in \Div(U)^{\adel}_{\bR}$ then
\begin{displaymath}
\mu^{\ess}(\overline{\mathcal{P}} + \lambda \overline{\mathcal{E}}) \ge \frac{(\overline{\mathcal{P}}^{d+1})}{(d+1)(P^d)} +  \frac{(\overline{\mathcal{P}}^d \cdot \overline{\mathcal{E}})}{(P^d)} \, \lambda - c_d \, \frac{(\overline{\mathcal{P}}^d \cdot \overline{\mathcal{A }})}{(P^d)} \, \frac{\lambda^2}{r(P;A)}.
\end{displaymath}
\end{proposition}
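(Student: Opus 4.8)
The plan is to deduce Proposition \ref{prop:keyquasiproj} from its counterpart on projective arithmetic varieties, namely Corollary \ref{coro:ineqSiu}, by a limiting argument over a Cauchy sequence of models. The point is that every quantity appearing in the asserted inequality --- the essential minimum, the arithmetic intersection numbers, the geometric volume, and the inradius --- has been set up in \S\ref{parag:quasiprojgeom}--\S\ref{parag:quasiprojarithm} precisely so as to be the limit of the corresponding quantities for model divisors. First I would fix strictly effective $\overline{\mathcal{B}}$ on a model $\mathcal{X}$ with $\mathcal{U} = \mathcal{X}\setminus\operatorname{supp}(\mathcal{B})$, and choose Cauchy sequences $(\overline{\mathcal{P}}_i)_i$, $(\overline{\mathcal{E}}_i)_i$, $(\overline{\mathcal{A}}_i)_i$ in $\widehat{\Div}(\mathcal{U})_{\bR}^{\mathrm{mod}}$ representing $\overline{P}$, $\overline{E}$, $\overline{A}$, all defined over a common sequence of models $(\mathcal{X}_i,\pi_i)\in R(\mathcal{X},\mathcal{U})$. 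Using the definitions of nef, pseudo-effective, and big for adelic $\bR$-divisors on $\mathcal{U}$, together with the fact that a Cauchy sequence can be modified within its class, I would arrange that $\overline{\mathcal{P}}_i^{\mathrm{ad}}$ is nef with $P_i$ big, $\overline{\mathcal{A}}_i^{\mathrm{ad}}$ is nef with $A_i$ big, and $\overline{\mathcal{A}}_i^{\mathrm{ad}}\pm\overline{\mathcal{E}}_i^{\mathrm{ad}}$ are nef for every $i$; here one uses that adding a small positive multiple of $\overline{\mathcal{B}}$ restores strict positivity and that $\|\overline{\mathcal{P}}_i - \overline{\mathcal{P}}_j\|\to 0$, etc.

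\textbf{Applying the projective case and passing to the limit.} Having set this up, for each $i$ I would apply Corollary \ref{coro:ineqSiu} on the normal projective variety $X_i$ with $\overline{P} = \overline{\mathcal{P}}_i^{\mathrm{ad}}$, $\overline{E} = \overline{\mathcal{E}}_i^{\mathrm{ad}}$, $\overline{A} = \overline{\mathcal{A}}_i^{\mathrm{ad}}$, obtaining for every $\lambda\in[0,r(P_i;A_i)/2)$
\begin{displaymath}
\mu^{\ess}(\overline{\mathcal{P}}_i^{\mathrm{ad}} + \lambda \overline{\mathcal{E}}_i^{\mathrm{ad}}) \ge \frac{((\overline{\mathcal{P}}_i^{\mathrm{ad}})^{d+1})}{(d+1)\vol(P_i+\lambda E_i)} + \lambda \frac{((\overline{\mathcal{P}}_i^{\mathrm{ad}})^d \cdot \overline{\mathcal{E}}_i^{\mathrm{ad}})}{(P_i^d)} - C_d \times \frac{((\overline{\mathcal{P}}_i^{\mathrm{ad}})^d \cdot \overline{\mathcal{A}}_i^{\mathrm{ad}})}{(P_i^d)} \times \frac{\lambda^2}{r(P_i;A_i)},
\end{displaymath}
with $C_d$ depending only on $d = \dim X_i = \dim X$. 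Now $\overline{\mathcal{P}}_i^{\mathrm{ad}} + \lambda \overline{\mathcal{E}}_i^{\mathrm{ad}}$ represents the Cauchy sequence for $\overline{P}+\lambda\overline{E}$, so by Lemma \ref{lemma:essminlimitquasiproj} the left-hand side converges to $\mu^{\ess}(\overline{P}+\lambda\overline{E})$. On the right-hand side, the arithmetic intersection numbers converge to $(\overline{P}^{d+1})$, $(\overline{P}^d\cdot\overline{E})$, $(\overline{P}^d\cdot\overline{A})$ by \cite[Theorem 3.37]{BurgosKramer}, the geometric numbers $(P_i^d)$ and $\vol(P_i+\lambda E_i)$ converge to $(P^d)$ and $\vol(P+\lambda E)$ by \cite[Theorems 5.2.1 and 5.2.9]{YuanZhang:quasiproj}, and $r(P_i;A_i)\to r(P;A)$ by \eqref{eq:limitinradius}. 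For a fixed $\lambda$ with $\lambda < r(P;A)/2$ we have $\lambda < r(P_i;A_i)/2$ for all large $i$, so the inequality is legitimate for the tail of the sequence, and taking $i\to\infty$ yields the first displayed inequality of the proposition; the $E = 0$ case follows since then $\vol(P+\lambda E) = \vol(P) = (P^d)$.

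\textbf{Main obstacle.} The routine analytic limits are harmless; the one point requiring care is the preliminary normalization of the model sequences, i.e.\ ensuring that one can choose representing Cauchy sequences for which $\overline{\mathcal{P}}_i^{\mathrm{ad}}$ and $\overline{\mathcal{A}}_i^{\mathrm{ad}}$ are genuinely nef with big geometric part and $\overline{\mathcal{A}}_i^{\mathrm{ad}}\pm\overline{\mathcal{E}}_i^{\mathrm{ad}}$ are nef \emph{simultaneously} over a common model, so that Corollary \ref{coro:ineqSiu} applies verbatim at each finite stage. This is where one must unwind the definitions of the positivity notions in \S\ref{parag:quasiprojarithm} and exploit the interplay between the $\overline{\mathcal{B}}$-adic topology and nefness: from the hypothesis that $\overline{P},\overline{A},\overline{A}\pm\overline{E}$ are nef one gets representing sequences by nef model divisors individually, and then a diagonal/refinement argument together with the addition of controlled multiples of $\overline{\mathcal{B}}$ puts them on a common tower of models while preserving nefness and the bigness of $P_i$, $A_i$; one also needs $r(P_i;A_i) > 0$ for the range of $\lambda$ to be nonempty, which holds since $P_i$ is big. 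Once this bookkeeping is in place the proposition follows immediately from the projective case by the continuity statements recalled above.
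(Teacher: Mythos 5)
Your proposal is correct and follows essentially the same route as the paper: the paper's proof also takes Cauchy sequences of model divisors on a common sequence of models, applies Corollary \ref{coro:ineqSiu} on each $X_i$ for the fixed $\lambda$ (legitimate for large $i$ since $r(P_i;A_i)\to r(P;A)$ by \eqref{eq:limitinradius}), and passes to the limit using Lemma \ref{lemma:essminlimitquasiproj} and the continuity of intersection numbers and volumes. The only difference is that you spell out the bookkeeping needed to choose the representing sequences with the required nefness and bigness simultaneously, a point the paper leaves implicit.
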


\begin{proof} 
  Let $(\overline{\mathcal{P}}_i)_{i}, (\overline{\mathcal{M}}_i)_i$
  and $(\overline{\mathcal{N}}_i)_i$ be Cauchy sequences in
  $\widehat{\Div}(\mathcal{U})_{\bR}^{\mathrm{mod}}$ representing
  respectively
  $ \overline{\mathcal{P}}, \overline{\mathcal{A}}+
  \overline{\mathcal{E}}$ and
  $\overline{\mathcal{A}}-\overline{\mathcal{E}}$ and such that
  $\overline{\mathcal{P}}_i, \overline{\mathcal{M}}_i$ and
  $\overline{\mathcal{N}}_i$ are nef and defined on the same
  projective arithmetic variety $\mathcal{X}_i$ for every $i$. Set
  \begin{displaymath}
\overline{\mathcal{A}}_i =
\frac{\overline{\mathcal{M}}_i+\overline{\mathcal{N}}_i}{2} \and 
\overline{\mathcal{E}}_i = \frac{\overline{\mathcal{M}}_i-\overline{\mathcal{N}}_i}{2} .
  \end{displaymath}
  Then $(\overline{\mathcal{A}}_i)_i$ and
  $ (\overline{\mathcal{E}}_i)_i$ are Cauchy sequences representing
  $\overline{\mathcal{A}}$ and $\overline{\mathcal{E}}$, and we have
  that $\overline{A}_i$ and $\overline{A}_i \pm \overline{E}_i$ are
  nef for every $i$.

  Let $\lambda \in [0, r(P;A)/2)$. By Lemma \ref{lem:3} we have
  $\lambda \in [0, r(P_i;A_i)/2)$ for every sufficiently large $i$. In
  particular
\begin{displaymath}
P_i + \lambda E_i = P_i- \lambda A_i + \lambda(A_i+E_i)  
\end{displaymath}
is big because $ P_i- \lambda A_i$ is big and $A_i+E_i$ is nef, hence
pseudo-effective.  Then by Corollary~\ref{coro:ineqSiu} there exists a
constant $c_d$ depending only on $d$ such that
\begin{displaymath}
  \mu^{\ess}(\overline{P}_i + \lambda \overline{E}_i) \ge  \frac{(\overline{P}_i^{d+1})}{(d+1)\vol(P_i+\lambda E_i)}+  \frac{(\overline{P}_i^d \cdot \overline{E}_i)}{(P_i^d)}\, \lambda - c_d  \frac{(\overline{P}_i^d \cdot \overline{A}_i)}{(P_i^d)}  \frac{\lambda^2}{r(P_i;A_i)}.
\end{displaymath}
We conclude by letting $i\to\infty$.
\end{proof}

\subsection{Equidistribution on quasi-projective varieties} \label{sec:equid-quasi-proj}

Let
$\overline{\mathcal{D}} \in
\widehat{\Div}(\mathcal{U})_{\bR}^{\mathrm{adel}}$ with big
$D \in \Div(U)^{\adel}_{\bR}$, as in the previous section.

\begin{definition} \label{def:17} A \emph{semipositive approximation}
  of $\overline{\mathcal{D}}$ is a pair
  $(\phi, \overline{\mathcal{Q}})$ where
\begin{enumerate}[leftmargin=*]
\item $\phi\colon \mathcal{U}' \rightarrow \mathcal{U}$ is a birational morphism of normal quasi-projective arithmetic~varieties,
\item $\overline{\mathcal{Q}} $ is a semipositive adelic $\bR$-divisor
  on $\mathcal{U}'$ with  big $Q  \in \Div(U)^{\adel}_{\bR}$,
\item  $\phi^*\overline{\mathcal{D}}-\overline{\mathcal{Q}}$ is pseudo-effective.
\end{enumerate}
\end{definition}

A sequence $(x_{\ell})_{\ell}$ in $U(\overline{K})$ is called
\emph{generic} if for every closed subset $V \varsubsetneq U$ there
exists $\ell_0 \in \bN$ such that $x_{\ell} \notin V(\overline{K})$
for every $\ell\ge \ell_{0}$. A generic sequence $(x_{\ell})_{\ell}$
is called \emph{$\overline{\mathcal{D}}$-small} if
\begin{displaymath}
\lim_{\ell \to \infty} h_{\overline{\mathcal{D}}}(x_{\ell}) =
\mu^{\ess}(\overline{\mathcal{D}}).
\end{displaymath}

For every $x \in U(\overline{K})$ the Galois orbit
$O(x) \subset X(\overline{K})$ lies in $U(\overline{K})$. Thus for
every $v\in \mathfrak{M}_K$ the $v$-adic Galois orbit $O(x)_v$ lies in
$ U_v^{\an}$, and in particular $\delta_{O(x)_v}$ is a probability
measure on $U_v^{\an}$.

The following is the quasi-projective version of Theorem
\ref{thm:VariantSequencesEP}.
% Similar arguments would allow to extend
% Theorem \ref{thm:MainSequences} to the quasi-projective setting, but
% we focus on this statement as it facilitates the comparison with
% \cite[Theorem 5.4.3]{YuanZhang:quasiproj}.  Its poorf adapts the
% approach we used for Theorem \ref{thm:VariantSequencesEP}, the
% central result being Proposition \ref{prop:keyquasiproj}.

\begin{theorem}\label{thm:equiquasiproj}
  Assume that there exists a sequence
  $(\phi_n \colon \mathcal{U}_n \rightarrow \mathcal{U},
  \overline{\mathcal{Q}}_n)_{n}$ of semipositive approximations of
  $\overline{\mathcal{D}}$ such that
  \begin{equation}
    \label{eq:34}
    \lim_{n \rightarrow \infty} \frac{1}{r(Q_n;\phi_n^*D)} \Big(\mu^{\ess}(\overline{\mathcal{D}}) - \frac{(\overline{\mathcal{Q}}_n^{d+1})}{(d+1)\, (Q_n^d)}\Big) = 0, \ \sup_{n \in \bN} \frac{\mu^{\ess}(\overline{\mathcal{D}}) - \mu^{\abs}(\overline{\mathcal{Q}}_n)}{r(Q_n;\phi_n^*D)} < \infty.
\end{equation}
Let $v \in\mathfrak{M}_K$, and for each $n \ge 1$ let $\nu_{n,v}$ be
the pushforward to $U_v^{\an}$ of the normalized $v$-adic Monge-Ampère
measure $c_1(\overline{\mathcal{Q}}_{n,v})^{\wedge d}/(Q_n^d)$ on
$U_{n,v}^{\an}$. Then
\begin{enumerate}[leftmargin=*]
\item the sequence $(\nu_{n,v})_n$ converges weakly to a probability measure $\nu_{\overline{\mathcal{D}}, v}$ on $U_v^{\an}$,  
\item for every $\overline{\mathcal{D}}$-small generic sequence
  $(x_{\ell})_{\ell}$ in $U(\overline{K})$, the sequence of probability measures
  $(\delta_{O(x_{\ell})_{v}})_{\ell}$ on $U_v^{\an}$ converges weakly
  to~$\nu_{\overline{\mathcal{D}},v}$. 
\end{enumerate}
% Then $\overline{\mathcal{D}}$ has the equidistribution property at
% every $v \in \mathfrak{M}_K$. The equidistribution measure is the
% weak limit
% $\nu_{\overline{\mathcal{D}},v} = \lim_{n \to \infty} \nu_{n,v}$,
% where $\nu_{n,v}$ denotes the pushforward to $U_v^{\an}$ of the
% probability measure
% $c_1(\overline{\mathcal{Q}}_{n,v})^{\wedge d}/(Q_n^d)$.  Let
% $(x_{\ell})_{\ell}$ be a $\overline{\mathcal{D}}$-small generic
% sequence. Then for every $v \in \mathfrak{M}_K$ and for every
% continuous function $\varphi \colon X_v^{\an} \to \bR$ we have that
%\begin{displaymath}
%\lim_{\ell \to \infty} \frac{1}{\# O(x_\ell)} \sum_{y \in O(x_{\ell})_v} \varphi(y) = \lim_{n \to\infty} \frac{1}{(Q_n^d)} \int_{U_{n,v}^{\an}} \phi_n^*\varphi \, c_1(\overline{\mathcal{Q}}_{n,v})^{\wedge d}.
%\end{displaymath}
%In particular, the limit on the right-hand side exists in $\bR$ and does not depend on the choice of the sequence $(\phi_n, \overline{\mathcal{Q}}_n)_n$. 
%  integrable $\overline{M} \in \widehat{\Div}(\mathcal{U})_{\bR}$, the limits
%\begin{displaymath}
%\lim_{\ell \rightarrow \infty} h_{\overline{M}}(x_{\ell}) \and \lim_{n \to \infty} \frac{(\overline{\mathcal{Q}}_n^d \cdot \phi_n^*\overline{M}) - d \mu^{\mathrm{abs}}(\overline{\mathcal{Q}}_n) \times (Q_{n}^{d-1} \cdot \phi_n^*M) }{(Q_n^d)}
%\end{displaymath}
%exist in $\bR$ and are equal.
\end{theorem}

\begin{proof}
  Let $v \in \mathfrak{M}_K$ and
  ${\varphi \colon U_v^{\an} \rightarrow \bR}$ a continuous function
  with compact support, and let $(x_{\ell})_{\ell}$ be a
  $\overline{\mathcal{D}}$-small generic sequence in
  $U(\overline{K})$.
  % For each $n \in \bN$, let
  % $\phi_{n,v} \colon U_{n,v}^{\an} \rightarrow U_v^{\an}$ be the
  % morphism of analytic spaces induced by
  % $\phi_n$. %We denote by $C_c(U_v^{\an}$ the set of real-valued functions with compact support on $U_v^{\an}$,
  % and by $C_c(U_v^{\an})^{G_v}$ the subset of $G_v$-invariant
  % functions.
We need to show that
\begin{equation}\label{eq:goalquasiproj}
\lim_{\ell \to \infty} \int_{U_v^{\an}} \varphi \, d\delta_{O(x_{\ell})_v} = \lim_{n\to \infty} \int_{U_{v}^{\an}} \varphi \, d\nu_{n,v}.
\end{equation}

Let $\varepsilon > 0$. Since $\varphi$ has compact support, we can
view it as an element of $C(X_v^{\an})$. By \cite[Proposition 2.11 and
Theorem 2.13]{GualdiMartinez}, after possibly extending the base field
$K$, there exists a $\Gal(\overline{K}_v/K_v)$-invariant
$\varphi_{\varepsilon}\in C( X_v^{\an})$ such that
$|\varphi_{\varepsilon} - \varphi| < \varepsilon$ on $X_v^{\an}$ and
such that the adelic divisor
$\overline{E} \coloneqq \overline{0}^{\varphi_{\varepsilon}}$ as in
\eqref{eq:27} is DSP.  Then by Lemma~\ref{lem:1} there exists
$\overline{A}\in \widehat{\Div}(X)_{\mathbb{R}}$ such that both
$\overline{A}$ and $\overline{A} \pm \overline{E}$ are nef and
$A \in \Div(X)_{\bR}$ is big. Shrinking $\mathcal{U}$ if necessary, we
view these adelic $\mathbb{R}$-divisors on $X$ as elements of
$\widehat{\Div}(\mathcal{U})_{\bR}^{\mathrm{int}}$, in which case we
respectively denote them by $\overline{\mathcal{E}} $ and
$\overline{\mathcal{A}}$.

For each $n \in \bN$ set
$\widetilde{\mathcal{Q}}_n = \overline{\mathcal{Q}}_n -
\mu^{\abs}(\overline{\mathcal{Q}}_n)\, [\infty]$.  By Lemma
\ref{lemma:nefquasiproj} we have that $\widetilde{\mathcal{Q}}_n$ is nef.
Moreover, the second condition in \eqref{eq:34} implies
\begin{displaymath}
\kappa \coloneqq \sup_{n \in \bN} \frac{(\widetilde{\mathcal{Q}}_n^d \cdot \phi_n^* \overline{\mathcal{A}})}{(Q_n^d)} < \infty.
\end{displaymath}
We omit the proof, as it is identical to that for Lemma
\ref{lemma:sup}. In this respect, note that Zhang's inequality remains
valid in the quasi-projective setting: this is \cite[Theorem
5.3.3]{YuanZhang:quasiproj}, and alternatively it follows from
Proposition \ref{prop:keyquasiproj} applied with
$\lambda = 0$. 

Let $n \in \bN$ and $\lambda \in (0, r(Q_n; \phi_n^*A)/2)$. By
Proposition \ref{prop:keyquasiproj} applied with
$\overline{\mathcal{P}} = \widetilde{\mathcal{Q}}_n$, there exists a constant $c_d$
depending only on $d$ such that
\begin{displaymath}
\mu^{\ess}(\widetilde{\mathcal{Q}}_n + \lambda \, \phi_n^*\overline{\mathcal{E}}) \ge \frac{(\widetilde{\mathcal{Q}}_n^{d+1})}{(d+1)\, (Q_n^d)}+  \frac{(\widetilde{\mathcal{Q}}_n^d \cdot \phi_n^*\overline{\mathcal{E}})}{(Q_n^d)} \, \lambda - c_d\, \kappa \, \frac{\lambda^2}{r(Q_n; \phi_n^*A)}.
\end{displaymath}
Since  $(\widetilde{\mathcal{Q}}_n^{d+1}) = (\overline{\mathcal{Q}}_n^{d+1}) - (d+1)\,  \mu^{\abs}(\overline{\mathcal{Q}}_n) \, (Q_n^d)$ and 
\begin{math}
  \mu^{\ess}(\widetilde{\mathcal{Q}}_n + \lambda \, \phi_n^*\overline{\mathcal{E}}) = \mu^{\ess}(\overline{\mathcal{Q}}_n + \lambda \, \phi_n^*\overline{\mathcal{E}}) - \mu^{\abs}(\overline{\mathcal{Q}}_n)
\end{math}
we obtain
\begin{displaymath}
  \mu^{\ess}(\overline{\mathcal{Q}}_n + \lambda  \, \phi_n^*\overline{\mathcal{E}}) \ge \frac{(\overline{\mathcal{Q}}_n^{d+1})}{(d+1) \, (Q_n^d)}
  +  \frac{(\widetilde{\mathcal{Q}}_n^d \cdot \phi_n^*\overline{\mathcal{E}})}{(Q_n^d)} \lambda - c_d\,\kappa \,  \frac{\lambda^2}{r(Q_n; \phi_n^*A)}.
\end{displaymath}
On the other hand we have  
\begin{displaymath}
  \mu^{\ess}(\overline{\mathcal{D}}) + \lambda \liminf_{\ell \to \infty} h_{\overline{\mathcal{E}}}(x_\ell) = \liminf_{\ell \to \infty} h_{\overline{\mathcal{D}} + \lambda \overline{\mathcal{E}}}(x_{\ell}) \ge \mu^{\ess}(\overline{\mathcal{D}} + \lambda \overline{\mathcal{E}}) \ge \mu^{\ess}(\overline{\mathcal{Q}}_n
  + \lambda \,  \phi_n^*\overline{\mathcal{E}}),
\end{displaymath}
where the last inequality is given by  Lemma \ref{lemma:increaseessminquasiproj}. Therefore 
\begin{equation}
  \label{eq:75}
\liminf_{\ell \to \infty} h_{\overline{\mathcal{E}}}(x_\ell) \ge \Big(\frac{(\overline{\mathcal{Q}}_n^{d+1})}{(d+1)(Q_n^d)} - \mu^{\ess}(\overline{\mathcal{D}}) \Big) \, \frac{1}{\lambda}  + \frac{(\widetilde{\mathcal{Q}}_n^d \cdot \phi_n^*\overline{\mathcal{E}})}{(Q_n^d)} - c_d\, \kappa \,  \frac{\lambda}{r(Q_n; \phi_n^*A)}.
\end{equation}
It follows from the first condition in \eqref{eq:34} and Lemma
\ref{lemma:compareinradii} that
\begin{displaymath}
\lim_{n \to \infty} \frac{1}{r(Q_n;\phi_n^*A)} \Big(\mu^{\ess}(\overline{\mathcal{D}}) - \frac{(\overline{\mathcal{Q}}_n^{d+1})}{(d+1)(Q_n^d)}\Big) = 0.
\end{displaymath}
Therefore, applying the inequality \eqref{eq:75} to a suitable choice
of $\lambda = \lambda_n$ %= \sqrt{\gamma(n)} r(Q_n;\phi_n^*A)$
and taking the supremum limit for $n\to \infty$ gives
\begin{equation}\label{eq:ineqproofquasiproj}
\liminf_{\ell \to \infty} h_{\overline{\mathcal{E}}}(x_\ell) \ge \limsup_{n\to \infty}  \frac{(\widetilde{\mathcal{Q}}_n^d \cdot \phi_n^*\overline{\mathcal{E}})}{(Q_n^d)}.
\end{equation}

To conclude we adapt the arguments in the proof of \cite[Theorem
5.4.3]{YuanZhang:quasiproj}. Let $n \in
\bN$ and choose a Cauchy sequence
$(\overline{\mathcal{Q}}_{n,i})_i$ in
$\widehat{\Div}(\mathcal{U}_n)^{\model}$ representing
$\widetilde{\mathcal{Q}}_n$ and such that
$\overline{\mathcal{Q}}_{n,i}$ is nef for every
$i$. Let
$\mathcal{X}_{n,i}$ be a projective arithmetic variety on which
$\overline{\mathcal{Q}}_{n,i}$ is defined, denote by
$X_{n,i}$ its generic fiber, and set $\overline{Q}_{n,i} \in
\widehat{\Div}(X_{n,i})_{\mathbb{R}}$ as in \eqref{eq:4}.

Let $v\in \mathfrak{M}_{K}$.  By \eqref{eq:defMongeAmperequasiproj},
the $v$-adic Monge-Ampère measures $c_1(\overline{Q}_{n,i,v})^{\wedge d}$
converge to that of $\widetilde{\mathcal{Q}}_n$, which coincides with
that of $\overline{\mathcal{Q}}_{n}$.  Then
\begin{align*}
    \frac{(\widetilde{\mathcal{Q}}_n^d \cdot \phi_n^*\overline{\mathcal{E}})}{(Q_n^d)}
  & = \frac{n_v}{(Q_n^d)} \lim_{i \to \infty} \int_{X_{n,i,v}^{\an}} \varphi_{\varepsilon}\, c_1(\overline{Q}_{n,i,v})^{\wedge d} \\
  & \ge \frac{n_v}{(Q_n^d)} \lim_{i \to \infty} \int_{X_{n,i,v}^{\an}} \varphi \, c_1(\overline{Q}_{n,i,v})^{\wedge d} - n_v \varepsilon\\ 
 & = \frac{n_v}{(Q_n^d)} \int_{U_{n,v}^{\an}} \varphi \, c_1(\overline{\mathcal{Q}}_{n,v})^{\wedge d} - n_v \varepsilon = n_v \int_{U_{v}^{\an}} \varphi \, d\nu_{n,v} - n_v\varepsilon,
\end{align*}
where in these 
integrals  we write $\varphi_{\varepsilon}$ and $\varphi$ for  their pullbacks to $X_{n,i,v}^{\an}$ and $U_{n,v}^{\an}$.
Moreover
 \begin{displaymath}
   n_v\varepsilon + n_v \int_{U_v^{\an}} \varphi \, d\delta_{O(x_{\ell})_v} \ge n_v \int_{X_v^{\an}} \varphi_{\varepsilon} \, d\delta_{O(x_{\ell})_v} = h_{\overline{\mathcal{E}}}(x_\ell)  \quad \text{ for every }  \ell \in \bN. 
 \end{displaymath}
 Combining this with \eqref{eq:ineqproofquasiproj} and letting $\varepsilon\to 0$ we obtain
 \begin{displaymath}
 \liminf_{\ell \to \infty} \int_{U_v^{\an}} \varphi \, d\delta_{O(x_{\ell})_v} \ge  \limsup_{n\to \infty} \int_{U_{n,v}^{\an}} \varphi \, d\nu_{n,v},
 \end{displaymath}
 and we deduce \eqref{eq:goalquasiproj} by applying this to
 $-\varphi$.
\end{proof}

The next consequence is the number field case of the Yuan and Zhang's
equidistribution theorem in the quasi-projective setting~\cite[Theorem
5.4.3]{YuanZhang:quasiproj}.

\begin{corollary}
\label{coro:equiYZ}
Assume that $\overline{\mathcal{D}}$ is nef and that
\begin{displaymath}
\mu^{\ess}(\overline{\mathcal{D}}) = \frac{(\overline{\mathcal{D}}^{d+1})}{(d+1)(D^d)}.
\end{displaymath}
Then for every $v \in \mathfrak{M}_K$ and every
$\overline{\mathcal{D}}$-small generic sequence $(x_{\ell})_{\ell}$ in
$U(\overline{K})$ the sequence of probability measures
$(\delta_{O(x_{\ell})_{v}})_{\ell}$ on $U_v^{\an}$ converges weakly to
$c_1(\overline{\mathcal{D}}_{v})^{\wedge d}/(D^d)$.
\end{corollary}

\begin{proof}
  Apply Theorem \ref{thm:equiquasiproj} to the constant sequence
  $(\phi_n, \overline{\mathcal{Q}}_n) =
  (\Id_\mathcal{U}, \overline{\mathcal{D}})$, $n\in \mathbb{N}$.
\end{proof}

\begin{remark}
  \label{rem:19}
  We assume  throughout that $\mathcal{U}$ is normal to be able to work
  with $\bR$-divisors.  Nevertheless, both Theorem
  \ref{thm:equiquasiproj} and Corollary \ref{coro:equiYZ} can be
  applied to an adelic divisor $\overline{\mathcal{D}}$ on an arbitrary
  quasi-projective arithmetic variety $\mathcal{U}$ over
  $\Spec(\mathcal{O}_{K})$ just shrinking to a normal open subset.
\end{remark}

% In \cite[Theorem 5.4.3]{YuanZhang:quasiproj}, Yuan and Zhang assume
% that $\overline{\mathcal{D}}$ is nef while here we ask that
% $\overline{\mathcal{D}}$ is semipositive and
% $\mu^{\abs}(\overline{\mathcal{D}}) > -\infty$. This is not more
% general, since the statement is invariant under translation by
% $[\infty]$.

Recently, Biswas proved the differentiability of the arithmetic volume
function and deduced a quasi-projective version of Chen's
equidistribution theorem~\cite{Biswas}.
% \cite[Corollary 5.5]{Chen:diffvol}.
It would be interesting to check
if this result also follows from Theorem~\ref{thm:equiquasiproj}
by adapting the arguments we used in the proof of Corollary
\ref{coro:Chenequi}.

% \begin{remark} As shown in \cite[Theorem
%   6.2.3]{YuanZhang:quasiproj}, Corollary \ref{coro:equiYZ} permits
%   to recover the equidistribution conjecture on abelian schemes
%   proved by Kühne in \cite{Kuhne:families}. In analogy with
%   §\ref{sec:semi-vari}, we expect that Theorem
%   \ref{thm:equiquasiproj} can be used to generalize this result to
%   semi-abelian schemes. We plan to explore this in a forthcoming
%   version of the present manuscript.\end{remark}

\appendix
\section{Auxiliary results on convex analysis}
\label{sec:prel-conv-analys}

Here we recall the constructions and properties from convex analysis
that are used in our study of toric varieties in Section
\ref{sec:toric-varieties}.  We also establish some auxiliary results,
most notably Proposition~\ref{prop:concavefunctions} concerning the
rate of the decay of the sup-level sets of a concave function as the
level approaches the maximum value.

Fix an integer $d\ge 1$ and let $C\subset \mathbb{R}^{d}$ be a
\emph{convex body}, that is a compact convex subset with nonempty
interior.

\begin{definition}
  \label{def:1} 
  For a linear functional $u\in (\mathbb{R}^{d})^{\vee}$ we denote by
  $\rmw(C,u) $ the length of the interval ${u}(C) \subset
  \mathbb{R}$. The \emph{width} of $C$ is defined as
\begin{displaymath}
  \rmw(C)=\inf_{u\in S^{d-1}} \rmw(C,u),
\end{displaymath}
where $S^{d-1}$ denotes the unit sphere of $(\mathbb{R}^{d})^{\vee}\simeq \mathbb{R}^{d}$.
  
For another convex body   $B \subset \mathbb{R}^{d}$, the \emph{inradius} of
$C$ with respect to $B$ is defined~as 
\begin{equation*}
\rmr (C;B) =\sup\{ \lambda\in \mathbb{R}_{>0} \mid \exists \, x\in \mathbb{R}^{d} \text{ such that } \lambda B +x \subset C\}.
\end{equation*}  
When $B$ is the unit ball of $\mathbb{R}^{d}$, it is the classical
inradius from Euclidean geometry.
\end{definition}

The inradius and the width can be compared up to scalar factors: there
are constants $c_{1},c_{2} >0$ depending only on $d$ and $B$ such that
\begin{equation}  \label{eq:comparewidthinradius}
  c_{1}\, \rmw(C) \le \rmr(C;B) \le c_{2}\, \rmw(C).
\end{equation}
The first inequality comes from \cite[page 86, inequality
(9)]{BonnesenFenchel:tcb} whereas the second is clear from the
definitions.

% Given convex bodies $B,B'\subset \mathbb{R}^{d}$, the corresponding
% inradii compare as
% \begin{displaymath}
%   \rmr(B',B) \, \rmr(C;B') \le \rmr(C;B)  \le \frac{1}{\rmr(B;B')} \, \rmr(C;B').  
% \end{displaymath}

Let $f \colon C \to \mathbb{R}$ be a concave function and set
$\mu=\sup_{x\in C}f(x)$.  For each $t \le \mu$ we denote by 
\begin{equation*}
%  \label{eq:51}
  S_{t}(f)=\{x\in C \mid f(x)\ge t\}
\end{equation*}
the corresponding sup-level set.  It is a nonempty compact convex
subset of $C$ that is a convex body whenever $t<\mu$.  We also set
$C_{\max}=S_{\mu}(f)$.

We next introduce the basic objects of the differential analysis of
concave functions.

\begin{definition}
  \label{def:4}
For $x_{0}\in C$, the \emph{sup-differential} of $f$ at
$x_{0}$ is the closed convex subset of $(\mathbb{R}^{d})^{\vee}$
defined as
\begin{equation*} %  \label{eq:35}
  \partial f (x_{0}) = \{u \in (\bR^d)^{\vee} \mid  \langle u, x-x_{0} \rangle \ge f (x) - f (x_{0})  \text{ for every } x\in C\}.
\end{equation*}  
Its elements are called the
\emph{sup-gradients} of $f$ at $x_{0}$.
\end{definition}

A point $x_{0} \in C$ lies in $ C_{\max}$ if and only if
$0\in \partial f(x_{0})$. When this is the case, the point $0$ is
\emph{not} a vertex of $ \partial f(x_{0})$ if and only if there
exists $u\in (\mathbb{R}^{d})^{\vee}\setminus \{0\}$ such that both
$u$ and $-u$ belong to this sup-differential or equivalently, if and
only if
\begin{equation} \label{eq:vertexsupdiff}
f(x) \le \mu-|\langle u, x-x_{0} \rangle|  \quad \text{ for every } x\in C.
\end{equation}
This condition does not depend on the choice of 
$x_{0} \in C_{\max}$: if \eqref{eq:vertexsupdiff} holds then
$\langle u, x_{1}-x_{0}\rangle =0$ for every $x_{1}\in C_{\max}$, and so
this inequality also holds with $x_{0}$ replaced by $x_{1}$.

The next proposition is a rigidity result that allows to determine
when a concave function admits a ``Canadian tent'' upper bound
like \eqref{eq:vertexsupdiff} in terms of the rate of decay of the
inradius or the width of its sup-level sets as the level approaches
its~maximum.
% $C_{t}=C_{t}(f)$ as $t\to \mu^{-}$.

\begin{proposition}
  \label{prop:concavefunctions}
  The following conditions are equivalent:
  \begin{enumerate}[leftmargin=*]
  \item \label{item:inradius} for any convex body
    $B \subset \mathbb{R}^{d}$ we have
    $\displaystyle{\lim_{t\to\mu} \frac{\mu-t}{\rmr(S_{t}(f);B)}=0}$,
    \item \label{item:width} $\displaystyle{\lim_{t\to\mu} \frac{\mu-t}{\rmw(S_{t}(f))}=0}$, 
    \item \label{item:widthu} for every $u\in (\mathbb{R}^{d})^{\vee} \setminus \{0\}$ we have $\displaystyle{\lim_{t\to\mu} \frac{\mu-t}{\rmw(S_{t}(f),u)}=0}$, 
    \item \label{item:vertex} for any $x_{0}\in C_{\max}$ we have
      that $0 \in (\mathbb{R}^{d})^{\vee}$ is a vertex of $\partial f(x_{0})$.
  \end{enumerate}
\end{proposition}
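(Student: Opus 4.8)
The plan is to establish the chain of implications
$(\ref{item:inradius}) \Leftrightarrow (\ref{item:width}) \Rightarrow (\ref{item:widthu}) \Rightarrow (\ref{item:vertex}) \Rightarrow (\ref{item:width})$, so that the four conditions become equivalent, the only substantial step being the last one. Throughout I fix a point $x_{0}\in C_{\max}$; recall that the property in \eqref{item:vertex} does not depend on this choice and that, by the discussion preceding the statement, it is equivalent to the nonexistence of a nonzero functional $w$ with an upper bound of the shape \eqref{eq:vertexsupdiff}, i.e. $f(x)\le \mu-|\langle w,x-x_{0}\rangle|$ for all $x\in C$.

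The equivalence $(\ref{item:inradius}) \Leftrightarrow (\ref{item:width})$ is immediate from the comparison \eqref{eq:comparewidthinradius}: applied to $S_{t}(f)$, it shows that $\rmr(S_{t}(f);B)$ and $\rmw(S_{t}(f))$ differ by factors bounded above and below by positive constants depending only on $d$ and $B$, so for a fixed $B$ one of the two limits vanishes if and only if the other does; in particular \eqref{item:width} forces the vanishing for every $B$, and the vanishing for a single $B$ already yields \eqref{item:width}. The implication $(\ref{item:width}) \Rightarrow (\ref{item:widthu})$ is trivial, since $\rmw(S_{t}(f),u)=|u|\,\rmw(S_{t}(f),u/|u|)\ge |u|\,\rmw(S_{t}(f))$ for $u\ne 0$. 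For $(\ref{item:widthu}) \Rightarrow (\ref{item:vertex})$ I argue by contraposition: if $0$ is not a vertex of $\partial f(x_{0})$, pick a nonzero $w$ with $f(x)\le \mu-|\langle w,x-x_{0}\rangle|$ for all $x$; then every $x\in S_{t}(f)$ satisfies $|\langle w,x-x_{0}\rangle|\le \mu-f(x)\le \mu-t$, whence $\rmw(S_{t}(f),w)\le 2(\mu-t)$ and $(\mu-t)/\rmw(S_{t}(f),w)\ge 1/2$ for all $t<\mu$, contradicting \eqref{item:widthu}.

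The heart of the argument is $(\ref{item:vertex}) \Rightarrow (\ref{item:width})$, which I would prove by contradiction. If \eqref{item:width} fails there are $c>0$ and a sequence $t_{n}\uparrow\mu$ with $\rmw(S_{t_{n}}(f))\le (\mu-t_{n})/c$; choosing for each $n$ a unit functional $u_{n}$ realizing the width of $S_{t_{n}}(f)$ and passing to a subsequence, I may assume $u_{n}\to u$ with $|u|=1$. The key estimate comes from concavity: for $x\in C$ with $f(x)<\mu$ and $s_{n}=(\mu-t_{n})/(\mu-f(x))$, which is $<1$ once $t_{n}>f(x)$, the point $(1-s_{n})x_{0}+s_{n}x$ has $f$-value at least $\mu-s_{n}(\mu-f(x))=t_{n}$, hence lies in $S_{t_{n}}(f)$; since $x_{0}\in C_{\max}\subset S_{t_{n}}(f)$ too, the width bound gives $s_{n}|\langle u_{n},x-x_{0}\rangle|\le (\mu-t_{n})/c$, i.e. $|\langle u_{n},x-x_{0}\rangle|\le (\mu-f(x))/c$. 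Letting $n\to\infty$ yields $f(x)\le \mu-c\,|\langle u,x-x_{0}\rangle|$; and for $x\in C_{\max}$ the same width bound forces $\langle u,x-x_{0}\rangle=0$, so the inequality persists there. Thus $f(x)\le \mu-|\langle cu,x-x_{0}\rangle|$ for all $x\in C$ with $cu\ne 0$, contradicting \eqref{item:vertex}.

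The main obstacle — and the only place where genuine work is needed — is this last implication, specifically the passage from the family of directional width bounds at the levels $t_{n}$ to a single globally valid tent-shaped upper bound for $f$ over all of $C$. The points requiring care are minor: that the minimizing direction is attained (compactness of $S^{d-1}$ together with continuity of $u\mapsto \rmw(C',u)$), so that extracting a convergent subsequence is legitimate; that $S_{t}(f)$ is a genuine convex body for $t<\mu$, so that its width is positive; and the separate treatment of $C_{\max}$, where $\mu-f(x)=0$ and the scaling factor $s_{n}$ degenerates. I expect the write-up to be short once these are in place.
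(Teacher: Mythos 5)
Your proof is correct, and its overall skeleton coincides with the paper's: the same chain \eqref{item:inradius}$\Leftrightarrow$\eqref{item:width}$\Rightarrow$\eqref{item:widthu}$\Rightarrow$\eqref{item:vertex}$\Rightarrow$\eqref{item:width}, with the first three steps handled exactly as in the paper (comparison \eqref{eq:comparewidthinradius}, triviality, and the contrapositive via the tent bound \eqref{eq:vertexsupdiff}). Where you genuinely diverge is in the implementation of the hard implication \eqref{item:vertex}$\Rightarrow$\eqref{item:width}. The paper factors it through two auxiliary lemmas --- the monotonicity of $t\mapsto(\mu-t)/\rmw(S_{t}(f),u)$ (Lemma \ref{lem:decreasingeta}), used to upgrade the failure of \eqref{item:width} along a sequence to the uniform bound $(\mu-t)/\rmw(S_{t}(f))\ge c$ for all $t<\mu$, and the ``Canadian tent'' estimate (Lemma \ref{lem:canadiantent}), valid only for $x\notin S_{t}(f)$ --- and then must treat $C_{\max}$ separately by observing $\dim(C_{\max})<d$ and extending the inequality by continuity. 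Your scaling argument (pushing an arbitrary $x$ with $f(x)<\mu$ into $S_{t_n}(f)$ via the convex combination with ratio $s_n=(\mu-t_n)/(\mu-f(x))$ and comparing with $x_{0}$) obtains the bound $|\langle u_n,x-x_{0}\rangle|\le(\mu-f(x))/c$ directly at the sequence levels, so you need neither the monotonicity upgrade nor the tent lemma, and your treatment of $x\in C_{\max}$ via the shrinking width $\rmw(S_{t_n}(f))\to 0$ replaces the dimension-and-continuity step. The trade-off: the paper's route isolates two statements of independent use in convex analysis, while yours is shorter, self-contained, and avoids the slightly delicate continuity extension across $C_{\max}$; the points you flag as needing care (positivity of $\rmw(S_t(f),w)$ for $t<\mu$, attainment and continuity of the width-minimizing direction) are exactly the right ones and are unproblematic.
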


Its proof relies on the next two lemmas.

\begin{lemma}
  \label{lem:decreasingeta}
  With the previous notation, we have
  \begin{enumerate}[leftmargin=*]
  \item \label{item:decreaseetawidthu} for every $u\in (\mathbb{R}^{d})^{\vee} \setminus \{0\}$ the function
  \begin{math}
\displaystyle{    t\in (-\infty,\mu) \mapsto \frac{\mu-t}{\rmw(S_{t}(f),u)}}
  \end{math}
  is non-increasing,
  \item \label{item:decreaseetawidth} the function
  \begin{math}
\displaystyle{          t\in (-\infty,\mu) \mapsto \frac{\mu-t}{\rmw(S_{t}(f))}}
  \end{math}
  is non-increasing.
  \end{enumerate}
\end{lemma}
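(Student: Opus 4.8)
The statement to prove is Lemma \ref{lem:decreasingeta}, which asserts that the two ratios
\begin{math}
t \mapsto (\mu-t)/\rmw(S_{t}(f),u)
\end{math}
and
\begin{math}
t \mapsto (\mu-t)/\rmw(S_{t}(f))
\end{math}
are decreasing on $(-\infty,\mu)$. The plan is to deduce this from the concavity of $f$ via a nesting/scaling property of the sup-level sets. The key geometric observation is that if $s < t < \mu$ and $x_{t} \in C_{\max}$ is any point maximizing $f$, then concavity of $f$ implies the inclusion
\begin{displaymath}
  S_{t}(f) \supset \Big(1 - \frac{\mu-t}{\mu-s}\Big) x_{t} + \frac{\mu-t}{\mu-s}\, S_{s}(f).
\end{displaymath}
Indeed, for $x \in S_{s}(f)$, writing $\lambda = (\mu-t)/(\mu-s) \in (0,1)$ and using $f(x_{t}) = \mu$, concavity gives
\begin{math}
f\big((1-\lambda)x_{t} + \lambda x\big) \ge (1-\lambda)\mu + \lambda f(x) \ge (1-\lambda)\mu + \lambda s = \mu - \lambda(\mu-s) = t.
\end{math}
(One should first note $S_{s}(f) \subset C$ so the convex combination lands in $C$ and $f$ is defined there.)

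From this inclusion the monotonicity of both ratios follows immediately. For the first part, a translate of $\lambda\, S_{s}(f)$ is contained in $S_{t}(f)$, and width is translation-invariant and scales linearly under scalar multiplication, so $\rmw(S_{t}(f),u) \ge \lambda\, \rmw(S_{s}(f),u) = \frac{\mu-t}{\mu-s}\rmw(S_{s}(f),u)$; rearranging gives
\begin{displaymath}
  \frac{\mu-t}{\rmw(S_{t}(f),u)} \le \frac{\mu-s}{\rmw(S_{s}(f),u)},
\end{displaymath}
which is exactly the claim that the function is decreasing (non-increasing). The same argument with $\rmw(\cdot)$ in place of $\rmw(\cdot,u)$, using that $\rmw$ is also translation-invariant and positively homogeneous of degree one (being an infimum over $u \in S^{d-1}$ of the $\rmw(\cdot,u)$), proves the second part. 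I would spell out the translation-invariance and homogeneity of $\rmw$ and $\rmw(\cdot,u)$ explicitly since they are the only analytic inputs beyond the inclusion.

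The only mild subtlety — and the step I expect to need the most care — is making sure the argument is valid at the level $s$ even when $S_{s}(f)$ might degenerate, and confirming that the convex combination $(1-\lambda)x_{t} + \lambda x$ remains in the domain $C$ of $f$; both are handled by the convexity of $C$ together with $x_{t}, x \in C$. There is no real obstacle here: the lemma is an elementary consequence of concavity, and the proof is essentially the displayed inclusion plus the two basic invariance properties of width. I would therefore keep the write-up short, presenting the inclusion as the main computation and then invoking homogeneity and translation-invariance to conclude both statements in one or two lines each.
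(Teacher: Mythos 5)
Your proof is correct and rests on the same key idea as the paper's: concavity of $f$ gives an affine contraction toward a maximizer $x_{0}\in C_{\max}$ carrying $S_{s}(f)$ into $S_{t}(f)$, which is exactly the inclusion you display (the paper's map $\iota$ is this contraction with $\lambda=(\mu-t)/(\mu-s)$). The only difference is packaging: the paper reduces to dimension one via the direct image $u_{*}f$ and compares Lebesgue measures, then handles part (2) by picking a direction attaining $\rmw(S_{t}(f))$, whereas you apply the inclusion directly in $\mathbb{R}^{d}$ using translation-invariance, homogeneity and monotonicity under inclusion of $\rmw(\cdot,u)$ and $\rmw(\cdot)$ — a mild streamlining of the same argument, not a different method.
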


\begin{proof}
  First suppose that $d=1$. Then choose $x_{0}\in C_{\max}$ and for
  each pair $t,t' \in \mathbb{R}$ with $t'<t<\mu$ consider the affine
  map $ \iota \colon \mathbb{R} \rightarrow \mathbb{R}$ defined as
  \begin{displaymath}
\iota( x)= \frac{t-t'}{\mu-t'} \, x_{0}+\frac{\mu-t}{\mu-t'}\, x.
  \end{displaymath}
  It follows from the concavity of $f$ that
  $\iota(S_{t'}(f))\subset S_{t}(f)$.  Denoting by $\ell$ the Lebesgue
  measure on $\mathbb{R}$, this gives
  \begin{equation}
    \label{eq:d=1}
  \frac{\mu-t}{\mu-t'}  \, \ell(S_{t'}(f)) \le  \ell(S_{t}(f)) .
  \end{equation}

  Now let $d$ be any positive integer. Take
  $u\in (\mathbb{R}^{d})^{\vee} \setminus \{0\}$ and consider the
  direct image of $f$ with respect to $u$, which is the concave
  function $u_{*}f \colon u(C) \to \mathbb{R}$ defined~as
  \begin{displaymath}
    u_{*}f(y) = \sup\{f(x) \mid x\in C \text{ such that } \langle u,x\rangle =y\} \quad \text{ for every } y\in  u(C).
  \end{displaymath}
  Clearly $ \sup_{y\in u(C)} u_{*} f (y)= \mu$ and
  $ S_{t}(u_{*}f)=u(S_{t}(f))$ for every $ t \le \mu$.
Then for any  $t'<t<\mu$ the inequality \eqref{eq:d=1} gives 
\begin{equation}
  \label{eq:decreaseeta}
  \frac{\mu-t'}{\rmw(S_{t'}(f),u)}=
  \frac{\mu-t'}{\ell(S_{t'}(u_{*}f))}
  \ge
    \frac{\mu-t}{\ell(S_{t}(u_{*}f))} =
    \frac{\mu-t}{\rmw(S_{t}(f),u)}, 
  \end{equation}
  proving \eqref{item:decreaseetawidthu}.  The statement
  \eqref{item:decreaseetawidth} follows by choosing $u\in S^{d-1}$
  such that $\rmw(S_{t}(f),u)=\rmw(S_{t}(f))$ and
  applying~\eqref{eq:decreaseeta} to show 
  \begin{displaymath}
          \frac{\mu-t'}{\rmw(S_{t'}(f))} \ge
      \frac{\mu-t'}{\rmw(S_{t'}(f) ,u)} \ge
    \frac{\mu-t}{\rmw(S_{t}(f),u)} =      \frac{\mu-t}{\rmw(S_{t}(f))},
  \end{displaymath}
  as stated.
  \end{proof}

  \begin{lemma}
    \label{lem:canadiantent}
    Let $x_{0}\in C_{\max}$ and $t<\mu$. Then for every
    $u\in (\mathbb{R}^{d})^{\vee}\setminus \{0\}$ we have
    \begin{displaymath}
 f(x)  \le       \mu-\frac{\mu-t}{\rmw(S_{t}(f),u)} \, |\langle u,x-x_{0}\rangle | \quad \text{ for every } x\in C\setminus S_{t}(f).
    \end{displaymath}
  \end{lemma}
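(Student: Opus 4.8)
\textbf{Proof plan for Lemma~\ref{lem:canadiantent}.}
The plan is to reduce everything to the one-dimensional slice determined by the chosen functional $u$. Fix $t<\mu$, a maximizing point $x_{0}\in C_{\max}$, and $u\in(\mathbb{R}^{d})^{\vee}\setminus\{0\}$, and write $g=u_{*}f\colon u(C)\to\mathbb{R}$ for the direct image of $f$, as in the proof of Lemma~\ref{lem:decreasingeta}. Recall from there that $\sup_{y\in u(C)}g(y)=\mu$, that $y_{0}\coloneqq\langle u,x_{0}\rangle$ is a maximizer of $g$, and that $S_{t}(g)=u(S_{t}(f))$, so that $\rmw(S_{t}(f),u)=\ell(S_{t}(g))$ with $\ell$ the Lebesgue measure on $\mathbb{R}$. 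It therefore suffices to prove the one-dimensional statement: for every $y\in u(C)\setminus S_{t}(g)$,
\begin{displaymath}
  g(y)\le \mu-\frac{\mu-t}{\ell(S_{t}(g))}\,|y-y_{0}|,
\end{displaymath}
since then for $x\in C\setminus S_{t}(f)$ we have $\langle u,x\rangle\in u(C)\setminus S_{t}(g)$ and $f(x)\le g(\langle u,x\rangle)$, giving the claim after noting $|\langle u,x-x_{0}\rangle|=|\langle u,x\rangle-y_{0}|$.

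For the one-dimensional claim, I would argue by convexity of the sup-level intervals. The set $S_{t}(g)$ is a closed interval $[a,b]\subset u(C)$ containing $y_{0}$, of length $\ell(S_{t}(g))=b-a>0$. Take $y\in u(C)$ with $g(y)<t$; by symmetry assume $y>b$ (the case $y<a$ is identical, using the endpoint $a$ in place of $b$). The key step is to bound the slope of $g$ just to the right of $b$: since $g$ is concave, $g(b)\ge t$ (as $b\in S_{t}(g)$), and $g$ takes values $<t$ for arguments $>b$, the one-sided incremental ratios of $g$ are non-increasing, so for every $y>b$,
\begin{displaymath}
  \frac{g(y)-g(b')}{y-b'}\ \le\ \frac{g(b)-g(a)}{b-a}\ \le\ \frac{\mu-t}{b-a}
\end{displaymath}
for $b'$ slightly less than $b$; passing to the limit $b'\to b^{-}$ (or simply using $g(b')\ge t$ and concavity directly between $a$, $b$, $y$) yields $g(y)\le g(b)-\frac{\mu-t}{b-a}(y-b)\le \mu-\frac{\mu-t}{b-a}(y-b)$. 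It remains to replace $y-b$ by $|y-y_{0}|=y-y_{0}$ in the estimate: since $y_{0}\in[a,b]$ we have $y-b\ge y-y_{0}-(b-y_{0})$, which is not immediately enough, so instead I would observe that the sharper inequality is obtained by running the concavity comparison through the point $y_{0}$ directly: concavity of $g$ on the three points $y_{0}<b\le$ (some point of $S_t(g)$) does not suffice, so the cleanest route is to compare $g$ at $y$, at $y_{0}$, and at the far endpoint $a$ of $S_{t}(g)$. Using concavity on $a<y_{0}<y$ together with $g(a)\ge t$ and $g(y_{0})=\mu$ gives $g(y)\le \mu - \frac{\mu-t}{y_{0}-a}(y-y_{0})$, and since $y_{0}-a\le b-a=\ell(S_{t}(g))$ we conclude $g(y)\le\mu-\frac{\mu-t}{\ell(S_{t}(g))}(y-y_{0})$, as desired. (When $y<a$ one uses the endpoint $b$ symmetrically, and $b-y_{0}\le\ell(S_{t}(g))$.)

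The main obstacle is making sure the chosen endpoint of $S_{t}(g)$ lies on the correct side of $y_{0}$ relative to $y$, so that the concavity comparison between $g(a)$, $g(y_{0})$, $g(y)$ (resp.\ $g(b)$, $g(y_{0})$, $g(y)$) genuinely has $y_{0}$ between the other two arguments; this is exactly why one splits into the cases $y>b$ and $y<a$ and picks the far endpoint. Everything else is routine one-variable concavity together with the bookkeeping $\rmw(S_{t}(f),u)=\ell(S_{t}(g))$ already recorded in the proof of Lemma~\ref{lem:decreasingeta}. I would then note that this lemma feeds into Proposition~\ref{prop:concavefunctions}: letting $t\to\mu$ in the bound, the implication \eqref{item:widthu}$\Rightarrow$\eqref{item:vertex} follows, since the coefficient $\frac{\mu-t}{\rmw(S_{t}(f),u)}$ has a positive limit whenever \eqref{item:widthu} fails, producing a ``Canadian tent'' upper bound \eqref{eq:vertexsupdiff} and hence showing $0$ is not a vertex of $\partial f(x_{0})$.
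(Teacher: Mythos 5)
Your reduction to the one-variable function $g=u_{*}f$ is fine in spirit (with one small slip: $x\notin S_{t}(f)$ does not imply $\langle u,x\rangle\notin S_{t}(g)$, since some other point of the fiber may have $f\ge t$; but in that case $|\langle u,x-x_{0}\rangle|\le \rmw(S_{t}(f),u)$, the right-hand side of the lemma is then $\ge t>f(x)$, and the bound is trivial). The genuine gap is the key step of your "cleanest route": the claim that concavity applied to $a<y_{0}<y$ with $g(a)\ge t$, $g(y_{0})=\mu$ yields $g(y)\le \mu-\frac{\mu-t}{y_{0}-a}(y-y_{0})$. This is false. The three-slope inequality through the far endpoint compares the secant slope on $[a,y_{0}]$, which is nonnegative, with the secant slope on $[y_{0},y]$, which is nonpositive, so it gives no control on the decay of $g$ to the right of $y_{0}$. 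The asserted inequality also fails numerically: take $g$ on $[0,11]$ with $g(y)=y$ on $[0,1]$ and $g(y)=1-\tfrac{1}{10}(y-1)$ on $[1,11]$, and $t=\tfrac12$; then $\mu=1$, $y_{0}=1$, $S_{t}(g)=[\tfrac12,6]$, and at $y=7$ one has $g(7)=0.4$ while $\mu-\frac{\mu-t}{y_{0}-a}(y-y_{0})=1-6=-5$. (The lemma itself, with $b-a$ in the denominator, gives $1-\tfrac{6}{11}\approx 0.45$ and is satisfied.) Your first sketch through $b'$ near $b$ also does not close: the inequality $\frac{g(y)-g(b')}{y-b'}\le\frac{g(b)-g(a)}{b-a}$ only bounds a nonpositive slope by a possibly positive quantity, and does not give $g(y)\le g(b)-\frac{\mu-t}{b-a}(y-b)$.

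A correct elementary route does exist on the side you abandoned: since there are domain points beyond $b$, continuity of the concave function $g$ at the interior point $b$ forces $g(b)=t$ and hence $y_{0}<b$; concavity on $y_{0}<b<y$ then gives $g(y)\le t-\frac{\mu-t}{b-y_{0}}(y-b)$, and writing $y-y_{0}=(y-b)+(b-y_{0})$ together with $b-y_{0}\le b-a$ shows this implies $g(y)\le\mu-\frac{\mu-t}{b-a}(y-y_{0})$. The paper's own proof is shorter and avoids all case analysis: set $t'=f(x)<t$; both $x$ and $x_{0}$ lie in $S_{t'}(f)$, so $|\langle u,x-x_{0}\rangle|\le \rmw(S_{t'}(f),u)$, whence $\mu-f(x)=\mu-t'\ge\frac{\mu-t'}{\rmw(S_{t'}(f),u)}\,|\langle u,x-x_{0}\rangle|\ge\frac{\mu-t}{\rmw(S_{t}(f),u)}\,|\langle u,x-x_{0}\rangle|$, the last inequality being exactly the monotonicity of Lemma~\ref{lem:decreasingeta}\eqref{item:decreaseetawidthu}, which you cited only for bookkeeping but never exploited. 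Your closing remark on how the lemma feeds into Proposition~\ref{prop:concavefunctions} matches the paper, but as written your argument does not prove the lemma.
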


  \begin{proof}
    Let $x\in C\setminus S_{t}(f)$ and set
    $t'=f(x)<t$. Since both $x$ and $ x_{0}$ lie in
    $ S_{t'}(f)$ we have 
    $|\langle u,x-x_{0}\rangle | \le \rmw(S_{t'}(f),u)$. Combining this 
    with Lemma \ref{lem:decreasingeta}\eqref{item:decreaseetawidthu} we get
    \begin{displaymath}
      \mu-f(x) \ge \frac{\mu-t'}{\rmw(S_{t'}(f),u)} \, |\langle u,x-x_{0}\rangle |  \ge  \frac{\mu-t}{\rmw(S_{t}(f),u)} \, |\langle u,x-x_{0}\rangle | ,
        \end{displaymath}
        which gives the statement.
  \end{proof}

  \begin{proof}[Proof of Proposition \ref{prop:concavefunctions}]
    The equivalence between \eqref{item:inradius} and
    \eqref{item:width} follows from the inequalities
    \eqref{eq:comparewidthinradius}, and clearly \eqref{item:width}
    implies \eqref{item:widthu}.

    Now assume \eqref{item:widthu}. If \eqref{item:vertex} does not
    hold, then there exists  $x_{0}\in C_{\max}$ such that
    $0\in \partial f(x_{0})$ is not a vertex of this convex
    subset, and so we can take  $u\in (\mathbb{R}^{d})^{\vee}\setminus \{0\}$ such that 
    \begin{math}
      \mu-|\langle u, x-x_{0} \rangle| \ge  f (x)
    \end{math}
    for every $ x\in C$ as in \eqref{eq:vertexsupdiff}.
    
    For each $t<\mu$ choose $y \in S_{t}(u_{*}f) = u(S_t(f)) $ such
    that
\begin{displaymath}
   |y- \langle u, x_{0} \rangle| \ge \frac{1}{2}\, \rmw(S_{t}(f),u).
\end{displaymath}
Taking $x\in S_{t}(f)$ with $\langle u,x\rangle=y$ we have
$ |\langle u, x-x_{0} \rangle| \ge \rmw(S_{t}(f),u)/2$ and so
    \begin{displaymath}
\frac{\mu-t}{ \rmw(S_{t}(f),u)} \ge \frac{\mu-f(x)}{ \rmw(S_{t}(f),u)}\ge  \frac{|\langle u, x-x_{0} \rangle|}{ \rmw(S_{t}(f),u)} \ge \frac{1}{2},
    \end{displaymath}
which contradicts  \eqref{item:widthu} and thus implies \eqref{item:vertex}.

To close the loop, we show that \eqref{item:vertex} implies
\eqref{item:width}. For this suppose that \eqref{item:width} does not
hold, which by Lemma \ref{lem:decreasingeta} implies that there exists
$c>0$ such that
\begin{displaymath}
\frac{\mu-t}{\rmw(S_{t}(f))}\ge c \quad \text{ for every } t<\mu.
    \end{displaymath}
    In particular $\dim(C_{\max})<d$ since otherwise
    $\rmw(S_{t}(f))\ge \rmw(C_{\max})>0$ for every $t<\mu$.

    Take sequences $(t_{k})_{k}$ in $(-\infty,\mu)$ and $(u_{k})_{k}$
    in $S^{d-1}$ with $\lim_{k\to \infty} t_{k}= \mu$ such that
    $\rmw(S_{t_{k}}(f))= \rmw(S_{t_{k}}(f),u_{k})$ for every $k$.  By
    the compacity of $S^{d-1}$ we can assume that
    $\lim_{k\to \infty}u_{k}=u$ for a point $u\in S^{d-1}$. Take also
    $x_{0}\in C_{\max}$.  By Lemma \ref{lem:canadiantent} we have
    \begin{displaymath}
      \mu-c\,  |\langle u_{k}, x-x_{0}\rangle | \ge f(x) \quad \text{ for every } x\in C\setminus S_{t_{k}}(f).
    \end{displaymath}

    Now let $x\in C \setminus C_{\max}$. Then
    $x \notin S_{t_{k}}(f)$ for $k\gg 1$ and so
    \begin{displaymath}
 \mu-c\,  |\langle u, x-x_{0}\rangle | =           \lim_{k\to \infty} \mu-c\,  |\langle u_{k}, x-x_{0}\rangle | \ge f(x).
    \end{displaymath}
    Since $\dim(C_{\max})<d$, this inequality extends to
    $x\in C_{\max}$ by continuity.  Therefore $0$ is not a vertex of
    $\partial f(x_{0})$ and so \eqref{item:vertex} does not hold.
  \end{proof}

  \begin{definition}
    \label{def:2}
    The concave function $f$ is said to be \emph{wide (at its
      maximum)} if it verifies any of the equivalent conditions in
    Proposition \ref{prop:concavefunctions}.
  \end{definition}

  Now a ssume that the considered concave function decomposes as a
  finite sum
  \begin{equation}
    \label{eq:50}
    f=\sum_{i\in I} n_{i} f_{i}
  \end{equation}
  where each $n_{i}$ is a positive real number and
  $f_{i}\colon C\to \mathbb{R}$ a concave function.
  
  \begin{definition}
    \label{def:6}
    A \emph{balanced family of sup-gradients} for the decomposition
    \eqref{eq:50} is a family of vectors
    \begin{displaymath}
      u_{i}\in (\mathbb{R}^{n})^{\vee},  \quad i\in I,       
    \end{displaymath}
    such that there exists $x_{0}\in C_{\max}$ with
    $u_{i}\in \partial f_{i}(x_{0})$ for every $i$ and
    $\sum_{i\in I}n_{i}u_{i}=0$.
  \end{definition}

  \begin{proposition}
    \label{prop:7}
    The decomposition $f=\sum_{i\in I}n_{i} f_{i}$ admits a balanced
    family of sup-gradients. If $f$ is wide then this family is
    unique.
  \end{proposition}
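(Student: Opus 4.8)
The plan is to establish existence by means of the classical sum rule for sup-differentials, and uniqueness by combining the wideness of $f$ with the rigidity of extreme points under Minkowski sums. Throughout, write $\mu=\sup_{C}f$ and $C_{\max}=S_{\mu}(f)$, a nonempty compact convex set, and let $\partial f,\partial f_{i}$ denote the sup-differentials of Definition \ref{def:4}.

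For existence, I would first fix a maximizer $x_{0}\in C_{\max}$ and note that $0\in\partial f(x_{0})$: this is immediate from Definition \ref{def:4} since $f\le\mu=f(x_{0})$ on $C$. Extending each $f_{i}$ by $-\infty$ outside $C$ makes it a proper concave function with effective domain exactly $C$; as $C$ is a convex body, the relative interiors of these domains all equal the nonempty open set $\mathrm{int}(C)$, so the Moreau--Rockafellar sum rule (applied to the convex functions $-n_{i}f_{i}$; see e.g. \cite[Thm.~23.8]{Rockafellar}) yields $\partial f(x_{0})=\sum_{i\in I}n_{i}\,\partial f_{i}(x_{0})$. Hence $0\in\sum_{i}n_{i}\partial f_{i}(x_{0})$, which furnishes vectors $u_{i}\in\partial f_{i}(x_{0})$ with $\sum_{i}n_{i}u_{i}=0$, i.e. a balanced family of sup-gradients in the sense of Definition \ref{def:6}.

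For uniqueness I would assume $f$ wide and proceed in two steps. First, I would show that a balanced family does not depend on its witnessing maximizer: if $(u_{i})_{i}$ is balanced with witness $x_{0}$ and $x_{1}\in C_{\max}$ is arbitrary, then summing the sup-gradient inequalities $\langle u_{i},x_{1}-x_{0}\rangle\ge f_{i}(x_{1})-f_{i}(x_{0})$ with the weights $n_{i}$ and using $\sum_{i}n_{i}u_{i}=0$ together with $\sum_{i}n_{i}f_{i}(x_{1})=f(x_{1})=\mu=f(x_{0})=\sum_{i}n_{i}f_{i}(x_{0})$ forces every one of them to be an equality; feeding this back into the inequalities defining $\partial f_{i}(x_{0})$ gives $\langle u_{i},x-x_{1}\rangle\ge f_{i}(x)-f_{i}(x_{1})$ for all $x\in C$, that is $u_{i}\in\partial f_{i}(x_{1})$. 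Consequently two balanced families $(u_{i})_{i}$, $(u_{i}')_{i}$ may be assumed, after fixing one $x_{0}\in C_{\max}$, to satisfy $u_{i},u_{i}'\in\partial f_{i}(x_{0})$ for all $i$. Second, since $f$ is wide, Proposition \ref{prop:concavefunctions}\eqref{item:vertex} gives that $0$ is a vertex, i.e. an extreme point, of $\partial f(x_{0})$; as $\sum_{i}n_{i}\partial f_{i}(x_{0})\subseteq\partial f(x_{0})$ contains $0$, the point $0$ is extreme in this Minkowski sum as well. I would then invoke (or prove in two lines) that an extreme point of a Minkowski sum of convex sets decomposes uniquely into a sum of elements of the summands: for two summands $K,L$ an equality $k+l=k'+l'=v$ with $v$ extreme in $K+L$ forces $k+l'=k'+l=v$ via $v=\tfrac12(k+l')+\tfrac12(k'+l)$, hence $k=k'$, $l=l'$, and one iterates. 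Applied to $0=\sum_{i}n_{i}u_{i}=\sum_{i}n_{i}u_{i}'$ with $n_{i}u_{i},n_{i}u_{i}'\in n_{i}\partial f_{i}(x_{0})$ this yields $u_{i}=u_{i}'$ since $n_{i}>0$.

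The main obstacle is the basepoint-independence step: a priori two balanced families could be built over different maximizers of $f$, and it is precisely the interplay of $\sum_{i}n_{i}u_{i}=0$ with the constancy of $f$ on $C_{\max}$ that rules this out. Once it is settled, the rest is a routine assembly of standard convex analysis: the subdifferential sum rule, which in the quoted form remains valid even at maximizers on $\partial C$ because $\mathrm{int}(C)\ne\emptyset$, and the extreme-point rigidity of Minkowski sums. A minor point to state carefully is that the sup-differentials $\partial f_{i}(x_{0})$ need not be bounded when $x_{0}\in\partial C$, but this is harmless since neither the sum rule nor the uniqueness-of-decomposition argument uses boundedness.
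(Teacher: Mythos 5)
Your proof is correct. The existence half is the same as the paper's: the paper decomposes the sup-differential as $\partial f(x_{0})=\sum_{i}n_{i}\,\partial f_{i}(x_{0})$ (citing \cite[Proposition 2.3.9]{BPS:asterisque} rather than Rockafellar's subdifferential sum rule, but it is the same fact, and your verification of the qualification condition via the common domain $C$ is fine) and then decomposes $0\in\partial f(x_{0})$. For uniqueness the routes diverge: the paper merely observes that wideness makes $0$ a vertex of $\partial f(x_{0})$ (Proposition \ref{prop:concavefunctions}) and then cites \cite[Proposition 3.15]{BPRS:dgopshtv}, whereas you give a self-contained argument in two steps — first the basepoint-independence claim, showing that a family balanced at one maximizer consists of sup-gradients at every point of $C_{\max}$ (this is genuinely needed, since Definition \ref{def:6} only requires \emph{some} witness, and your equality-forcing computation using $\sum_{i}n_{i}u_{i}=0$ and the constancy of $f$ on $C_{\max}$ is correct), and second the rigidity of extreme points under Minkowski sums applied to $0\in\sum_{i}n_{i}\partial f_{i}(x_{0})$. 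One small point worth making explicit: the paper's notion that ``$0$ is a vertex of $\partial f(x_{0})$'' is stated as the absence of $u\neq 0$ with $\pm u\in\partial f(x_{0})$, and since $0$ lies in this convex set this is indeed equivalent to $0$ being an extreme point (if $0=\lambda a+(1-\lambda)b$ nontrivially, convexity puts some nonzero $\pm u$ on the segments joining $0$ to $a$ and to $b$), so your identification is legitimate. What your route buys is independence from the external reference \cite{BPRS:dgopshtv} at the cost of a slightly longer argument; the paper's route is shorter but outsources exactly the two points you prove by hand.
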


  \begin{proof}
    For the first statement, the decomposition of $f$ implies the decomposition of 
    its sup-differential at a point $x_{0}\in C$ as the Minkowski
    sum
  \begin{equation}
    \label{eq:16}
  \partial f(x_{0})= \sum_{i\in I} n_{i} \, \partial f_{i}(x_{0}),
\end{equation}
see for instance \cite[Proposition 2.3.9]{BPS:asterisque}. If
$x_{0}\in C_{\max}$ then $0\in \partial f(x_{0})$, and so we obtain a
balanced family of sup-gradients by considering any decomposition of
this vector according to \eqref{eq:16}. This proves the first statement. 

If $f$ is wide then $0\in \partial f(x_{0})$ is a vertex, and so the
second statement is given by \cite[Proposition 3.15]{BPRS:dgopshtv}.
  \end{proof}

  We denote by $\MV(C_{1},\dots, C_{d})$ the mixed volume of a family
  of $d$ convex bodies of $\mathbb{R}^{d}$, and by
  $\MI(f_{0},\dots, f_{d})$ the mixed integral of a family of concave
  functions on convex bodies of $\mathbb{R}^{d}$, both with respect to
  the Lebesgue measure of $\mathbb{R}^{d}$. They are respectively
  defined as alternating sums of Minkowski sums of convex bodies and
  sup-convolutions of concave functions, see  \cite[Definitions~2.7.14 and
  2.7.16]{BPS:asterisque} for precisions. 

  The next lemma gives the continuity of the mixed integral with
  respect to the approximation of the domains of the involved concave
  functions. The proof is straightforward from the behavior of
  sup-convolutions with respect to restrictions of domains and the
  continuity of the integral of a  concave function on a
  convex body with respect to the approximation of its domain.
  
    \begin{lemma}
\label{lem:9}
For $i=0, \dots, d$ let $f_{i}\colon C_{i} \to \mathbb{R}$ be a
concave function on a convex body, and $(C_{i,n})_{n}$ a sequence of
convex bodies approaching $C_{i}$ uniformly from inside. Then
    \begin{displaymath}
\lim_{n\to \infty}      \MI (f_{0}|_{C_{0,n}}, \dots, f_{d}|_{C_{d,n}}) = \MI(f_{0},\dots, f_{d}).
    \end{displaymath}
  \end{lemma}

  % \begin{proof}
  %   For the sequences of convex bodies $(C_{i,n})_{n}$,
  %   $i=0,\dots, d$, approaching $C_{i}$ uniformly, the same holds for
  %   the corresponding Minkowski sums. Furthermore, for the sequences
  %   of concave functions $(f_{i}|_{C_{i,n}})_{n}$, $i=0,\dots, d$,
  %   their sup-convolutions approach the corresponding sup-convolutions
  %   of the concave functions $f_{i}$, $i=0,\dots, d$, restricted to
  %   the domain of the approximant.

  %   The statement then follows from the continuity of the integral of
  %   a continuous concave function on a convex body with respect to
  %   the restriction of its domain to a sequence of convex bodies
  %   approaching it.
  % \end{proof}
  
  Finally, the next result allows to compute the mixed integral when
  all but one of the involved concave functions are equal and affine.
  Recall that the \emph{Legendre-Fenchel dual} of a concave function
  on a convex body $f \colon C\to \mathbb{R}$ is the concave function
  $f^{\vee}\colon (\mathbb{R}^{d})^{\vee}\to \mathbb{R}$ defined as
  \begin{equation}
    \label{eq:59}
    f^{\vee}(u)=\inf_{x\in C} \langle u,x\rangle -f(x).
  \end{equation}
  
\begin{lemma}
  \label{lem:12}  
  Let $f \colon C\to \mathbb{R}$ an affine function on a convex body
  with linear part $u\in (\mathbb{R}^{d})^{\vee}$ and constant
  $c\in \mathbb{R}$. Then for any concave function on a convex body
  $g \colon B\to \mathbb{R}$ we have
  \begin{displaymath}
    \MI(f, \dots, f, g)=    \MI(u|_{C},\dots, u|_{C}, u|_{B})+c \, d \MV (C,\dots, C, B)
    -d! \vol(C) \, g^{\vee}(u).
  \end{displaymath}
\end{lemma}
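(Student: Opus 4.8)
The plan is to argue directly from the definition of the mixed integral as an alternating sum of integrals of sup-convolutions: writing $h_{I}=\boxplus_{i\in I}h_{i}$ for the sup-convolution and $E_{I}=\sum_{i\in I}E_{i}$ for its domain,
\[
  \MI_{M}(h_{0},\dots, h_{d})=\sum_{\emptyset\ne I\subseteq \{0,\dots, d\}}(-1)^{d+1-\#I}\int_{E_{I}} h_{I}\, d\hspace*{-1.5pt}\vol_{M},
\]
as in \cite[Definition~2.7.16]{BPS:asterisque}. The first step is to reduce to the case $c=0$, that is $\ell=\langle u,\cdot\rangle$ linear. For this I would record the general identity that, for concave functions $h_{0},\dots,h_{d}$ on convex bodies $E_{0},\dots,E_{d}$ and $a\in\mathbb{R}$,
\[
  \MI_{M}(h_{0}+a,h_{1},\dots,h_{d})=\MI_{M}(h_{0},\dots,h_{d})+a\,\MV_{M}(E_{1},\dots,E_{d}):
\]
adding $a$ to $h_{0}$ changes only the summands with $0\in I$, each by $a\,\vol_{M}(E_{I})$, and the resulting contribution $a\sum_{J\subseteq\{1,\dots,d\}}(-1)^{d-\#J}\vol_{M}(E_{0}+\sum_{i\in J}E_{i})$ is the $d$-th finite difference of a Minkowski volume polynomial, which by the inclusion–exclusion formula for mixed volumes equals $a\,\MV_{M}(E_{1},\dots,E_{d})$ (here I use the normalization $\MV_{M}(E,\dots,E)=d!\,\vol_{M}(E)$ of \cite{BPS:asterisque}). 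Applying this identity once to each of the $d$ copies of $\ell|_{C}=\langle u,\cdot\rangle|_{C}+c$ and using the symmetry of $\MI_{M}$ yields
\[
  \MI_{M}(\ell|_{C},\dots,\ell|_{C},g)=\MI_{M}(u|_{C},\dots,u|_{C},g)+c\,d\,\MV_{M}(C,\dots,C,B).
\]

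Next I would evaluate the difference $\MI_{M}(u|_{C},\dots,u|_{C},g)-\MI_{M}(u|_{C},\dots,u|_{C},u|_{B})$. Since sup-convolving the linear function $\langle u,\cdot\rangle$ over several convex bodies gives $\langle u,\cdot\rangle$ over their Minkowski sum, the terms indexed by subsets $I$ not containing the last index are the same for both mixed integrals and cancel. For $I$ containing the last index, with $k=\#I-1$, the corresponding sup-convolution is $\langle u,\cdot\rangle|_{kC}\boxplus g=\langle u,\cdot\rangle+h_{k}$ on $kC+B$, where $h_{k}=(g-\langle u,\cdot\rangle)\boxplus 0_{kC}$, i.e.
\[
  h_{k}(z)=\sup\{\,g(w)-\langle u,w\rangle\ :\ w\in B,\ z-w\in kC\,\};
\]
with $g$ replaced by $\langle u,\cdot\rangle|_{B}$ this $h_{k}$ vanishes identically. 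Collecting signs and the $\binom{d}{k}$ subsets $I$ of each size, one gets
\[
  \MI_{M}(u|_{C},\dots,u|_{C},g)-\MI_{M}(u|_{C},\dots,u|_{C},u|_{B})=\sum_{k=0}^{d}\binom{d}{k}(-1)^{d-k}\int_{kC+B}h_{k}\, d\hspace*{-1.5pt}\vol_{M}.
\]
Finally, setting $\phi=g-\langle u,\cdot\rangle$ on $B$ and $S_{t}(\phi)=\{w\in B:\phi(w)\ge t\}$, the layer-cake formula together with the identity $\{z:h_{k}(z)\ge t\}=S_{t}(\phi)+kC$ gives $\int_{kC+B}h_{k}\,d\hspace*{-1.5pt}\vol_{M}=(\inf_{B}\phi)\,\vol_{M}(kC+B)+\int_{\inf_{B}\phi}^{\sup_{B}\phi}\vol_{M}(S_{t}(\phi)+kC)\,dt$. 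Since for every compact convex set $P$ the map $k\mapsto\vol_{M}(kC+P)$ is a polynomial of degree $d$ with leading coefficient $\vol_{M}(C)$ (as $C$ is full-dimensional), its $d$-th finite difference is $d!\,\vol_{M}(C)$ regardless of $P$; applying this with $P=B$ and $P=S_{t}(\phi)$ and pulling the finite sum inside the integral, the last display collapses to $d!\,\vol_{M}(C)\,\sup_{B}\phi=-d!\,\vol_{M}(C)\,g^{\vee}(u)$. Combining with the reduction step gives the stated formula.

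The main difficulty is purely in the bookkeeping: making sure the finite-difference identities reproduce exactly the constants $c\,d$ and $d!\,\vol_{M}(C)$ of the statement under the normalizations of $\MI_{M}$ and $\MV_{M}$ used in \cite{BPS:asterisque}, and handling the layer-cake step with some care when $\sup_{B}\phi$ is not attained or $S_{t}(\phi)$ is lower-dimensional — points that are routine and can be dealt with by the monotonicity of $t\mapsto\vol_{M}(S_{t}(\phi)+kC)$ (and, in the intended applications, $g$ is a continuous roof function, so these issues do not arise).
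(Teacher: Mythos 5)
Your proof is correct, but it takes a genuinely different route from the paper. The paper's argument is a short reduction to results of Gualdi \cite{Gualdi:hhtv}: it first invokes Corollary 1.10 there to strip off the constant $c$, then computes the mixed real Monge--Amp\`ere measure $\textrm{M}\mathcal{M}(u|_{C},\dots,u|_{C})=d!\vol(C)\,\delta_{u}$ via Proposition 1.5, and finally applies the recursive formula of Theorem 1.6 to both $g$ and $u|_{B}$, so that the difference of mixed integrals becomes $d!\vol(C)\,((u|_{B})^{\vee}(u)-g^{\vee}(u))$ with $(u|_{B})^{\vee}(u)=0$; this requires $C$ and $B$ to be polytopes, with the general case obtained by uniform approximation. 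You instead work directly from the inclusion--exclusion definition of $\MI$ in \cite{BPS:asterisque}: your constant-stripping identity (the $d$-fold finite difference of $J\mapsto\vol(E_{0}+\sum_{i\in J}E_{i})$ extracting $\MV(E_{1},\dots,E_{d})$) is correct, the identity $u|_{kC}\boxplus g=\langle u,\cdot\rangle+h_{k}$ and the cancellation of the terms not involving the last index are correct, and the layer-cake computation combined with the fact that the $d$-th finite difference of $k\mapsto\vol(kC+P)$ equals $d!\vol(C)$ for any nonempty compact convex $P$ does collapse the sum to $d!\vol(C)\sup_{B}(g-\langle u,\cdot\rangle)=-d!\vol(C)\,g^{\vee}(u)$, matching the paper's convention $g^{\vee}(u)=\inf_{x\in B}(\langle u,x\rangle-g(x))$. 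What your approach buys is self-containedness: no appeal to the Monge--Amp\`ere machinery of \cite{Gualdi:hhtv} and no reduction to polytopes, at the cost of the combinatorial bookkeeping you carry out; the caveats you flag (attainment of suprema, lower-dimensional level sets) only affect at most a null set of levels $t$ and are harmless for continuous $g$, which is also what the paper's uniform-approximation step implicitly assumes, so the two proofs are on equal footing there.
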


\begin{proof}
  The proof is based on \cite[Section 1.3]{Gualdi:hhtv}
  and we will freely use the notation therein. This requires that both
  $C$ and $B$ are lattice polytopes, which we now suppose.

  By Corollary 1.10 in \textit{loc. cit.} we have
  \begin{equation}
    \label{eq:65}
  \MI(f,\dots, f, g)=
  \MI(u|_{C},\dots, u|_{C}, g)+ c\, d \MV (C,\dots, C, B),
\end{equation}
and by Proposition 1.5 in \textit{loc. cit.}, the mixed real
Monge-Ampère measure of $u|_{C}$ is
\begin{displaymath}
  \textrm{M}\mathcal{M}(u|_{C},\dots, u|_{C}) = d! \vol(C) \, \delta_{u}
\end{displaymath}
with $\delta_{u}$ the Dirac measure at the point
$u \in (\mathbb{R}^{d})^{\vee}$.  Hence applying the recursive formula
of Theorem~1.6 in \textit{loc. cit.}  to $g$ and to $u|_{B}$ we get
\begin{equation}
  \label{eq:66}
    \MI(u|_{C},\dots, u|_{C}, g)-  \MI(u|_{C},\dots, u|_{C}, u|_{B}) = d! \vol(C) \, ((u|_{B})^{\vee}(u)-g^{\vee}(u)) .
\end{equation}
The statement  follows in this case from \eqref{eq:65} and \eqref{eq:66} together
with the fact that
\begin{math}
(u|_{B})^{\vee}(u) = \inf_{x\in B } (\langle u,x\rangle - \langle u,x\rangle)=0.
\end{math}

The case when $C$ and $B$ are arbitrary convex bodies is deduced from
the previous using  the invariance of the formula with respect to
homothecies and its continuity with respect to uniform
approximations.
\end{proof}

\begin{remark}
  \label{rem:12}
  For any $u\in (\mathbb{R}^{d})^{\vee}$ and convex bodies
  $C_{1},C_{2}\subset \mathbb{R}^{d}$, the sup-convolution of the
  restrictions $u|_{C_{1}}$ and $u|_{C_{2}}$ coincides with the
  restriction $u|_{C_{1}+C_{2}}$, that is
  \begin{displaymath}
u|_{C_{1}} \boxplus u|_{C_{2}}= u|_{C_{1}+C_{2}}.
  \end{displaymath}
  Hence the mixed integral $\MI(u|_{C},\dots, u|_{C}, u|_{B})$ can be
  written as an alternating sum of integrals of the linear function
  $u$ on the Minkowski sum of several copies $C$ and $B$. In
  particular, the map $u\mapsto \MI(u|_{C},\dots, u|_{C}, u|_{B})$ is
  linear.
\end{remark}

% \bibliographystyle{amsalpha}
% \bibliography{./bibliography.bib}

% \end{document}

\providecommand{\bysame}{\leavevmode\hbox to3em{\hrulefill}\thinspace}
\providecommand{\MR}{\relax\ifhmode\unskip\space\fi MR }
% \MRhref is called by the amsart/book/proc definition of \MR.
\providecommand{\MRhref}[2]{%
  \href{http://www.ams.org/mathscinet-getitem?mr=#1}{#2}
}
\providecommand{\href}[2]{#2}

\end{document}